\date{}
\theoremstyle{plain}
\newtheorem{theorem}{Theorem}
\newtheorem{lemma}[theorem]{Lemma}
\newtheorem{corollary}[theorem]{Corollary}
\newtheorem{proposition}[theorem]{Proposition}
\newtheorem{claim}[theorem]{Claim}
\theoremstyle{definition}
\theoremstyle{remark}
\newtheorem{remark}[theorem]{\bf Remark}
\renewcommand{\thefigure}{\Alph{figure}}
\renewcommand{\thesubsection}{\Roman{subsection}}
\renewcommand{\paragraph}[2][\scndArg]{{%
\def\scndArg{#2} 
\@startsection{paragraph}{4}{0ex}%
   {-3.25ex plus -1ex minus -0.2ex}%
   {1.5ex plus 0.2ex}%
   {\normalfont\normalsize\bfseries}[#1]{#2}}}
\renewcommand{\theparagraph}{\thesubsubsection.\@arabic\c@paragraph}
\renewcommand{\subparagraph}[2][\scndArg]{{%
\def\scndArg{#2} 
\@startsection{subparagraph}{5}{0ex}%
   {-1.25ex plus -0.5ex minus -0.2ex}%
   {0.5ex plus 0.1ex}%
   {\normalfont\normalsize\bfseries\em}[#1]{#2}}}
\renewcommand{\thesubparagraph}{\theparagraph.\@roman\c@subparagraph}
\theoremstyle{plain}
\newtheorem*{claim*}{Claim}
\theoremstyle{remark}
\newtheorem*{remark*}{Remark}
\newenvironment{tikztab}{\begin{tabular}{c}\begin{tikzpicture}}{\end{tikzpicture}\end{tabular}}
\def\ZZ{\mathbb{Z}}
\def\FfF{{\mathcal F}}
\def\GgG{{\mathcal G}}
\def\HhH{{\mathcal H}}
\def\Lft{([}
\def\Rgt{])}
\newlength{\dk} \dk=1.3em
\newlength{\mk} \mk=0.7em
\newlength{\DK} \DK=1.41421356\dk
\tikzset{x=1.0\dk,y=1.0\dk,
 cell/.style={rectangle,draw=black!15,very thin, minimum size=1\dk},
 mell/.style={rectangle,draw=black!15,very thin, minimum size=1\mk},
 oell/.style={rectangle,very thin, minimum size=1\mk},
 cell/.style={rectangle,draw=black!15,very thin, minimum size=1\dk},
 dell/.style={diamond,draw=black!15,very thin, minimum size=\DK},
 zell/.style={diamond,draw=white,very thin,  minimum size=\DK},
 diag/.style={draw=black!30!white, thin, dash pattern=on 0.28\DK off 0.44\DK on 0.28\DK off 0em}
 }
\def\dfant{ ++(1\DK,0) node [zell] {} }
\def\dcolor#1#2{ ++(1\DK,0) node [dell,fill=#1] {\raisebox{-0.25em}[0.2em][0em]{\makebox[0mm][c]{#2}}} }
\def\Dcolor#1#2#3{ ++(1\DK,0) node [dell,inner color=#1,outer color=#2] {\raisebox{-0.25em}[0.2em][0em]{\makebox[0mm][c]{#3}}} }
\def\DDolor#1#2#3#4{ ++(1\DK,0) node [dell,shading = axis,left color=#1,right color=#2,shading angle=#3] {\raisebox{-0.25em}[0.2em][0em]{\makebox[0mm][c]{#4}}} }
\def\ccolor#1#2{ ++(1,0) node [cell,fill=#1] {\raisebox{-0.25em}[0.2em][0em]{\makebox[0mm][c]{#2}}} }
\def\CColor#1#2#3#4{ ++(1,0) node [cell,shading = axis,left color=#1,right color=#2,shading angle=#3] {\raisebox{-0.25em}[0.2em][0em]{\makebox[0mm][c]{#4}}} }
\def\Ccolor#1#2#3{ ++(1,0) node [cell,inner color=#1,outer color=#2] {\raisebox{-0.25em}[0.2em][0em]{\makebox[0mm][c]{#3}}} }
\def\cocolor#1#2{ ++(1,0) node [oell,fill=#1,] {\raisebox{-0.25em}[0.2em][0em]{\makebox[0mm][c]{$ #2$}}} }
\def\t{ ++(1,0) node  {\raisebox{-0.25em}[0.2em][0em]{\makebox[0mm][c]{$\cdots$}}} }
\def\td{ ++(1,0) node  {\makebox[0mm][c]{\begin{turn}{-45}$\cdots$\end{turn}}} }
\def\dt{ ++(1,0) node  {\makebox[0mm][c]{\begin{turn}{45}$\cdots$\end{turn}}} }
\def\o{\ccolor{white}{$\ $}}
\def\dfc#1#2{\definecolor{#1}{HTML}{#2}}
\definecolor{clrTWO}{rgb}{0.78, 0.95, 0.70}
\def\colONE{blue!30!green!25!white}
\definecolor{yell}{rgb}{0.95,0.9,0.45}
\def\colTWO{clrTWO!90!white}
\def\1{\ccolor{\colONE}{$1$}}
\def\2{\ccolor{\colTWO}{$2$}}
\def\3{\ccolor{blue!70!green!25!white}{$3$}}
\def\4{\ccolor{blue!70!red!25!white}{$4$}}
\def\0{++(1,0)}
\newcommand\cll[2][0.7em]{\protect\framebox{$\smash{#2}\protect\vphantom{1}$}}
\newcommand\ccll[1]{\cll{#1}}
\newcommand\Cll[1]{\cll{#1}}
\def\scll#1{#1}
\def\dll#1#2{\cll{#1}\cll{#2}}
\def\DDll#1#2{\ccll{#1}\ccll{#2}}
\def\Ddll#1#2{\ccll{#1}\cll{#2}}
\title{Perfect colorings of the infinite square grid:\\ coverings and twin colors%
\thanks{The study was carried out within the framework of the state contract
of the Sobolev Institute of Mathematics (Project  FWNF-2022-0017) and supported by the Russian Foundation for Basic Research (Grant 20-51-53023).}\thanks{%
This is the author's final version of the manuscript published in The Electronic Journal of Combinatorics 30(2), 2023,
\url{https://doi.org/10.37236/10005}
}
}
\author{Denis S. Krotov\thanks{Sobolev Institute of Mathematics, Novosibirsk, Russia. \tt krotov@math.nsc.ru}}
\begin{document}

\maketitle

\begin{abstract}
A perfect coloring (equivalent concepts are equitable partition and partition design) of a graph $G$ is a function $f$ from the set of vertices onto some finite set (of colors) such that every node of color $i$ has exactly $S(i,j)$ neighbors of color $j$, where  $S(i,j)$ are constants, forming the matrix $S$ called quotient. If $S$ is an adjacency matrix of some simple graph $T$ on the set of colors, then $f$ is called a covering of the target graph $T$ by the cover graph $G$. We characterize all coverings by the infinite square grid, proving that every such coloring is either orbit (that is, corresponds to the orbit partition under the action of some group of graph automorphisms) or has twin colors (that is, two colors such that unifying them keeps the coloring perfect). The case of twin colors is separately classified.

Keywords: perfect coloring, equitable partition, partition design, square grid, rectangular grid, wallpaper group, twin colors, graph covering
\end{abstract}

\vspace{1.5em}
\section{Introduction}

This work is focused on the problem of classification of
perfect colorings of the infinite square grid $G(\ZZ^2)$.
We classify an important subcase of perfect colorings
when there are no two nodes of the same color at distance one
or two from each other. Separately, we characterize bipartite
perfect colorings in the case when two colors can be merged
keeping the perfectness of the coloring.

Perfect colorings
(equivalent concepts are equitable partitions,
regular partitions,
partition designs)
of graphs are usually considered as objects
of algebraic combinatorics, see, e.g.,~\cite[9.3]{GoRo},
because of their connection with
the eigenspaces of graphs, which determine many of their properties.
While perfect colorings can be considered for any graph or even multigraph,
the most natural classes of graphs to study from this point of view are distance-regular
graphs~\cite{Brouwer} and vertex-transitive graphs.
Perfect colorings of vertex-transitive infinite grids can be considered as
combinatorial analogs of crystals.

One of motivations of the study of perfect colorings is that
they naturally generalize
 more special classes of objects
with similar regular properties.
The list of concepts that can be
defined as perfect colorings with
special parameters
includes
perfect codes,
MDS codes
(maximum distance separable codes \cite[Ch.\,11]{MWS}, not necessarily linear) and
MRD (maximum rank distance) codes \cite{Gabidulin:2021}
with distance~$2$ or~$3$,
latin squares \cite{DK:74} and latin hypercubes \cite{MK-W:small},
Steiner triple and quadruple systems \cite{ColMat:Steiner},
some other types of combinatorial designs
and their subspace analogues \cite{BKW:2018subspace},
Cameron--Liebler line classes \cite{CL:82},
intriguing sets \cite{DeBruSuz:2010}
(which are essentially perfect $2$-colorings).
For some subclasses of classical objects,
their equivalence to perfect colorings
with corresponding
parameters are less straightforward and
the study of such connections can be considered
as a notable direction in discrete mathematics.
For example, orthogonal arrays \cite{HSS:OA} and error-correcting
codes \cite{MWS} whose parameters attain certain bounds
are equivalent to perfect colorings
with certain parameters
(see e.g.  \cite{FDF:CorrImmBound},
\cite{Pot:2012:color},
\cite[Th.1]{Kro:OA13},
\cite{Kro:2m-4},
\cite{BRZ:CR}),
Boolean bent functions are equivalent to perfect $4$-colorings
of Grassmann graphs \cite[Th.\,2]{PotAvg:20},
some optimal edge cuts in a regular graphs
correspond to perfect $2$-colorings
\cite[Th.\,2.4]{Golubev:2020}.

Perfect colorings of $G(\ZZ^2)$ were studied in the following preceding papers.
The most important result was proved by Puzynina,
who showed in~\cite{Puz2004.en} that every such coloring is either periodic (in two independent directions),
or has so-called binary diagonals and can be made periodic by shifting some of them.
In~\cite{Axenovich:2003,Puz2005.en,Kro:9colors},
perfect colorings of~$G(\ZZ^2)$ up to, respectively, $2$, $3$, and $9$ colors are listed
(the last result is computational).
In~\cite{AVS:2012en}, there is a characterization of
\emph{completely regular codes} in $G(\ZZ^2)$, i.e., sets $C$ of vertices such that
the distance coloring with respect to $C$ is perfect.

Similar results on perfect colorings of other notable infinite
grids are worth mentioning. A partial analogs of the result of~\cite{Puz2004.en}
was obtained in \cite{Puz:2011en}, where it was proved that for every
perfect coloring of the hexagonal or triangular grid (of degree $3$ and $6$, respectively), there is a periodic (in two directions)
perfect coloring with the same quotient matrix. Completely regular codes
in the hexagonal or triangular grids were described in~\cite{AKV:hex}
and~\cite{Vas:Maltsev14}, \cite{Vas:grid:22} respectively. In~\cite{Plotnikov:2005},
perfect colorings of the infinite hexagonal grid with three colors were described.
Some results on completely regular codes in $n$-dimensional square grid
were obtained in~\cite{AvgVas:2022}. A systematic study of perfect colorings of infinite one-dimensional periodic graphs (finite-degree periodic graphs on $\ZZ$: infinite circulant graphs,
infinite multipath graphs, etc.) is carried out by Avgustinovich, Lisitsyna,
and Parshina, see one of the latest works~\cite{LAP:2020} and references there. That study is also related to perfect colorings of infinite grids
because some of those graphs are quotient graphs of one of the grids mentioned above.


\smallskip

Consider the set $\ZZ^2$ of pairs $[x,y]$ of integers $x$, $y$.
For convenience, the elements of~$\ZZ^2$ will be called \emph{nodes} and pictured as squares.
Pictures that show a fragment of~$\ZZ^2$ with colored nodes are very frequent in this paper
and formally can be considered as mathematical expressions, which can be a part of a sentence.
For example, the phrase ``a~coloring $\FfF$ contains the fragment
\begin{tikzpicture}
\dfc{Ub}{ffd800}
\dfc{Ua}{0057b8}
\def\1{\ccolor{Ua!60!white}{$1$}}
\def\2{\ccolor{Ub!70!white}{$2$}}
\draw (0,-0) \1\0\0\0\2;
\draw (0,-1) \0\2\0\1\0;
\end{tikzpicture}
'' means ``there are integers $x$ and $y$ such that
$\FfF\Lft x,y+1\Rgt =\FfF\Lft x+3,y\Rgt =\cll 1$ and
$\FfF\Lft x+1,y\Rgt =\FfF\Lft x+4,y+1\Rgt =\cll 2$''.
In such pictures the first coordinate $x$  grows in the right
(sometimes, right-down) direction, while the second, $y$,
grows in the upward (sometimes, up-right) direction.
If a picture is implied to show a coloring of the whole grid
(or of some of the nodes, for example only the even nodes), see, e.g.,
Fig.~\ref{AA}, \ref{BB}, \ldots, then the whole coloring is reconstructed from
the shown fragment by translations with periods
that are obvious from the picture.

A node $[x,y]$ is \emph{even} (\emph{odd}) iff the number $x+y$ is even (odd).
We define the \emph{distance}
between two nodes ${\bar v}=[x,y]$ and ${\bar v}'=[x',y']$
as
$d({\bar v},{\bar v}')\triangleq |x'-x|+|y'-y|$.
Two nodes at distance one from each other
are \emph{adjacent} or \emph{neighbors}.
In such a manner, an infinite graph $G(\ZZ^2)$ is defined, called the \emph{infinite square grid} (sometimes, infinite rectangular grid).
The \emph{neighborhood} of a node is the set of all (four) its neighbors.
We say that two nodes $[x,y]$ and $[x',y']$
are placed \emph{diagonally} from each other iff $|x'-x|=|y'-y|=1$.
For a node $[x,y]$ the set $\{[x+i,y+i] \mid i\in\ZZ\}$ is called the
\emph{R-diagonal}
through $[x,y]$;
the set $\{[x+i,y-i] \mid i\in\ZZ\}$ is called the
\emph{L-diagonal}
through $[x,y]$.
By definition, a \emph{diagonal} is
an R-diagonal (R is for ``right'')
or
an L-diagonal (L is for ``left'').
Two
R-diagonals
(L-diagonals)
through neighbor nodes are also called \emph{neighbor}.

Let $G=(V(G),E(G))$ be a simple graph;
let $ \Sigma=(\cll{1},\cll{2},\ldots,\cll{n})$ be some fixed collection
of distinct elements,
which will be referred to as \emph{colors};
and let $S=(S_{ij})_{ij}$ be an $n \times n$ nonnegative integer matrix.
A surjective function $\FfF:V(G)\to\Sigma$
is called a \emph{perfect coloring} (of~$G$) with (quotient) matrix~$S$
iff for every~$i$ and~$j$ from~$1$ to~$n$ every node $v\in V(G)$ such that
$\FfF(v)=\cll{i}$ has exactly~$S_{ij}$ neighbors colored with~$\cll{j}$.

The convenient term \emph{semicoloring}, suggested in~\cite{LAP:2020},
refers to any function from a part of a bipartite graph
to a finite set of colors.

Two colorings  $\FfF,\GgG:\ZZ^2\to\Sigma$ (or semicolorings)
are called \emph{equivalent}, $\FfF \sim \GgG$, iff
$\FfF(\cdot)\equiv \pi \GgG(\tau(\cdot))$
where $\pi$ is a permutation of the colors,
$\tau$ is an adjacency-preserving transformation of $\ZZ^2$
(an \emph{automorphism} of $G(\ZZ^2)$),
i.e., translation,
rotation, reflection, or a sliding symmetry
(the composition of a reflection and a translation).

Next, we define three important classes of
perfect colorings: bipartite perfect colorings, coverings,
and perfect colorings with twin colors.

A perfect coloring is called \emph{bipartite}
iff the set of nodes of each color consists of
nodes of the same parity (even or odd).
Every perfect coloring $\FfF$
(of $G(\ZZ^2)$ or any other bipartite graph)
can be treated as a bipartite perfect coloring:
if $\FfF$ is not bipartite itself, then each color can be split into the even subcolor and the odd subcolor,
resulting in a bipartite perfect coloring $\overline{\FfF}$.
Note that
 $\overline{\FfF} \sim \overline{\GgG}$ neither
 means that $\FfF$ and $\GgG$ are equivalent, nor that
 they have the same quotient matrix (up to permutation of colors).
 On the other hand, some bipartite perfect colorings do not correspond to
 any non-bipartite ones.

If the quotient matrix of a perfect coloring $\FfF$ of $G$
is a $\{0,1\}$-matrix with zero diagonal,
then it is the adjacency matrix of some graph $T$ on the set of colors.
In this case, $\FfF$ is known as a \emph{covering}
of the \emph{target} graph $T$ by the \emph{cover} graph $G$.

Assume that we have a perfect coloring with an $n$-by-$n$ quotient matrix~$S$.
Two different colors~$\cll{a}$ and~$\cll{b}$ are called \emph{twin}
(twin colors or just \emph{twins}) iff
identifying them results in a perfect coloring~$\FfF'$ with $n-1$ colors.
Equivalently, two different colors~$\cll{a}$ and~$\cll{b}$ are twin
if and only if $S_{aj}=S_{bj}$ for all $j\neq a,b$
(however, $S_{ja}$ and~$S_{jb}$ might be distinct,
and the ``densities'' of two twin colors in a perfect coloring can be different).

\begin{remark*}
In preceding papers, twin colors were called ``equivalent''.
However, equivalence is a general mathematical concept:
any reflexive, symmetric,
and transitive relation is an equivalence.
In this paper, a specific term ``twin'' is suggested for that reason.
\end{remark*}

In this paper, we (1) characterize coverings by~$G(\ZZ^2)$,
and (2) classify all bipartite perfect colorings of~$G(\ZZ^2)$
with twin colors.

In the next section,
some additional concepts and two main theorems are introduced.
Sections~\ref{s:p1} and~\ref{s:p2} contain proofs of the theorems.
In Section~\ref{s:no}, we briefly discuss the existence of non-orbit perfect colorings.
Appendix~A completes Theorem~\ref{th:01} by describing the subgroups of the automorphism
group of the infinite square grid whose orbit coloring is
a covering of a simple graph.
Appendix~B contains a catalogue of perfect colorings that,
together with Theorem~\ref{th:main},
form the classification of perfect colorings
of the infinite square grid with twin colors.


\section{Main results}

Assume that some perfect coloring $\FfF$ with $n \times n$ matrix
$S=(S_{ij})_{ij}$ is fixed.
Let $\cll{a}$ and~$\cll{b}$ be some colors, $C$ some set of colors.
We say that ${\bar v}\in \ZZ^2$ is an \emph{$\cll{a}$-node}
(or ${\bar v}$ is colored with \cll{a}, or ${\bar v}$ has color \cll{a})
iff $\FfF({\bar v})=\cll{a}$.
We say that ${\bar v}\in \ZZ^2$ is a \emph{$C$-node} iff $\FfF({\bar v})\in C$.

A diagonal (R-diagonal, L-diaginal) is called an $\dll ab$-diagonal
($\dll ab$-R-diagonal or $\dll ab$-L-diaginal, depending on its direction) iff
it consists of $\{\cll{a},\cll{b}\}$-nodes and, moreover,
the colors alternate on the diagonal, i.e.,
all its nodes of type $[2i,\cdot]$ ($i\in \ZZ$) are $\cll{a}$-nodes
and all its nodes of type $[2i+1,\cdot]$ ($i\in \ZZ$) are $\cll{b}$-nodes,
or vice versa.
An \dll{a}{b}-diagonal is called a
\emph{binary diagonal} iff $\cll{a}\neq\cll{b}$.
The following simple fact is crucial for the rest of the article,
and it will be used without explicit references.
\begin{claim*}[{\cite[Proposition~6]{Puz2004.en}}]
If there is an $\dll ab$-diagonal with $\cll a \ne \cll b$,
then $\cll a$ and $\cll b$ are twins.
\end{claim*}
\begin{proof}
 Assume without loss of generality that
 $$\FfF\Lft 0,0\Rgt =\FfF\Lft 2,2\Rgt =\FfF\Lft 4,4\Rgt =\cll a
 \quad\mbox{and}\quad
 \FfF\Lft 1,1\Rgt =\FfF\Lft 3,3\Rgt =\FfF\Lft 5,5\Rgt =\cll b.$$
 Denote by~$N_a$ and~$N_b$ the union of the neighborhoods of
 $[0,0]$, $[2,2]$, $[4,4]$ and
 $[1,1]$, $[3,3]$, $[5,5]$, respectively.
 For a color $\cll i$, the number of $\cll i$-nodes
 in $N_a$ is $3S_{ai}$,
 and the number of $\cll i$-nodes
 in $N_b$ is $3S_{bi}$. Since
 $|N_a\backslash N_b|=|N_b\backslash N_a|=2$,
 the difference between $3S_{ai}$ and $3S_{bi}$ cannot
 be larger than $2$. Hence, $S_{ai}=S_{bi}$.
\end{proof}

By \emph{shifting} a binary \dll{a}{b}-diagonal
we mean swapping the colors \cll{a} and \cll{b} on that diagonal.
Such operation does not change the perfectness of the coloring
of the whole grid. Two colorings or semicolorings are \emph{shifting equivalent}
iff one of them is equivalent to a coloring (respectively, semicoloring)
obtained from the other by shifting binary diagonals.

A coloring of a graph $G$ is called \emph{orbit}, if there is a subgroup $\Phi$
of the automorphism group $\mathrm{Aut}(G)$ of $G$
such that
\begin{itemize}
 \item
any two nodes ${\bar x}$ and ${\bar y}$ have the same color
if and only if ${\bar x}=\varphi({\bar y})$ for some $\varphi$ in $\Phi$
(i.e., ${\bar x}$ and ${\bar y}$ belong to the same orbit under the action of $\Phi$).
\end{itemize}
It is straightforward and well known that every orbit coloring is a perfect coloring.

We are now ready to state the main results of the paper.

\begin{theorem}\label{th:01}
 Every perfect coloring of $G(\ZZ^2)$
 with quotient $\{0,1\}$-matrix with zero diagonal
 (that is, the coloring is a covering of a simple graph)
 is either orbit
 or has a binary diagonal (and, hence, twin colors).
\end{theorem}

Two corollaries (\ref{c:bipartite01} and \ref{c:nonbipartite01}) of the next theorem, of independent interest, complete Theorem~\ref{th:01}
by characterizing coverings with binary diagonals.

\begin{theorem}\label{th:main}
Every bipartite perfect coloring of $G(\ZZ^2)$
with at least one pair of twin colors is either
shifting equivalent
to one of the colorings listed in Fig.~\ref{AA}--\ref{WW}
with the corresponding matrices (see Appendix~B) or can be obtained
from such a coloring by merging groups
of (two, three, or four) mutually twin colors.
\end{theorem}
A special case in the claim of Theorem~\ref{th:main}
should be highlighted because of the risk to be forgotten.
The only case that satisfies the hypothesis of the theorem
with twin colors of different parity is the
\emph{chessboard coloring},
obtained from the coloring in Fig.~\ref{II}
by merging two groups (even and odd)
from four mutually twin colors.

\begin{remark}
Each of the infinite families of colorings shown in Fig.~\ref{KK}--\ref{WW}
has one or two small-parameter cases that are included
in other, finite, families.
In each of such cases, the coloring has binary diagonals of both directions,
in contrast to the other colorings of the corresponding infinite family.
For example the coloring in Fig.~\ref{MM}($n=6$) is equivalent to a coloring
in Fig.~\ref{II} with shifted L-diagonals and some merged colors.
That coloring has an additional pair (\cll 3, \cll 4) of twin colors
and binary \dll34-L-diagonals, which are not illustrated
in the general diagram (Fig.~\ref{MM}).
By this reason, one can find natural to exclude such special cases
from the infinite series (which keeps Theorem~\ref{th:main} complete).
To make this easy, such special cases
 are explicitly indicated by the reference
to a finite family in the parenthesis after the corresponding
number of the colors, see the description to each of Fig.~\ref{KK}--\ref{WW}.
\end{remark}

\begin{remark}\label{r:intersect}
The coloring shown in Fig.~\ref{AA} has intersecting binary
diagonals,
which cannot be shifted independently.
Actually, any shifting
R-diagonals,
except shifting \emph{all}
R-diagonals
(which has the same effect as renaming colors $\cll4\leftrightarrow\cll5$)
results in a coloring without binary
L-diagonals.
Since the original coloring has an automorphism
(horizontal reflection plus swapping colors $\cll5\leftrightarrow\cll6$)
that changes the roles of
R-diagonals
and
L-diagonals,
it is safe to say that we can use only
R-diagonal
shifts
and ordinary equivalence transformations
(automorphisms of the grid and renaming colors)
to exhaust the shifting equivalence class of the coloring shown in Fig.~\ref{AA}.

A similar situation is with Fig.~\ref{II}. However,
in that case there are
L-diagonals
and
R-diagonals
of both parities.
Shifting some (but not all)
\dll12-diagonals or some (but not all)
\dll34-diagonals breaks all
L-diagonals
of the same parity,
but does not affect the diagonals of the opposite
parity.
Similar ``nontrivial'' shifting
\dll13- and \dll24-diagonals breaks all
R-diagonals
of the same parity.
Similarly, for \dll57- and \dll68-diagonals
and for  \dll56- and \dll78-diagonals.
It is not difficult to conclude that the shifting equivalence class
is the union of two families.
One family is obtained by shifting only diagonals of the same direction
(say, R-diagonals);
the second family is obtained by shifting only even diagonal
of the same direction
(say, R-diagonals)
and odd diagonals
of the opposite direction
(respectively, L-diagonals).
The two families intersect in the colorings equivalent
to the one shown in Fig.~\ref{II}.

If Fig.~\ref{JJ}, both
binary R-diagonals
and
binary L-diagonals
can be found, but they do not intersect and can
be shifted independently.
\end{remark}

\begin{corollary}\label{c:bipartite01}
Every bipartite perfect colorings of $G(\ZZ^2)$
with quotient $\{0,1\}$-matrix and twin colors
is shifting equivalent to a coloring shown
in Fig.~\ref{II}, Fig.~\ref{KK}, or  Fig.~\ref{TT}.
All such colorings have binary diagonals.
\end{corollary}

Any non-bipartite perfect coloring $\FfF$ of $G(\ZZ^2)$
with equal rows in the quotient matrix
corresponds to a bipartite perfect coloring $\overline{\FfF}$
(with the number of colors twice larger than in $\FfF$)
with twin colors. Using this fact and Theorem~\ref{th:main},
we can derive the following.
\begin{corollary}\label{c:nonbipartite01}
Every non-bipartite perfect colorings of $G(\ZZ^2)$
whose quotient matrix is a $\{0,1\}$-matrix
with zero diagonal and two equal rows
is shown in Fig.~\ref{KK},
where $n=6,10,14,\ldots$ in the non-bipartite case.
All such colorings have
binary diagonals.
\end{corollary}

\begin{corollary}
There are two quotient matrices admitting perfect colorings
of $G(\ZZ^2)$ both with binary diagonals and without binary diagonals.
These matrices are shown in
Fig.~\ref{CC} (Fig.~\ref{JJ})
and
Fig.~\ref{EE} (Fig.~\ref{WW}, $n=10$).
\end{corollary}

\section{Proof of Theorem~\ref{th:main}: cases and subcases}\label{s:p1}

Let us consider a bipartite perfect coloring $\FfF$ with twin colors $\cll{1}$ and $\cll{2}$.
We start the proof with a very simple observation.
\begin{claim}\label{cl:check}
 Either $\FfF$ is the {chessboard coloring}
 (i.e., obtained from the coloring in Fig.~\ref{II}
 by merging the groups $\{\cll1,\cll2,\cll3,\cll4\}$
 and
  $\{\cll5,\cll6,\cll7,\cll8\}$ of twin colors)
  or all the $\{\cll 1,\cll2\}$-nodes are of the same parity.
\end{claim}
\begin{proof}
 Since $\FfF$ is bipartite,
 there are no two neighbor nodes of the same color.
 Therefore, either \cll1-nodes have only \cll2-neighbors
 and \cll2-nodes have only \cll1-neighbors
 (hence the coloring is chessboard),
 or both \cll1-nodes and \cll2-nodes have neighbors
 of some third color, say \cll3.
 If the \cll3-nodes are even then
 the $\{\cll 1,\cll2\}$-nodes are odd and vice versa.
\end{proof}

Since the chessboard coloring satisfies the conclusion
of Theorem~\ref{th:main},
we further assume w.l.o.g.\
that the $\{\cll{1},\cll{2}\}$-nodes are even.
We will say that a node is of type $k{:}l$ iff it is adjacent to exactly
$k$ $\cll{1}$-nodes and $l$ $\cll{2}$-nodes.
Clearly, nodes of the same color are of the same type.
Moreover, nodes of the same color are adjacent to the same number of nodes of some fixed type.

From the definition of twin colors we directly get the following fact.
A node is adjacent to a $\cll{1}$-node if and only if it is adjacent to a $\cll{2}$-node.
To put it differently, there are no nodes of type $0{:}k$ or $k{:}0$, where $k>0$.
On the other hand, the number of $\cll{1}$-nodes and the number of $\cll{2}$-nodes
that are adjacent to a given node
can be different.

We divide the situation into the following cases.

\begin{enumerate}
\renewcommand{\theenumi}{\Roman{enumi}}
    \item
There exists a node of type $3{:}1$ or $1{:}3$.
   W.l.o.g., we can consider only the first case.
    \item
There exists a node of type $2{:}1$ or $1{:}2$.
   W.l.o.g., we can consider only the first case.
    \item
All the neighbors of a $\{\cll{1},\cll{2}\}$-node are of type $2{:}2$.
    \item
Every $\{\cll{1},\cll{2}\}$-node has two neighbors of type $2{:}2$
and two neighbors of type $1{:}1$.
    \item
Every $\{\cll{1},\cll{2}\}$-node has one neighbor of type $2{:}2$ and three neighbors of type $1{:}1$.
    \item
All the neighbors of a $\{\cll{1},\cll{2}\}$-node are of type $1{:}1$.
\end{enumerate}

It is easy to see that if a node has three neighbors of type $2{:}2$ then
the fourth neighbor can not be of type $1{:}1$.
So, the completeness of the case list is obvious.
Before starting to prove Theorem~\ref{th:main} for each of the cases,
we introduce one convenient tool related with the number of paths of special color types
in a perfect coloring.

\def\dd{d}
Assume that $\FfF$ is a perfect coloring of $G(\ZZ^2)$ (or any finite-degree
graph $G$) with quotient $n\times n$ matrix
$S=(S_{ij})_{i,j\in\{1,\ldots,n\}}$.
For a node ${\bar v}_0$ of color $\cll{b}$, a positive integer $k$,
and colors $\Cll{b_1},\ldots,\Cll{b_k}$
denote by $\dd_{{\bar v}_0}(\cll{b},\Cll{b_1},\ldots,\Cll{b_k})$ the number of $k$-tuples
$({\bar v}_1,\ldots,{\bar v}_k)$ such that ${\bar v}_{i-1}$ and ${\bar v}_i$ are adjacent in $G$ and
$\FfF({\bar v}_i)=\Cll{b_i}$ for every $i\in\{1,\ldots,k\}$;
denote by $\dd_{{\bar v}_0}^k(\cll{b},\Cll{b_k})$ the number of $k$-tuples
$({\bar v}_1,\ldots,{\bar v}_k)$ such that ${\bar v}_{i-1}$ and ${\bar v}_i$ are adjacent in $G$
for every $i\in\{1,\ldots,k\}$ and $\FfF({\bar v}_k)=\Cll{b_k}$.
The following straightforward statement is well known.

\begin{lemma}\label{l:dist}
The value
$\dd_{\bar v}^k(\cll{b},\Cll{b_k})$
does not depend on the choice
of the $\cll{b}$-node
 ${\bar v}$.
\end{lemma}
\begin{proof}
It is clear that
$$\displaystyle
\dd_{\bar v}^k(\cll{b},\Cll{b_k})=\sum_{b_1=1}^n\ldots\sum_{b_{k-1}=1}^n \dd_{\bar v}(\cll{b},\Cll{b_1},\ldots,\Cll{b_k}).
$$
By induction, $\dd_{\bar v}(\cll{b},\Cll{b_1},\ldots,\Cll{b_k})$ equals
$S_{\scll{b},\scll{b_1}}S_{\scll{b_1},\scll{b_2}}\dots S_{\scll{b_{k-1}},\scll{b_k}}$
and does not depend on the choice of the $\cll{b}$-node ${\bar v}$.
(The induction base and step are straightforward from the definition
of perfect coloring.)
\end{proof}

As $\dd_{\bar v}^k(\cll{b},\Cll{b_k})$ does not depend on ${\bar v}$,
we will use the notation $\dd^k(\cll{b},\Cll{b_k})$ instead.

We will also need the following fact.
\begin{lemma}[{\cite[Claim~9]{Puz2005.en}}]\label{p:freq}\label{l:freq}\label{l:friq}
  There are positive rational numbers
  $P_{\scll{1}}$, \ldots, $P_{\scll{n}}$
  (the ``densities'' of the corresponding colors)
  such that
  $\sum_{\scll{i}=\scll{1}}^{\scll{n}}P_{\scll{i}}=1$ and
  $S_{\scll{i}\scll{j}}P_{\scll{i}} = S_{\scll{j}\scll{i}}P_{\scll{j}}$
  for all $ i, j\in\{ 1,\ldots, n\}$.
\end{lemma}

\begin{remark}\label{r:density}
 The density $P_{\scll{i}}$
 equals the limit of the portion
 of the $\cll i$-nodes in a square of growing
 size.
\end{remark}


\subsection[There exists a node of type 3:1]{There exists a node of type $3{:}1$}\label{case1}
We assume that a node of type $3{:}1$ has color $\cll{3}$.
\begin{claim}\label{cl1} Every node is of type $0{:}0$ or of type $3{:}1$.
\end{claim}
\begin{proof}
By Lemma~\ref{p:freq},
for any color $\cll{i}$ we have
\[S_{1,3}S_{3,2}S_{2,i}S_{i,1}=S_{3,1}S_{2,3}S_{i,2}S_{1,i}.\]
Since $S_{1,3}=S_{2,3}$, $S_{2,i}=S_{1,i}$, $S_{3,1}=3$, and $S_{3,2}=1$,
we get $S_{i,1}=3S_{i,2}$. Hence,
either $S_{i,1}=S_{i,2}=0$, or $S_{i,1}=3$ and $S_{i,2}=1$.
\end{proof}

It is easy to see that all even nodes are $\{\cll{1},\cll{2}\}$-nodes.
Note that there are only $\cll{1}$-nodes at the distance $2$ from any $\cll{2}$-node.
Consider two possibilities.
\begin{enumerate}
 \item [\ref{case1}.a)]
 $\cll{2}$-nodes are placed periodically
                 with periods $[2,2]$ and $[2,-2]$.
\begin{center}
\begin{tikzpicture}
\dfc{Ub}{ffd800}\def\clUb{Ub!50!white}
\dfc{Ua}{0057b8}\def\clUa{Ua!45!white}
\def\2{\ccolor{\clUa}{$2$}}
\def\1{\ccolor{\clUb}{$1$}}
\draw (0,0) \1\0\2\0\1\0\2\0\1\0\2\0\1\0\2\0 ;
\draw (0,1) \0\1\0\1\0\1\0\1\0\1\0\1\0\1\0\1 ;
\draw (0,2) \2\0\1\0\2\0\1\0\2\0\1\0\2\0\1\0 ;
\draw (0,3) \0\1\0\1\0\1\0\1\0\1\0\1\0\1\0\1 ;
\draw (0,4) \1\0\2\0\1\0\2\0\1\0\2\0\1\0\2\0 ;
\draw (0,5) \0\1\0\1\0\1\0\1\0\1\0\1\0\1\0\1 ;
\draw (0,6) \2\0\1\0\2\0\1\0\2\0\1\0\2\0\1\0 ;
\draw (0,7) \0\1\0\1\0\1\0\1\0\1\0\1\0\1\0\1 ;
\end{tikzpicture}
\end{center}

\item[\ref{case1}.b)]
In some $\cll{2}$-node ${\bar v}$ one of the periods is broken,
i.e., $\FfF({\bar v}+{\bar d})=\cll{1}$, where
\begin{enumerate}
\item[\ref{case1}.b')]
${\bar d}=[2,2]$, or
${\bar d}=[-2,-2]$, or
\item[\ref{case1}.b'')]
${\bar d}=[2,-2]$, or
${\bar d}=[-2,2]$.
\end{enumerate}
\end{enumerate}
Without loss of generality, we assume ${\bar v}=[0,0]$ and ${\bar d}=[2,2]$.
\begin{center}
\dfc{Ub}{ffd800}\def\clUb{Ub!50!white}
\dfc{Ua}{0057b8}\def\clUa{Ua!45!white}
\def\2{\ccolor{\clUa}{$2$}}
\def\1{\ccolor{\clUb}{$1$}}
\def\B{ ++(1,0) node [cell,fill=\clUa,draw=blue!50!white,very thick]{\raisebox{-0.25em}[0.2em][0em]{\makebox[0mm][c]{$2$}}} }
\begin{tikzpicture}
\draw (0,-0) \0\0\1\0\1 ;
\draw (0,-1) \0\1\0\1 ;
\draw (0,-2) \1\0\B\0\1 ;
\draw (0,-3) \0\1\0\1 ;
\draw (0,-4) \0\0\1 ;
\phantom{\draw (0,-5) \1 ;}
\end{tikzpicture}
\ \ \raisebox{4.2em}{$\Longrightarrow$}\
\begin{tikzpicture}
\draw (0, 2) \0\0\1\0\1 ;
\draw (0, 1) \0\1\0\2\0\1 ;
\draw (0,-0) \0\0\1\0\1\0\1 ;
\draw (0,-1) \0\1\0\1\0\2\0\1 ;
\draw (0,-2) \1\0\B\0\1\0\1 ;
\draw (0,-3) \0\1\0\1\0\1 ;
\draw (0,-4) \0\0\1 ;
\phantom{\draw (0,-5) \1 ;}
\end{tikzpicture}
\ \ \raisebox{4.2em}{$\Longrightarrow$}
\begin{tikzpicture}
\draw (0, 3) \0\0\td\0\0\0 ;
\draw (0, 2) \0\0\0\1\0\1 ;
\draw (0, 1) \td\0\1\0\2\0\1 ;
\draw (0,-0) \0\2\0\1\0\1\0\1 ;
\draw (0,-1) \0\0\1\0\1\0\2\0\1 ;
\draw (0,-2) \0\1\0\B\0\1\0\1 ;
\draw (0,-3) \0\0\1\0\1\0\1\0\td ;
\draw (0,-4) \0\0\0\1\0\2 ;
\draw (0,-5) \0\0\0\0\0\0\td ;
\end{tikzpicture}
\end{center}
From Claim~\ref{cl1} we get that $[1,3]$ and $[3,1]$ are $2$-nodes; then,
$[-2,2]$ and $[2,-2]$ are $\cll{2}$-nodes, and, by induction,
$k[-2,2]$ and $k[-2,2]+[3,1]$ are $\cll{2}$-nodes for any integer $k$.
Further, if $[4,4]$ is a $\cll{2}$-node, then in a similar manner
we have that $k[-2,2]+[4,4]$ is a $\cll{2}$-node for any $k$;
otherwise the nodes $k[-2,2]+[4,4]+[-1,1]$ are $\cll{2}$-nodes.
In both cases we have one more
$\dll{1}{2}$-L-diagonal.
In a similar manner, by induction, we get that for each integer $n$
either the nodes $k[-2,2]+n[2,2]$
or the nodes $k[-2,2]+[-1,1]+n[2,2]$
are $\cll{2}$-nodes.
So, we can conclude the following.

\begin{proposition}\label{p:1}
A coloring corresponding to
subcase~{\rm\ref{case1}.b}
can be obtained from a coloring
corresponding to subcase~{\rm\ref{case1}.a}
by shifting
 $\dll{1}{2}$-L-diagonals
or
by shifting
 $\dll{1}{2}$-R-diagonals {(but not both)}.
\end{proposition}
So, the colors of the even nodes are described.
Every odd node is adjacent to a $\cll 1$-node,
and hence,
the quotient matrix $S$ is completely determined
by the colors in the neighborhood of a $\cll 1$-node.
Up to renaming the colors, $S$ is one of the following:
{\footnotesize
$${
\left(%
\begin{array}{ccc}
  0 & 0 & 4 \\
  0 & 0 & 4 \\
  3 & 1 & 0 \\
\end{array}%
\right),
\left(%
\begin{array}{cccc}
  0 & 0 & 3 & 1 \\
  0 & 0 & 3 & 1 \\
  3 & 1 & 0 & 0 \\
  3 & 1 & 0 & 0 \\
\end{array}%
\right),
\left(%
\begin{array}{cccc}
  0 & 0 & 2 & 2 \\
  0 & 0 & 2 & 2 \\
  3 & 1 & 0 & 0 \\
  3 & 1 & 0 & 0 \\
\end{array}%
\right),
\left(%
\begin{array}{ccccc}
  0 & 0 & 2 & 1 & 1 \\
  0 & 0 & 2 & 1 & 1 \\
  3 & 1 & 0 & 0 & 0 \\
  3 & 1 & 0 & 0 & 0 \\
  3 & 1 & 0 & 0 & 0 \\
\end{array}%
\right),
\left(%
\begin{array}{cccccc}
  0 & 0 & 1 & 1 & 1 & 1 \\
  0 & 0 & 1 & 1 & 1 & 1 \\
  3 & 1 & 0 & 0 & 0 & 0 \\
  3 & 1 & 0 & 0 & 0 & 0 \\
  3 & 1 & 0 & 0 & 0 & 0 \\
  3 & 1 & 0 & 0 & 0 & 0 \\
\end{array}%
\right).
}
$$}
If $S$ is the first matrix,
then the odd nodes are colored with one color.
If the second one,
then we can change the roles of the even and odd nodes
to see that the colors of the odd nodes are also described by
Proposition~\ref{p:1}
(with colors $\cll 3$ and $\cll 4$ instead of $\cll 1$ and $\cll 2$).
For the last three matrices, we also see that every two odd colors
are twin, but after changing the roles
of the even and odd nodes we get one of cases that will be considered
later, namely, in cases II, III, and VI.
To help structuring the proof, we will state the following lemma
as the summary of these cases.
The corollary of the lemma finalizes the proof
of Theorem~\ref{th:main} for
cases~I, II, III, and~VI.a.

\begin{lemma}\label{l:even}
Assume that every two nodes of the same parity are colored with the same or twin colors.
Then the coloring of the even (similarly, odd) nodes is
 equivalent
 to a semicoloring obtained from the following semicoloring $\HhH$
 by shifting
 R-diagonals
 and/or
 merging colors (two, three, or all four colors, or two groups of two colors):
$$
\definecolor{yell}{rgb}{0.95,0.9,0.45}
\definecolor{cSIX}{rgb}{0.92, 0.99, 0.70}
\definecolor{pSIX}{rgb}{0.88, 0.94, 0.60}
\begin{tikzpicture}
\clip (-0.51,0.51) -- ++(12.02,0) -- ++(0,-8.02) -- ++(-12.02,0) -- cycle;
 diag/.style={draw=black!30!white, thin, dash pattern=on 0.5em off 0.8385em on 0.5em off 0em}
\def\1{\ccolor{\colONE!60!white}{$1$}}
\def\2{\ccolor{\colTWO!60!white}{$2$}}
\def\3{\ccolor{yell!60!white          }{$3$}}
\def\4{\ccolor{cSIX }{$4$}}
\def\a{++(1,0)}
\draw (-1,-0) \3\a\2\a\3\a\2\a\3\a\2\a ;
\draw (-1,-1) \a\1\a\4\a\1\a\4\a\1\a\4 ;
\draw (-1,-2) \2\a\3\a\2\a\3\a\2\a\3\a ;
\draw (-1,-3) \a\4\a\1\a\4\a\1\a\4\a\1 ;
\draw (-1,-4) \3\a\2\a\3\a\2\a\3\a\2\a ;
\draw (-1,-5) \a\1\a\4\a\1\a\4\a\1\a\4 ;
\draw (-1,-6) \2\a\3\a\2\a\3\a\2\a\3\a ;
\draw (-1,-7) \a\4\a\1\a\4\a\1\a\4\a\1 ;
\draw [rotate=-45, draw=black!30!white, thin, step=\DK,
      dash pattern=on 0\DK off 0.28\DK on 0.44\DK off 0.28\DK]
(-1.4140,-5.6562) grid (13.5,11);
\end{tikzpicture}
\ \ \raisebox{4.6em}{$\stackrel{\displaystyle\circlearrowright}{\longleftrightarrow }$} \!\!\!\!\!\!\!
\def\0{} 
\def\o{\dcolor{white}{$\ $}}
\def\1{\Dcolor{\colONE}{white}{$1$}}
\def\2{\Dcolor{\colTWO}{white}{$2$}}
\def\a{\Dcolor{yell        }{white}{$3$}}
\def\b{\Dcolor{pSIX}{cSIX!30!white}{$4$}}
\def\c{\0}
\def\d{\0}
\def\e{\0}
\def\f{\0}
\def\t{} 
\begin{tikzpicture}
\draw (-0\DK, 4\DK)      \t\a\b\a\b\a\b\a\b\a;
\draw (-0.5\DK, 3.5\DK) \0\c\d\c\d\c\d\c\d\c\d;
\draw (-0\DK, 3\DK)      \t\1\2\1\2\1\2\1\2\1;
\draw (-0.5\DK, 2.5\DK) \0\e\f\e\f\e\f\e\f\e\f;
\draw (-0\DK, 2\DK)      \t\a\b\a\b\a\b\a\b\a;
\draw (-0.5\DK, 1.5\DK) \0\c\d\c\d\c\d\c\d\c\d;
\draw (-0\DK, 1\DK)      \t\1\2\1\2\1\2\1\2\1;
\draw (-0.5\DK, 0.5\DK) \0\e\f\e\f\e\f\e\f\e\f;
\draw (-0\DK, 0\DK)      \t\a\b\a\b\a\b\a\b\a;
\draw (-0.5\DK, -.5\DK) \0\c\d\c\d\c\d\c\d\c\d;
\draw (-0\DK,-1\DK)      \t\1\2\1\2\1\2\1\2\1;
\begin{scope}
\clip [](0.5\DK,-1.5\DK) rectangle +(9.0\DK,6\DK);
\end{scope}
\end{tikzpicture}
$$
\end{lemma}

\begin{proof}
If there is only one even color, then the claim is trivial.
If there are at least two even colors, $\cll 1$ and $\cll 2$,
then they are twin and all odd nodes are of the same type
by the hypothesis of the lemma.
Depending on the type, $3{:}1$, $2{:}1$, $2{:}2$, or $1{:}1$,
the situation falls into the scope
of cases~\ref{case1}, \ref{case2}, \ref{case3}, \ref{case6}, respectively.

The claim of the lemma is proved in
Proposition~\ref{p:1} for case~\ref{case1},
Proposition~\ref{p:2}(a)
for case~\ref{case2},
Proposition~\ref{p:3}
for case~\ref{case3},
Propositions~\ref{p:6a}(ii,iii) and~\ref{p:6d1}(a)
for subcases~\ref{case6a} and~\ref{case6d} of case~\ref{case6},
while in the other subcases,
the claim of Theorem~\ref{th:main} is
proved directly, which also means that either
the claim of the lemma is true (Fig.~\ref{II})
or its hypothesis is not satisfied
(Fig.~\ref{AA}--\ref{HH},\,\ref{JJ}--\ref{WW}).
\end{proof}
Applying Lemma~\ref{l:even} to both even and odd nodes, we get
\begin{corollary}\label{c:evenodd}
Under the hypothesis of Lemma~\ref{l:even},
 the conclusion of Theorem~\ref{th:main} holds
  with the corresponding Fig.~\ref{II}.
\end{corollary}

\subsection[There exists a node of type 2:1]{There exists a node of type $2{:}1$}\label{case2}
Similarly to Claim~\ref{cl1}, we have
\begin{claim}\label{cl2}
Every node has one of the types $0{:}0$, $2{:}1$.
\end{claim}
It is not difficult to see that
the type $0{:}0$ nodes are the even nodes and all the odd nodes have the type $2{:}1$ (indeed, if, seeking a contradiction, we assume that an odd node~${\bar u}$ is of type $0{:}0$, then ${\bar u}+[\pm1,\pm1]$ are also type $0{:}0$;
by induction, all odd nodes are type $0{:}0$).
Consider a new coloring~$\GgG$ with
\begin{itemize}
 \item $\GgG({\bar v})=\FfF({\bar v}) \mbox{ if } \FfF({\bar v})\in\{\cll{1},\cll{2}\},$
 \item
$\GgG({\bar v})=\cll{4} \mbox{ if } {\bar v} \mbox{ is odd,}$
\item
$\GgG({\bar v})=\cll{3} \mbox{ otherwise.}$
\end{itemize}
By the definition, $\GgG$ is a bipartite perfect coloring with the matrix
\[
\left(%
\begin{array}{cccc}
  0 & 0 & 0 & 4 \\
  0 & 0 & 0 & 4 \\
  0 & 0 & 0 & 4 \\
  2 & 1 & 1 & 0 \\
\end{array}%
\right).
\]
Joining together twin colors $\cll{1}$ and $\cll{3}$
(or $\cll{1}$ and $\cll{2}$),
we get the situation of  case~{\rm\ref{case1}}.
Hence, the location of the $\cll{2}$-nodes
(respectively, $\cll{3}$-nodes) is described by subcases~{\rm\ref{case1}.a} and~{\rm\ref{case1}.b}.
\begin{claim}\label{cl:b'b''}
The coloring $\GgG$ has a period $(2,2)$ or $(2,-2)$.
\end{claim}
\begin{proof}
 If the set of $\cll{2}$-nodes
 corresponds to subcases~{\rm\ref{case1}.a},
 then it has both periods  $(2,2)$ and $(2,-2)$;
 if it corresponds to subcases~{\rm\ref{case1}.b'},
 then it has period $(2,-2)$;
 if it corresponds to subcases~{\rm\ref{case1}.b''},
 then it has period $(2,2)$.
 The same can be stated for the set of $\cll{3}$-nodes.
 It remains to show that
 subcases~{\rm\ref{case1}.b'} and~{\rm\ref{case1}.b''}
 cannot happen simultaneously
 for the $\cll{2}$-nodes and the $\cll{3}$-nodes
 (or, similarly, for the $\cll{3}$-nodes
 and the $\cll{2}$-nodes) respectively.
 Seeking a contradiction, assume that
 the $\cll{2}$-nodes correspond
 to subcases~{\rm\ref{case1}.b'}
 and
 the $\cll{3}$-nodes correspond
 to subcases~{\rm\ref{case1}.b''}.
 This means that for some even $\bar v=[x,y]$,
 \begin{equation}\label{eq:ii}
 \GgG\Lft x+2i,y-2i\Rgt
 = \GgG\Lft x+3+2i,y+1-2i\Rgt
 = \cll{2},\quad \mbox{for all $i\in\ZZ$},
 \end{equation}
 and, similarly, for some even $[x',y']$,
 \begin{equation}\label{eq:jj}
\GgG\Lft x'+2j,y'+2j\Rgt
= \GgG\Lft x'+3+2j,y'-1+2j\Rgt
= \cll{3},\quad \mbox{for all $j\in\ZZ$}.
 \end{equation}
Now, at least one of the four pairs
$\left(\frac{y-x-y'+x'  }4, \frac{y+x-y'-x'  }4 \right)$,
$\left(\frac{y-x-y'+x'-2}4, \frac{y+x-y'-x'+4}4 \right)$,
$\left(\frac{y-x-y'+x'+4}4, \frac{y+x-y'-x'-2}4 \right)$,
$\left(\frac{y-x-y'+x'+2}4, \frac{y+x-y'-x'+2}4 \right)$
is integer; denote it $(i_0,j_0)$.
Substituting $i=i_0$ in \eqref{eq:ii}
and $j=j_0$ in \eqref{eq:jj},
we find that the same node has color $\cll 2$ and $\cll 3$
(for example, if $(i,j)$ is equal to the first pair,
then $[x+2i,y-2i]=[x'+2j,y'+2j]$).
The contradiction obtained proves the claim.
\end{proof}

{%
See Proposition~\ref{p:1} for
the placement of nodes with colors~\cll2 and~\cll3.
Moreover, if we assume that $\GgG$ has period $[2,-2]$,
then we find that up to equivalence,
the coloring~$\GgG$ can be obtained
by shifting L-diagonals
from one of the following two colorings:
}
\begin{center}
\begin{tikzpicture}
\def\4{\ccolor{white}{$4$}}
\draw (0,-0) \3\4\2\4\3\4\2\4\3\4\2\4 ++(2,0) \1\4\2\4\1\4\2\4\1\4\2\4;
\draw (0,-1) \4\1\4\1\4\1\4\1\4\1\4\1 ++(2,0) \4\1\4\3\4\1\4\3\4\1\4\3;
\draw (0,-2) \2\4\3\4\2\4\3\4\2\4\3\4 ++(2,0) \2\4\1\4\2\4\1\4\2\4\1\4;
\draw (0,-3) \4\1\4\1\4\1\4\1\4\1\4\1 ++(2,0) \4\3\4\1\4\3\4\1\4\3\4\1;
\draw (0,-4) \3\4\2\4\3\4\2\4\3\4\2\4 ++(2,0) \1\4\2\4\1\4\2\4\1\4\2\4;
\draw (0,-5) \4\1\4\1\4\1\4\1\4\1\4\1 ++(2,0) \4\1\4\3\4\1\4\3\4\1\4\3;
\draw (0,-6) \2\4\3\4\2\4\3\4\2\4\3\4 ++(2,0) \2\4\1\4\2\4\1\4\2\4\1\4;
\draw (0,-7) \4\1\4\1\4\1\4\1\4\1\4\1 ++(2,0) \4\3\4\1\4\3\4\1\4\3\4\1;
\end{tikzpicture}
\end{center}

\begin{claim}\label{cl:3ab}
One of the following assertions is true.\\
{\rm a)} The color $\cll{3}$ of the coloring $\GgG$ corresponds to one color of the coloring $\FfF$.\\
{\rm b)} The color $\cll{3}$ of the coloring $\GgG$ corresponds to two colors of the coloring $\FfF$
and the corresponding nodes are placed periodically with periods $[0,4]$ and $[4,0]$.
\end{claim}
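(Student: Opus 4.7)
The plan is to show that $\FfF$ restricted to the set $B$ of type $0{:}0$ even nodes (the $\cll{3}$-nodes of $\GgG$) is periodic with periods $[4,0]$ and $[0,4]$. Since $B$ modulo this period lattice has only two equivalence classes, represented by $[0,0]$ and $[2,2]$, this forces at most two $\FfF$-colors on $B$, which is exactly the dichotomy of the claim: one class corresponds to case~{\rm(a)}, two classes to case~{\rm(b)}.

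The key observation I will use is that each odd node has exactly one $B$-neighbor: out of its four neighbors two are $\cll{1}$-nodes, one is a $\cll{2}$-node, and the remaining one is an even node that belongs neither to $\cll{1}$ nor to $\cll{2}$, hence to $B$. So every odd $\FfF$-color $o$ has a uniquely defined ``$c$-connection'': the unique color $c$ appearing in $B$ with $S_{o,c}=1$, which equivalently equals the $\FfF$-color of the single $B$-neighbor of any $o$-node. Since $\cll{1}$ has a fixed row in the quotient matrix of $\FfF$, the multiset of $\FfF$-colors on the four odd neighbors of any $\cll{1}$-node is invariant across all $\cll{1}$-nodes, and consequently so is the induced multiset of their $c$-connections.

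I will then compare these multisets for the $\cll{1}$-nodes at $[1,1]$ and $[3,1]$. The former has odd neighbors $[0,1],[1,0],[2,1],[1,2]$, whose unique $B$-neighbors are $[0,0],[0,0],[2,2],[2,2]$; the latter has odd neighbors $[2,1],[3,0],[4,1],[3,2]$, whose $B$-neighbors are $[2,2],[4,0],[4,0],[2,2]$. Setting $A:=\FfF([0,0])$, $M:=\FfF([2,2])$, and $C:=\FfF([4,0])$, equality of the $c$-connection multisets gives $\{A,A,M,M\}=\{M,M,C,C\}$; cancelling the two copies of $M$ (or collapsing to a single value in the degenerate case $A=M$) forces $A=C$, i.\,e., $\FfF([0,0])=\FfF([4,0])$. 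By translation invariance this yields $\FfF(\bar v)=\FfF(\bar v+[4,0])$ for every $\bar v\in B$, and the analogous comparison of $[1,1]$ with $[1,3]$ gives $\FfF(\bar v)=\FfF(\bar v+[0,4])$. Combining the two periodicities completes the proof.

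The main subtlety I anticipate is handling subcase~{\rm\ref{case1}.b}, where $\GgG$ differs from the canonical pattern by shifts of binary diagonals, so the positions of some $\cll{1}$- and $\cll{2}$-labels are exchanged. However, the argument above uses only the positions of $B$-nodes adjacent to the odd neighbors of a node from one of the twin colors $\cll{1}$ or $\cll{2}$ (which play symmetric roles), and the set $B$ itself is invariant under binary-diagonal shifts of $\GgG$, so the reasoning carries through essentially unchanged after possibly interchanging the roles of $\cll{1}$ and $\cll{2}$.
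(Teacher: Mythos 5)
Your argument works, and is essentially a cleaner version of part~3) of the paper's proof, in the sub-case where the $\cll{3}$-nodes of $\GgG$ sit in the canonical periodic position (placement as in subcase~{\rm\ref{case1}.a}): there the set $B$ really is a coset of $\langle[2,2],[2,-2]\rangle$, each odd node has exactly one $B$-neighbor in the position you compute, and comparing the multisets of ``$c$-connections'' around $[1,1]$ and $[3,1]$ does yield $\FfF([0,0])=\FfF([4,0])$ and hence the dichotomy (a)/(b). Your remark that shifted $\dll{1}{2}$-diagonals are harmless is also correct, because such shifts permute only $\cll{1}$- and $\cll{2}$-labels and leave $B$ fixed.

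The gap is in your last paragraph. The coloring $\GgG$ is obtained from the canonical pattern by shifting $\dll{1}{2}$-, $\dll{1}{3}$-, \emph{and} $\dll{2}{3}$-diagonals, and a shift of a $\dll{1}{3}$- or $\dll{2}{3}$-diagonal exchanges $\cll{3}$-labels with $\cll{1}$- or $\cll{2}$-labels along that diagonal. So your assertion that ``the set $B$ itself is invariant under binary-diagonal shifts of $\GgG$'' is false for exactly the shifts that matter: after shifting the right diagonal through the origin, the $B$-node at $[0,0]$ moves to $[1,1]$, which lies in neither of your two residue classes modulo $\langle[4,0],[0,4]\rangle$, and the unique $B$-neighbor of the odd node $[0,1]$ is no longer $[0,0]$, so your explicit multiset computation no longer applies. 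Worse, in this situation conclusion (b) cannot hold as stated (the $B$-nodes are not $[4,0]$-, $[0,4]$-periodic), so one has to prove that all of $B$ carries a single $\FfF$-color; this is precisely what parts~1) and~2) of the paper's proof do, via the counts $\dd^2(\cll{a},\cll{x})$ and $\dd^2(\cll{b},\cll{3'})$ applied to the broken-period fragment. Your proof needs a genuinely separate argument for the case where the $\cll{3}$-placement corresponds to subcase~{\rm\ref{case1}.b}; as written it silently excludes it.
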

\begin{proof}
1.
Assume the placement of $\cll{3}$-nodes corresponds
to case~{\rm\ref{case1}.b}.
Then the coloring $\GgG$ contains (up to reflection) the fragment
\begin{center}
\begin{tikzpicture}
\def\a{\ccolor{white}{$a$}}
\def\b{\ccolor{white}{$b$}}
\def\0{\ccolor{white}{$\ $}}
\draw (2, 3) \td ;
\draw (3, 2) \a ;
\draw (0, 1) \td ++(1,0) \0 ++(1,0) \3 ;
\draw (1,-0) \3 ++(1,0) \b ++(1,0) \a ;
\draw (2,-1) \a ++(1,0) \0 ++(1,0) \3 ;
\draw (3,-2) \3 ++(1,0) \b ++(1,0) \a ;
\draw (4,-3) \a ++(1,0) \0 ++(1,0) \td ;
\draw (5,-4) \3 ;
\draw (6,-5) \td ;
\end{tikzpicture}
\end{center}

1.1. Consider the values $\dd^2(\cll{a},\cll{x})$
for the coloring $\FfF$ and each color $\cll{x}$
that corresponds to the color $\cll{3}$ of $\GgG$.
Using Lemma~\ref{l:dist} we get that
there are not more than two such colors (say, $\cll{3}$ and $\cll{3'}$),
moreover, their nodes alternate with period $[4,-4]$.
\begin{center}
\begin{tikzpicture}
\def\a{\ccolor{white}{$a$}}
\def\b{\ccolor{white}{$b$}}
\def\0{\ccolor{white}{$\ $}}
\def\9{\ccolor{blue!70!green!25!white}{$3'$}}
\draw (2, 3) \td;
\draw (3, 2) \a ;
\draw (0, 1) \td++(1,0) \0 ++(1,0) \3 ;
\draw (1,-0) \9 ++(1,0) \b ++(1,0) \a ;
\draw (2,-1) \a ++(1,0) \0 ++(1,0) \9 ;
\draw (3,-2) \3 ++(1,0) \b ++(1,0) \a ;
\draw (4,-3) \a ++(1,0) \0 ++(1,0) \td;
\draw (5,-4) \9 ;
\draw (6,-5) \td;
\end{tikzpicture}
\end{center}

1.2. Considering the values $\dd^2(\cll{b},\cll{3'})$
for two $\cll{b}$-nodes we get that $\cll{3}$ and $\cll{3'}$
are the same color.

2. Assume the placement of $\cll{3}$-nodes corresponds to the case {\rm\ref{case1}.a}.
Then by arguments similar to p.\,1.1 we get that one of the statements a), b) holds.
\end{proof}

Now we can conclude
{(with a similar argument than the one given
in the proof of Claim~\ref{cl:3ab})}
that the following proposition is true.
\begin{proposition}\label{p:2}
In case {\rm\ref{case2}}, $\FfF$ is equivalent to a coloring
obtained by shifting
L-diagonals
from a colorings of the following
three types, where the colors of any two ${a}$-nodes (${b}$-nodes)
are either twin or the same:
$$
{\rm (a')}\!\!\!\!\!\!\!\!\!\!
\raisebox{-38mm}{
\begin{tikzpicture}
\clip (-0.51,0.51) -- ++(9.02,0) -- ++(0,-8.02) -- ++(-9.02,0) -- cycle;
 diag/.style={draw=black!30!white, thin, dash pattern=on 0.5em off 0.8385em on 0.5em off 0em}
\def\a{\Ccolor{blue!20!green!10!white}{white}{$a$}}
\draw ( 1,-0)     \2\a\3\a\2\a\3\a ; 
\draw ( 1,-1)     \a\1\a\1\a\1\a\1 ; 
\draw (-1,-2) \2\a\3\a\2\a\3\a\2\a ; 
\draw (-1,-3) \a\1\a\1\a\1\a\1\a\1 ; 
\draw (-1,-4) \3\a\2\a\3\a\2\a\3\a ; 
\draw (-1,-5) \a\1\a\1\a\1\a\1\a\1 ; 
\draw (-1,-6) \2\a\3\a\2\a\3\a\2\a ; 
\draw (-1,-7) \a\1\a\1\a\1\a\1\a\1 ; 
\draw [rotate=-45, draw=black!30!white, thin, step=\DK,
      dash pattern=on 0\DK off 0.28\DK on 0.44\DK off 0.28\DK]
(-1.4140,-5.6562) grid (13.5,11);
\end{tikzpicture}
  }
\ \ {\rm (a'')}\!\!\!\!\!\!\!\!\!\!
\raisebox{-38mm}{\begin{tikzpicture}
\clip (-0.51,0.51) -- ++(9.02,0) -- ++(0,-8.02) -- ++(-9.02,0) -- cycle;
 diag/.style={draw=black!30!white, thin, dash pattern=on 0.5em off 0.8385em on 0.5em off 0em}
\def\a{\Ccolor{blue!20!green!10!white}{white}{$a$}}
\draw ( 1,-0)     \2\a\1\a\2\a\1\a ; 
\draw ( 1,-1)     \a\3\a\1\a\3\a\1 ; 
\draw (-1,-2) \2\a\1\a\2\a\1\a\2\a ; 
\draw (-1,-3) \a\3\a\1\a\3\a\1\a\3 ; 
\draw (-1,-4) \1\a\2\a\1\a\2\a\1\a ; 
\draw (-1,-5) \a\1\a\3\a\1\a\3\a\1 ; 
\draw (-1,-6) \2\a\1\a\2\a\1\a\2\a ; 
\draw (-1,-7) \a\3\a\1\a\3\a\1\a\3 ; 
\draw [rotate=-45, draw=black!30!white, thin, step=\DK,
      dash pattern=on 0\DK off 0.28\DK on 0.44\DK off 0.28\DK]
(-1.4140,-5.6562) grid (13.5,11);
\end{tikzpicture}}
\ \ {\rm (b)}\!\!\!\!\!\!\!\!\!\!
\raisebox{-38mm}{
\begin{tikzpicture}
\clip (-0.51,0.51) -- ++(9.02,0) -- ++(0,-8.02) -- ++(-9.02,0) -- cycle;
\def\a{\Ccolor{blue!20!white}{white}{$a$}}
\def\b{\Ccolor{red!20!white }{white}{$b$}}
\draw ( 1,-0)     \2\a\3\a\2\a\3\a ; 
\draw ( 1,-1)     \b\1\a\1\b\1\a\1 ; 
\draw (-1,-2) \2\b\4\b\2\b\4\b\2\b ; 
\draw (-1,-3) \a\1\b\1\a\1\b\1\a\1 ; 
\draw (-1,-4) \3\a\2\a\3\a\2\a\3\a ; 
\draw (-1,-5) \a\1\b\1\a\1\b\1\a\1 ; 
\draw (-1,-6) \2\b\4\b\2\b\4\b\2\b ; 
\draw (-1,-7) \a\1\b\1\a\1\b\1\a\1 ; 
\draw [xshift=2\dk,rotate=-45, draw=black!30!white, thin, step=2\DK,
      dash pattern=on 0\DK off 0.28\DK on 0.44\DK off 0.28\DK]
(-1.4140,-7.070) grid (13.5,11);
\end{tikzpicture}}
$$
\end{proposition}

Proposition~\ref{p:2}(a',a'') confirms
the claim of Lemma~\ref{l:even} with the following correspondence
of colors:
$$
\begin{array}{r|c|c|c}
 \rm (a'): & \cll 1 & \cll 2 &  \cll 3 \\ \hline
 \HhH: & \cll{1} \cup \cll{4}   & \cll 2 &  \cll{3}
 \\
\end{array},\qquad
\begin{array}{r|c|c|c}
 \rm (a''): & \cll 1 & \cll 2 &  \cll 3 \\ \hline
 \HhH: & \cll{1} \cup \cll{2}  & \cll 3 &  \cll{4}
 \\
\end{array}.
$$
Corollary~\ref{c:evenodd} finalizes the proof
{of Theorem~\ref{th:main},
case~\ref{case2}
in subcases~(a') and~(a'') (according to the conclusion of Proposition~\ref{p:2}).
In subcase (b), the claim of Theorem~\ref{th:main}}
holds corresponding to Fig.~\ref{AA}
in accordance to the following table:
$$
\begin{array}{r|c|c|c|c|c|c}
 \mbox{(b)}: & \cll 1 & \cll 2 &  \cll 3 &  \cll 4 &  a &  b \\  \hline
 \mbox{Fig.~\ref{AA}}: & \cll{5}\cup\cll{6}   & \cll 4 &  \cll{1} &\cll{9} & \cll{2}\cup\cll{3}   & \cll{7}\cup\cll{8}
 \\
\end{array}.
$$

\subsection[All neighbors of a \{1,2\}-node are of type 2:2]{All the neighbors of a $\{\cll{1},\cll{2}\}$-node are of type $2{:}2$}\label{case3}
Consider two subcases.
\begin{enumerate}
 \item[\ref{case3}.a)] The coloring $\FfF$ is of the following type,
 where the odd nodes are colored with not more than four pairwise twin colors:
\begin{center}
\begin{tikzpicture}
\clip (-0.51,0.51) -- ++(16.02,0) -- ++(0,-8.02) -- ++(-16.02,0) -- cycle;
\def\a{\ccolor{white}{$\ $}}
\draw (-1,-0) \1\a\1\a\1\a\1\a\1\a\1\a\1\a\1\a ;
\draw (-1,-1) \a\2\a\2\a\2\a\2\a\2\a\2\a\2\a\2 ;
\draw (-1,-2) \1\a\1\a\1\a\1\a\1\a\1\a\1\a\1\a ;
\draw (-1,-3) \a\2\a\2\a\2\a\2\a\2\a\2\a\2\a\2 ;
\draw (-1,-4) \1\a\1\a\1\a\1\a\1\a\1\a\1\a\1\a ;
\draw (-1,-5) \a\2\a\2\a\2\a\2\a\2\a\2\a\2\a\2 ;
\draw (-1,-6) \1\a\1\a\1\a\1\a\1\a\1\a\1\a\1\a ;
\draw (-1,-7) \a\2\a\2\a\2\a\2\a\2\a\2\a\2\a\2 ;
\draw [rotate=-45, draw=black!30!white, thin, step=\DK,
      dash pattern=on 0\DK off 0.28\DK on 0.44\DK off 0.28\DK]
(-1.4140,-5.6562) grid (16.5,11);
\end{tikzpicture}
\end{center}

\item[\ref{case3}.b)] The coloring $\FfF$ contains up to reflection
the following fragment:
$$
\begin{tikzpicture}
\draw (1,0)   \2 ;
\draw (0,-1) \1 (2,-1) \2 ;
\draw (1,-2)   \1 ;
\end{tikzpicture}
$$
\end{enumerate}
In the last subcase,
by the hypothesis of case~{\rm\ref{case3}} we get
\begin{itemize}
 \item the fragment is uniquely extended to two
  $\dll{1}{2}$-R-diagonals,
\item the set of all even nodes consists of
 $\dll{1}{2}$-R-diagonals.
\end{itemize}
\begin{center}
\begin{tikzpicture}
\draw (1,0)   \2 ;
\draw (0,-1) \1 (2,-1) \2 ;
\draw (1,-2)   \1 ;
\phantom{\draw (4,-5) \1 ;}
\end{tikzpicture}
\ \ \raisebox{5.2em}{$\Longrightarrow$}
\begin{tikzpicture}
\def\0{ ++(1,0)}
\draw (6, 2)             \dt ;
\draw (5, 1)           \1\0\dt ;
\draw (4,-0)         \2\0\1 ;
\draw (3,-1)       \1\0\2 ;
\draw (2,-2)     \2\0\1 ;
\draw (1,-3)  \dt\0\2 ;
\draw (2,-4)     \dt ;
\phantom{\draw (3,-5) \1 ;}
\end{tikzpicture}
\ \ \raisebox{5.2em}{$\Longrightarrow$}
\begin{tikzpicture}
\def\a{\ccolor{white}{$a$}}
\def\b{\ccolor{white}{$b$}}
\def\c{\ccolor{white}{$c$}}
\def\f{\ccolor{white}{$d$}}
\def\0{ ++(1,0)}
\draw (3, 3)       \td\0\dt ;
\draw (2, 2)     \td\0\a\0\dt ;
\draw (1, 1)   \td\0\b\0\1\0\dt ;
\draw (0,-0) \td\0\a\0\2\0\1\0\dt ;
\draw (1,-1)    \b\0\1\0\2\0\f ;
\draw (0,-2) \dt\0\2\0\1\0\c\0\td ;
\draw (1,-3)   \dt\0\2\0\f\0\td ;
\draw (2,-4)     \dt\0\c\0\td ;
\draw (3,-5)       \dt\0\td ;
\end{tikzpicture}
$\{a,b\}=\{c,d\}=\{1,2\}$
\end{center}
So, we have the following fact.
\begin{proposition}\label{p:3}
A coloring corresponding to subcase
{\rm\ref{case3}.b} can be obtained from a coloring
corresponding to subcase {\rm\ref{case3}.a}
by shifting
$\dll{1}{2}$-R-diagonals
or
by shifting
$\dll{1}{2}$-L-diagonals {(but not both)}.
\end{proposition}

Again, we see that the claim of
Lemma~\ref{l:even} is confirmed by Proposition~\ref{p:3}
in the considered case, where the colors $\cll 1$ and $\cll 2$
in Proposition~\ref{p:3} correspond to the merged  $\cll 1$ and $\cll 3$
and merged  $\cll 2$ and $\cll 4$ in Lemma~\ref{l:even}.
Corollary~\ref{c:evenodd} finalizes the proof
of Theorem~\ref{th:main} in this case.

\subsection%
[Every \{1,2\}-node has two type-2:2
and two type-1:1 neighbors]
{Every $\{\cll{1},\cll{2}\}$-node has two neighbors of type $2{:}2$
and two neighbors of type $1{:}1$}
\label{case4}

Consider any $\cll{1}$-node. Let its neighbors of type $2{:}2$
have colors $\cll{a}$ and $\cll{b}$
(it is possible that $\cll{a}=\cll{b}$).
Note that these neighbors must be placed diagonally from each other, otherwise the
other two neighbors cannot be of type~$1{:}1$.
By the same reason, a fragment like
\begin{center}
\begin{tikzpicture}[yscale=-1]
\def\a{\ccolor{white}{$a$}}
\def\b{\ccolor{white}{$b$}}
\draw (1,0)   \2 ;
\draw (0,-1) \1 \a \2 ;
\draw (1,-2)   \1 \b ;
\end{tikzpicture}
\end{center}
is impossible. Hence we have two possibilities up to rotation and reflection.
\begin{center}
\begin{tikzpicture}[yscale=-1]
\def\a{\ccolor{white}{$a$}}
\def\b{\ccolor{white}{$b$}}
\draw (1,0)    \2 ;
\draw (0,-1) \2\a\1 ;
\draw (1,-2)   \1\b\2 ;
\draw (2,-3)     \2 ;
\end{tikzpicture}
\qquad
\begin{tikzpicture}[yscale=-1]
\def\a{\ccolor{white}{$a$}}
\def\b{\ccolor{white}{$b$}}
\draw (1,0)    \2 ;
\draw (0,-1) \1\a\1 ;
\draw (1,-2)   \2\b\2 ;
\draw (2,-3)     \1 ;
\end{tikzpicture}
\end{center}
In the both cases the fragment is uniquely extended
to two $\cll{1}\cll{2}$-diagonals and $\cll{a}\cll{b}$-diagonal.

The rest of the proof of the conclusion of
Theorem~\ref{th:main} in Case~\ref{case4}
is contained in the following proposition,
which reflects a more general situation
and will be repeatedly used in Section~\ref{case6}.

\begin{proposition}\label{p:twobin}
If a bipartite perfect coloring $\FfF$  of $G(\ZZ^2)$
has neighbor $\dll ab$ and $\dll ef$ diagonals, where
$\cll a$, $\cll b$, $\cll e$, $\cll f$ are some colors,
not necessarily distinct, then $\FfF$ is equivalent
to a coloring obtained from a coloring in Fig~\ref{II},
Fig~\ref{KK}, or  Fig~\ref{LL} by shifting
R-diagonals
and merging some groups of twin colors.
\end{proposition}
\begin{proof}
Without loss of generality, we assume that $\FfF$
is a bipartite perfect coloring
with two neighbor binary or one-color R-diagonals.


\begin{claim}\label{cl:allD}
Every
R-diagonal
is either binary or one-color.
\end{claim}
Indeed, consider the nodes $[1,0]$ and $[0,-1]$.
Their neighbors have colors
$\cll a$, $\cll b$, $\FfF\Lft 1,-1\Rgt $, $\FfF\Lft 2,0\Rgt $
and
$\cll a$, $\cll b$, $\FfF\Lft 1,-1\Rgt $, $\FfF\Lft 0,-2\Rgt $,
respectively.
Since $[1,0]$ and $[0,-1]$ are of the same or twin colors, we conclude
$\FfF\Lft 2,0\Rgt  = \FfF\Lft 0,-2\Rgt $.
Similarly, by induction on $|i|$,
we get
$\FfF\Lft i,j\Rgt =\FfF\Lft i+2,j+2\Rgt $ for every $i,j\in\ZZ$,
which proves Claim~\ref{cl:allD}.

We now consider two subcases.

(i) For every two
R-diagonals,
the sets of their colors coincide
or do not intersect. In this case, for
an R-diagonal,
its set of colors uniquely determines
the sets of colors of the two neighbor diagonals.
In other words, we have a perfect coloring of
the infinite path graph~$G(\ZZ)$,
where the node~$[j]$ is colored
with the set of colors of the diagonal through $[j,0]$
in the coloring $\FfF$.
It is easy to find (see, e.g., \cite[Lemma~4]{LAP:2020})
that there are only two classes
of bipartite perfect colorings of~$G(\ZZ)$:
\begin{itemize}
 \item[(a)] \emph{cyclic} colorings
 $\cdots\ccll{c_1}\ccll{c_2}\cdots\ccll{c_s}\ccll{c_1}\ccll{c_2}\cdots\ccll{c_s}\ccll{c_1}\ccll{c_2}\cdots$
 {(the number~$s$ of colors is even because the coloring is bipartite)};
 \item[(b)] \emph{mirror} colorings
$\cdots\ccll{c_2}\ccll{c_1}\ccll{c_2}\cdots\cll[2em]{c_{s{-}1}}\ccll{c_s}\cll[2em]{c_{s{-}1}}\cdots\ccll{c_2}\ccll{c_1}\ccll{c_2}\cdots$ (any number $s$ of colors).
 \end{itemize}
Once we fix the coloring of $G(\ZZ)$, the colors of the one-color
R-diagonals
of $\FfF$ are determined,
and the colors of the binary
R-diagonals
of $\FfF$ are determined up to shifting.
If there are only binary
R-diagonals,
then $\FfF$ is obtained by shifting from the coloring
in Fig.~\ref{KK}
(the case $n=8$ is shown separately in Fig.~\ref{II},
and the case $n=4$ is obtained from Fig.~\ref{II}
by merging twins $\cll 1$ and $\cll 3$, $\cll 2$ and $\cll 4$,
 $\cll 5$ and $\cll 6$, $\cll 7$ and $\cll 8$)
 or in Fig.~\ref{LL}
 (again the case $n=4$ is obtained from Fig.~\ref{II}
by merging twin colors as above, and the case $n=6$
is obtained from Fig.~\ref{II}
by merging $\cll 5$ and $\cll 6$, $\cll 7$ and $\cll 8$),
for subcases (a) and (b) respectively.

(ii) There are two
R-diagonals,
say $\dll{a}{b}$- and $\dll{a}{c}$-diagonals,
whose sets of colors are different but with non-zero intersection,
i.e., $\cll b\ne\cll c$.
Clearly, the colors from $\{\cll a, \cll b, \cll c\}$
are pairwise twin.
We assume w.l.o.g. that $\cll c\ne\cll a$,
$\FfF\Lft 0,0\Rgt = \cll a$,
the R-diagonal
through $[0,0]$ is an \dll{a}{b}-diagonal,
and the R-diagonal
through $[1,0]$ is an \dll{e}{f}-diagonal.
Denote
$\cll g=\FfF\Lft -1,0\Rgt $, $\cll h=\FfF\Lft 0,1\Rgt $,
(hence, the whole diagonal through the last two nodes is a $\dll gh$-diagonal).
Since \cll{a} and \cll{c} are twins
{(this is the only place where we use the existence of an $\dll{a}{c}$-diagonal)},
every $\cll{e}$-node
has a $\cll c$-neighbor. Therefore, the diagonal through $[2,0]$
is a $\dll cd$-diagonal for some $\cll d$.
Similarly, the diagonal through $[-2,0]$
is a $\dll c{d'}$-diagonal for some $\cll{d'}$.
The colors from $\{\cll{a},\cll{b},\cll{c},\cll{d},\cll{d'}\}$
are mutually twin, and we see that the diagonal through $[0,3]$
is a $\dll gh$-diagonal and
the diagonal through $[0,-3]$
is an $\dll ef$-diagonal.
Similarly (by induction on $|i|$),
for every $i\in \ZZ$
the colors of the diagonal through $[2i,0]$
are equal or twin to $\cll a$ and the diagonal
through $[2i+1,0]$ is an  $\dll ef$- or $\dll gh$- diagonal,
depending on the parity of $i$.
In particular, every odd node is an
$\{\cll{e},\cll{f},\cll{g},\cll{h}\}$-node, where
\cll{e} and \cll{f} are equal or twin,
\cll{g} and \cll{h} are equal or twin.
By shifting some of
R-diagonals,
we can make a
$\dll{e}{g}$-L-diagonal.
Therefore, any two different colors from
$\{\cll{e},\cll{f},\cll{g},\cll{h}\}$ are twins.
It follows that $\cll d = \cll{d'}$ and for every even (odd)
$i$ the
R-diagonal
through $[2i,0]$
is an \dll{a}{b}-diagonal (a \dll{c}{d}-diagonal, respectively).
We now see that if all colors
$\cll a$,
$\cll b$,
$\cll c$,
$\cll d$,
$\cll e$,
$\cll f$,
$\cll g$,
$\cll h$
are pairwise different, then $\FfF$ is shifting equivalent
to the coloring in Fig.~\ref{II}.
If there are some equalities between them,
then $\FfF$ is shifting equivalent
to a coloring obtained from the one in Fig.~\ref{II} by merging twin colors.
\end{proof}

\subsection%
[Each \{1,2\}-node has one type-2:2
and three type-1:1 neighbors]
{Each $\{\cll{1},\cll{2}\}$-node has one neighbor of type $2{:}2$
and three neighbors of type $1{:}1$}
\label{case5}

Denote by $\cll{3}$ the color of the type $2{:}2$ nodes.
Up to rotation the coloring contains one of the following fragments.
\begin{center}
\def\3{\ccolor{yellow!35!white}{$3$}}
\begin{tikzpicture}
\draw (1,0)   \1 ;
\draw (0,1) \1\3\2 ;
\draw (1,2)   \2 ;
\end{tikzpicture}
\qquad\qquad
\begin{tikzpicture}
\draw (1,0)   \1 ;
\draw (0,1) \2\3\2 ;
\draw (1,2)   \1 ;
\end{tikzpicture}
\end{center}
It is easy to see that the first fragment contradicts the condition
of case~{\rm\ref{case5}}.
Directly from this condition we get that the second
fragment uniquely determines the placement of the $\{\cll{1},\cll{2},\cll{3}\}$-nodes:
\begin{center}
\begin{tikzpicture}
\def\ccolor#1#2{ ++(1,0) node [cell,fill=#1] {\raisebox{-0.25em}[0.2em][0em]{\makebox[0mm][c]{#2}}} }
\def\3{\ccolor{yellow!35!white}{$3$}}
\def\0{\ccolor{white}{$\ $}}
\def\d{ ++(1,0) node [cell] {.} }
\draw (0,0) \0\0\0\1\0\0\0\2\0\0\0\1\0\0\0\2 ;
\draw (0,1) \0\d\0\0\0\d\0\0\0\d\0\0\0\d\0\0 ;
\draw (0,2) \0\0\0\2\0\0\0\1\0\0\0\2\0\0\0\1 ;
\draw (0,3) \2\0\1\3\1\0\2\3\2\0\1\3\1\0\2\3 ;
\draw (0,4) \0\0\0\2\0\0\0\1\0\0\0\2\0\0\0\1 ;
\draw (0,5) \0\d\0\0\0\d\0\0\0\d\0\0\0\d\0\0 ;
\draw (0,6) \0\0\0\1\0\0\0\2\0\0\0\1\0\0\0\2 ;
\draw (0,7) \1\0\2\3\2\0\1\3\1\0\2\3\2\0\1\3 ;
\draw (0,8) \0\0\0\1\0\0\0\2\0\0\0\1\0\0\0\2 ;
\draw (0,9) \0\d\0\0\0\d\0\0\0\d\0\0\0\d\0\0 ;
\end{tikzpicture}
\end{center}

Denote by $E$ the set of the colors
of the even nodes excluding $\cll{1}$ and $\cll{2}$.

\begin{claim}\label{cl:E2}
$|E|\leq 2$. If $|E|= 2$, then $\dd^3(\cll{3},\cll{e})=12$
for each $\cll{e}$ in $E$.
\end{claim}

\begin{proof}
First we note that
$$
\foreach \xX in {1,2} {\sum_{e\in E}\dd^2(\cll{\xX},\cll{e})=6, \qquad }
$$ and
\begin{equation}\label{eq:d21e}
\dd^2(\cll{1},\cll{e})\geq 2,
\qquad \dd^2(\cll{2},\cll{e})\geq 2
\qquad \mbox{ for all }\cll{e}\in E
\end{equation}
(the last follows from the fact that every $E$-node is at $[\pm 1, \pm 1]$ from some $\cll{1}$-node).
Then,
\begin{equation}\label{eq:0md3}
\dd^3(\cll{3},\cll{e}) = 2 \dd^2(\cll{1},\cll{e}) + 2 \dd^2(\cll{2},\cll{e}) \equiv 0\bmod 3
\end{equation}
(indeed, there are exactly $3$ paths of length $3$ from every $\cll{3}$-node to every nearest $E$-node,
each path containing exactly one $\{\cll{1},\cll{2}\}$-node); hence, $\dd^3(\cll{3},\cll{e})$ is even, not less than $8$,
and divisible by $3$. Since
\begin{equation}\label{eq:d33e}
\sum_{e\in E}\dd^3(\cll{3},\cll{e})=24.
\end{equation}
the only possibilities for $\dd^3(\cll{3},\cll{e})$
are $12$ (with $|E|=2$) and $24$ (with $|E|=1$).
\end{proof}

\begin{proposition}\label{p:krest}
In case {\rm\ref{case5}}, the coloring $\FfF$ either

{\rm(a)} is equivalent to
the coloring shown in Fig.~\ref{BB}, or

{\rm(b)} $\FfF$ can be obtained from the coloring
shown in Fig.~\ref{AA}
by shifting $\dll{4}{5}$- or $\dll{4}{6}$-diagonals {(but not both)},
unifying the colors $\cll{7}$ and $\cll{8}$,
unifying two or three colors from $\{\cll{4},\cll{5},\cll{6}\}$,
and/or renaming colors.
\end{proposition}

\begin{proof} (a)
 We first consider the subcase when $|E|=2$, say
 $E=\{\cll{7},\cll{8}\}$
 (in agree with Fig.~\ref{BB}),
 and an odd node of type $0{:}0$
 has two non-opposite neighbors of the same color.
\begin{center}
\begin{tikzpicture}
\def\ccolor#1#2{ ++(1,0) node [cell,fill=#1] {\raisebox{-0.25em}[0.2em][0em]{\makebox[0mm][c]{#2}}} }
\def\3{\ccolor{yellow!35!white}{$3$}}
\def\7{\ccolor{red!15!white}{$7$}}
\def\8{\ccolor{blue!15!white}{$8$}}
\def\d{ ++(1,0) node [cell] {.} }
\draw (0,-2) \0\0\0\2\0\0;
\draw (0,-3) \2\0\1\3\1\0\2;
\draw (0,-4) \0\0\0\2\0\7;
\draw (0,-5) \0\0\0\0\7\d;
\end{tikzpicture}
\end{center}

{%
We see that
$\dd^2(\cll{1},\cll{7})\geq 3$
and
$\dd^2(\cll{2},\cll{7})\geq 3$.
From this, $\dd^3(\cll{3},\cll{7})=12$ (Claim~\ref{cl:E2}),
and $\dd^3(\cll{3},\cll{7})=4\cdot\dd^2(\cll{1},\cll{7})$,
we find
\begin{equation}
\dd^2(\cll{1},\cll{7}) = \dd^2(\cll{2},\cll{7}) = 3,
\quad\mbox{and}\quad
\dd^2(\cll{1},\cll{8}) = \dd^2(\cll{2},\cll{8}) = 3.
\end{equation}
}
With the last equations, the colors of all nodes at distance~$3$
from the $\cll{3}$-node are uniquely reconstructed.
\def\7{\ccolor{red!15!white}{$7$}}
\def\8{\ccolor{red!15!blue!15!white}{$8$}}
\def\3{\ccolor{yellow!35!white}{$3$}}
\begin{center}
\begin{tikzpicture}
\draw (0,-0) \0\0\0\1\0\0;
\draw (0,-1) \0\0\0\0\0\0;
\draw (0,-2) \0\0\0\2\0\0;
\draw (0,-3) \2\0\1\3\1\0\2;
\draw (0,-4) \0\0\0\2\0\7;
\draw (0,-5) \0\0\0\0\7\0;
\draw (0,-6) \0\0\0\1\0\0;
\end{tikzpicture}
\ \raisebox{17mm}{$\Rightarrow$}
\begin{tikzpicture}
\draw (0,-0) \0\0\0\1\0\0;
\draw (0,-1) \0\0\0\0\8\0;
\draw (0,-2) \0\0\0\2\0\8;
\draw (0,-3) \2\0\1\3\1\0\2;
\draw (0,-4) \0\8\0\2\0\7;
\draw (0,-5) \0\0\8\0\7\0;
\draw (0,-6) \0\0\0\1\0\0;
\end{tikzpicture}
\ \raisebox{17mm}{$\Rightarrow$}
\begin{tikzpicture}
\draw (0,-0) \0\0\0\1\0\0;
\draw (0,-1) \0\0\7\0\8\0;
\draw (0,-2) \0\7\0\2\0\8;
\draw (0,-3) \2\0\1\3\1\0\2;
\draw (0,-4) \0\8\0\2\0\7;
\draw (0,-5) \0\0\8\0\7\0;
\draw (0,-6) \0\0\0\1\0\0;
\end{tikzpicture}
\end{center}
Next, we see that the cells at distance $2$ from the $\cll{3}$-cell
are uniquely colored with three new colors.
From this point, the fragment considered is continued
in a straightforward manner,
resulting in the coloring in Fig.~\ref{BB}.

(b') Next, we consider the subcase when $|E|=2$,
say
 $E=\{\cll{7},\cll{8}\}$
 and
 any two non-opposite neighbors of each odd node of type $0{:}0$
 have different colors.
We have a fragment like the following:
\definecolor{yell}{rgb}{0.95, 0.9, 0.45}
\def\3{\Ccolor{white}{yell}{$3$}}
\def\4{\Ccolor{white}{cyan!30!blue!15!white}{$9$}}
\def\5{\ccolor{red!90!blue!20!white}{$7$}}
\def\P{\ccolor{red!90!blue!20!white}{$\underline{{7}}$}}
\def\6{\ccolor{red!30!blue!20!white}{$8$}}
\def\S{\ccolor{red!30!blue!20!white}{$\underline{{8}}$}}
\def\x{\ccolor{white}{$\bar x$}}
\def\y{\ccolor{white}{$\bar y$}}
\def\d{ ++(1,0) node  {.} }
\def\a{\ccolor{white}{$a$}}
\def\b{\ccolor{white}{$b$}}
\def\c{\ccolor{white}{$c$}}
\def\A{\ccolor{\colTWO}{$\underline{{2}}$}}
\def\O{\ccolor{\colONE}{$\underline{{1}}$}}
\def\7{\Ccolor{blue!20!  }{white}{$4$}}
\def\8{\Ccolor{purple!20!}{white}{$5$}}
\def\9{\Ccolor{orange!30!}{white}{$6$}}
\def\D{ ++(1,0) node  [cell]{.} }
$$
\begin{tikzpicture}
\draw (0,-2) \0\0\0\2\0\y;
\draw (0,-3) \0\0\1\3\1\0\2;
\draw (0,-4) \0\0\0\2\0\6;
\draw (0,-5) \0\0\x\0\5\4\5;
\draw (0,-6) \0\0\0\1\0\6;
\end{tikzpicture}
$$
We claim that $\FfF(\bar x) = \cll{8}$ and similarly
$\FfF(\bar y) = \cll{7}$.
Indeed, if $\FfF(\bar x) = \cll{7}$,
then $\dd^2(\cll{1},\cll{7})\geq 4$ and
$\dd^2(\cll{2},\cll{7})\geq 4$,
which means $\dd^3(\cll{3},\cll{7})\geq 16$
and contradicts Claim~\ref{cl:E2}.

In a similar manner, we can now determine the colors
of all nodes except those at distance $2$ from the
$\cll{3}$-nodes. In the picture below,
these nodes are marked by the labels $a$, $b$, and $c$.
$$
\begin{tikzpicture}
\draw (0,0) \b\6\c\1\b\5\c\2\b\6\c\1\b\5\c\2 ;
\draw (0,1) \5\4\5\a\6\4\6\a\5\4\5\a\6\4\6\a ;
\draw (0,2) \c\6\b\2\c\5\b\1\c\6\b\2\c\5\b\1 ;
\draw (0,3) \2\a\1\3\1\a\2\3\2\a\1\3\1\a\2\3 ;
\draw (0,4) \b\5\c\2\b\6\c\1\b\5\c\2\b\6\c\1 ;
\draw (0,5) \6\4\6\a\5\4\5\a\6\4\6\a\5\4\5\a ;
\draw (0,6) \c\5\b\1\c\6\b\2\c\5\b\1\c\6\b\2 ;
\draw (0,7) \1\a\2\3\2\a\1\3\1\a\2\3\2\a\1\3 ;
\end{tikzpicture}
$$
Clearly, the remaining nodes are colored with one, two, or three
mutually twin colors.
Moreover, it is easy to see that $S_{1,i}=S_{2,i}=S_{7,i}=S_{8,i}$
for every color $\cll{i}$ of a node marked by $a$, $b$, or $c$.
Hence, $\cll{7}$ and $\cll{8}$ are twins.

If the nodes marked by $a$ are of the same color,
then it is not difficult to find that the same is true for
the nodes marked by $b$ and for
the nodes marked by $c$. Then p.\,(b) of the proposition
takes place.

Otherwise, there are two $a$-nodes $\bar z$ and $\bar z'$
of different colors and with difference
$\bar z' - \bar z = [2,\pm 2]$.
W.l.o.g., assume
$\bar z=[0,0]$, $\bar z=[2,-2]$,
$ \FfF( \bar z ) = \cll{4} $,
$ \FfF( \bar z ) = \cll{5} $.

\begin{center}
\begin{tikzpicture}
\draw (0,6) \0\0\0\0\o\6\o\2\o ;
\draw (0,5) \0\0\0\7\5\4\5\o\0 ;
\draw (0,4) \0\0\D\A\o\6\o\0\0 ;
\draw (0,3) \0\o\1\3\O\8\0\0\0 ;
\draw (0,2) \o\6\o\2\D\0\0\0\0 ;
\end{tikzpicture}
\!\!\!\!\!\!\raisebox{12mm}{$\Rightarrow$}\!\!\!\!\!\!
\begin{tikzpicture}
\draw (0,6) \0\0\0\0\D\6\o\2\o ;
\draw (0,5) \0\0\0\7\P\4\5\o\0 ;
\draw (0,4) \0\0\8\2\o\S\D\0\0 ;
\draw (0,3) \0\o\1\3\1\8\0\0\0 ;
\draw (0,2) \o\6\o\2\7\0\0\0\0 ;
\end{tikzpicture}
\!\!\!\!\!\!\raisebox{12mm}{$\Rightarrow$}\!\!\!\!\!\!
\begin{tikzpicture}
\draw (0,6) \0\0\0\0\8\6\o\2\7 ;
\draw (0,5) \0\0\0\7\5\4\5\8\0 ;
\draw (0,4) \0\0\8\2\o\6\7\0\0 ;
\draw (0,3) \0\7\1\3\1\8\0\0\0 ;
\draw (0,2) \8\6\o\2\7\0\0\0\0 ;
\draw [diag] (0.5,1.5)--(5.5,6.5);
\draw [diag] (4.5,1.5)--(9.5,6.5);
\end{tikzpicture}
\end{center}
Since $\cll{1}$ and  $\cll{2}$ are twins,
the neighbors of each of the nodes $[0,-1]$, $[1,-2]$
(underlined in the picture above)
have colors $\cll{3}$, $\cll{4}$, $\cll{5}$, and
$\FfF\Lft 1,-1\Rgt $, independently on the value
$\FfF\Lft 1,-1\Rgt $ (it can be $\cll{4}$, $\cll{5}$, or a new color, say~$\cll{6}$).
We see that $\FfF\Lft -1,-1\Rgt =\cll{5}$ and $\FfF\Lft 1,-3\Rgt =\cll{4}$.
Similarly, considering the neighborhoods
of the $\{\cll{7},\cll{8}\}$-nodes $[1,0]$ and $[2,-1]$,
we see $\FfF\Lft 1,1\Rgt =\cll{5}$ and $\FfF\Lft 3,-1\Rgt =\cll{4}$.
Processing in a similar way,
we find that the both
R-diagonals
through $[0,0]$ and $[2,-2]$ are $\dll{4}{5}$-diagonals.

The same argument show that the
R-diagonal
through $[4,-4]$ is a $\dll{4}{5}$-diagonal,
independently on the value $\FfF\Lft 4,-4\Rgt $,
which can be $\cll{4}$ or $\cll{5}$.
Similarly, by induction, for every $i$ the
R-diagonal
through $[2i,-2i]$ is a $\dll{4}{5}$-diagonal.
It remains to note that all nodes marked by $b$,
i.e., of form $[2+2i+2j,2i-2j]$ are colored with the same color,
$\cll{4}$, $\cll{5}$, or a new color $\cll{6}$.

(b'') The subcase $|E|=1$ is considered similarly to (b').
\end{proof}

\subsection[All the neighbors of a \{1,2\}-node are of type 1:1]{All the neighbors of a $\{\cll{1},\cll{2}\}$-node are of type $1{:}1$}\label{case6}
We divide the case into the following four subcases.
\begin{claim}\label{cl:VI}
One of the following four assertions takes place.
\begin{enumerate}
 \item[\rm\ref{case6a}.] The $\{1,2\}$-nodes are all the nodes with even coordinates or all the nodes with odd coordinates
 (without loss of generality, we consider the even subcase).
 \item[\rm\ref{case6b}.]
 The coloring $\FfF$ contains the following fragment,
up to rotation and reflection:
$$
\begin{tikzpicture}
\draw (0,-0) \1\0\0\0\2;
\draw (0,-1) \0\2\0\1\0;
\end{tikzpicture}$$
\item[\rm\ref{case6c}.]
The coloring $\FfF$ contains the following fragment,
up to rotation and renaming the colors $\cll{1}$ and $\cll{2}$:
$$
\begin{tikzpicture}
\draw (2, 4)     \o\o\2;
\draw (1, 3)   \1\o\o\o;
\draw (0, 2) \o\o\2\o\1;
\draw (0, 1) \o\o\o\o  ;
\draw (0, 0) \2\o\1    ;
\end{tikzpicture}$$
\item[\rm\ref{case6d}.]
The coloring $\FfF$ contains
a $\dll{1}{2}$-diagonal.
\end{enumerate}
\end{claim}

\begin{proof}
At first, if every type $1{:}1$ node has opposite
$\cll{1}$- and $\cll{2}$-neighbors, then we obviously
have subcase~\ref{case6a}.

Thus, we can assume that there are two
$\{\cll{1},\cll{2}\}$-neighbors
of a type $1{:}1$ node that are placed diagonally from each other.
W.l.o.g., assume $\FfF\Lft 0,0\Rgt =2$ and $\FfF\Lft -1,1\Rgt =1$.
Let us consider the diagonal through these two nodes.

If it is  a $\dll{1}{2}$-diagonal, then we have case~\ref{case6d}.

Otherwise, there is an integer $i$ with the minimum absolute value
such that the node $\bar x=[i,-i]$ is not a $\{\cll{1},\cll{2}\}$-node.
Depending on the sign and the parity of $i$, we have one of the four situations:
\begin{center}
\def\3{\ccolor{yellow!35!white}{$\bar x$}}
\def\4{\ccolor{white}{$\bar y$}}
\def\9{\ccolor{white}{$*$}}
\begin{tikzpicture}
\draw (0,2) \1\0\9;
\draw (0,1) \0\2\4;
\draw (0,0) \0\0\3;
\end{tikzpicture}
\quad
\begin{tikzpicture}
\draw (0,2) \2\0;
\draw (0,1) \0\1;
\draw (0,0) \0\0\3;
\end{tikzpicture}
\quad
\begin{tikzpicture}
\draw (0,2) \3\0;
\draw (0,1) \0\2;
\draw (0,0) \0\0\1;
\end{tikzpicture}
\quad
\begin{tikzpicture}
\draw (0,2) \3\0;
\draw (0,1) \0\1;
\draw (0,0) \0\0\2;
\end{tikzpicture}
\end{center}
We consider only the first one, as the other three are similar.
We observe that the node $[i,-i+2]$,
marked by $*$ in the picture, is not a $\{\cll{1},\cll{2}\}$-node,
by the condition of case~\ref{case6}.
The node $\bar y = [i,-i+1]$ is of type $1{:}1$, and the only place for its
$\cll{1}$-neighbor is $[i+1,-i+1]$:
\begin{center}
\def\3{\ccolor{yellow!35!white}{$\bar x$}}
\def\4{\ccolor{white}{$\bar z$}}
\def\9{\ccolor{white}{$*$}}
\begin{tikzpicture}
\draw (0,2) \1\0\9\4;
\draw (0,1) \0\2\0\1;
\draw (0,0) \0\0\3;
\end{tikzpicture}
\end{center}
The node $\bar z = [i+1,-i+2]$
is also of type $1{:}1$,
and there are two possibilities for its
$\cll{2}$-neighbor:
\begin{center}
\def\3{\ccolor{yellow!35!white}{$\bar x$}}
\def\4{\ccolor{white}{$\bar z$}}
\def\9{\ccolor{white}{$*$}}
\begin{tikzpicture}
\draw (0,2) \1\0\9\4\2;
\draw (0,1) \0\2\0\1;
\draw (0,0) \0\0\3;
\end{tikzpicture}
\ \ \ \mbox{or}\ \ \
\begin{tikzpicture}
\draw (0,3) \0\0\0\2;
\draw (0,2) \1\0\9\4;
\draw (0,1) \0\2\0\1;
\draw (0,0) \0\0\3;
\end{tikzpicture}
\end{center}
The first possibility leads
to subcase~\ref{case6b}. In the remaining case,
considering similarly the type $1{:}1$ nodes
$\bar y' = [i-1,-i]$ and $\bar z' = [i-2,-i-1]$,
we find that subcase~\ref{case6b}
or subcase~\ref{case6c} takes place:
\begin{center}
\def\3{\ccolor{yellow!35!white}{$\bar x$}}
\def\4{\ccolor{white}{$\bar y'$}}
\def\5{\ccolor{white}{$\bar z'$}}
\begin{tikzpicture}
\draw (0,3)  \0\0\0\2;
\draw (0,2)  \1\0\0\0;
\draw (0,1)  \0\2\0\1;
\draw (0,0)  \0\4\3;
\draw (0,-1) \5\1\0;
\draw (0,-2) \2\0\0;
\end{tikzpicture}
\ \ \ \mbox{or}\ \ \
\begin{tikzpicture}
\draw (0,3)    \0\0\0\2;
\draw (0,2)    \1\0\0\0;
\draw (0,1)    \0\2\0\1;
\draw (0,0)    \0\4\3;
\draw (-1,-1)\2\5\1\0;
\end{tikzpicture}
\end{center}

\end{proof}

In the following four subsections,
we show that
the claim of Theorem~\ref{th:main} holds in each of the four subcases.

\renewcommand{\thesubsubsection}{\thesubsection.\alph{subsubsection}}
\subsubsection[All \{1,2\}-nodes are all nodes with even coordinates]{All the $\{\cll{1},\cll{2}\}$-nodes are all the nodes with even coordinates}\label{case6a}
\begin{center}
\def\x{++(+1,0) node {.}}
\begin{tikzpicture}
\draw (-1,-0) \1\o\2\o\1\o\2\o\1\o\2\o ;
\draw (-1,-1) \o\x\o\x\o\o\o\o\o\o\o\o ;
\draw (-1,-2) \2\o\1\o\2\o\1\o\2\o\1\o ;
\draw (-1,-3) \o\x\o\x\o\o\o\o\o\o\o\o ;
\draw (-1,-4) \1\o\2\o\1\o\2\o\1\o\2\o ;
\draw (-1,-5) \o\o\o\o\o\o\o\o\o\o\o\o ;
\draw (-1,-6) \2\o\1\o\2\o\1\o\2\o\1\o ;
\end{tikzpicture}
\end{center}

\begin{proposition}\label{p:6a}
In case {\rm \ref{case6a}},
either the coloring $\FfF$ is equivalent
to one of the colorings in Fig.~\ref{AA}, \ref{CC}, \ref{DD}, \ref{EE}
where some twin colors can be merged,
or the coloring of the even vertices is
equivalent to one of the following two
semicolorings:
$$
\raisebox{9em}{\rm (i)}\,\!\!\!
\begin{tikzpicture}
\def\3{\ccolor{yellow!40!white}{$3$}}
\draw (0,-0) \1\0\2\0\1\0\2\0;
\draw (0,-1) \0\3\0\3\0\3\0\3;
\draw (0,-2) \2\0\1\0\2\0\1\0;
\draw (0,-3) \0\3\0\3\0\3\0\3;
\draw (0,-4) \1\0\2\0\1\0\2\0;
\draw (0,-5) \0\3\0\3\0\3\0\3;
\draw (0,-6) \2\0\1\0\2\0\1\0;
\draw (0,-7) \0\3\0\3\0\3\0\3;
\end{tikzpicture}
\ \
\raisebox{9em}{\rm (ii)}\,\!\!\!
\begin{tikzpicture}
\def\3{\ccolor{yellow!40!white}{$3$}}
\def\4{\ccolor{orange!30!white}{$4$}}
\draw (0,-0) \1\0\2\0\1\0\2\0;
\draw (0,-1) \0\3\0\4\0\3\0\4;
\draw (0,-2) \2\0\1\0\2\0\1\0;
\draw (0,-3) \0\4\0\3\0\4\0\3;
\draw (0,-4) \1\0\2\0\1\0\2\0;
\draw (0,-5) \0\3\0\4\0\3\0\4;
\draw (0,-6) \2\0\1\0\2\0\1\0;
\draw (0,-7) \0\4\0\3\0\4\0\3;
\end{tikzpicture}
$$
\end{proposition}
\begin{proof}
Up to equivalence, there are seven ways to color
the four nodes placed diagonally from
a $\cll{1}$-node:
$$
\def\3{\ccolor{yellow!40!white}{$3$}}
\def\4{\ccolor{orange!30!white}{$4$}}
\def\5{\ccolor{red!25!white}{$5$}}
\begin{array}{lllllll}
\mbox{ (i)}&\mbox{ (ii)}&\mbox{ (iii)}&\mbox{ (iv)}&\mbox{ (v)}&\mbox{ (vi)}&\mbox{ (vii)}
\\
\begin{tikzpicture}
\draw (0,-1) \3\0\3;
\draw (0,-2) \0\1\0;
\draw (0,-3) \3\0\3;
\end{tikzpicture}
&
 \begin{tikzpicture}
 \draw (0,-1) \3\0\4;
 \draw (0,-2) \0\1\0;
 \draw (0,-3) \4\0\3;
 \end{tikzpicture}
&
 \begin{tikzpicture}
 \draw (0,-1) \3\0\3;
 \draw (0,-2) \0\1\0;
 \draw (0,-3) \4\0\4;
 \end{tikzpicture}
&
 \begin{tikzpicture}
 \draw (0,-1) \3\0\3;
 \draw (0,-2) \0\1\0;
 \draw (0,-3) \4\0\3;
 \end{tikzpicture}
&
 \begin{tikzpicture}
 \def\4{\ccolor{orange!30!white}{$4$}}
 \def\5{\ccolor{red!25!white}{$5$}}
 \draw (0,-1) \3\0\4;
 \draw (0,-2) \0\1\0;
 \draw (0,-3) \5\0\3;
 \end{tikzpicture}
&
 \begin{tikzpicture}
 \def\4{\ccolor{cyan!30!blue!25!white}{$4$}}
 \def\5{\ccolor{purple!80!blue!25!white}{$5$}}
 \draw (0,-1) \3\0\3;
 \draw (0,-2) \0\1\0;
 \draw (0,-3) \5\0\4;
 \end{tikzpicture}
&
 \begin{tikzpicture}
 \def\4{\ccolor{cyan!70!blue!30!white}{$4$}}
 \def\5{\ccolor{purple!80!blue!30!white}{$5$}}
 \def\6{\ccolor{red!80!yellow!30!white}{$6$}}
 \draw (0,-1) \3\0\4;
 \draw (0,-2) \0\1\0;
 \draw (0,-3) \6\0\5;
 \end{tikzpicture}
 \end{array}
$$

In case (i), all non-$\{\cll 1,\cll 2\}$ even nodes have color $\cll 3$,
corresponding to p.\,(i) of the claim.

In case (ii)
 each neighbor of a $\cll 1$-node has
 odd node has
 one $\cll 1$-,
  one $\cll 2$-,
 one $\cll 3$-, and
  one $\cll 4$-neighbor.
  And this is true for every odd node.
  It is not difficult to see that the coloring of the even nodes is uniquely determined.
  Since every odd node is a neighbor of a $\cll 1$-node,

Now consider case (iii).
We denote the colors of the $1$-node neighbors at the picture as
follows, where each of $\cll a$, $\Cll{a'}$ is different
from $\cll 5$ and $\cll 6$ but it is possible that
$\cll a =\Cll{a'}$:
$$
\def\4{\ccolor{orange!30!white}{$4$}}
\def\3{\ccolor{yellow!40!white}{$3$}}
\def\5{\ccolor{white}{$5$}}
\def\6{\ccolor{white}{$6$}}
\def\7{\ccolor{white}{$a$}}
\def\8{\ccolor{white}{$a'$}}
\begin{array}{c}
\begin{tikzpicture}
\draw (0,-1) \o\3\5\3\o\0;
\draw (0,-2) \2\7\1\8\2\o;
\draw (0,-3) \o\4\6\4\o\0;
\end{tikzpicture}
\end{array}
\Rightarrow
\begin{array}{c}
\begin{tikzpicture}
\draw (0,-1) \o\3\5\3\5\o\0;
\draw (0,-2) \2\7\1\8\2\7\1;
\draw (0,-3) \o\4\6\4\6\o\0;
\end{tikzpicture}
\end{array}
\Rightarrow
\begin{array}{c}
\begin{tikzpicture}
\draw (0,-1) \o\3\5\3\5\3\o;
\draw (0,-2) \2\7\1\8\2\7\1;
\draw (0,-3) \o\4\6\4\6\4\o;
\end{tikzpicture}
\end{array}
\Rightarrow\
\cdots
$$
Since \cll1 and \cll2 are twins, the rightest \cll{2}-node
in the picture
has neighbors of colors \cll 5, \cll6, \cll{a}, and \cll{a'}.
The only places for the \cll5- and \cll6-neighbors are above and below
the considered \cll2-node, respectively.
Now, this last \cll5-node must have a second \cll3-neighbor,
and the only place is at the right (because of the \cll1- and \cll2-neighbors in the vertical direction).
Similarly, all nodes in the three horizontal
rows are uniquely colored.
The whole coloring is now uniquely determined, whether
$\cll a =\Cll{a'}$ or $\cll a \ne \Cll{a'}$,
see Fig.~\ref{CC}.

Case~(iv) leads to a contradiction
when trying to reconstruct the coloring,
which is straightforward but not too short.
We will use more intuitive
arguments. It follows from Lemma~\ref{l:dist}
that there are exactly three $\cll3$-nodes
at distance $2$ from every $\cll 1$-node and exactly two $\cll1$-nodes
at distance $2$ from every $\cll 3$-node.
Considering a sufficiently large square,
we find (Remark~\ref{r:density}) that
the densities $P_1$ and $P_3$ of the colors \cll1 and \cll3
are related as $P_1:P_3 = 2:3$.
On the other hand,
it is easy to see that
there are no nodes that have more than~$2$
neighbors of the same color. It follows
from Lemma~\ref{l:friq} that the quotient of the
densities of any two colors is a power of two, a contradiction.

Now consider case (v).
We denote the colors of the $1$-node neighbors at the picture as
follows, where   $\{\cll a, \Cll{a'}\}$ and
$\{\cll b, \Cll{b'}\}$ are disjoint
but it is possible that
$\cll a =\Cll{a'}$ or $\cll b =\Cll{b'}$:
$$
\def\3{\ccolor{yellow!40!white}{$3$}}
\def\4{\ccolor{orange!30!white}{$4$}}
\def\5{\ccolor{red!20!white}{$5$}}
\def\6{\ccolor{white}{$a$}}
\def\7{\ccolor{white}{$a'$}}
\def\8{\ccolor{white}{$b$}}
\def\9{\ccolor{white}{$b'$}}
\begin{array}{c}
\begin{tikzpicture}
\draw (0,-1) \o\3\6\4\o\0;
\draw (0,-2) \2\9\1\7\2\o;
\draw (0,-3) \o\5\8\3\o\0;
\end{tikzpicture}
\end{array}
\Longrightarrow
\begin{array}{c}
\begin{tikzpicture}
\draw (0,-1) \o\3\6\4\6\o\0;
\draw (0,-2) \2\9\1\7\2\8\1;
\draw (0,-3) \o\5\8\3\9\o\0;
\end{tikzpicture}
\end{array}
\Rightarrow
\begin{array}{c}
\begin{tikzpicture}
\draw (0,-1) \o\3\6\4\6\3\o;
\draw (0,-2) \2\9\1\7\2\8\1;
\draw (0,-3) \o\5\8\3\9\5\o;
\end{tikzpicture}
\end{array}
\Rightarrow\
\cdots
$$
Let us determine the colors of the neighbors of the rightest
\cll2-node in the picture.
The neighbor above has color \cll{a}
because $\{\cll b,\Cll{b'}\}$-nodes have no \cll4-neighbors.
The neighbor below has color \Cll{b'}
because by Lemma~\ref{l:friq} $S_{1b}=S_{1b'}$
implies
$S_{3b}=S_{3b'}$.
So, arguing as in case (iii),
we reconstruct the colors
in the three horizontal rows and then
in the whole grid, obtaining the coloring
corresponding to Fig~\ref{AA} as follows:
$$
{
\begin{array}{r|c|c|c|c|c|c|c|c|c}
 \FfF: & \cll 1 & \cll 2 & \cll{3} & \cll{4}  & \cll{5} & \cll{a} & \Cll{a'} &  \cll{b} & \Cll{b'}  \\ \hline
 \mbox{Fig.~\ref{AA}:} & \cll 6 & \cll 5 &\cll 4  & \cll 9 & \cll 1 &   \cll{7} & \cll{8}  &  \cll{2} & \cll{3} \\
\end{array}.
}
$$

In each of cases (vi), (vii), the \cll1-node has neighbors of four distinct colors,
and with the same strategy as in cases (iii) and (v) the coloring is uniquely reconstructed,
corresponding to Fig.~\ref{DD} or Fig.~\ref{EE}, respectively.
\end{proof}

We see that in subcases (i) and (ii)
in Proposition~\ref{p:6a}
the even colors are pairwise twin
(we remember that twin colors might have different densities
like~\cll1 and~\cll3 in subcase~(i)).
The same is true for the odd colors
(if there are more than one such colors)
because the neighborhoods of all odd nodes are colored
with the same colors.
Proposition~\ref{p:6a}
confirms the claim of Lemma~\ref{l:even}
and
Corollary~\ref{c:evenodd} finalizes the proof
of Theorem~\ref{th:main} in the considered case \ref{case6a}.

\subsubsection[The coloring F contains fragment 12..12]{The coloring $\FfF$ contains the fragment}\label{case6b}
\begin{center}
\begin{tikzpicture}
\draw (0,-0) \1\0\0\0\2;
\draw (0,-1) \0\2\0\1\0;
\end{tikzpicture}
\end{center}
Without loss of generality, we assume that the $\cll{1}$-nodes of the fragment are
$[-2,2]$ and $[1,1]$ and the $\cll{2}$-nodes are $[-1,1]$ and $[2,2]$.
By the condition of the case \ref{case6} the colors of the nodes
$[0,0]$, $[-2,0]$, $[2,0]$, $[3,1]$, $[0,2]$, $[-1,3]$, $[1,3]$, $[0,4]$,
$[-3,1]$
differ from $\cll{1}$ and $\cll{2}$.
Denote by $\cll{3}$ and $\cll{4}$ the colors of $[0,0]$ and $[0,3]$ respectively.
At the pictures, an even node marked by~$*$ is not a $\{\cll{1},\cll{2}\}$-node.
\begin{center}
\begin{tikzpicture}
\def\3{\ccolor{yellow!40!white}{$3$}}
\def\s{++(1,0) node {$*$}}
\draw (0,4) \0\0\0\s\0\0;
\draw (0,3) \0\0\s\4\s\0;
\draw (0,2) \0\1\0\s\0\2;
\draw (0,1) \s\0\2\0\1\0\s;
\draw (0,0) \0\s\0\3\0\s;
\end{tikzpicture}
\end{center}
Using the condition of case \ref{case6}, we see that
$[-1,-1]$ is a $\cll{1}$-node, $[1,-1]$ is a $\cll{2}$-node,
$[0,-2]$ is not a $\{\cll{1},\cll{2}\}$-node.
\begin{center}
\begin{tikzpicture}
\def\3{\ccolor{yellow!40!white}{$3$}}
\def\s{++(1,0)  node[cell] {$*$}}
\def\n{\ccolor{white}{\makebox[0mm][c]{$3$}\makebox[0mm][c]{$/$}}}
\draw (0,6) \0\0\0\s\0;
\draw (0,5) \0\0\s\4\s\0;
\draw (0,4) \0\1\0\s\0\2;
\draw (0,3) \s\0\2\0\1\0\s;
\draw (0,2) \0\s\0\3\0\s;
\draw (0,1) \0\0\1\0\2\0;
\draw (0,0) \0\0\0\s\0\0;
\end{tikzpicture}
\end{center}
From the picture above, we see
 \begin{equation}
  \label{eq:3243}
 d^3(\cll{2},\cll{4}) > 0.
 \end{equation}
 If $[2,-2]$ is not a $\cll{1}$-node,
 then $[3,-1]$ and $[1,-3]$ are $\cll{1}$-nodes, by
 the condition of case~\ref{case6}.
\begin{center}
\begin{tikzpicture}
\def\3{\ccolor{yellow!40!white}{$3$}}
\def\s{++(1,0)  node[cell] {$*$}}
\def\n{\ccolor{white}{\makebox[0mm][c]{$1^{?\!\!\!}$}\makebox[0mm][c]{$/$}}}
\def\l{\ccolor{blue!30!green!30!white}{$1^{?\!\!\!}$}}
\draw (0,6) \0\0\0\s\0;
\draw (0,5) \0\0\s\4\s\0;
\draw (0,4) \0\1\0\s\0\2;
\draw (0,3) \s\0\2\0\1\0\s;
\draw (0,2) \0\s\0\3\0\s;
\draw (0,1) \0\0\1\0\2\0\l; \draw (5,1) node [cell, very thick, draw=cyan] {};
\draw (0,0) \0\0\0\s\0\n;
\draw (0,-1)\0\0\0\0\l\0;
\end{tikzpicture}
\end{center}
But then we have a contradiction with~\eqref{eq:3243},
because there is no room to place a $\cll{4}$-node at distance $3$ from
the $\cll{2}$-node $[1,-1]$ (taking into account that the $\cll{4}$-nodes
are of type~$0{:}0$).
Hence, the color of $[2,-2]$ is $\cll{1}$.
\begin{center}
\begin{tikzpicture}
\def\3{\ccolor{yellow!40!white}{$3$}}
\def\s{++(1,0)  node[cell] {$*$}}
\def\n{\ccolor{white}{\makebox[0mm][c]{$1^{?\!\!\!}$}\makebox[0mm][c]{$/$}}}
\def\l{\ccolor{blue!30!green!30!white}{$1^{?\!\!\!}$}}
\draw (0,6) \0\0\0\s\0;
\draw (0,5) \0\0\s\4\s\0;
\draw (0,4) \0\1\0\s\0\2;
\draw (0,3) \s\0\2\0\1\0\s;
\draw (0,2) \0\s\0\3\0\s\0\s;
\draw (0,1) \0\0\1\0\2\0\s; ;
\draw (0,0) \0\0\0\s\0\1;
\end{tikzpicture}
\end{center}
We see that $d^2(\cll{1},\cll{3})=2$,
$d^2(\cll{2},\cll{3})=2$, and  $d^2(\cll{2},\cll{1})=4$.
Applying the last two  equalities to $[2,2]$
and noting that a $\cll{3}$-node cannot be neighbor to a type $0{:}0$ node,
we find that there is only one way to color
the nodes $[2,4]$, $[3,3]$, and $[4,2]$:
\begin{center}
\begin{tikzpicture}
\def\3{\ccolor{yellow!40!white}{$3$}}
\def\s{++(1,0)  node[cell] {$*$}}
\draw (0,6) \0\0\0\s\0\1;
\draw (0,5) \0\0\s\4\s\0\3;
\draw (0,4) \0\1\0\s\0\2\0\1;
\draw (0,3) \s\0\2\0\1\0\s;
\draw (0,2) \0\s\0\3\0\s\0\s;
\draw (0,1) \0\0\1\0\2\0\s; ;
\draw (0,0) \0\0\0\s\0\1;
\draw [mrk/.style={cell, very thick, draw=red!40}] (8,4) node [mrk] {}
(7,3) node [mrk] {}
(6,2) node [mrk] {}
(7,5) node [mrk] {}
(5,3) node [mrk] {}
(6,6) node [mrk] {}
(5,5) node [mrk] {}
(4,4) node [mrk] {};
\end{tikzpicture}
\end{center}
Directly from the condition of case \ref{case6}, we find two more $\cll{2}$-nodes:
\begin{center}
\begin{tikzpicture}
\def\3{\ccolor{yellow!40!white}{$3$}}
\def\s{++(1,0)  node[cell] {$*$}}
\draw (0,7) \0\0\0\0\2\0;
\draw (0,6) \0\0\0\s\0\1;
\draw (0,5) \0\0\s\4\s\0\3;
\draw (0,4) \0\1\0\s\0\2\0\1;
\draw (0,3) \s\0\2\0\1\0\s\0\2;
\draw (0,2) \0\s\0\3\0\s\0\s;
\draw (0,1) \0\0\1\0\2\0\s; ;
\draw (0,0) \0\0\0\s\0\1;
\end{tikzpicture}
\end{center}
Now, by analogy, we can recover the placement of all the $\{\cll{1},\cll{2},\cll{3}\}$-nodes.
\begin{center}
\begin{tikzpicture}
\def\3{\ccolor{yellow!40!white}{$3$}}
\def\a{\ccolor{blue!15!white}{$a$}}
\def\b{++(1,0)  node[cell] {$b$}}
\def\c{++(1,0)  node[cell] {$c$}}
\def\d{++(1,0)  node[cell] {$d$}}
\def\e{++(1,0)  node[cell] {$e$}}
\draw (0,11)\0\0\0\2\0\1\0\0\0\2\0\1\0\0\0\2\0\1\0\0\0\2\0\1;
\draw (0,10)\0\a\0\0\3\0\0\a\0\0\3\0\0\a\0\0\3\0\0\a\0\0\3\0;
\draw (0,9) \0\0\0\1\0\2\0\0\0\1\0\2\0\0\0\1\0\2\0\0\0\1\0\2;
\draw (0,8) \1\0\2\0\0\0\1\0\2\0\0\0\1\0\2\0\0\0\1\0\2\0\0\0;
\draw (0,7) \0\3\0\0\4\0\0\3\0\0\4\0\0\3\0\0\4\0\0\3\0\0\4\0;
\draw (0,6) \2\0\1\0\0\0\2\0\1\0\0\0\2\0\1\0\0\0\2\0\1\0\0\0;
\draw (0,5) \0\0\0\2\0\1\0\0\0\2\0\1\0\0\0\2\0\1\0\0\0\2\0\1;
\draw (0,4) \0\a\0\0\3\0\0\a\0\0\3\0\0\a\0\0\3\0\0\a\0\0\3\0;
\draw (0,3) \0\0\d\1\0\2\0\0\0\1\0\2\0\0\0\1\0\2\0\0\0\1\0\2;
\draw (0,2) \1\b\2\e\0\0\1\0\2\0\0\0\1\0\2\0\0\0\1\0\2\0\0\0;
\draw (0,1) \c\3\c\0\4\0\0\3\0\0\4\0\0\3\0\0\4\0\0\3\0\0\4\0;
\draw (0,0) \2\b\1\0\0\0\2\0\1\0\0\0\2\0\1\0\0\0\2\0\1\0\0\0;
\end{tikzpicture}
\end{center}
Moreover, since $\dd^3(\cll{1},\cll{4})=3$ or $\dd^3(\cll{1},\cll{4})=6$, we see that there are
at most two colors of odd type $0{:}0$ nodes
and these nodes are colored
periodically with periods $[6,0]$ and $[0,6]$.
Applying the definition of a perfect coloring to the neighborhoods of $\cll{1}$- and $\cll{2}$-nodes,
we conclude that the opposite neighbors of a $\cll{3}$-node must have the same color. We separate five subcases, according to
the equalities between the colors of the marked nodes in the last picture:
\begin{itemize}
 \item[\rm (i)]  $\cll a = \cll 4$, $\cll b = \cll c$, $\cll d = \cll e$;
 \item[\rm (ii)]  $\cll a = \cll 4$, $\cll b = \cll c$, $\cll d \ne \cll e$;
 \item[\rm (iii)] $\cll a = \cll 4$, $\cll b \ne \cll c$, $\cll d = \cll e$;
 \item[\rm (iv)] $\cll a = \cll 4$, $\cll b \ne \cll c$, $\cll d \ne \cll e$;
 \item[\rm (v)]  $\cll a \ne \cll 4$  (it follow that $\cll b \ne \cll c$ and $\cll d \ne \cll e$).
\end{itemize}

Leaving the further arguments as an exercise, we formulate the result.

\begin{proposition}
In subcase \ref{case6b}, the coloring $\FfF$ is equivalent
to one of the following five colorings:
\begin{itemize}
 \item[\rm (i)] the coloring shown at Fig.~\ref{FF},
 with merged colors \cll6 and \cll7;
 \item[\rm (ii)] the coloring shown at Fig.~\ref{FF};
 \item[\rm (iii)] the coloring shown at Fig.~\ref{GG},
 with merged colors \cll7 and \cll8;
 \item[\rm (iv)] the coloring shown at Fig.~\ref{GG};
 \item[\rm (v)] the coloring shown at Fig.~\ref{HH}.
\end{itemize}
\end{proposition}

\subsubsection[The coloring F contains fragment 21..12..21]{The coloring $\FfF$ contains the following fragment:}\label{case6c}
$$
\begin{tikzpicture}
\draw (2, 4)     \o\o\2;
\draw (1, 3)   \1\o\o\o;
\draw (0, 2) \o\o\2\o\1;
\draw (0, 1) \o\o\o\o  ;
\draw (0, 0) \2\o\1    ;
\end{tikzpicture}
\ \ \raisebox{2em}{$\stackrel{\displaystyle\circlearrowright}{\longrightarrow    }$}\
\begin{tikzpicture}
\def\1{\dcolor{\colONE}{$1$}}
\def\2{\dcolor{\colTWO}{$2$}}
\def\o{\dcolor{white}{$\,$}}
\draw (1\DK, 4\DK)       \o\1\o   ++(2\DK,0) node[anchor=west] {--- diagonal $D_0$};
\draw (0.5\DK, 3.5\DK)  \o\o\o\o ++(3.5\DK, 0) node[anchor=west] {--- diagonal $D_1$};
\draw (0\DK, 3\DK)     \2\o\2\o\2 ++(1\DK,0) node[anchor=west] {--- diagonal $D_2$};
\draw (0.5\DK, 2.5\DK)  \o\o\o\o ++(3.5\DK, 0) node[anchor=west] {--- diagonal $D_3$};
\draw (1\DK, 2\DK)       \1\o\1   ++(2\DK,0) node[anchor=west] {--- diagonal $D_4$};
\end{tikzpicture}
$$
In this subsection, we will draw the grid $45^\circ$ rotated
and give indexed names to R-diagonals as illustrated in the figure above.

By the hypothesis of case~\ref{case6}, all neighbors
of a $\{\cll{1},\cll{2}\}$-node are of type $1{:}1$.
Subsequently applying this condition to the nodes
of the diagonals $D_1$ and $D_3$, we uniquely reconstruct
all $\{\cll{1},\cll{2}\}$-nodes
in the diagonals $D_0$, $D_2$, and $D_4$:
\begin{center}\begin{tikzpicture}
\def\1{\dcolor{\colONE}{$1$}}
\def\2{\dcolor{\colTWO}{$2$}}
\def\o{\dcolor{white}{$\,$}}
\def\t{ ++(1\DK, 0) node {$\cdots$}}
\draw (0\DK, 4\DK) \t\1\o\1\o\1\o\1\o\1\o\1\o\1\t ;
\draw (0\DK, 3\DK) \t\2\o\2\o\2\o\2\o\2\o\2\o\2\t ;
\draw (0\DK, 2\DK) \t\o\1\o\1\o\1\o\1\o\1\o\1\o\t ;
\end{tikzpicture}\end{center}

\begin{claim}
  The diagonal $D_0$ is binary.
\end{claim}
\begin{proof}
  Assume the contrary. Then $D_0$ contains
  two nodes of different colors,
  say~$\cll{3}$ and~$\cll{4}$,
  at distance~$4$ from each other.
\begin{center}\begin{tikzpicture}
\def\1{\dcolor{\colONE}{$1$}}
\def\2{\dcolor{\colTWO}{$2$}}
\def\3{\dcolor{red!15!white}{$3$}}
\def\4{\dcolor{orange!20!white}{$4$}}
\def\o{\dcolor{white}{$\,$}}
\def\d{\dcolor{white}{$\cdots$}}
\draw (0\DK, 4\DK) \1\o\1\3\1\4\1\o\1 ;
\draw (0\DK, 3\DK) \2\o\2\o\2\o\2\o\2 ;
\draw (0\DK, 2\DK) \o\1\o\1\o\1\o\1\o ;
\end{tikzpicture}\end{center}
  Considering the central $\cll{2}$-node
  of the diagonal~$D_2$ in the diagram above, we see that
  $d^2(\cll{2},\cll{i})$
  is odd for $\cll{i}=\cll{3},\cll{4}$
  and even for any other $\cll{i}$.
  Since $\cll{1}$ and $\cll{2}$ are twins,
  the same holds for $d^2(\cll{1},\cll{i})$.
  Considering the other $\cll{2}$-nodes
  of $D_2$, we see that
  the colors of the diagonal~$D_0$ alternate like
  $\cll{1}$, $\cll{3}$, $\cll{1}$, $\cll{4}$, $\cll{1}$, $\cll{3}$, $\cll{1}$, $\cll{4}$, \ldots
  (we will talk about a $\dll{1}{3}\dll{1}{4}$-diagonal
  in this case).
  Considering the $\cll{1}$-nodes
  of the diagonal~$D_4$ (recall that $d^2(\cll{1},\cll{3})$ and $d^2(\cll{1},\cll{3})$ are odd), we see that the diagonal~$D_6$
  is a $\dll{2}{3}\dll{2}{4}$-diagonal,
  and the situation is, up to the reflection,
  as in the following figure:
\begin{center}\begin{tikzpicture}
\def\1{\dcolor{\colONE}{$1$}}
\def\2{\dcolor{\colTWO}{$2$}}
\def\3{\dcolor{red!15!white}{$3$}}
\def\4{\dcolor{orange!20!white}{$4$}}
\def\o{\dcolor{white}{$\,$}}
\def\t{ ++(1\DK, 0) node {$\cdots$}}
\draw (0\DK, 4\DK) \t\1\4\1\3\1\4\1\3\1\4\1\3\1\t ;
\draw (0\DK, 3\DK) \t\2\o\2\o\2\o\2\o\2\o\2\o\2\t ;
\draw (0\DK, 2\DK) \t\o\1\o\1\o\1\o\1\o\1\o\1\o\t ;
\draw (0\DK, 1\DK) \t\3\2\4\2\3\2\4\2\3\2\4\2\3\t ;
\end{tikzpicture}\end{center}
As we see from the diagonal~$D_0$, $d^2(\cll{1},\cll{3})$
and $d^2(\cll{1},\cll{4})$ are not less than~$2$.
Since they are known to be odd, at least one of these two values is $3$.
Let, w.l.o.g., $d^2(\cll{1},\cll{3})=3$.
Then $d^2(\cll{2},\cll{3})=3$ too.
Considering these values for the bold $\cll{1}$-node
$  \bar u+[3,1]$
and $\cll{2}$-node $  \bar u+[1,1]$
from the diagonals~$D_2$ and~$D_4$, see the diagram
\begin{center}\begin{tikzpicture}
\def\1{\dcolor{\colONE}{$1$}}
\def\2{\dcolor{\colTWO}{$2$}}
\def\9{\dcolor{\colONE}{$\mathbf{1}$}}
\def\8{\dcolor{\colTWO}{$\mathbf{2}$}}
\def\3{\dcolor{red!15!white}{$3$}}
\def\4{\dcolor{orange!20!white}{$4$}}
\def\a{\dcolor{white}{$\bar{u}$}}
\def\b{\dcolor{white}{$\bar{v}$}}
\def\B{\dcolor{white}{$\bar{w}$}}
\def\d{\dcolor{white}{$\cdot$}}
\def\o{\dcolor{white}{$\,$}}
\def\t{ ++(1\DK, 0) node {$\cdots$}}
\draw (0\DK, 4\DK) \t\1\4\1\3\1\4\1\3\1\4\1\3\1\t ;
\draw (0\DK, 3\DK) \t\2\o\2\a\8\o\2\o\2\d\2\o\2\t ;
\draw (0\DK, 2\DK) \t\B\1\o\1\o\9\b\1\o\1\o\1\d\t ;
\draw (0\DK, 1\DK) \t\3\2\4\2\3\2\4\2\3\2\4\2\3\t ;
\draw [mrk/.style={dell, very thick, draw=red!30!white}]
(5\DK, 2\DK) node [mrk] {}
(5\DK, 3\DK) node [mrk] {}
(5\DK, 4\DK) node [mrk] {}
(6\DK, 4\DK) node [mrk] {}
(7\DK, 2\DK) node [mrk] {}
(7\DK, 4\DK) node [mrk] {};
\draw [mrk/.style={dell, very thick, draw=blue!30!white}]
(6\DK, 1\DK) node [mrk] {}
(6\DK, 2\DK) node [mrk, draw=purple!30!white] {}
(6\DK, 3\DK) node [mrk] {}
(7\DK, 1\DK) node [mrk] {}
(7\DK, 3\DK) node [mrk, draw=purple!30!white] {}
(8\DK, 1\DK) node [mrk] {}
(8\DK, 2\DK) node [mrk] {}
(8\DK, 3\DK) node [mrk] {};
\end{tikzpicture}\end{center}
we find the following:
\begin{claim}\label{cl:uvw}
A node~$\bar u$ from~$D_2$
has color~$\cll{3}$ if and only if $\bar v$ is a $\cll{3}$-node too,
where  $\bar v= \bar u+[4,2]$
(similarly, for $\bar w= \bar u-[4,2]$).
\end{claim}
Starting from an arbitrary $\cll{3}$-node in~$D_2$ or~$D_4$
and applying Claim~~\ref{cl:uvw},
we inductively derive that
the $\cll{3}$-nodes of the diagonal~$D_2$,
as well as of~$D_4$, are placed with period $[6,6]$:
\begin{center}\begin{tikzpicture}
\def\1{\dcolor{\colONE}{$1$}}
\def\2{\dcolor{\colTWO}{$2$}}
\def\8{\dcolor{green!40!white}{$\mathbf 2$}}
\def\3{\dcolor{red!15!white}{$3$}}
\def\4{\dcolor{orange!20!white}{$4$}}
\def\a{\dcolor{white}{$\mathbf{!}$}}
\def\b{\dcolor{white}{$\mathbf{?}$}}
\def\o{\dcolor{white}{$\ $}}
\def\t{ ++(1\DK, 0) node {$\cdots$}}
\draw (0\DK, 4\DK) \t\1\4\1\3\1\4\1\3\1\4\1\3\1\t;
\draw (4.5\DK, 3.5\DK)      \a\o\b\b;
\draw (0\DK, 3\DK) \t\2\o\2\3\2\o\2\o\2\3\2\o\2\t;
\draw (6.5\DK, 2.5\DK)          \b\b;
\draw (0\DK, 2\DK) \t\3\1\o\1\o\1\3\1\o\1\o\1\3\t;
\draw (0\DK, 1\DK) \t\3\2\4\2\3\2\4\2\3\2\4\2\3\t;
\end{tikzpicture}\end{center}
Now we see an obvious contradiction with the definition
of a perfect coloring: there are nodes with the neighbors of colors
$\cll{1}$, $\cll{2}$, $\cll{3}$, $\cll{3}$; but there is
a $\cll{2}$-node that is not adjacent with such a node.
The contradiction proves the statement.
\end{proof}

Thus, there is a binary diagonal.
Well, it is not a $\dll{1}{2}$-diagonal,
but nevertheless, its colors are twin and
hence the situation is described in some other considered case,
\ref{case1}, \ref{case2}, \ref{case3}, or \ref{case6d}.

\subsubsection[The coloring F contains
a 12-diagonal]{The coloring $\FfF$ contains
a $\dll{1}{2}$-diagonal}\label{case6d}

Without loss of generality, we assume that $\FfF$ contains
a $\dll{1}{2}$-R-diagonal
through $[0,0]$.
\begin{center}\begin{tikzpicture}
\def\1{\dcolor{\colONE}{$1$}}
\def\2{\dcolor{\colTWO}{$2$}}
\def\t{ ++(1\DK, 0) node {$\cdots$}}
\draw (0\DK, 1\DK) \t\1\2\1\2\1\2\1\2\1\2\1\2\1\t;
\end{tikzpicture}\end{center}
\begin{proposition}\label{p:44}
Either the coloring is periodic with period $[2,2]$ or $[4,4]$,
or the two diagonals at distance $4$ from the given ``main'' $\dll{1}{2}$-diagonal
are $\dll{1}{2}$-diagonals too.
\end{proposition}
\begin{proof}
We first note that if some nodes $[x,x+1]$ and $[x+1,x]$
have colors $\cll{a}$ and $\cll{b}$, then $[x+2,x+3]$ and $[x+3,x+2]$
have, respectively, the colors $\cll{a}$ and $\cll{b}$ or $\cll{b}$ and $\cll{a}$;
the same is true for $[x-2,x-1]$ and $[x-1,x-2]$:
\begin{center}
\def\1{\dcolor{\colONE}{$1$}}
\def\2{\dcolor{\colTWO}{$2$}}
\def\0{ ++(1\DK, 0) }
\def\a{\dcolor{white}{${a}$}}
\def\b{\dcolor{white}{${b}$}}
\begin{tabular}{c}
 \begin{tikzpicture}
\draw (1.5\DK, 0.5\DK) \a\0\a;
\draw (1.5\DK, 1.5\DK) \b\0\b;
\draw (0\DK, 1\DK) \1\2\1\2\1\2;
\end{tikzpicture}
\end{tabular}
\qquad or \qquad
\begin{tabular}{c}
\begin{tikzpicture}
\draw (1.5\DK, 0.5\DK) \a\0\b;
\draw (1.5\DK, 1.5\DK) \b\0\a;
\draw (0\DK, 1\DK) \1\2\1\2\1\2;
\end{tikzpicture}
\end{tabular}
\end{center}
If both diagonals neighbor to the considered diagonal are periodic with period $[4,4]$,
then the coloring is periodic too. Otherwise there is one of the following two fragments:
\begin{center}
\def\1{\dcolor{\colONE}{$1$}}
\def\2{\dcolor{\colTWO}{$2$}}
\def\0{ ++(1\DK, 0) }
\def\a{\dcolor{white}{${a}$}}
\def\b{\dcolor{white}{${b}$}}
\def\c{\dcolor{white}{${c}$}}
\def\d{\dcolor{white}{${d}$}}
\begin{tabular}{c}
\begin{tikzpicture}
\draw (1.5\DK, 0.5\DK) \a\c\a\c\b;
\draw (1.5\DK, 1.5\DK) \b\d\b\d\a;
\draw (0\DK, 1\DK) \1\2\1\2\1\2\1\2;
\end{tikzpicture}
\end{tabular}\quad or \quad
\begin{tabular}{c}
\begin{tikzpicture}
\draw (1.5\DK, 0.5\DK) \a\c\a\d\b;
\draw (1.5\DK, 1.5\DK) \b\d\b\c\a;
\draw (0\DK, 1\DK) \1\2\1\2\1\2\1\2;
\end{tikzpicture}
\end{tabular}
\end{center}
where $\cll{a}\neq\cll{b}$ but other equalities are allowed.
We see that in both cases $d^2(\cll{c},\cll{a})\geq 4$ and $d^2(\cll{c},\cll{b})\geq 3$;
hence, there are $\{\cll{a},\cll{b}\}$-nodes at distance $3$ from the main diagonal.
It follows that there are $\{\cll{1},\cll{2}\}$-nodes at distance $4$ from the main diagonal.
By the hypothesis of case~\ref{case6}, such a node must belong to a $\dll{1}{2}$-diagonal.
Similarly, there is an ``opposite'', with respect to the main diagonal, $\dll{1}{2}$-diagonal.
\end{proof}
We separate Case~\ref{case6d} into two subcases.


\paragraph
[The coloring F is not {[4,4]}-periodic]
{The coloring $\FfF$ is not periodic with period $[4,4]$ or $[2,2]$}
\label{case6d1}

By Proposition~\ref{p:44}, the $\dll{1}{2}$-diagonals in $\FfF$ occur
with period $[2,-2]$.
The other even nodes also constitute
R-diagonals
that occur with period $[2,-2]$.
\begin{claim}\label{cl:chain}
  Every even
  R-diagonal
  is colored in such a way that the color of every
  of its nodes ${\bar x}$ uniquely determines the unordered pair of colors of nearest diagonal nodes
  ${\bar x}-[1,1]$ and ${\bar x}+[1,1]$ (in other words, we have a perfect coloring of the diagonal
  considered as a chain graph).
\end{claim}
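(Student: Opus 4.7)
The plan is to exploit the local structure enforced by the case~\ref{case6} hypothesis in the presence of $\dll{1}{2}$-diagonals. First I would dispose of the trivial case: if $D$ is itself a $\dll{1}{2}$-diagonal, then the colors on $D$ alternate and the unordered pair of diagonal neighbors of any $\bar x\in D$ is uniformly $\{\cll{2},\cll{2}\}$ or $\{\cll{1},\cll{1}\}$ according to whether $\FfF(\bar x)=\cll{1}$ or $\cll{2}$. Otherwise, by Proposition~\ref{p:44} combined with the non-periodicity hypothesis of case~\ref{case6d}.1, the $\dll{1}{2}$-diagonals occur at $[2,-2]$-intervals, so $D$ is flanked by two $\dll{1}{2}$-diagonals $D_+$ and $D_-$.

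Fix $\bar x\in D$ of color $\cll{c}\notin\{\cll{1},\cll{2}\}$ and let $u$ be any of its four odd neighbors. A short coordinate check shows that $u$'s two $D$-neighbors are $\bar x$ and exactly one of $\bar x\pm[1,1]$, while its other two neighbors are consecutive nodes on either $D_+$ or $D_-$, hence carry the colors $\cll{1}$ and $\cll{2}$ (one each). By the case~\ref{case6} hypothesis, $u$ is of type $1{:}1$, so the quotient-determined multiset of neighbor colors of $u$ is $\{\cll{1},\cll{2},\cll{c},\cll{d}(u)\}$, where $\cll{d}(u)$ denotes the color of the remaining $D$-neighbor of $u$, which lies in $\{\FfF(\bar x+[1,1]),\FfF(\bar x-[1,1])\}$. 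Once $\cll{c}$ is fixed, $\cll{d}(u)$ is read off from $\FfF(u)$ alone by removing one copy each of $\cll{1}$, $\cll{2}$, and $\cll{c}$ from the multiset; this extraction remains unambiguous even in the degenerate subcase $\cll{d}(u)=\cll{c}$.

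The final step is the direct verification that exactly two of the four odd neighbors of $\bar x$ bridge to $\bar x+[1,1]$ (namely $\bar x+[1,0]$ and $\bar x+[0,1]$) and the other two bridge to $\bar x-[1,1]$ (namely $\bar x-[1,0]$ and $\bar x-[0,1]$). Consequently the multiset $\{\cll{d}(u):u\text{ odd neighbor of }\bar x\}$ equals $\{\cll{d}_+,\cll{d}_+,\cll{d}_-,\cll{d}_-\}$ with $\cll{d}_\pm:=\FfF(\bar x\pm[1,1])$. Since the color multiset of $\bar x$'s odd neighbors is determined by $\cll{c}$ via the quotient matrix, and each $\cll{d}(u)$ is in turn determined by $\FfF(u)$ and $\cll{c}$, this four-element multiset is a function of $\cll{c}$ alone, and hence so is the unordered pair $\{\cll{d}_+,\cll{d}_-\}$. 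The main obstacle is purely the bookkeeping: one must check that the two $\dll{1}{2}$-diagonal neighbors of each $u$ are truly consecutive on the same $\dll{1}{2}$-diagonal, which reduces to a short local coordinate computation enabled by $D_\pm$ both being $\dll{1}{2}$-diagonals.
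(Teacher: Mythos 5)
Your proof is correct and rests on the same idea as the paper's: the distance-two color information around a node is an invariant of its color, and since the two flanking right diagonals are $\dll{1}{2}$-diagonals (hence carry no color occurring on $D$ except in a fixed alternating pattern), the only distance-two even nodes whose colors can vary with $\bar x$ are $\bar x\pm[1,1]$. The paper packages this in two lines by applying Corollary~\ref{dist} with $k=2$ (the constancy of $d^2(\cll{i},\cll{j})$), whereas you unwind the same two-step count explicitly through the four odd neighbors and the quotient-matrix rows; both routes are valid and your coordinate bookkeeping checks out.
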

\begin{proof}
For every two colors $\cll{i}$ and $\cll{j}$ occuring on the considered diagonal, the value
$d^2(\cll{i},\cll{j})$ does not depend on the choice of the initial $\cll{i}$-node ${\bar x}$. Moreover,
it coincides to the number of $\cll{j}$-nodes
among ${\bar x}-[1,1]$ and ${\bar x}+[1,1]$ (indeed, the two R-diagonals at distance $2$ from ${\bar x}$ have no $\cll{j}$-nodes).
\end{proof}

\begin{proposition}\label{p:6d1}
In subcase \ref{case6d1},
one of the following takes place.

{\rm (i)} The hypothesis and the conclusion
of Lemma~\ref{l:even} {(and hence, of Corollary~\ref{c:evenodd})} are satisfied {both for even and odd nodes.}

{\rm (ii)} The coloring is equivalent,
up to shifting  $\dll{1}{2}$- and $\dll{3}{4}$- diagonals,
to the coloring in Fig.~\ref{JJ}.

\end{proposition}
\begin{proof}
We already know that
 $\dll{1}{2}$-R-diagonals
occur at distance $4$ from each other.
Let us focus on an R-diagonal at distance~$2$
from a $\dll{1}{2}$-diagonal. As shown above
(see the proof of Proposition~\ref{p:44}),
there is a fragment like the following:
\def\s{\dcolor{black!4}{$*$}}
\def\t{ ++(1\DK, 0) node {$\cdots$}}
\def\o{\dcolor{white}{$\ $}}
\dfc{cD}{2e5c3a}\dfc{cD}{009527}\def\1{\Dcolor{cD!25}{cD!40}{$1$}}
\dfc{cC}{65a370}\dfc{cC}{18b234}\def\2{\Dcolor{green!45!cC!30}{cC!45}{$2$}}
\dfc{cA}{914019}\dfc{cA}{914019}\def\a{\Dcolor{red!8!cA!25}{red!5!cA!45}{$a$}}
\dfc{cB}{845042}\dfc{cB}{915319}\def\b{\Dcolor{green!8!cB!30}{green!5!cB!50}{$b$}}
\dfc{cF}{d36808}
\dfc{cE}{bcaa1f}\dfc{cE}{e7cc00}\def\c{\DDolor{cE!65}{cE!65}{210}{$c$}}
\dfc{cH}{f0ab3a}\dfc{cH}{ffb131}\def\d{\DDolor{cF!40!cH!60}{cF!30!cH!60}{210}{$d$}}
\dfc{cG}{a08c04}\dfc{cG}{ffde00}\def\f{\DDolor{cG!35}{cG!35}{210}{$e$}}
\dfc{cF}{d36808}\dfc{cF}{f97600}\def\e{\DDolor{cF!90!cH!35}{cF!70!cH!35}{210}{$f$}}
\begin{center}
\begin{tikzpicture}
\draw (1.5\DK, 0.5\DK) \a\c\a\o\b;
\draw (1.5\DK, 1.5\DK) \b\d\b\o\a;
\draw (0\DK, 1\DK) \1\2\1\2\1\2\1\2;
\end{tikzpicture}
\end{center}
Now consider the neighbors of the $\cll{a}$-nodes.
If the fragment expands like
\def\g{\DDolor{black!10}{black!4}{210}{$e$}}
\def\h{\DDolor{black!15}{black!4}{210}{$f$}}
\begin{center}
\begin{tikzpicture}
\draw (1.5\DK, 1.5\DK)  \b\d\b\o\a;
\draw (0\DK, 1\DK)   \1\2\1\2\1\2\1\2;
\draw (1.5\DK, 0.5\DK)  \a\c\a\o\b;
\draw (1\DK, 0\DK)     \g\h\g\h;
\end{tikzpicture}
\end{center}
for some (maybe equal) $\cll{e}$ and $\cll{f}$,
then by Claim~\ref{cl:chain} there is an $\dll{e}{f}$-diagonal.
It follows immediately that all even R-diagonals are either
$\dll{1}{2}$ or $\dll{e}{f}$.
Then all colors of nodes of the same parity are mutually twin,
and p.\,(i) of the proposition statement takes place.

It remains to consider the following case,
where $\cll{f}\ne\cll{e}$:
$$
\begin{tikzpicture}
\draw (1.5\DK, 1.5\DK)  \b\d\b\o\a;
\draw (0\DK, 1\DK)   \1\2\1\2\1\2\1\2;
\draw (1.5\DK, 0.5\DK)  \a\c\a\s\b;
\draw (1\DK, 0\DK)     \e\f\f\e;
\end{tikzpicture}
$$
Since the $\cll{c}$-nodes have no $\cll{f}$-neighbors,
the node marked by $*$ at the figure above
is a $\cll{d}$-node:
$$
\begin{tikztab}
\phantom{\draw (4\DK, 2\DK) \f;}
\draw (4\DK, 2\DK)           \dfant;
\draw (1.5\DK, 1.5\DK)  \b\d\b\o\a;
\draw (0\DK, 1\DK)   \1\2\1\2\1\2\1\2;
\draw (1.5\DK, 0.5\DK)  \a\c\a\d\b;
\draw (1\DK, 0\DK)     \e\f\f\e;
\end{tikztab}
\ \ \Longrightarrow\ \
\begin{tikztab}
\draw (4\DK, 2\DK)           \f\f;
\draw (1.5\DK, 1.5\DK)  \b\d\b\c\a;
\draw (0\DK, 1\DK)   \1\2\1\2\1\2\1\2;
\draw (1.5\DK, 0.5\DK)  \a\c\a\d\b;
\draw (1\DK, 0\DK)     \e\f\f\e;
\end{tikztab}
$$
Since $d^2(\cll{e},\cll{f})=2$, we get
$$
\begin{tikzpicture}
\draw (3\DK, 2\DK)         \e\f\f\e;
\draw (1.5\DK, 1.5\DK)  \b\d\b\c\a;
\draw (0\DK, 1\DK)   \1\2\1\2\1\2\1\2;
\draw (1.5\DK, 0.5\DK)  \a\c\a\d\b;
\draw (1\DK, 0\DK)     \e\f\f\e;
\end{tikzpicture}
$$
Now we see that a $\cll{b}$-node
has neighbors of colors $\cll{e}$ and $\cll{f}$.
By Claim~\ref{cl:chain}, a $\dll{e}{e}\dll{f}{f}$-diagonal
is uniquely reconstructed:
$$
\begin{tikzpicture}
\draw (3\DK, 2\DK)         \e\f\f\e;
\draw (1.5\DK, 1.5\DK)  \b\d\b\c\a;
\draw (-1\DK, 1\DK)\t\1\2\1\2\1\2\1\2\t;
\draw (1.5\DK, 0.5\DK)  \a\c\a\d\b;
\draw (-1\DK, 0\DK)\t\e\e\f\f\e\e\f\f\t;
\end{tikzpicture}
$$
It is easy to see now that the whole coloring, up to equivalence
and shifting binary diagonals, is as in p.\,(ii) of the proposition statement.
\end{proof}

\paragraph
[The coloring F is {[4,4]}-periodic]
{The coloring $\FfF$ is periodic with period $[4,4]$}
\label{case6d2}

In this subcase, $\FfF$ has a $\dll{1}{2}$-R-diagonal and is periodic
with period~$[4,4]$.
We further divide this subcase depending on the colors of nodes
at distance~$1$ and~$2$ from the $\dll{1}{2}$-diagonal.

\noindent\subparagraph%
[There is ab-diagonal neighbor to 12-diagonal]
{There is an $\dll{a}{b}$-diagonal neighbor to a $\dll{1}{2}$-diagonal}
\label{case6d2i}
In this subcase, the conclusion of Theorem~\ref{th:main}
holds by Proposition~\ref{p:twobin}.

\subparagraph%
[There is ab-diagonal at distance 2 from 12-diagonal]
{There is an $\dll{a}{b}$-diagonal at distance $2$ from a $\dll{1}{2}$-diagonal}
\label{case6d2ii}
Let us consider the diagonal between the $\dll{a}{b}$-diagonal and the $\dll{1}{2}$-diagonal.
If it is binary or one-color, then subcase~\ref{case6d2i} takes place.
Otherwise, it contains two nodes at distance~$4$ colored with different colors,
say~$\cll c$ and~$\cll d$:

\definecolor{yell}{rgb}{0.98,0.98,0.45}
\def\1{\dcolor{\colONE}{$1$}}
\def\2{\dcolor{\colTWO}{$2$}}
\def\0{ ++(1\DK, 0) }
\def\o{\dcolor{white}{$\ $}}
$$
\def\a{\dcolor{orange!20!white}{$a$}}
\def\b{\dcolor{red!25!orange!20!white}{$b$}}
\def\A{\dcolor{orange!20!white}{$\underline a$}}
\def\B{\dcolor{red!25!orange!20!white}{$\underline b$}}
\def\c{\Dcolor{green!15!yell}{white}{$c$}}
\def\d{\Dcolor{yell}{white}{$d$}}
\def\t{ ++(1\DK, 0) node {$\cdots$}}
\begin{tikztab}
\draw (1\DK, 2\DK)     \dfant;
\draw (0.5\DK, 1.5\DK)\t\b\A\B\a\t;
\draw (1\DK, 1\DK)     \0\c\0\d;
\draw (0.5\DK, 0.5\DK)\t\1\2\1\2\t;
\end{tikztab}
\Longrightarrow
\begin{tikztab}
\draw (1\DK, 2\DK)     \0\d\0\c;
\draw (0.5\DK, 1.5\DK)\t\b\a\b\a\t;
\draw (1\DK, 1\DK)     \0\c\0\d;
\draw (0.5\DK, 0.5\DK)\t\1\2\1\2\t;
\end{tikztab}
$$
Since $\cll a$ and $\cll b$ are either the same or twins,
we find that there are $\{\cll c,\cll d\}$-nodes at distance $3$ from
the $\dll{1}{2}$-diagonal.
Hence, there are $\{\cll 1,\cll 2\}$-nodes at distance $4$ from
the $\dll{1}{2}$-diagonal; of course,
{they belong to a $\dll{1}{2}$-diagonal
because neighbors of $\{\cll1,\cll2\}$ nodes are of type~$1{:}1$.}
Similarly, there is an $\dll{a}{b}$-diagonal at distance~$4$ from
the considered $\dll{a}{b}$-diagonal.
Now we can see that all odd colors are mutually twin and we get the situation
described in {Proposition~\ref{p:6d1}(i)},
with the only nonessential difference that now there is the period~$[4,4]$.

In the following subcases we automatically assume that the R-diagonals
at distance~$2$ from every $\dll{1}{2}$-diagonal are not one-color or binary.

\subparagraph
[There is aaab-diagonal neighbor to 12-diagonal]
{There is an $\dll{a}{a}\dll{a}{b}$-diagonal
neighbor to a $\dll{1}{2}$-diagonal, $\cll{a}\ne\cll{b}$}\label{case6d2iii}
\def\0{ ++(1\DK, 0) }
\def\1{\dcolor{\colONE}{$1$}}
\def\2{\dcolor{\colTWO}{$2$}}
\def\a{\Dcolor{green!15!yell}{white}{$a$}}
\def\b{\Dcolor{yell}{white}{$b$}}
\def\t{ ++(1\DK, 0) node {$\cdots$}}
\def\c{\Dcolor{red!15!yell!80!white  }{white}{$c$}}
\def\s{\dcolor{white}{$*$}}
\def\o{\dcolor{white}{}}
$$
\begin{tikzpicture}
\draw (0.5\DK, 1.5\DK)\0\b\a\a\a\b\a\a;
\draw (0\DK, 1\DK)   \t\1\2\1\2\1\2\1\2\t;
\end{tikzpicture}
$$
It is easy to find that the diagonal next to the $\dll{a}{a}\dll{a}{b}$-diagonal
(at distance $2$ from the  $\dll{1}{2}$-diagonal) is one-color or binary.

\subparagraph
[There is aabc-diagonal neighbor to 12-diagonal]
{There is an $\dll{a}{a}\dll{b}{c}$-diagonal
neighbor to a $\dll{1}{2}$-diagonal, $\cll{a}\ne\cll{b}\ne\cll{c}\ne\cll{a}$}
\label{case6d2iv}

\def\a{\Dcolor{cyan!55!blue!20!white}{white}{$a$}}
\def\d{\dcolor{green!35!black!19!white}{$d$}}
\def\f{\dcolor{orange!40!red!15!white}{$f$}}
\definecolor{oell}{rgb}{0.95,0.92,0.55}
\def\e{\dcolor{cyan!30!blue!16!white}{$e$}}
$$
\begin{tikzpicture}
\draw (0.5\DK, 1.5\DK)\0\b\c\a\a\b\c\a;
\draw (0\DK, 1\DK)   \t\1\2\1\2\1\2\1\2\t;
\end{tikzpicture}
$$
We have:
$$
\begin{tikzpicture}
\draw (0\DK, 2\DK)   \t\d\f\d\e\d\f\d\e\t;
\draw (0.5\DK, 1.5\DK)\0\b\c\a\a\b\c\a;
\draw (0\DK, 1\DK)   \t\1\2\1\2\1\2\1\2\t;
\end{tikzpicture}
$$
where $\cll{f}\ne\cll{e}$ (otherwise, we obtain a $\dll{d}{e}$-diagonal
and return to subcase~\ref{case6d2ii}).
We see that a $\cll d$-node has neighbors of all three colors
$\cll a$, $\cll b$, and $\cll c$.
This means that there are two $\{\cll 1,\cll 2\}$-diagonals at distance $2$
from a $\cll d$-node:
$$
\begin{tikzpicture}
\draw (0\DK, 3\DK)   \t\1\2\1\2\1\2\1\2\t;
\draw (0\DK, 2\DK)   \0\d\f\d\e\d\f\d\e;
\draw (0.5\DK, 1.5\DK)\0\b\c\a\a\b\c\a;
\draw (0\DK, 1\DK)   \t\1\2\1\2\1\2\1\2\t;
\end{tikzpicture}
$$
{Since
$d^2(\cll d,\cll f)=2\ne d^2(\cll e,\cll f)$,
we see that $\cll d$ is different from~$\cll e$.
Similarly, since $d^2(\cll d,\cll e)=2\ne d^2(\cll f,\cll e)$,
we have $\cll d \ne \cll f$.}
Since $\{\cll b,\cll c\}$-nodes do not have $\cll e$-neighbors,
every $\cll e$-node has four $\cll a$-neighbors
(looking to the neighborhoods of $\{\cll 1,\cll 2\}$-nodes,
we find that there are only three odd colors).
Then the whole coloring can be reconstructed up to shifting $\dll 12$-diagonals:
$$
\begin{tikzpicture}
\draw (0\DK, 5\DK)   \1\2\1\2\1\2\1\2\1\2\1\2;
\draw (0.5\DK, 4.5\DK)\a\a\b\c\a\a\b\c\a\a\b;
\draw (0\DK, 4\DK)   \d\e\d\f\d\e\d\f\d\e\d\f;
\draw (0.5\DK, 3.5\DK)\a\a\c\b\a\a\c\b\a\a\c;
\draw (0\DK, 3\DK)   \1\2\1\2\1\2\1\2\1\2\1\2;
\draw (0.5\DK, 2.5\DK)\c\b\a\a\c\b\a\a\c\b\a;
\draw (0\DK, 2\DK)   \d\f\d\e\d\f\d\e\d\f\d\e;
\draw (0.5\DK, 1.5\DK)\b\c\a\a\b\c\a\a\b\c\a;
\draw (0\DK, 1\DK)   \1\2\1\2\1\2\1\2\1\2\1\2;
\end{tikzpicture}
$$
We get the coloring
shown in Fig.~\ref{AA}
with the following color correspondence:
$$
\begin{array}{r||c|c|c|c|c|c|c|c}
 \FfF: & \cll 1 & \cll 2 & \cll{a} & \cll{b}  & \cll{c} & \cll{d} & \cll{e}  & \cll{f}\\ \hline
 \mbox{Fig.~\ref{AA}}:
 & \cll 4 & \cll 5 & \cll{7} \cup \cll{8} & \cll 2  & \cll 3 & \cll 6 & \cll 9  & \cll 1 \\
\end{array}
$$

\begin{remark}
The colors $\cll b$ and $\cll c$ are twin,
and we considered colorings with
such two twin colors in Case~\ref{case5}, which also leads
to the coloring in Fig.~\ref{AA} in one of the subcases.
\end{remark}

\subparagraph
[There is aabb-diagonal
neighbor to 12-diagonal]
{There is an $\dll{a}{a}\dll{b}{b}$-diagonal
neighbor to a $\dll{1}{2}$-diagonal, $\cll{a}\ne\cll{b}$}
\label{case6d2v}

$$
\def\1{\dcolor{\colONE}{$1$}}
\def\2{\dcolor{\colTWO}{$2$}}
\def\0{ ++(1\DK, 0) }
\def\o{\dcolor{white}{$\ $}}
\def\a{\Dcolor{yellow!60!white         }{white}{$a$}}
\def\b{\Dcolor{green!30!yellow!55!white}{white}{$b$}}
\def\c{\dcolor{cyan!30!blue!8!white}{$c$}}
\def\d{\dcolor{red!10!white}{$d$}}
\def\f{\dcolor{purple!70!blue!20!white}{$e$}}
\def\e{\dcolor{red!50!orange!35!white}{$f$}}
\def\s{\dcolor{white}{$*$}}
\def\t{ ++(1\DK, 0) node {$\cdots$}}
\begin{tikzpicture}
\draw (0.5\DK, 1.5\DK)\b\b\a\a\b\b\a;
\draw (0\DK, 1\DK)   \1\2\1\2\1\2\1\2;
\phantom{\draw (1.5\DK, 0.5\DK)  \a;}
\end{tikzpicture}
\ \ \raisebox{1.5em}{$\Longrightarrow$}\ \
\begin{tikzpicture}
\draw (0.5\DK, 1.5\DK)\b\b\a\a\b\b\a;
\draw (0\DK, 1\DK)   \1\2\1\2\1\2\1\2;
\draw (0.5\DK, 0.5\DK)\a\a\b\b\a\a\b;
\end{tikzpicture}
$$
It is immediately follows that the $\dll{1}{2}$-diagonal is surrounded by two
$\dll{a}{a}\dll{b}{b}$-diagonals, as in the figure.
Next, a diagonal, which is not binary or one-color,
at distance~$2$ from the $\dll{1}{2}$-diagonal must be a
a $\dll cd\dll ed$-diagonal
for some distinct $\cll c$, $\cll d$, and $\cll e$.
\vspace{0.3em}

\begin{proposition}\label{p:aabb}
In subcase~\ref{case6d2v}, the coloring has
the following structure, up to renaming colors.
Every $\dll{1}{2}$-diagonal is surrounded by two
$\dll{3}{3}\dll{4}{4}$-diagonals.
Each of the $\dll{3}{3}\dll{4}{4}$-diagonals is followed
by a $\dll{5}{6}\dll{7}{6}$-diagonal.
Then, a  $\dll{8}{8}\dll{9}{9}$-diagonal can follow;
then a $\DDll{10}{11}\DDll{12}{11}$-diagonal, and so on,
until we meet one of the following fragments:


\def\krotovspec{\hskip-13pt}

\begin{enumerate}[itemsep=0pt]
\def\t{ ++(1\DK, 0) node {$\cdots$}}
\dfc{cI}{f0f936}
\dfc{cJ}{fcc807}\def\j{\Dcolor{cJ}{white}{$j$}}
\dfc{cK}{fea602}\def\k{\Dcolor{cK}{white}{$k$}}
\dfc{cL}{ec6f17}\def\l{\dcolor{cL!50}{$l$}}
\dfc{cM}{bb3509}\def\m{\dcolor{cM!40}{$m$}}
\dfc{cN}{721b10}\def\n{\dcolor{cN!30}{$n$}}
\def\s{\dcolor{cN!50!cM!40}{$m$}}
 \item[\rm(a)]\krotovspec
\begin{tikztab}
\draw (0.5\DK, 1.5\DK)\t\k\k\j\j\k\k\j\t;
\draw (1\DK, 1\DK)     \m\n\m\n\m\n\m\n;
\draw (0.5\DK, 0.5\DK)\t\j\j\k\k\j\j\k\t;
\end{tikztab}
\!\!\!or\!\!\!\!\!
\begin{tikztab}
\draw (0.5\DK, 1.5\DK)\t\k\k\j\j\k\k\j\t;
\draw (1\DK, 1\DK)     \s\s\s\s\s\s\s\s;
\draw (0.5\DK, 0.5\DK)\t\j\j\k\k\j\j\k\t;
\end{tikztab}
 \item[\rm(b)]\krotovspec
\begin{tikztab}
\draw (0.5\DK, 1.5\DK)\t\k\k\j\j\k\k\j\t;
\draw (1\DK, 1\DK)     \m\n\m\l\m\n\m\l;
\draw (0.5\DK, 0.5\DK)\t\k\k\j\j\k\k\j\t;
\end{tikztab}
 \def\i{\dcolor{cI!65}{$i$}}
 \def\j{\dcolor{cJ!60}{$j$}}
 \def\k{\dcolor{red!10!cK!45}{$k$}}
 \def\m{\Dcolor{cM!50}{white}{$m$}}
 \def\n{\Dcolor{cN!50}{white}{$n$}}
 \def\s{\Dcolor{cN!50!cM!50}{white}{$m$}}
 \item[\rm(c)]\krotovspec
\begin{tikztab}
\draw (0.5\DK, 1.5\DK)\t\i\j\k\j\i\j\k\t;
\draw (1\DK, 1\DK)     \m\n\m\n\m\n\m\n;
\draw (0.5\DK, 0.5\DK)\t\k\j\i\j\k\j\i\t;
\end{tikztab}
\!\!\!or\!\!\!\!\!\!
\begin{tikztab}
\draw (0.5\DK, 1.5\DK)\t\i\j\k\j\i\j\k\t;
\draw (1\DK, 1\DK)     \s\s\s\s\s\s\s\s;
\draw (0.5\DK, 0.5\DK)\t\k\j\i\j\k\j\i\t;
\end{tikztab}
 \item[\rm(d)]\krotovspec
\begin{tikztab}
\draw (0.5\DK, 1.5\DK)\t\i\j\k\j\i\j\k\t;
\draw (1\DK, 1\DK)     \m\m\n\n\m\m\n\n;
\draw (0.5\DK, 0.5\DK)\t\i\j\k\j\i\j\k\t;
\end{tikztab}
\end{enumerate}
After that, the same diagonals follow in the reverse order, until a $\dll 12$-diagonal;
then the situation repeats.
\end{proposition}

The corresponding colorings are shown in Fig.~\ref{MM}--\ref{PP}.
\begin{proof}
{%
We use induction on~$i$,
the distance to the $\dll12$-diagonal.
We assume that every $\dll12$-diagonal
is surrounded by two \dll33\dll44-diagonals.
Each of the \dll33\dll44-diagonals is followed
by a \dll56\dll76-diagonal. Then, a \dll88\dll99-diagonal and a \DDll{10}{11}\DDll{12}{11}-diagonal, and so
on, until we have colored a diagonal at distance~$i$ away from the initial \dll12-diagonal. We also
assume that the colors have not been exhausted,
that is, we have not yet crossed the diagonal
where the pattern folds and the colors start to repeat (those diagonals correspond, as will see,
to cases (a)--(d) in the statement of the proposition).
In what follows we will show what the
possibilities are for the diagonal at distance~$i+1$.
The arguments above
the proposition provide the induction base,
and now we can assume $i\ge 2$.}
Consider two cases according to the parity of~$i$.


(A) $i$ is even. We consider two neighbor $\dll{v}{v}\dll{w}{w}$- and $\dll{x}{y}\dll{z}{y}$-diagonals at distance $i-1$ and $i$ from
a \cll1\cll2-diagonal, respectively.
Considering the neighbors of $\cll y$-nodes, we see that the diagonal next to the
$\dll{x}{y}\dll{z}{y}$-diagonal must be an $\dll{a}{b}$- or $\dll{a}{a}\dll{b}{b}$-diagonal
for some (maybe equal) $\cll a$, $\cll b$.

At first, assume that it is an $\dll{a}{b}$-diagonal.
 \dfc{cA}{b8c58b}\def\j{\Dcolor{green!5!cA}{white}{$v$}}
 \dfc{cB}{a19d6e}\def\k{\Dcolor{cB!80}{white}{$w$}}
 \dfc{cI}{f0f936}\def\l{\dcolor{cI!65}{$x$}}
 \dfc{cJ}{fcc807}\def\m{\dcolor{cJ!60}{$y$}}
 \dfc{cK}{fea602}\def\n{\dcolor{cK!55}{$z$}}
 \dfc{cM}{bb3509}\def\a{\Dcolor{cM!50}{white}{$a$}}
 \dfc{cN}{721b10}\def\b{\Dcolor{cN!50}{white}{$b$}}
\def\t{ ++(1\DK, 0) node {$\cdots$}}
\begin{center}
\begin{tikzpicture}
\draw (0.5\DK, 1.5\DK)\t\b\a\b\a\b\a\b\t;
\draw (1\DK, 1\DK)     \m\n\m\l\m\n\m\l;
\draw (0.5\DK, 0.5\DK)\t\k\k\j\j\k\k\j\t;
\end{tikzpicture}
\end{center}

The colors $\cll a$ and $\cll b$ are the same or twins, and considering the neighborhoods
of the corresponding nodes, we find positions of some $\{\cll x,\cll z\}$-nodes;
then we uniquely color the neighborhoods of these nodes:
\begin{center}
\def\o{\dcolor{cJ!8}{$*$}}
\begin{tikzpicture}
\draw (0.5\DK, 2.5\DK)\t\j\j\k\k\j\j\k\t;
\draw (1\DK, 2\DK)     \o\l\o\n\o\l\o\n;
\draw (0.5\DK, 1.5\DK)\t\b\a\b\a\b\a\b\t;
\draw (1\DK, 1\DK)     \m\n\m\l\m\n\m\l;
\draw (0.5\DK, 0.5\DK)\t\k\k\j\j\k\k\j\t;
\end{tikzpicture}
\end{center}
We note that $\cll{a},\cll{b}\not\in\{\cll{v},\cll{w}\}$
(indeed, each $\{\cll{v},\cll{w}\}$-node
has two non-$\{\cll{x},\cll{y},\cll{z}\}$ neighbors).
Now we see that the nodes marked by $*$ in the figure must be colored with
$\cll y$ (as we see from the four previous diagonals, $\cll y$ has no twins).
So, we obtain fragment (c).

Now consider the second variant:
\begin{center}
\def\a{\dcolor{black!3}{$a$}}
\def\b{\dcolor{black!3}{$b$}}
\begin{tikzpicture}
\draw (0.5\DK, 1.5\DK)\t\b\b\a\a\b\b\a\t;
\draw (1\DK, 1\DK)     \m\n\m\l\m\n\m\l;
\draw (0.5\DK, 0.5\DK)\t\k\k\j\j\k\k\j\t;
\end{tikzpicture}
\end{center}
where $\cll a \ne \cll b$. It is not difficult to see that
either $\cll a=\cll v$ and $\cll b=\cll w$
or $\cll a$ and $\cll b$
are ``new'' colors (that is, they do not occur between the
$\dll{x}{y}\dll{z}{y}$-diagonal and the nearest $\dll{1}{2}$-diagonal).
{%
Indeed, if $\cll a$ is an old color, then
$\cll a = \cll v$ because all other old colors have no
$\cll x$-neighbors.
Similarly, if $\cll b$ is old, then
$\cll b = \cll w$.
If $\cll a$ is new, then we see $d^2(\cll w, \cll a)=1$
while $d^2(\cll b, \cll a)\ge 2$;
so, $\cll b\ne \cll w$ and hence $\cll b$ is new too.
Similarly, if $\cll b$ is new, then $\cll a$ is new.
}

In the first case, we have fragment (b);
in the second, we move to the next step and consider
the $\dll{x}{y}\dll{z}{y}$- and $\dll{a}{a}\dll{b}{b}$-diagonals,
see p.\,(B) of the proof.

(B) $i$ is odd. We consider two neighbor $\dll{u}{v}\dll{w}{v}$-
and $\dll{x}{x}\dll{y}{y}$-diagonals at distance $i-1$ and $i$ from
a \cll1\cll2-diagonal, respectively.
Considering the neighbors of $\cll y$-nodes,
we see that the diagonal next to the
$\dll{x}{x}\dll{y}{y}$-diagonal must be an $\dll{a}{b}\dll{c}{b}$-diagonal
for some $\cll a$, $\cll b$, $\cll c$.
\def\t{ ++(1\DK, 0) node {$\cdots$}}
\dfc{cL}{ec6f17}
\def\a{\dcolor{black!50!cL!4}{$a$}}
\def\b{\dcolor{black!50!cM!4}{$b$}}
\def\c{\dcolor{black!50!cN!4}{$c$}}
\dfc{cJ}{fcc807}\def\l{\Dcolor{cJ}{white}{$x$}}
\dfc{cK}{fea602}\def\m{\Dcolor{cK}{white}{$y$}}
\dfc{ccL}{cabb70}\def\u{\dcolor{yellow!10!ccL!70}{$u$}}
\dfc{ccM}{b8c58b}\def\v{\dcolor{ccM!70}{$v$}}
\dfc{ccN}{a19d6e}\def\w{\dcolor{ccN!60}{$w$}}
\def\0{ ++(1\DK, 0) }
\def\s{\Dcolor{black!6}{white}{$*$}}
\begin{center}
\begin{tikzpicture}
\draw (0.5\DK, 1.5\DK)\t\a\b\c\b\a\b\c\t;
\draw (1\DK, 1\DK)     \l\l\m\m\l\l\m\m;
\draw (0.5\DK, 0.5\DK)\t\u\v\w\v\u\v\w\t;
\end{tikzpicture}
\end{center}
At first, assume $\cll a=\cll c$,
i.e., we have an $\dll ab$-diagonal, where
$\cll a$ and $\cll b$ are the same color or twins.
As every $\{\cll a,\cll b\}$-node
has two $\cll x$- and two $\cll y$-neighbors,
the next diagonal is uniquely colored.
We get fragment (a).

Then, assume $\cll c=\cll b$:
\begin{center}
\begin{tikzpicture}
\draw (1\DK, 2\DK)     \0\0\0\0\s\0\0\0;
\draw (0.5\DK, 1.5\DK)\t\a\b\b\b\a\b\b\t;
\draw (1\DK, 1\DK)     \l\l\m\m\l\l\m\m;
\draw (0.5\DK, 0.5\DK)\t\u\v\w\v\u\v\w\t;
\end{tikzpicture}
\end{center}
Considering the neighborhoods of $\cll b$-nodes,
we find that the node marked by $*$ has the color $\cll y$.
Since the $\cll y$-nodes have neighbors of colors
$\cll v$, $\cll w$, $\cll b$ only, we conclude that $\cll a=\cll b$;
hence, $\cll a=\cll c$, which case was considered above.

The last possibility is $\cll a\ne\cll b\ne\cll c\ne\cll a$.
If $\{\cll a,\cll b,\cll c\} \in \{\cll u,\cll v,\cll w\}$, then
the color of a $\{\cll a,\cll b,\cll c\}$-node is uniquely determined
by the occurence of $\cll x$- or $\cll y$-nodes in its neighborhood.
Namely, $\cll a = \cll u$, $\cll b = \cll v$, $\cll c = \cll w$.
We get fragment (d).
Otherwise, $\cll a$,  $\cll b$, and $\cll c$ are new colors,
and we continue considering the $\dll{x}{x}\dll{y}{y}$
and $\dll{a}{b}\dll{c}{b}$--diagonals
with p.\,(A) of the current proof.

As the number of colors is finite,
we have to meet one of the fragments (a)--(d) at some step.
\end{proof}

\subparagraph
[There is abcb-diagonal neighbor to 12-diagonal]
{There is an $\dll{a}{b}\dll{c}{b}$-diagonal
neighbor to a $\dll{1}{2}$-diagonal, $\cll{a}\ne\cll{b}\ne\cll{c}\ne\cll{a}$}
\label{case6d2vi}
{
\def\1{\dcolor{\colONE}{$1$}}
\def\2{\dcolor{\colTWO}{$2$}}
\def\0{ ++(1\DK, 0) }
\def\o{\dfant}
\def\a{\Dcolor{yellow!60!white}{white}{$a$}}
\def\b{\Dcolor{green!30!yellow!55!white}{white}{$b$}}
\def\c{\dcolor{cyan!30!blue!8!white}{$c$}}
\def\d{\dcolor{red!10!white}{$d$}}
\def\f{\dcolor{purple!70!blue!20!white}{$e$}}
\def\e{\dcolor{red!50!orange!35!white}{$f$}}
\def\s{\dcolor{black!5}{$*$}}
\def\t{ ++(1\DK, 0) node {$\cdots$}}
%
\dfc{cO}{b7d7b3}\def\1{\dcolor{cO!90}{$1$}}
\dfc{cT}{adbe64}\def\2{\dcolor{cT!80}{$2$}}
\dfc{cA}{f4ccdc}\def\a{\Dcolor{cA}{cA!50}{$a$}}
\dfc{cB}{dd98ab}\def\b{\Dcolor{cB!85}{cB!45}{$b$}}
\dfc{cC}{dea78c}\def\c{\Dcolor{cC}{cC!50}{$c$}}

\dfc{cD}{d5e3f8}\def\d{\dcolor{cD}{$d$}}
\dfc{cF}{a5bce6}\def\e{\dcolor{cF!75}{$e$}}

\noindent
$
\begin{tikztab}
\draw (0\DK, 2\DK)   \o;
\draw (0.5\DK, 1.5\DK)\a\b\c\b\a\b\c\b;
\draw (0\DK, 1\DK)   \t\2\1\2\1\2\1\2\t;
\end{tikztab}
\!\!\!\!\Longrightarrow\!\!\!\!\!
\begin{tikztab}
\draw (0\DK, 2\DK)   \t\d\e\e\d\d\e\e\t;
\draw (0.5\DK, 1.5\DK)\a\b\c\b\a\b\c\b;
\draw (0\DK, 1\DK)   \t\2\1\2\1\2\1\2\t;
\end{tikztab}
$

Considering the neighborhoods of the $\cll b$-nodes,
we find that the next diagonal is a $\dll dd\dll ee$-diagonal
for some different $\cll d$ and $\cll e$
(recall that we avoid the possibility for this diagonal to be binary or one-color,
considered in subcase~\ref{case6d2ii}).
We see that every $\{\cll 1,\cll 2\}$-node has neighbors of
colors $\cll a$, $\cll b$, and $\cll c$. This allows us to reconstruct
the colors of the following nodes:
$$
\begin{tikzpicture}
\draw (0\DK, 2\DK)   \t\d\e\e\d\d\e\e\t;
\draw (0.5\DK, 1.5\DK)\a\b\c\b\a\b\c\b;
\draw (0\DK, 1\DK)   \t\2\1\2\1\2\1\2\t;
\draw (0.5\DK, 0.5\DK)\c\s\a\s\c\s\a\s;
\draw (0\DK, 0\DK)   \t\e\d\d\e\e\d\d\t;
\end{tikzpicture}
$$
If the nodes marked by $*$ have a color different from $\cll b$,
then there are $\cll b$-nodes at distance~$3$ from the $\dll 12$-diagonal,
and, hence, there are $\{\cll 1,\cll 2\}$-nodes at distance~$4$
from the $\dll 12$-diagonal. We finally obtain the situation of
Proposition~\ref{p:6d1}(ii), i.e. the coloring is equivalent,
up to shifting of $\dll 12$-diagonals, to the coloring of Fig.~\ref{JJ}.

Otherwise, we have the situation as follows,
which can be solved similarly to subcase~\ref{case6d2v}:
$$
\begin{tikzpicture}
\draw (0\DK, 2\DK)   \t\d\e\e\d\d\e\e\t;
\draw (0.5\DK, 1.5\DK)\a\b\c\b\a\b\c\b;
\draw (0\DK, 1\DK)   \t\2\1\2\1\2\1\2\t;
\draw (0.5\DK, 0.5\DK)\c\b\a\b\c\b\a\b;
\draw (0\DK, 0\DK)   \t\e\d\d\e\e\d\d\t;
\end{tikzpicture}
$$
}

\begin{proposition}\label{p:abcb}
In the subcase \ref{case6d2vi}, the coloring
either is described in Proposition~\ref{p:6d1}
or has the following structure, up to renaming colors:
Every $\dll{1}{2}$-diagonal is surrounded by two
$\dll{3}{4}\dll{5}{4}$-diagonals.
Each of the $\dll{3}{4}\dll{5}{4}$-diagonals is followed
by a $\dll{6}{6}\dll{7}{7}$-diagonal.
Then, a  $\dll{8}{9}\Ddll{10}{9}$-diagonal can follow;
then a $\DDll{11}{11}\DDll{12}{12}$-diagonal, and so on,
until we meet one of the  fragments {\rm (a)--(d)} of
Proposition~\ref{p:aabb}.
After that, the same diagonals follow in the reverse order, until a $\dll 12$-diagonal;
then the situation repeats.
\end{proposition}
The corresponding colorings are shown in Fig.~\ref{OO}, \ref{SS}, \ref{QQ}, and~\ref{RR} (the special case $n=7$ corresponds to in Fig.~\ref{JJ} with merged colors~\cll3 and~\cll4).

\subparagraph
[There is abcd-diagonal neighbor to 12-diagonal]
{There is an $\dll{a}{b}\dll{c}{d}$-diagonal
neighbor to a $\dll{1}{2}$-diagonal; $\cll{a}$--$\cll{d}$ are pairwise distinct}
\label{case6d2vii}

\smallskip\noindent
$
\def\1{\dcolor{\colONE}{$1$}}
\def\2{\dcolor{\colTWO}{$2$}}
\def\o{\dcolor{white}{$\ $}}
\def\a{\Dcolor{yellow!60!white}{white}{$a$}}
\def\b{\Dcolor{green!30!yellow!55!white}{white}{$b$}}
\def\c{\dcolor{cyan!30!blue!8!white}{$c$}}
\def\d{\dcolor{red!10!white}{$d$}}
\def\e{\dcolor{purple!60!blue!15!white}{$a$}}
\begin{tikzpicture}
\draw (0.5\DK, 1.5\DK)\b\c\d\e\b\c\d\e;
\draw (0\DK, 1\DK)   \1\2\1\2\1\2\1\2\1;
\phantom{\draw (0.5\DK, 0.5\DK)\o;}
\end{tikzpicture}
\ \ \raisebox{1.5em}{$\Longrightarrow$}\ \
\begin{tikzpicture}
\draw (0.5\DK, 1.5\DK)\b\c\d\e\b\c\d\e;
\draw (0\DK, 1\DK)   \1\2\1\2\1\2\1\2\1;
\draw (0.5\DK, 0.5\DK)\d\e\b\c\d\e\b\c;
\end{tikzpicture}
$

\noindent
This subcase is considered similarly
to the previous two subcases, resulting in:
\begin{proposition}\label{p:abcd}
In the subcase~\ref{case6d2vii}, the coloring has
the following structure, up to renaming colors.
Every $\dll{1}{2}$-diagonal is surrounded by two
$\dll{3}{4}\dll{5}{6}$-diagonals.
Then, every $\dll{a}{b}\dll{c}{d}$-diagonal is followed
by an $\dll{e}{f}\dll{g}{h}$-diagonal,
where $h=g+1=f+2=e+3=d+4=c+5=b+6=a+7\equiv 2 \bmod 4$,
until we meet one of the following fragments:
\begin{center}
\def\1{\dcolor{\colONE}{$1$}}
\def\2{\dcolor{\colTWO}{$2$}}
\def\0{ ++(1\DK, 0) }
\def\i{\Dcolor{blue!15!white  }{white}{$k$}}
\def\j{\Dcolor{blue!25!white  }{white}{$l$}}
\def\k{\Dcolor{blue!35!white  }{white}{$m$}}
\def\l{\Dcolor{blue!45!white  }{white}{$n$}}
\def\p{\Dcolor{purple!15!white}{white}{$o$}}
\def\n{\Dcolor{purple!30!white}{white}{$p$}}
\def\m{\Dcolor{purple!40!white}{purple!5!white}{$q$}}
\def\q{\Dcolor{purple!50!white}{purple!10!white}{$r$}}
\def\P{\dcolor{purple!15!white}{$p$}}
\def\Q{\dcolor{purple!25!white}{$q$}}
\def\t{ ++(1\DK, 0) node {$\cdots$}}
\rm{(a)}
\raisebox{-0.5\height}{
\begin{tikzpicture}
\draw (0.5\DK, 1.5\DK)\t\k\l\i\j\k\l\i\j\t;
\draw (1\DK, 1\DK)     \P\Q\P\Q\P\Q\P\Q\P;
\draw (0.5\DK, 0.5\DK)\t\i\j\k\l\i\j\k\l\t;
\end{tikzpicture}}
\\[3pt] \rm{(b)}
\raisebox{-0.5\height}{\begin{tikzpicture}
\draw (0.5\DK, 1.5\DK)\t\i\l\k\j\i\l\k\j\t;
\draw (1\DK, 1\DK)     \n\n\m\m\n\n\m\m\n;
\draw (0.5\DK, 0.5\DK)\t\i\j\k\l\i\j\k\l\t;
\end{tikzpicture}}
\\[3pt] \rm{(c)}
\raisebox{-0.5\height}{\begin{tikzpicture}
\draw (0.5\DK, 1.5\DK)\t\j\i\l\k\j\i\l\k\t;
\draw (1\DK, 1\DK)     \n\p\n\m\n\p\n\m\n;
\draw (0.5\DK, 0.5\DK)\t\i\j\k\l\i\j\k\l\t;
\end{tikzpicture}}
\\[3pt] \rm{(d)}
\raisebox{-0.5\height}{\begin{tikzpicture}
\draw (0.5\DK, 1.5\DK)\t\i\j\k\l\i\j\k\l\t;
\draw (1\DK, 1\DK)     \m\n\p\q\m\n\p\q\m;
\draw (0.5\DK, 0.5\DK)\t\i\j\k\l\i\j\k\l\t;
\end{tikzpicture}}
\end{center}
{\rm (}where all symbols denote different colors except maybe \cll{p} and \cll{q} in {\rm(a))}. After that, the same diagonals follow in the reverse order, until a $\dll 12$-diagonal;
then the situation repeats.
\end{proposition}

The corresponding colorings are shown in Fig.~\ref{TT}, \ref{UU}, \ref{VV}, and~\ref{WW}.

\section{Proof of Theorem~\ref{th:01}}\label{s:p2}

We start with an obvious claim,
then prove two lemmas,
considering two special cases,
then give a proof of Theorem~\ref{th:01},
which consists of crucial Proposition~\ref{p:01} and a short concluding part.

\begin{claim}\label{cl:1122}
For a perfect coloring of $G(\ZZ^2)$
with quotient $\{0,1\}$-matrix with zero diagonal and without equal rows,
 any two nodes with difference in
 $\{\,[ 0,\pm 1]$, $[\pm 1, 0]$, $[\pm 1,\pm 1]$, $[0,\pm 2]$, $[\pm 2,0]$, $[\pm 2,\pm 2]\,\}$
 are colored with different colors.
\end{claim}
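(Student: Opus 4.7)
The plan is to do cases on the difference $\bar v - \bar u$; the first two groups are standard, and only the diagonal case $[\pm 2,\pm 2]$ uses the no-equal-rows hypothesis.

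For $\bar v - \bar u \in\{[0,\pm 1],[\pm 1,0]\}$ the statement is immediate from $S_{ii}=0$. For $\bar v - \bar u \in\{[\pm 1,\pm 1],[0,\pm 2],[\pm 2,0]\}$, the nodes $\bar u$ and $\bar v$ share at least one common neighbor $\bar w$; if $\FfF(\bar u)=\FfF(\bar v)=\cll{i}$ and $\FfF(\bar w)=\cll{j}$, then $\bar w$ has two $\cll{i}$-colored neighbors, so $S_{\scll{j}\scll{i}}\ge 2$, contradicting that $S$ is a $\{0,1\}$-matrix.

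The essential case is $\bar v = \bar u+[2,2]$ (the other three signs are symmetric). Suppose for contradiction $\FfF(\bar u)=\FfF(\bar v)=\cll{i}$, and set $\cll{m}:=\FfF(\bar u+[1,1])$. Label the colors of the four grid-neighbors of $\bar u$ by $\cll{a},\cll{b},\cll{e},\cll{f}$ (at $\bar u+[1,0], \bar u+[0,1], \bar u-[1,0], \bar u-[0,1]$) and those of $\bar v$ by $\cll{c},\cll{d},\cll{g},\cll{h}$ (at $\bar v-[0,1], \bar v-[1,0], \bar v+[1,0], \bar v+[0,1]$). All pairwise differences among the positions of $\bar u$'s (resp.\ $\bar v$'s) neighbors lie in the already-treated list, so $\cll{a},\cll{b},\cll{e},\cll{f}$ are four distinct colors, hence form the full quotient-neighbor set of $\cll{i}$; the same holds for $\{\cll{c},\cll{d},\cll{g},\cll{h}\}$, so the two sets coincide. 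A parallel calculation at the midpoint $\bar u+[1,1]$, whose four grid-neighbors carry the colors $\cll{a},\cll{b},\cll{c},\cll{d}$, shows these four are pairwise distinct (again by the already-treated differences) and form $\cll{m}$'s quotient-neighbor set.

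The endgame is a short combinatorial observation: since $\cll{a}\ne\cll{c}$ (difference $[1,1]$) and $\cll{a}\ne\cll{d}$ (difference $[0,2]$), the set equality $\{\cll{a},\cll{b},\cll{e},\cll{f}\}=\{\cll{c},\cll{d},\cll{g},\cll{h}\}$ forces $\cll{a}\in\{\cll{g},\cll{h}\}$; by the same reason $\cll{b}\in\{\cll{g},\cll{h}\}$, and since $\cll{a}\ne\cll{b}$ one concludes $\{\cll{a},\cll{b}\}=\{\cll{g},\cll{h}\}$ and $\{\cll{e},\cll{f}\}=\{\cll{c},\cll{d}\}$. Hence $\cll{i}$'s neighbor set equals $\{\cll{a},\cll{b},\cll{c},\cll{d}\}$, which is $\cll{m}$'s neighbor set; as $S$ is $\{0,1\}$-valued, rows $\cll{i}$ and $\cll{m}$ of $S$ coincide, and since $\cll{i}\ne\cll{m}$ (difference $[1,1]$), this contradicts the no-equal-rows hypothesis. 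The main obstacle is spotting the right pair of colors whose rows will collide --- namely $\cll{i}$ and $\cll{m}$ --- after which the argument reduces to a degree count in the quotient graph.
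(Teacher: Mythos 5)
Your proof is correct: the first two groups of differences follow from the zero diagonal and the $\{0,1\}$ entries via a common neighbor, and your treatment of $[\pm 2,\pm 2]$ correctly exploits that the four neighbor colors of an $\cll{i}$-node must exhaust the quotient-neighborhood of $\cll{i}$ (row sum $4$ with $\{0,1\}$ entries), forcing rows $\cll{i}$ and $\cll{m}$ of $S$ to coincide and contradicting the no-equal-rows hypothesis. The paper states this claim without proof, calling it obvious, so there is nothing to compare against; your argument is a complete and correct justification of exactly the kind the author presumably had in mind.
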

\begin{proof}
The only non-trivial case is with the difference of form $[\pm 2,\pm 2]$.
If, w.l.o.g., $\FfF\Lft 0,0\Rgt =\FfF\Lft 2,2\Rgt =\cll{1}$ and $\FfF\Lft 1,1\Rgt =\cll{2}$, then
each of $[0,1]$, $[0,1]$, $[1,2]$, $[2,1]$ is adjacent to a $\cll{1}$-node and to a $\cll{2}$-node.
Since the colors of $[0,1]$, $[0,1]$, $[1,2]$, $[2,1]$ are pairwise different,
we see that the $1$st and $2$nd rows of the quotient matrix
coincide, contradicting the hypothesis of the claim.
\end{proof}

\def\O{\ccolor{white}{\ }}
\def\o{ ++(1, 0) }
\def\0{\ccolor{gray!20!white}{$0$}}
\def\1{\ccolor{purple!10!white}{$1$}}
\def\2{\ccolor{green!17!white}{$2$}}
\def\3{\ccolor{cyan!15!white}{$3$}}
\def\4{\ccolor{red!10!white}{$4$}}
\def\5{\ccolor{yellow!30!white}{$5$}}
\def\6{\ccolor{blue!10!white}{$6$}}
\def\7{\ccolor{gray!10!white}{$7$}}
\def\A{\ccolor{white}{$A$}}
\def\B{\ccolor{white}{$B$}}
\def\C{\ccolor{white}{$C$}}
\def\D{\ccolor{white}{$D$}}
\def\E{\ccolor{white}{$E$}}
\def\F{\ccolor{white}{$F$}}
\def\G{\ccolor{white}{$G$}}
\def\H{\ccolor{white}{$H$}}
\def\q{\ccolor{white}{$?$}}
\def\s{++(1,0) node {$*$}}

\begin{lemma}\label{l:+}
If a bipartite perfect coloring $\FfF$ of $G(\ZZ^2)$ with quotient $\{0,1\}$-matrix contains the fragment
$$
 \begin{tikzpicture}
\draw (0,4) \o\o\o\5\o\o;
\draw (0,3) \o\o\4\B\1\o;
\draw (0,2) \o\6\A\0\C\6;
\draw (0,1) \o\o\3\D\2\o;
\draw (0,0) \o\o\o\5\o\o;
\end{tikzpicture}
$$
then $\FfF$, up to equivalence, is one of the following
two colorings, with periods, respectively, $[4,0]$ and $[0,4]$,
$[4,0]$ and $[0,8]$.
$$
\raisebox{2em}{}
 \begin{tikzpicture}
\draw (0,4) \7\G\5\H\7\G\5\H\7;
\draw (0,3) \E\4\B\1\E\4\B\1\E;
\draw (0,2) \6\A\0\C\6\A\0\C\6;
\draw (0,1) \F\3\D\2\F\3\D\2\F;
\draw (0,0) \7\G\5\H\7\G\5\H\7;
\draw (0,-1)\E\4\B\1\E\4\B\1\E;
\draw (0,-2)\6\A\0\C\6\A\0\C\6;
\draw (0,-3)\F\3\D\2\F\3\D\2\F;
\draw (0,-4)\7\G\5\H\7\G\5\H\7;
\end{tikzpicture}\quad
\raisebox{2em}{}
 \begin{tikzpicture}
\draw (0,4) \7\H\5\G\7\H\5\G\7;
\draw (0,3) \E\4\B\1\E\4\B\1\E;
\draw (0,2) \6\A\0\C\6\A\0\C\6;
\draw (0,1) \F\3\D\2\F\3\D\2\F;
\draw (0,0) \7\G\5\H\7\G\5\H\7;
\draw (0,-1)\E\1\B\4\E\1\B\4\E;
\draw (0,-2)\6\C\0\A\6\C\0\A\6;
\draw (0,-3)\F\2\D\3\F\2\D\3\F;
\draw (0,-4)\7\H\5\G\7\H\5\G\7;
\end{tikzpicture}
$$
\end{lemma}
\begin{proof}
 The only \cll{1}-node in the fragment cannot be adjacent to an \cll{A}- or \cll{D}-node;
 so, its right neighbor has a color,
 say \cll{E}, different from \cll{A}, \cll{B}, \cll{C}, \cll{D}.
 The same can be said about the right neighbor of the \cll{2}-node, denote its color by \cll{F}.
 $$
 \begin{tikzpicture}
\draw (0,4) \o\o\o\5\o\o;
\draw (0,3) \o\o\4\B\1\E;
\draw (0,2) \o\6\A\0\C\6;
\draw (0,1) \o\o\3\D\2\F;
\draw (0,0) \o\o\o\5\o\o;
\end{tikzpicture}
$$
The fourth neighbor of the right \cll{6}-node has color \cll{A}.
In its turn, this \cll{A}-node has a \cll{0}-neighbor,
and it can be only the right neighbor.
$$
 \begin{tikzpicture}
\draw (0,4) \o\o\o\5\o\o;
\draw (0,3) \o\o\4\B\1\E;
\draw (0,2) \o\6\A\0\C\6\A\0;
\draw (0,1) \o\o\3\D\2\F;
\draw (0,0) \o\o\o\5\o\o;
\end{tikzpicture}
$$
There are only two possibilities for the neighborhood of the right \cll{A}-node.
In each case, the neighborhood of the right \cll{0}-node is uniquely colored:
$$
\raisebox{2em}{(a) }
 \begin{tikzpicture}
\draw (0,4) \o\o\o\5\o\o\o\5;
\draw (0,3) \o\o\4\B\1\E\4\B\1;
\draw (0,2) \o\6\A\0\C\6\A\0\C\6;
\draw (0,1) \o\o\3\D\2\F\3\D\2;
\draw (0,0) \o\o\o\5\o\o\o\5;
\end{tikzpicture}
\raisebox{2em}{\quad or\quad (b) }
\begin{tikzpicture}
\draw (0,4) \o\o\o\5\o\o\o\5;
\draw (0,3) \o\o\4\B\1\E\3\D\2;
\draw (0,2) \o\6\A\0\C\6\A\0\C\6;
\draw (0,1) \o\o\3\D\2\F\4\B\1;
\draw (0,0) \o\o\o\5\o\o\o\5;
\end{tikzpicture}
$$
The same situation takes place in the vertical direction
$$
\raisebox{2em}{(c) }
 \begin{tikzpicture}
\draw (0,4) \o\o\o\5\o\o;
\draw (0,3) \o\o\4\B\1\o;
\draw (0,2) \o\6\A\0\C\6;
\draw (0,1) \o\o\3\D\2\o;
\draw (0,0) \o\o\G\5\H\o;
\draw (0,-1)\o\o\4\B\1\o;
\draw (0,-2)\o\6\A\0\C\6;
\draw (0,-3)\o\o\3\D\2\o;
\draw (0,-4)\o\o\o\5\o\o;
\end{tikzpicture}
\raisebox{2em}{\quad or\quad (d) }
\begin{tikzpicture}
\draw (0,4) \o\o\o\5\o\o;
\draw (0,3) \o\o\4\B\1\o;
\draw (0,2) \o\6\A\0\C\6;
\draw (0,1) \o\o\3\D\2\o;
\draw (0,0) \o\o\G\5\H\o;
\draw (0,-1)\o\o\1\B\4\o;
\draw (0,-2)\o\6\C\0\A\6;
\draw (0,-3)\o\o\2\D\3\o;
\draw (0,-4)\o\o\o\5\o\o;
\end{tikzpicture}
$$
Each of (a), (b) can be combined with each of (c), (d),
see e.g. (a)+(c) and (a)+(d) in the first two pictures below, except (b)+(d), see
{%
the contradiction at
the third picture (the color of the node marked with
``?'' must be~$\cll F$ because of the $\cll 4$-node at the left;
on the other hand, it cannot be~$\cll F$ because the $\cll 7$-node
above already has an $\cll F$-neighbor).
}
$$\!\!\!\!\!\!
 \begin{tikzpicture}
\draw (0,4) \o\o\5\o\o\o\5;
\draw (0,3) \o\4\B\1\E\4\B\1;
\draw (0,2) \6\A\0\C\6\A\0\C\6;
\draw (0,1) \o\3\D\2\F\3\D\2;
\draw (0,0) \o\G\5\H\7\o\5;
\draw (0,-1)\o\4\B\1\o;
\draw (0,-2)\6\A\0\C\6 ++(3,0) node {(a)+(c)};
\draw (0,-3)\o\3\D\2\o;
\draw (0,-4)\o\o\5\o\o;
\end{tikzpicture}
\!\!\!
 \begin{tikzpicture}
\draw (0,4) \o\o\o\5\o\o\o\5;
\draw (0,3) \o\o\4\B\1\E\4\B\1;
\draw (0,2) \o\6\A\0\C\6\A\0\C\6;
\draw (0,1) \o\o\3\D\2\F\3\D\2;
\draw (0,0) \o\o\G\5\H\7\o\5;
\draw (0,-1)\o\o\1\B\4\o;
\draw (0,-2)\o\6\C\0\A\6 ++(3,0) node {(a)+(d)};
\draw (0,-3)\o\o\2\D\3\o;
\draw (0,-4)\o\o\o\5\o\o;
\end{tikzpicture}
\!\!\!
 \begin{tikzpicture}
\draw (0,4) \o\o\o\5\o\o\o\5;
\draw (0,3) \o\o\4\B\1\E\3\D\2;
\draw (0,2) \o\6\A\0\C\6\A\0\C\6;
\draw (0,1) \o\o\3\D\2\F\4\B\1;
\draw (0,0) \o\o\G\5\H\7\o\5;
\draw (0,-1)\o\o\1\B\4\q;
\draw (0,-2)\o\6\C\0\A\6 ++(3,0) node {(b)+(d)};
\draw (0,-3)\o\o\2\D\3\o;
\draw (0,-4)\o\o\o\5\o\o;
\end{tikzpicture}
$$
In  cases (a)+(c) and (a)+(d), it is easy to see that the quotient matrix and the
whole coloring are uniquely reconstructed.
Cases (a)+(d) and (b)+(c) result in equivalent colorings.
\end{proof}

Next, we consider the case where two nodes with difference in
$\{ [ \pm 3, \pm 1 ], [ \pm 1, \pm 3 ] \}$ (w.l.o.g., $[3,1]$)
have the same color.

\begin{lemma} \label{l:/}
 Let $\FfF$ be a perfect coloring with quotient $\{0,1\}$-matrix
 with zero diagonal and without equal rows.
 If $\FfF({\bar x})=\FfF( {\bar x}+[3,1]  )$ for some ${\bar x}\in\ZZ^2$, then either
 \begin{enumerate}
  \item[\rm(a)]
 $\FfF$ has $3 \beta$ colors,
 $\beta\in \{3,4,5,\ldots\}$,
 periods $[6,0]$ and $[0,\beta]$,
 and satisfies $\FfF(i,j)\equiv \FfF(i+3,-j+1)$, or
 \item[\rm(b)]
  $\FfF$ has $\alpha$ colors,
 $ \alpha \in \{5,7,8,9,\ldots\}$, and periods $[\alpha,0]$ and $[3,1]$.
 \end{enumerate}
In both cases, $\FfF$ is an orbit coloring.
\end{lemma}
\begin{proof}
Without loss of generality, assume  $\FfF\Lft 0,0\Rgt =\FfF\Lft 3,1\Rgt =\cll{1}$.
\def\O{\ccolor{white}{\ }}
\def\o{ ++(1, 0) }
\def\oo{ ++(0.5, 0) }
\def\1{\ccolor{yellow!15!white}{$1$}}
\def\2{\ccolor{cyan!10!white}{$2$}}
\def\3{\ccolor{green!10!white}{$3$}}
\def\4{\ccolor{red!10!white}{$4$}}
\def\5{\ccolor{magenta!12!white}{$5$}}
\def\6{\ccolor{blue!10!white}{$6$}}
\def\x{\ccolor{white}{$X$}}
\def\y{\ccolor{white}{$Y$}}
\def\z{\ccolor{white}{$Z$}}
\def\A{\ccolor{white}{$A$}}
\def\B{\ccolor{white}{$B$}}
\def\C{\ccolor{white}{$C$}}
\def\D{\ccolor{white}{$D$}}
\def\E{\ccolor{white}{$E$}}
\def\F{\ccolor{white}{$F$}}
\def\s{\ccolor{white}{$\star$}}
\def\q{\ccolor{white}{$?$}}
\def\nA{\ccolor{white}{$\cdot$}}
$$
\begin{tikzpicture}
\draw (0,2) \o\o\o\o\nA\o\o\o;
\draw (0,1) \o\o\o\nA\1\2\o\o;
\draw (0,0) \o\1\2\O\nA\o\o\o;
\end{tikzpicture}
$$
Denote by \cll{2} the color of $[1,0]$. By Claim~\ref{cl:1122}, the \cll{2}-neighbor
of $[3,1]$ is not $[2,1]$, $[3,0]$, or $[3,2]$. Hence, it is $[4,1]$.
Similarly, the colors of $[-1,0]$ and $[2,1]$ coincide.
By induction,
we have
\begin{equation}\label{eq:i0i1}
 {\FfF}\Lft i,0\Rgt  = {\FfF}\Lft i+3,1\Rgt  \qquad \mbox{for all } i \in \ZZ.
\end{equation}

Denote by $\alpha$ the first positive integer such that
$\FfF\Lft i,0\Rgt =\FfF\Lft i+\alpha,0\Rgt $ for some $i$.
From $\FfF\Lft i,0\Rgt =\FfF\Lft i+3,1\Rgt $ and Claim~\ref{cl:1122},
we see $\alpha\ge 5$. We consider two cases.

(a) \emph{$\alpha=6$}.
Without loss of generality, $ {\FfF}\Lft 0,0\Rgt  = {\FfF}\Lft 6,0\Rgt  $.
$$
 \begin{tikzpicture}
\draw (0,1) \o\o\o\o\1\A\2\B\3\C;
\draw (0,0) \o\1\A\2\B\3\C\1\o;
\end{tikzpicture}
$$
Similarly to \eqref{eq:i0i1}, we have
$$
 {\FfF}\Lft i,1\Rgt  = {\FfF}\Lft i+3,0\Rgt  \qquad \mbox{for all } i\in\ZZ,
$$
and we know the coloring of the two rows
$$
 \begin{tikzpicture}
\draw (0,1) \t\2\B\3\C\1\A\2\B\3\C\1\A\t;
\draw (0,0) \t\C\1\A\2\B\3\C\1\A\2\B\3\t;
\end{tikzpicture}
$$
\begin{claim}\label{cl:123abc}
There are no $\{\cll{1},\cll{2},\cll{3},
\cll{A},\cll{B},\cll{C}\}$-nodes
in the row $[i,2]$, $i \in \ZZ$.
\end{claim}
W.l.o.g.,
we will show this for \cll{1}-nodes.
If $i$ is even or $i\equiv 3\bmod 6$, then
the claim
follows from Claim~\ref{cl:1122}.
Seeking a contradiction
in the remaining case,
we assume without loss of generality
that $\FfF\Lft 1,2\Rgt =\cll{1}$.
$$
 \begin{tikzpicture}
\def\xX{\ccolor{white}{$\bar x$}}
\draw (0,2) \o\o\o\1\O\3\xX\q;
\draw (0,1) \o\2\B\3\C\1\A\2\B;
\draw (0,0) \o\C\1\A\2\B\3\C\1;
\end{tikzpicture}
$$
Then $[3,2]$ is a \cll{3}-node,
$\bar x=[4, 2]$ is an $\{\cll{A},\cll{B},\cll{C}\}$-node,
and
$[5,2]$ is an $\{\cll{1},\cll{2},\cll{3}\}$-node.
However, $\FfF\Lft 5,2\Rgt  \in \{\cll{2},\cll{3}\}$ contradicts Claim~\ref{cl:1122},
while $\FfF\Lft 5,2\Rgt =\cll{1}$ contradicts the fact that the $\cll{1}$-node $[3,1]$
is not adjacent to any $\cll{2}$-node.
The contradiction obtained proves Claim~\ref{cl:123abc}.

It immediately follows that the row $[i,2]$, $i\in \ZZ$,
is colored periodically with period $[6,0]$
with new colors. We divide this situation into two subcases, depending on the number of different colors in this row.

(a') The row $[i,2]$ is colored with less than~$6$ colors.
Then it contains
two nodes of the same color at distance~$3$ from each other.
W.l.o.g., $\FfF\Lft 0,2\Rgt =\FfF\Lft 3,2\Rgt =\cll{X}$;
denote $\cll{Y}=\FfF\Lft 2,2\Rgt $, $\cll{Z}=\FfF\Lft 1,2\Rgt $.
$$
\begin{tikzpicture}
\draw (0,2) \o\x\z\y\x;
\draw (0,1) \2\B\3\C\1\A;
\draw (0,0) \C\1\A\2\B\3\o;
\end{tikzpicture}
\raisebox{1em}{$\Rightarrow$}
\begin{tikzpicture}
\draw (0,3) \o\1\O\O\B;
\draw (0,2) \o\x\z\y\x;
\draw (0,1) \2\B\3\C\1\A ;
\draw (0,0) \C\1\A\2\B\3\o ;
\end{tikzpicture}
\raisebox{1em}{$\Rightarrow$}
\begin{tikzpicture}
\draw (0,3) \o\1\A\2\B;
\draw (0,2) \o\x\z\y\x;
\draw (0,1) \2\B\3\C\1\A ;
\draw (0,0) \C\1\A\2\B\3 ;
\end{tikzpicture}
$$
The right \cll{X}-node at the picture must have a \cll{B}-neighbor,
which has three $\{\cll{1},\cll{2},\cll{3}\}$-neighbors.
The node $[2,3]$ abobe the \cll{Y}-node can only be a \cll{2}-node.
Similarly, $[1,3]$ is an \cll{A}-node.
Now (see the last picture above)
the quotient matrix is completely determined,
and it is easy to see that the coloring is uniquely reconstructed and
meets assertion (a) of the lemma with $\beta=3$.

(a'') The row $[i,2]$ is colored with exactly $6$ colors.
$$
 \begin{tikzpicture}
\draw (0,2) \t\D\6\E\4\F\5\D\6\E\4\F\5\t;
\draw (0,1) \t\2\B\3\C\1\A\2\B\3\C\1\A\t;
\draw (0,0) \t\C\1\A\2\B\3\C\1\A\2\B\3\t;
\end{tikzpicture}
$$
It is easy to see that the row $[i,3]$, $i\in \ZZ$, has no
$\{\cll{1},\cll{2},\cll{3},\cll{A},\cll{B},\cll{C}\}$-nodes
(for example, a \cll{1}-node cannot have a $\{\cll{4},\cll{5},\cll{6},\cll{D},\cll{E}\}$-neighbor,
and an \cll{F}-node cannot have two \cll{1}-neighbors).
So, there are two possibilities for that row.

 {1.} It can have a $\{\cll{4},\cll{5},\cll{6},\cll{D},\cll{E},\cll{F}\}$-node, w.l.o.g., a $\cll 4$-node.
According to Claim~\ref{cl:1122}, that $\cll 4$-node
can only be above a $\cll{D}$-node, and by arguments similar as above
we have $  {\FfF}\Lft i,3\Rgt  = {\FfF}\Lft i-3,2\Rgt  $ for all $i \in \ZZ$.
$$
 \begin{tikzpicture}
\draw (0,3) \t\4\F\5\D\6\E\4\F\5\D\6\E\t;
\draw (0,2) \t\D\6\E\4\F\5\D\6\E\4\F\5\t;
\draw (0,1) \t\2\B\3\C\1\A\2\B\3\C\1\A\t;
\draw (0,0) \t\C\1\A\2\B\3\C\1\A\2\B\3\t;
\end{tikzpicture}
$$
From these four rows, we know the quotient matrix, and the coloring is uniquely reconstructed and satisfies
the statement of the lemma with $\beta=4$.

{2.}  In the other subcase, the row $[i,3]$, $i\in \ZZ$, is colored with new colors
so that $\FfF\Lft i,3\Rgt =\FfF\Lft i+6,3\Rgt $.
If the number of those new colors is less than~$6$,
the arguments similar to (a') show that this number is~$3$
and that assertion (a) of the lemma holds with $\beta=5$.
Otherwise, we have six new colors.

In its turn, the next row $[i,4]$, $i\in \ZZ$, either satisfies
$  {\FfF}\Lft i,4\Rgt  = {\FfF}\Lft i-3,3\Rgt $, $i \in \ZZ$,
or colored with new three of six colors.

In the last case, we apply the same argument to the row $[i,4]$, then $[i,5]$, and so on,
until the colors are exhausted after some row $[i,\gamma]$
in the sense that the number of remaining colors is less than~$6$.
If at least one new color remains, then similarly
to~(a')
the next row $[i,\gamma+1]$ is colored with period $[3,0]$ and assertion~(a)
of the lemma holds with  $\beta=2\gamma+1$.
Otherwise,
the only possibility
is
$  {\FfF}\Lft i,\gamma+1\Rgt  = {\FfF}\Lft i-3,\gamma\Rgt $, $i \in \ZZ$.
From the $\gamma+2$ rows $[i,0]$, \ldots, $[i,\gamma+1]$,
the quotient matrix and the coloring is uniquely reconstructed and satisfies
assertion~(a) of the lemma with $\beta=2\gamma$.

(b) \emph{$\alpha= 5$ or $\alpha\ge 7$}.

The colors of $[0,0]$, $[1,0]$, \ldots, $[\alpha-1,0]$ are pairwise distinct, by the definition of
$\alpha$. The node $[\alpha,0]$ (the right \cll{1}-node in the picture below) is colored with \cll{1}.

\def\1{\Ccolor{white}{red!15!white           }{$1$}}
\def\2{\Ccolor{white}{orange!20!white        }{$2$}}
\def\3{\Ccolor{white}{yellow!30!white        }{$3$}}
\def\4{\Ccolor{white}{green!15!white         }{$4$}}
\def\5{\Ccolor{white}{cyan!20!white          }{$5$}}
\def\E{\Ccolor{white}{blue!10!white          }{$l$}}
\def\L{\Ccolor{white}{blue!50!purple!10!white}{$m$}}
\def\F{\Ccolor{white}{purple!10!white}{$n$}}
$$
 \begin{tikzpicture}
\draw (0,1) \o\o\o\o\1\2\3\4\5\oo\t\oo\o\o\o\E;
\draw (0,0) \o\1\2\3\4\5\O\O\o\oo\t\oo\E\L\F\1\s\o;
\end{tikzpicture}
$$
We see that the neighborhood of a \cll{1}-node has colors
\cll{2}, \cll{4}, \cll{l}, \cll{n}. The color of $[\alpha+1,0]$
(marked by $\star$) cannot be \cll{l} or \cll{n}.
And it cannot be \cll{4}, because
$\FfF\Lft 3,0\Rgt =\FfF\Lft \alpha+1,0\Rgt $ contradicts the definition of $\alpha$.
So, it is \cll{2}. Analogously, the color of $[2,1]$, and hence, of $[-1,0]$, is \cll{n}.
Using similar arguments, we can prove by induction that
$\FfF\Lft i,0\Rgt =\FfF\Lft i+\alpha,0\Rgt $ for any $ i \in \ZZ $, i.e., the row $[i,0]$ (and hence, $[i,1]$)
is colored with $\alpha$ colors in a cyclic way.

Next, we see that a \cll{4}-node has a
\cll{1}-neighbor.
Also we know that $\FfF\Lft 6,0\Rgt \ne \cll{1}$ because $\alpha\ne 6$.
Considering the \cll{4}-node $[6,1]$, we find that the color of $[6,2]$ is \cll{1}:
$$
\def\nE{\ccolor{white}{$\not1$}}
 \begin{tikzpicture}
\draw (0,1) \o\o\o\o\1\2\3\4\5;
\draw (0,0) \o\1\2\3\4\5\O\nE\o;
\end{tikzpicture}\ \ \ \
\raisebox{1em}{$\Rightarrow$}
 \begin{tikzpicture}
\draw (0,2) \o\o\o\o\o\o\o\1;
\draw (0,1) \o\o\o\o\1\2\3\4\5 ;
\draw (0,0) \o\1\2\3\4\5\O\O\o ;
\end{tikzpicture}
$$
Therefore, $\FfF\Lft i,2\Rgt  = \FfF\Lft i-3,1\Rgt  = \FfF\Lft i-6,0\Rgt $ for every $ i \in \ZZ $.
Similarly, $\FfF$ is periodic with period $[3,1]$.
\end{proof}

Given a perfect coloring $\FfF$ of $G( \ZZ^2 )$,
we say that two subsets $M$ and $N$ of $\ZZ^2$ are colored \emph{similarly}
if $\varphi(M)=(N)$ and $\FfF(\varphi(\cdot))\equiv\FfF(\cdot)$ for some automorphism
$\varphi$ of $ G( \ZZ^2 )$.

\begin{proposition}\label{p:01}
 Let $\FfF$ be a perfect coloring of $G(\ZZ^2)$ with quotient $\{0,1\}$-matrix with zero diagonal and without equal rows. Then every two radius-$2$ balls centered in nodes of the same color
 are colored similarly.
\end{proposition}

\begin{proof}
Consider a radius-$2$ ball and denote colors of its nodes as follows:
 \def\O{\ccolor{white}{\ }}
\def\o{ ++(1, 0) }
\def\0{\ccolor{gray!10!white}{$0$}}
\def\1{\ccolor{gray!10!white}{$a$}}
\def\2{\ccolor{gray!10!white}{$b$}}
\def\3{\ccolor{gray!10!white}{$c$}}
\def\4{\ccolor{gray!10!white}{$d$}}
\def\5{\ccolor{gray!10!white}{$e$}}
\def\6{\ccolor{gray!10!white}{$f$}}
\def\7{\ccolor{gray!10!white}{$g$}}
\def\8{\ccolor{gray!10!white}{$h$}}
\def\A{\ccolor{white}{$A$}}
\def\B{\ccolor{white}{$B$}}
\def\C{\ccolor{white}{$C$}}
\def\D{\ccolor{white}{$D$}}
\def\F{\ccolor{white}{$\star$}}
\def\D{\ccolor{white}{$D$}}
\def\s{++(1,0) node {$*$}}
$$
 \begin{tikzpicture}
\draw (0,4) \o\o\o\5\o\o;
\draw (0,3) \o\o\4\B\1\o;
\draw (0,2) \o\8\A\0\C\6;
\draw (0,1) \o\o\3\D\2\o;
\draw (0,0) \o\o\o\7\o\o;
\end{tikzpicture}
$$
We consider different cases depending on the equalities that can
take place between the colors \cll{a}--\cll{f}
{%
(independently of equalities that can occur between the colors of
 nodes of different parities,
for example, $\cll a=\cll D$ or $\cll e = \cll C$).
}

{%
By Claim~\ref{cl:1122},
two nodes of the same color cannot have difference in
 $\{\,[\pm 1,\pm 1]$, $[0,\pm 2]$, $[\pm 2,0]$, $[\pm 2,\pm 2]\,\}$.

If there are two nodes of the same color with difference
in
 $\{\,[\pm 1,\pm 3]$, $[\pm 3,\pm 1]\,\}$,
  then the statement holds by Lemma~\ref{l:/}.

So, we can assume that all colors \cll{a}--\cll{f} are mutually different
with maybe one or two exceptions from
$\cll{e}=\cll{g}$, $\cll{f}=\cll{h}$.

Now, if $\cll{f}\ne\cll{h}$ (the case $\cll{e}\ne\cll{g}$ is similar),
then the statement is obvious: for any \cll{0}-node,
the \cll{A}- and \cll{C}-neighbors at distance~$1$ from it
must be opposite, and the colors of the nodes at distance~$2$ from it are uniquely determined
by the four $\{\cll{A},\cll{B},\cll{C},\cll{D}\}$-neighbors.
}

 {Consider the last remaining case:}
 assume that the colors \cll{a}--\cll{f} are pairwise distinct with two exceptions
$\cll{e}=\cll{g}$ and $\cll{f}=\cll{h}$.
If the coloring $\FfF$ is bipartite,
then it is one of two colorings in the statement
of Lemma~\ref{l:+}.
If $\FfF$ is not bipartite, then the corresponding
bipartite coloring $\overline{\FfF}$ (each color is subdivided into
two subcolors in accordance with the parity of the node)
satisfies the hypothesis of Lemma~\ref{l:+} and hence
is one of the two colorings in the conclusion of Lemma~\ref{l:+}.
The original coloring $\FfF$ is obtained from $\overline{\FfF}$
by identifying each color from \cll{0}--\cll{7}
with one of \cll{A}--\cll{H}.

Consider the right coloring
(in further discussion, we refer the picture in Lemma~\ref{l:+}).
Clearly, \cll{0} cannot be identified {with one of its
neighbor colors
\cll{A}, \cll{B}, \cll{C}, \cll{D}}. Next,
\cll{0} cannot be identified with
\cll{G} or \cll{H}
(indeed, every $\{\cll{G},\cll{H}\}$-node has~$7$
 different colors at distance~$2$ from it,
while a \cll{0}-node has only~$6$).
We assume it is identified with \cll{F}
(the case of \cll{E} is similar).

{Consider the left coloring.
Again, \cll{0} cannot be identified with
\cll{A}, \cll{B}, \cll{C}, or \cll{D}.
The remaining four cases (\cll{E}, \cll{F}, \cll{G}, \cll{H})
are similar, and we again assume that
\cll{0} is identified with \cll{F}.}

{The rest of the proof is common for the both colorings.
For every $\cll i$ from \cll1 to \cll6, it holds
$d^2(\cll 0,\cll i)>0$, while for $\cll{i}=\cll 7$
this is not true: $d^2(\cll 0,\cll 7)=0$.
Similarly, $\cll B$ is a distinguished color with respect to
$\cll F$: $d^2(\cll F,\cll B)=0$, while
$d^2(\cll F,\cll j)>0$ for any other color~\cll{j}
of the same parity.
It follows that}
\cll{7} is identified with~\cll{B}.
Now, let us take a look to the neighborhood
of an \cll{F}-node.
\def\O{\ccolor{white}{\ }}
\def\o{ ++(1, 0) }
\def\0{\ccolor{gray!20!white}{$0$}}
\def\0{\ccolor{gray!20!white}{$0$}}
\def\1{\ccolor{purple!10!white}{$1$}}
\def\2{\ccolor{green!20!white}{$2$}}
\def\3{\ccolor{cyan!15!white}{$3$}}
\def\4{\ccolor{red!10!white}{$4$}}
\def\5{\ccolor{yellow!30!white}{$5$}}
\def\6{\ccolor{blue!10!white}{$6$}}
\def\7{\ccolor{gray!10!white}{$7$}}
\def\A{\ccolor{white}{$A$}}
\def\B{\ccolor{white}{$B$}}
\def\C{\ccolor{white}{$C$}}
\def\D{\ccolor{white}{$D$}}
\def\E{\ccolor{white}{$E$}}
\def\F{\ccolor{white}{$F$}}
\def\G{\ccolor{white}{$G$}}
\def\H{\ccolor{white}{$H$}}
\def\q{\ccolor{white}{$?$}}
\def\u{\ccolor{white}{$=$}}
\def\R{ ++(1, 0) node {\raisebox{-0.3em}{\makebox[0pt]{$\stackrel{\text{up to reflection}}\longleftrightarrow$}}}}
\def\t{ ++(1, 0) node {$\cdots$}}
\def\u{ ++(1, 0) node {$=$}}
$$
\begin{tikzpicture}
\draw (0,5) \o\o\o\5\o\o\o\o\o\o\o\R\o\o;
\draw (0,4) \o\o\4\B\1\o\o\o\o\o\o\E\o\o;
\draw (0,3) \o\6\A\0\C\6\o\t\o\o\C\6\A\o;
\draw (0,2) \o\o\3\D\2\o\o\t\o\D\2\F\3\D;
\draw (0,1) \o\o\o\5\o\o\o\o\o\o\H\7\G\o;
\draw (0,0) \o\o\o\R\o\o\o\o\o\o\o\E\o\o;
\end{tikzpicture}
$$
The neighbors of an \cll{F}-node have colors
\cll{2}, \cll{6}, \cll{3}, \cll{7},
and they should be identified with the neighbor
colors of a \cll{0}-node, i.e., with
\cll{A}, \cll{B}, \cll{C}, \cll{D}, in some order.
We already know that \cll{7}
and \cll{B} are identified.
There is only one possibility for the other three colors,
taking into account that
there is no two neighbor nodes
of the same color. Therefore,

$$
\def\oo{ ++(0.5,0) }
\begin{array}{c}
\begin{tikzpicture}
\draw (0,4) \o\B\o\o    \o\o\7;
\draw (0,3) \A\0\C\oo\u\oo\2\F\3;
\draw (0,2) \o\D\o\o    \o\o\6;
\end{tikzpicture}
\end{array}
\mbox{, and finally we have }
\begin{array}{c}
\begin{tikzpicture}
\draw (0,5) \o\o\5\o\o\o    \o\o\o\o\o\o;
\draw (0,4) \o\4\7\1\o\o    \o\o\o\5\o\o;
\draw (0,3) \6\2\0\3\6\oo\t\oo\o\3\6\2\o;
\draw (0,2) \o\3\6\2\o\oo\t\oo\6\2\0\3\6;
\draw (0,1) \o\o\5\o\o\o    \o\o\4\7\1\o;
\draw (0,0) \o\o\o\o\o\o    \o\o\o\5\o\o;
\end{tikzpicture}
\end{array}
$$
So, we have convinced that
the conclusion of the proposition
holds for two \cll{0}-nodes
not only of the same parity
(which is immediate from Lemma~\ref{l:+}),
but also
of different parities.

\end{proof}

\begin{proof}[Proof of Theorem~\ref{th:01}]
The case when the coloring is bipartite and has twin colors is solved by the characterization in Theorem~\ref{th:main}
and Corollary~\ref{c:bipartite01}.

If the coloring, $\FfF$, is not bipartite and has two equal rows
 in the quotient matrix, then the corresponding bipartite coloring $\overline{\FfF}$
 has twin colors and the quotient is a $\{0,1\}$-matrix.
 As in the paragraph above, $\overline{\FfF}$ has a binary diagonal, and hence $\FfF$ has a binary diagonal too.

 Assume we have a coloring~$\FfF$ satisfying the hypothesis
 of the theorem without two equal rows
 in the quotient matrix.
 {%
 We need to proof that the coloring is orbit,
 which, by the definition, is equivalent to the following assertion.
\begin{claim}\label{cl:ll}
For any two nodes ${\bar x}$ and ${\bar y}$ of the same color,
there is an automorphism~$\varphi$ of~$G(\ZZ^2)$ such that
$\varphi({\bar x}) = {\bar y} $ and for all $\bar u \in \ZZ^2$
 \begin{equation}\label{eq:au1}
 \FfF(\varphi({\bar u}))=\FfF({\bar u}).
 \end{equation}
\end{claim}
We prove the claim by induction on the distance~$l$
between~$\bar x$ and~$\bar u$.
By Proposition~\ref{p:01}, it holds
if $d(\bar x,\bar u)\le 2$, with some automorphism~$\varphi$.
Suppose \eqref{eq:au1} holds with the same~$\varphi$
for every~$\bar u$ at distance less than~$l$
from~$\bar x$, where $l\ge 3$.

Consider~$\bar u$ at distance~$l$ from~$\bar x$.
Consider a node~$\bar x'$ between~$\bar x$ and~$\bar u$ such that
$d(\bar x,\bar x')=l-2$ and $d(\bar x',\bar u)=2$.
By the induction hypothesis, $\FfF(\varphi({\bar x'}))=\FfF({\bar x'})$ and, moreover
\begin{equation}\label{eq:Fw}
\FfF(\varphi({\bar u'}))=\FfF({\bar u'})\quad
\mbox{for all~$\bar u'$ at distance~$1$ from~$\bar x'$,}
\end{equation}
because all such~$\bar u'$ satisfy
$d(\bar x,\bar u')\le d(\bar x,\bar x')+d(\bar x',\bar u')=l-1$.
Now, consider the nodes~$\bar x'$ and $\bar y' = \varphi(\bar x')$.
They have the same color, and by Proposition~\ref{p:01}
there is an automorphism $\psi$ of~$G(\ZZ^2)$ such that
$\psi({\bar x'}) = {\bar y'} $ and for all $\bar u \in \ZZ^2$
 \begin{equation}\label{eq:Fu}
 \FfF(\psi({\bar u'}))=\FfF({\bar u'}) \quad
 \mbox{for all~$\bar u'$ at distance at most~$2$ from~$\bar x'$.}
 \end{equation}
Now we see $\varphi(\bar x')=\psi(\bar x')$.
Moreover, from~\eqref{eq:Fw} and~\eqref{eq:Fu} we find that
both~$\varphi$ and~$\psi$ preserve the color of each neighbor of~$\bar x'$.
Since the neighbors of~$\bar x'$ have pairwise different colors,
it follows that the values of~$\varphi$ and~$\psi$ coincide on the
neighborhood of~$\bar x'$.
Since an automorphism of the grid~$G(\ZZ^2)$ is uniquely determined by its action
on the neighborhood of a node, we have $\psi\equiv \varphi$.
Then, substituting $\bar u'=\bar u$ in~\eqref{eq:Fu},
we obtain~\eqref{eq:au1},
which completes the induction step and the proof of the theorem.
}%
\end{proof}

\section{Non-orbit perfect colorings}\label{s:no}
Here, we provide examples of non-orbit
perfect colorings of $G(\ZZ^2)$,
answering some natural questions.
At first, we observe that in the colorings shown
in Fig.~\ref{MM}--\ref{PP} and~\ref{TT}--\ref{WW}
merging all groups of twin colors results
in non-orbit bipartite perfect colorings
without twin colors (for example in Fig.~\ref{MM}, a~\cll{1}-node
has two \cll{4}-neighbors
in opposite directions,
which is not true for the two \cll{4}-neighbors of a~\cll{2}-node;
after merging, such nodes become of the same color
but cannot be in the same orbit),
providing an infinite number of such examples.
One can ask if there are another examples (in particular,
that cannot be obtained by merging twin colors).
Up to now, we know a finite number of examples
of non-orbit perfect colorings of $G(\ZZ^2)$
without a binary or single-color
diagonal. Here we list the numbers of
such colorings up to $9$ colors,
according to the catalogue~\cite{Kro:9colors}:
2-5,
2-7(1),
3-6,
3-8(1),
3-9(1),
3-14,
3-15,
3-17(2),
3-17(3),
3-20(1),
4-7,
\underline{4-13},
\underline{4-14(2)},
4-16,
4-18,
4-29,
4-32(1),
4-32(2),
4-36,
4-37,
4-38(1),
4-41(1),
5-29,
5-36(1),
5-36(2),
5-42*,
5-43(1)*,
5-45,
\underline{6-22},
\underline{6-23},
\underline{6-36(1)},
\underline{6-58(2)},
6-67(1),
6-80(2)*,
6-81*,
6-83(1),
6-89(3)*,
\underline{8-32},
\underline{8-45},
\underline{8-49},
\underline{8-122},
\underline{8-123(2)},
8-150(1)*,
\underline{9-127}; the underlined numbers correspond to bipartite colorings,
the symbol * indicates that the quotient matrix is a $\{0,1\}$-matrix.
{Each bipartite coloring from those examples
is
either of form~$\overline\GgG$ for
some non-bipartite perfect colorings~$\GgG$,}
or obtained from Theorem~\ref{th:main}
by merging (not necessarily all) twin colors.
Below, we show two non-bipartite examples.

\begin{itemize}

\def\9{\ccolor{red!15!white            }{$9$}}
\def\8{\ccolor{blue!80!yellow!08!white }{$8$}}
\def\7{\ccolor{purple!70!blue!25!white }{$7$}}
\def\6{\ccolor{red!80!yellow!08!white  }{$6$}}
\def\5{\ccolor{blue!30!white           }{$5$}}
\def\4{\ccolor{red!30!white            }{$4$}}
\def\3{\ccolor{yellow!50!white}{$3$}}
\def\2{\ccolor{green!40!white          }{$2$}}
\def\1{\ccolor{blue!30!green!30!white  }{$1$}}
\def\A{\ccolor{blue!15!white           }{$10$}}

\item 3-17(2,3): 
Examples of non-orbit perfect colorings with twin colors.
Note that the quotient matrix (the same for the two colorings) has no equal rows,
and hence the corresponding bipartite
perfect colorings have no twin colors;
so, the hypothetical future
characterization of non-bipartite perfect colorings of $G(\ZZ^2)$
with twin colors
cannot be derived straightforwardly from Theorem~\ref{th:main}.
$$
\dfc{cA}{6eafa7}\dfc{cB}{485d60}\dfc{cC}{ffdb97}
\def\1{\ccolor{cA!60}{$1$}}
\def\2{\ccolor{cB!60}{$2$}}
\def\3{\ccolor{cC!70}{$3$}}
\begin{tikzpicture}
\draw (0,-0) \1\3\3\2\1\3\3\2\1\3\3\2 ;
\draw (0,-1) \1\3\3\2\1\3\3\2\1\3\3\2 ;
\draw (0,-2) \2\2\1\1\2\2\1\1\2\2\1\1 ;
\draw (0,-3) \3\1\2\3\3\1\2\3\3\1\2\3 ;
\draw (0,-4) \3\1\2\3\3\1\2\3\3\1\2\3 ;
\draw (0,-5) \2\2\1\1\2\2\1\1\2\2\1\1 ;
\draw (0,-6) \1\3\3\2\1\3\3\2\1\3\3\2 ;
\draw (0,-7) \1\3\3\2\1\3\3\2\1\3\3\2 ;
\end{tikzpicture}
~
\begin{tikzpicture}
\draw (0,-0)\1\3\3\2\1\3\3\2\1\3\3\2 ;
\draw (0,-1)\1\2\1\2\1\2\1\2\1\2\1\2 ;
\draw (0,-2)\3\2\1\3\3\2\1\3\3\2\1\3 ;
\draw (0,-3)\3\1\2\3\3\1\2\3\3\1\2\3 ;
\draw (0,-4)\2\1\2\1\2\1\2\1\2\1\2\1 ;
\draw (0,-5)\2\3\3\1\2\3\3\1\2\3\3\1 ;
\draw (0,-6)\1\3\3\2\1\3\3\2\1\3\3\2 ;
\draw (0,-7)\1\2\1\2\1\2\1\2\1\2\1\2 ;
\end{tikzpicture}
$$

\item 8-150(1,2):
An example of two colorings with the same $\{0,1\}$-matrix,
the first colorings is not orbit, the second one is orbit.
$$
\dfc{cAA}{57a2e6}
\dfc{cBB}{9cb9cb}
 \dfc{cLL}{b09970}
 \dfc{cMM}{636f61}
 \dfc{cNN}{fbe8c8}
 \dfc{cOO}{cfdfee}
\dfc{cXX}{26344f}
\dfc{cYY}{284b71}
\dfc{cAA}{d2e6eb}
\dfc{cBB}{b6d5e8}
 \dfc{cLL}{f8e8c2}
 \dfc{cMM}{a7a0a6}
 \dfc{cNN}{a78b7a}
 \dfc{cOO}{8b898f}
\dfc{cXX}{828a8e}
\dfc{cYY}{68727a}
\def\3{\ccolor{cAA}{$3$}}
\def\6{\ccolor{cBB}{$6$}}
 \def\1{\ccolor{cLL}{$1$}}
 \def\2{\ccolor{cNN!80}{$2$}}
 \def\7{\ccolor{cMM!95}{$7$}}
 \def\8{\ccolor{cOO!50}{$8$}}
\def\4{\ccolor{cXX}{$4$}}
\def\5{\ccolor{cYY}{$5$}}
\definecolor{cAA}{HTML}{303a31}
\definecolor{cBB}{HTML}{394344}
\definecolor{cLL}{HTML}{cad2e2}
\definecolor{cMM}{HTML}{dce3f3}
\definecolor{cNN}{HTML}{b9c0d0}
\definecolor{cOO}{HTML}{a4acb9}
\definecolor{cXX}{HTML}{6c5545}
\definecolor{cYY}{HTML}{816e72}
\def\3{\ccolor{cAA!85}{$3$}}
\def\6{\ccolor{cBB!70}{$6$}}
%
\definecolor{cAA}{HTML}{e6ecf8}\def\3{\ccolor{cAA!70}{$1$}}
\definecolor{cBB}{HTML}{f3edf3}\def\6{\ccolor{cBB!50}{$2$}}
\definecolor{cLL}{HTML}{cad2e2}\def\1{\ccolor{cLL}{$3$}}
\definecolor{cMM}{HTML}{dce3f3}\def\7{\ccolor{cMM}{$4$}}
\definecolor{cNN}{HTML}{b9c0d0}\def\2{\ccolor{cNN}{$5$}}
\definecolor{cOO}{HTML}{a4acb9}\def\8{\ccolor{cOO}{$6$}}
\definecolor{cXX}{HTML}{6c5545}\def\4{\ccolor{cXX!80}{$7$}}
\definecolor{cYY}{HTML}{303a31}\def\5{\ccolor{cYY!60}{$8$}}
\definecolor{cAA}{HTML}{c8e1e7}\def\3{\ccolor{cAA}{$1$}}
\definecolor{cBB}{HTML}{b4dbe5}\def\6{\ccolor{cBB}{$2$}}
\definecolor{cLL}{HTML}{3e5157}\def\1{\ccolor{cLL!85}{$3$}}
\definecolor{cMM}{HTML}{919193}\def\7{\ccolor{cMM}{$4$}}
\definecolor{cNN}{HTML}{4f4b4e}\def\2{\ccolor{cNN!80}{$5$}}
\definecolor{cOO}{HTML}{8f887f}\def\8{\ccolor{cOO}{$6$}}
\definecolor{cXX}{HTML}{dbdbdf}\def\4{\ccolor{cXX}{$7$}}
\definecolor{cYY}{HTML}{a8a4c1}\def\5{\ccolor{cYY!80}{$8$}}
\definecolor{cAA}{HTML}{c8e1e7}\def\3{\ccolor{cAA!80}{$1$}}
\definecolor{cBB}{HTML}{b4dbe5}\def\6{\ccolor{cBB!80}{$2$}}
\definecolor{cNN}{HTML}{707384}\def\1{\ccolor{cNN!90!white}{$5$}}
\definecolor{cOO}{HTML}{4e6166}\def\7{\ccolor{cOO!95!white!90!white}{$6$}}
\definecolor{cLL}{HTML}{5e828b}\def\2{\ccolor{cLL!80!white!90!white}{$3$}}
\definecolor{cMM}{HTML}{524b4b}\def\8{\ccolor{cMM!77!white!80!white!90!white}{$4$}}
\definecolor{cXX}{HTML}{dbdbdf}\def\4{\ccolor{cXX!90!white}{$7$}}
\definecolor{cYY}{HTML}{a8a4c1}\def\5{\ccolor{cYY!80!white!90!white}{$8$}}
\begin{tikzpicture}
\draw (0,-0) \1\6\6\7\1\6\6\7\1\6\6\7;
\draw (0,-1) \5\7\1\5\5\7\1\5\5\7\1\5;
\draw (0,-2) \4\2\8\4\4\2\8\4\4\2\8\4;
\draw (0,-3) \8\3\3\2\8\3\3\2\8\3\3\2;
\draw (0,-4) \1\6\6\7\1\6\6\7\1\6\6\7;
\draw (0,-5) \5\7\1\5\5\7\1\5\5\7\1\5;
\draw (0,-6) \4\2\8\4\4\2\8\4\4\2\8\4;
\draw (0,-7) \8\3\3\2\8\3\3\2\8\3\3\2;
\draw (0,-8) \1\6\6\7\1\6\6\7\1\6\6\7;
\draw (0,-9) \5\7\1\5\5\7\1\5\5\7\1\5;
\end{tikzpicture}
~~
\begin{tikzpicture}
\draw (0,-0) \1\7\2\8\1\7\2\8\1\7\2\8;
\draw (0,-1) \5\5\4\4\5\5\4\4\5\5\4\4;
\draw (0,-2) \7\1\8\2\7\1\8\2\7\1\8\2;
\draw (0,-3) \6\6\3\3\6\6\3\3\6\6\3\3;
\draw (0,-4) \1\7\2\8\1\7\2\8\1\7\2\8;
\draw (0,-5) \5\5\4\4\5\5\4\4\5\5\4\4;
\draw (0,-6) \7\1\8\2\7\1\8\2\7\1\8\2;
\draw (0,-7) \6\6\3\3\6\6\3\3\6\6\3\3;
\draw (0,-8) \1\7\2\8\1\7\2\8\1\7\2\8;
\draw (0,-9) \5\5\4\4\5\5\4\4\5\5\4\4;
\end{tikzpicture}
$$
\end{itemize}
The number of colors (nine) for which the perfect colorings of $G(\ZZ^2)$
are characterized is relatively small to make convincing conjectures,
but clearly, the problem of characterization of all such colorings
includes the following question:

{\bf Question.} Is the number of non-orbit perfect colorings of $G(\ZZ^2)$
without
a binary or single-color diagonal finite?

\appendix
\section{Appendix. Wallpaper groups and orbit colorings}
The concept of orbit coloring plays a fundamental role
in our study.
Here, we briefly mention the classification
of subgroups of the automorphism group of
$G(\ZZ^2)$ of finite index (i.e., with finite number of orbits),
mainly focused on subgroups
whose orbit partitions
fit the hypothesis and conclusion of Theorem~\ref{th:01}.
The set of nodes of
$G(\ZZ^2)$ can be naturally treated as a set points
of the Euclidean plain,
forming a square lattice.
Every automorphism of the square grid is continued
to an isometry of the Euclidean plain.
A group of isometries of the Euclidean plain
whose translation
subgroup is additively spanned
by two linearly independent vectors
(any group of automorphisms of~$G(\ZZ^2)$
of finite index satisfies this condition)
is called a \emph{wallpaper group}. It is known that the
wallpaper groups are divided into $17$ families
(see, e.g., \cite[Ch.\,2]{CBGS:Symmetries}),
where the groups in the same family
are conjugate to each other in the group of affine transformations
of the plane.
Of the $17$ families, only~$12$ include subgroups of the
automorphism group of $G(\ZZ^2)$
(the groups from the remaining $5$ families contain a
$120^{\circ}$-rotation, and hence cannot be such a subgroup).
The groups are parameterized with
three vectors $\bar u$, $\bar v$, and~$\bar o$.
The vectors~$\bar u$ and~$\bar v$ are periods
that generate the translation subgroup;
$\bar o$ is some point of the plain that defines the positions
of the rotation centers and/or reflection (glide reflection) axes
(some groups do not depend on $\bar o$
or depend only on one coordinate component of it).

There are three kinds of non-translation elements
in wallpaper groups:
rotations; reflections; glide reflections
(a glide reflection is the composition
of a reflection {and} a translation along the reflection axe).

The group of all automorphisms of $G(\ZZ^2)$
is a wallpaper group of the type called \emph{p4m}.
All its finite-index subgroups
are listed in \cite{SBZ:78},
as well as the wallpaper subgroups
of each of the $17$ wallpaper groups.

It is easy to see that if a group of automorphisms of~$G(\ZZ^2)$ contains
a rotation or a reflection, then there is an orbit of nodes with the minimum
distance~$1$ or~$2$ between different elements. This means that the quotient
matrix contains a non-zero element on the main diagonal or
an element larger than~$1$ anywhere. Hence, such orbit coloring cannot
be a covering of a simple graph.
Below, we describe the two families (named \emph{p1} and \emph{pg})
of wallpaper groups that do not have
reflections and rotations.
The description of each group is followed (after a bullet)
by the restrictions on the parameters that are necessary and sufficient for the group to be
a subgroup of the automorphism group of~$G(\ZZ^2)$.

\begin{itemize}
 \item[\bf\em p1.] The group does not have non-translation elements.
 \item For a \emph{p1}-group to consist of automorphisms of~$G(\ZZ^2)$,
 the translations must be integer.
 For the orbit coloring of the nodes of~$G(\ZZ^2)$ to be a covering,
 all translations must be different from $[0,\pm1]$, $[0,\pm2]$,
 $[\pm1,\pm1]$.
 The orbit coloring
 is bipartite if and only if all translations are even.

 \item[\bf\em pg.] The translation subgroup
 is generated by two perpendicular vectors~$\bar u$ and~$\bar v$;
 the non-translation elements are
 glide reflections with axes along~$\bar u$ through
 the points $\bar o+\frac j2\bar v$, $j\in\ZZ$.
 \item
 There are two possibilities for a \emph{pg}-group to consist of automorphisms of $G(\ZZ^2)$.
 \begin{itemize}
 \item[(a)] Diagonal glide reflections:
 $\bar u = [s,s]$, $\bar v = [t,-t]$ or
 $\bar u = [s,-s]$, $\bar v = [t,t]$,
 $s,t\in\ZZ^+$.
 For the orbit coloring of the nodes of $G(\ZZ^2)$ to be a covering,
 $s$ must be at least~$3$ and $t$ at least~$2$.
 There are additional restrictions:
 \begin{itemize}
           \item[(a0)] if $s$ is even, then $\bar o$ is either integer
           or has two non-integer coordinate components
           (and hence the reflection axes contain integer points);
           the orbit coloring is bipartite in this case;
           \item[(a1)] if $s$ is odd, then $\bar o$ has exactly
           one integer component
           (and hence the reflection axes do not contain integer points);
           the orbit coloring is not bipartite.
 \end{itemize}
 \item[(b)] Horizontal or vertical reflections:
            $\bar u = [0,2s]$, $\bar v = [t,0]$ or
            $\bar u = [2s,0]$, $\bar v = [0,t]$, $s,t\in\ZZ^+$.
           Case (b) is divided into three subcases:
 \begin{itemize}
           \item[(b0)] all reflection axes
           contain integer points (even~$t$);
           \item[(b1)] there are reflection
           axes both containing and not containing
           integer points (odd~$t$);
           \item[(b2)] none of reflection axes
           contains an integer point (even~$t$).
 \end{itemize}
           Only subcase (b0) with even~$s$ and subcase (b2)
           with odd~$s$ correspond to bipartite orbit colorings.
           For the orbit coloring of the nodes of~$G(\ZZ^2)$ to be a covering,
           $t$ must be at least~$3$ and $s$ at least $2$ for~(b0) and~(b1)
           and at least~$1$ for~(b2).
 \end{itemize}
 \end{itemize}


\section{Appendix. Perfect colorings with twin colors}
Here we describe $23$ types of perfect colorings that, according to Theorem~\ref{th:main}, exhaust the bipartite perfect of $G(\ZZ^2)$
with twin colors. Each type is represented with the coloring
shown in one of Fig.~\ref{AA}--\ref{WW}, and the other colorings
of the same type are obtained by applying automorphisms of $G(\ZZ^2)$,
renaming colors, shifting binary diagonals, and/or merging
groups of twin colors.

Fig.~\ref{KK}--\ref{WW} show infinite parameterized series,
where $n=m+1=l+2=\ldots=c+11$ is the number of colors.
The arrows show the direction in which the colors increase
with increment $2$ (Fig.~\ref{KK}--\ref{LL}), $5$ (\ref{MM}--\ref{SS}),
or $4$ (\ref{TT}--\ref{WW}); the number of steps depends on $n$.
Each of Fig.~\ref{KK}--\ref{WW} is accompanied with
an analytic description of the corresponding coloring.

To simplify some descriptions,
we introduce  three colorings
with infinite number of colors.
The coloring $\FfF_{23}$, $\FfF_{32}$, and $\FfF_{44}$
(partially illustrated in Fig.~\ref{MM}--\ref{PP},
 Fig.~\ref{QQ}--\ref{SS}, and  Fig.~\ref{TT}--\ref{WW}, respectively)
 are defined by the period $[-4,4]$ for $\FfF_{23}$, $\FfF_{32}$
 and $[4,4]$ for $\FfF_{44}$
and the following equalities for each
\def\bgg{\big}
$\alpha \in \{1,2,3,\ldots\}$,
 $\gamma \in \{0,1,2,3\}$:
$$
\FfF_{23}\Lft 0,0\Rgt =\FfF_{23}\Lft -2,2\Rgt =  \cll{1},
\qquad
\FfF_{23}\Lft -1,1\Rgt =\FfF_{23}\Lft -3,3\Rgt =  \cll{2},
$$
$$
\FfF_{23}\big( \pm [\alpha + \gamma - 1,\alpha - \gamma] \,\big)
=
\framebox{$5\alpha - \frac12 - |\gamma - \frac32|$},
$$
$$
\FfF_{23}\big( \pm [\alpha + \gamma + 1,\alpha - \gamma - 1] \,\big)
=
\framebox{$5\alpha + |2 - \gamma|$}.
$$
$$
\FfF_{32}\Lft 0,0\Rgt =\FfF_{32}\Lft -2,2\Rgt = \cll{1},
\qquad
\FfF_{32}\Lft -1,1\Rgt =\FfF_{32}\Lft -3,3\Rgt =\cll{2},
$$
$$
\FfF_{32}\big(\, [{\textstyle\frac12},-{\textstyle\frac12}]
\pm [ \alpha+\beta-{\textstyle\frac32}, \alpha-\beta+{\textstyle\frac12} ] \,\big)
= \framebox{$5\alpha-|\gamma-2|$},
$$
$$
\FfF_{32}\big(\, [{\textstyle\frac12},-{\textstyle\frac12}]
\pm [ \alpha+\beta-{\textstyle\frac12}, \alpha-\beta+{\textstyle\frac12} ] \,\big)
= \framebox{$5\alpha+\frac52-|\gamma-\frac32|$}.
$$
$$
\FfF_{44}\Lft 0,0\Rgt =\FfF_{44}\Lft 2,2\Rgt =\cll{1},
\qquad
\FfF_{44}\Lft 1,1\Rgt =\FfF_{44}\Lft 3,3\Rgt =\cll{2},
$$
$$
\FfF_{44}\Lft \alpha + \gamma,\gamma\Rgt =
\FfF_{44}\Lft 2 + \gamma,\alpha + 2 + \gamma\Rgt =
\framebox{$4\alpha+\gamma-1$}.
$$

\def\ccolor#1#2{ ++(1,0) node [cell,fill=#1] {\raisebox{-0.25em}[0.2em][0em]{\makebox[0mm][c]{#2}}} }
\begin{figure}[H]
\scalebox{1.10}{
\begin{tikzpicture}
\definecolor{Ca}{HTML}{b85f06}
\definecolor{cA}{HTML}{fdf520}\def\1{\Ccolor{Ca}{cA}{$1$}}
\definecolor{cB}{HTML}{92b76c}\def\2{\ccolor{cB}{$2$}}
\definecolor{cC}{HTML}{56721e}\def\3{\ccolor{cC!80}{$3$}}
\definecolor{cD}{HTML}{76a2d5}\def\4{\Ccolor{cyan!10!cD!80}{white}{$4$}}
\definecolor{cE}{HTML}{a6b8d6}\def\5{\Ccolor{red!10!cE!90}{white}{$5$}}
\definecolor{cF}{HTML}{6e90bd}\def\6{\Ccolor{cF!80}{white}{$6$}}
\definecolor{cG}{HTML}{75abe8}\def\7{\ccolor{cG}{$7$}}
\definecolor{cH}{HTML}{4280d0}\def\8{\ccolor{cH!90}{$8$}}
\definecolor{Ci}{HTML}{b4ceed}
\definecolor{cI}{HTML}{ffffef}\def\9{\Ccolor{cI}{Ci!50!cG}{$9$}}
\draw (0,-0) \9\8\4\7\9\7\4\8\9\8\4\7\9\7\4\8 ;
\draw (0,-1) \7\5\3\6\8\5\2\6\7\5\3\6\8\5\2\6 ;
\draw (0,-2) \4\2\1\2\4\3\1\3\4\2\1\2\4\3\1\3 ;
\draw (0,-3) \8\6\3\5\7\6\2\5\8\6\3\5\7\6\2\5 ;
\draw (0,-4) \9\7\4\8\9\8\4\7\9\7\4\8\9\8\4\7 ;
\draw (0,-5) \8\5\2\6\7\5\3\6\8\5\2\6\7\5\3\6 ;
\draw (0,-6) \4\3\1\3\4\2\1\2\4\3\1\3\4\2\1\2 ;
\draw (0,-7) \7\6\2\5\8\6\3\5\7\6\2\5\8\6\3\5 ;
\draw (0,-8) \9\8\4\7\9\7\4\8\9\8\4\7\9\7\4\8 ;
\draw (0,-9) \7\5\3\6\8\5\2\6\7\5\3\6\8\5\2\6 ;
\draw (0,-10)\4\2\1\2\4\3\1\3\4\2\1\2\4\3\1\3 ;
\draw (0,-11) \7\6\2\5\8\6\3\5\7\6\2\5\8\6\3\5 ;
\draw [diag] (0.5,-9.5)--(2.5,-11.5);
\draw [diag] (0.5,-5.5)--(6.5,-11.5);
\draw [diag] (0.5,-1.5)--(10.5,-11.5);
\draw [diag] (2.5, 0.5)--(14.5,-11.5);
\draw [diag] (6.5, 0.5)--(16.5,-9.5);
\draw [diag] (10.5, 0.5)--(16.5,-5.5);
\draw [diag] (14.5, 0.5)--(16.5,-1.5);
\draw [diag] (0.5,-2.5)--(3.5, 0.5);
\draw [diag] (0.5,-6.5)--(7.5, 0.5);
\draw [diag] (0.5,-10.5)--(11.5, 0.5);
\draw [diag] (3.5,-11.5)--(15.5, 0.5);
\draw [diag] (7.5,-11.5)--(16.5,-2.5);
\draw [diag] (11.5,-11.5)--(16.5,-6.5);
\draw [diag] (15.5,-11.5)--(16.5,-10.5);
\end{tikzpicture}%
} 
\hfill
\begin{tikzpicture}
\def\1{\ccolor{white}{$1$}}
\def\2{\ccolor{white}{$2$}}
\def\4{\ccolor{white}{$4$}}
\def\O{\ccolor{white}{\ }}
\def\0{\ccolor{white}{$0$}}
\def\o{ ++(1, 0) }
\draw (0,-0) \O\2\2\o\o\o\o\o\o ;
\draw (0,-1) \1\O\O\1\1\1\o\o\o ;
\draw (0,-2) \1\O\O\1\1\1\o\o\o ;
\draw (0,-3) \o\1\1\O\O\O\1\1\o ;
\draw (0,-4) \o\1\1\O\O\O\1\1\o ;
\draw (0,-5) \o\1\1\O\O\O\1\1\o ;
\draw (0,-6) \o\o\o\1\1\1\O\O\1 ;
\draw (0,-7) \o\o\o\1\1\1\O\O\1 ;
\draw (0,-8) \o\o\o\o\o\o\2\2\O ;
\draw (0.5,0.5)rectangle(9.5,-8.5);
\end{tikzpicture}
\refstepcounter{figure}
\caption*{Fig.~\thefigure\ (Sunflower field).\hfill Twins: $\cll2,\cll3$; $\cll4,\cll5,\cll6$; $\cll7,\cll8$.\hfill Periods: $[8,0]$, $[4,4]$.}\label{AA}
\end{figure}

\begin{figure}[H]
\scalebox{1.10}{
\begin{tikzpicture}
\dfc{Ca}{3d090a}
\dfc{cA}{8e0603}\def\1{\Ccolor{Ca}{cA}{$1$}}
\dfc{cBB}{e88f8f}\dfc{cCC}{fe6a35}
\dfc{cB}{f96146}\def\2{\Ccolor{cB}{cBB}{$2$}}
\dfc{cC}{d93903}\def\3{\Ccolor{cC}{cCC}{$3$}}
\dfc{cE}{93b2cf}\def\5{\ccolor{cE}{$5$}}
\dfc{cG}{6497cd}\def\7{\ccolor{cG}{$7$}}
\dfc{cII}{6b6ca0}
\dfc{cI}{c2c6cf}\def\9{\Ccolor{cI}{cI!80!cE}{$9$}}
\dfc{cD}{86958e}\def\4{\ccolor{cD}{$4$}}
\dfc{cF}{869148}\def\6{\ccolor{cF}{$6$}}
\dfc{cH}{6b7924}\def\8{\ccolor{cH}{$8$}}
\dfc{cJ}{c2b67c}
\dfc{cJJ}{c2b67c}\def\A{\ccolor{cJJ!60!cF}{$10$}}
\draw (0,-0)  \9\7\4\8\A\8\4\7\9\7\4\8\A\8\4\7 ;
\draw (0,-1)  \7\5\3\6\8\6\2\5\7\5\3\6\8\6\2\5 ;
\draw (0,-2)  \4\2\1\2\4\3\1\3\4\2\1\2\4\3\1\3 ;
\draw (0,-3)  \8\6\3\5\7\5\2\6\8\6\3\5\7\5\2\6 ;
\draw (0,-4)  \A\8\4\7\9\7\4\8\A\8\4\7\9\7\4\8 ;
\draw (0,-5)  \8\6\2\5\7\5\3\6\8\6\2\5\7\5\3\6 ;
\draw (0,-6)  \4\3\1\3\4\2\1\2\4\3\1\3\4\2\1\2 ;
\draw (0,-7)  \7\5\2\6\8\6\3\5\7\5\2\6\8\6\3\5 ;
\draw (0,-8)  \9\7\4\8\A\8\4\7\9\7\4\8\A\8\4\7 ;
\draw (0,-9)  \7\5\3\6\8\6\2\5\7\5\3\6\8\6\2\5 ;
\draw (0,-10) \4\2\1\2\4\3\1\3\4\2\1\2\4\3\1\3 ;
\draw (0,-11) \7\5\2\6\8\6\3\5\7\5\2\6\8\6\3\5 ;
\end{tikzpicture}%
}
\hspace{-5cm}
\hfill%
\begin{tikzpicture}
\def\1{\ccolor{white}{$1$}}
\def\2{\ccolor{white}{$2$}}
\def\4{\ccolor{white}{$4$}}
\def\O{\ccolor{white}{\ }}
\def\0{\ccolor{white}{$0$}}
\def\o{ ++(1, 0) }
\draw (0,-0) \O\2\2\o\o\o\o\o\o\o ;
\draw (0,-1) \1\O\O\1\1\1\o\o\o\o ;
\draw (0,-2) \1\O\O\1\1\1\o\o\o\o ;
\draw (0,-3) \o\1\1\O\O\O\1\1\o\o ;
\draw (0,-4) \o\1\1\O\O\O\2\0\o\o ;
\draw (0,-5) \o\1\1\O\O\O\0\2\o\o ;
\draw (0,-6) \o\o\o\1\2\0\O\O\1\0 ;
\draw (0,-7) \o\o\o\1\0\2\O\O\0\1 ;
\draw (0,-8) \o\o\o\o\o\o\4\0\O\O ;
\draw (0,-9) \o\o\o\o\o\o\0\4\O\O ;
\draw (0.5,0.5)rectangle(10.5,-9.5);
\end{tikzpicture}
\refstepcounter{figure}
\caption*{Fig.~\thefigure\ (Poppy field).\hfill Twins: $\cll2,\cll3$.\hfill Periods: $[8,0]$, $[4,4]$.}
\label{BB}
\end{figure}

\begin{figure}[H]
$
\begin{tikzpicture}
\def\cccolor#1#2{\Ccolor{#1}{#1!70}{#2}}
\dfc{cA}{657c9a}\def\1{\cccolor{cA}{$1$}}
\dfc{cA}{f1f0ed}\def\1{\cccolor{cA}{$1$}}
\dfc{cB}{a1acbd}\def\2{\cccolor{cB!90!white}{$2$}}
\dfc{cC}{a4c5e6}\def\3{\ccolor{cC}{$3$}}
\dfc{cD}{019366}\def\4{\ccolor{cD!85}{$4$}}
\dfc{cE}{6299dc}\def\5{\ccolor{cE!85}{$5$}}
\dfc{cF}{1f6f65}\def\6{\ccolor{cF!80}{$6$}}
\dfc{cG}{679019}\def\7{\cccolor{cG}{$7$}}
\dfc{cH}{516e29}\def\8{\cccolor{cH}{$8$}}
\draw (0,-0) \1\7\2\8\1\7\2\8\1\7\2\8\1\7\2\8\1\7;
\draw (0,-1) \5\3\5\3\5\3\5\3\5\3\5\3\5\3\5\3\5\3;
\draw (0,-2) \2\8\1\7\2\8\1\7\2\8\1\7\2\8\1\7\2\8;
\draw (0,-3) \6\4\6\4\6\4\6\4\6\4\6\4\6\4\6\4\6\4;
\draw (0,-4) \1\7\2\8\1\7\2\8\1\7\2\8\1\7\2\8\1\7;
\draw (0,-5) \5\3\5\3\5\3\5\3\5\3\5\3\5\3\5\3\5\3;
\draw (0,-6) \2\8\1\7\2\8\1\7\2\8\1\7\2\8\1\7\2\8;
\draw (0,-7) \6\4\6\4\6\4\6\4\6\4\6\4\6\4\6\4\6\4;
\draw (0,-8) \1\7\2\8\1\7\2\8\1\7\2\8\1\7\2\8\1\7;
\draw (0,-9) \5\3\5\3\5\3\5\3\5\3\5\3\5\3\5\3\5\3;
\end{tikzpicture}%
\hfill
\begin{tikzpicture}
\def\1{\ccolor{white}{$1$}}
\def\2{\ccolor{white}{$2$}}
\def\4{\ccolor{white}{$4$}}
\def\O{\ccolor{white}{\ }}
\def\0{\ccolor{white}{$0$}}
\def\o{ ++(1, 0) }
\draw (0,-0) \O\O\O\O\1\1\1\1 ;
\draw (0,-1) \O\O\O\O\1\1\1\1 ;
\draw (0,-2) \O\O\O\O\2\0\1\1 ;
\draw (0,-3) \O\O\O\O\0\2\1\1 ;
\draw (0,-4) \1\1\2\0\O\O\O\O ;
\draw (0,-5) \1\1\0\2\O\O\O\O ;
\draw (0,-6) \1\1\1\1\O\O\O\O ;
\draw (0,-7) \1\1\1\1\O\O\O\O ;
\draw (0.5,0.5) rectangle (8.5,-7.5);
\end{tikzpicture}
$
\refstepcounter{figure}
\caption*{Fig.~\thefigure\ (Mountain lake).\hfill Twins: $\cll1,\cll2$; $\cll7,\cll8$.\hfill Periods: $[4,0]$, $[0,4]$.\\
          The quotient matrix is similar to that in Fig.~\ref{JJ}.}
\label{CC}
\end{figure}

\begin{figure}[H]
$
\begin{tikzpicture}
\def\cccolor#1#2{\Ccolor{#1}{#1!70}{#2}}
\def\CColor#1#2#3{\Ccolor{#2}{#1}{#3}}
\dfc{o}{ffffff}
\dfc{cAA}{e6d1bc}\dfc{cBB}{dfdbda}
\dfc{cA}{bc9c86}\def\1{\Ccolor{cA}{cAA!70}{$1$}}
\dfc{cB}{a9a3a7}\def\2{\Ccolor{cB}{cBB!70}{$2$}}
\dfc{cC}{d6935b}\def\3{\ccolor{cC!85}{$3$}}
\dfc{cF}{bd8056}\def\6{\ccolor{cF!85}{$6$}}
\dfc{cD}{3e8661}\def\4{\ccolor{green!2!cD!95}{$4$}}
\dfc{cE}{3d8789}\def\5{\ccolor{blue!2!cE!95}{$5$}}
\dfc{cG}{a99675}\def\7{\ccolor{cG!70}{$7$}}
\dfc{cH}{768672}\def\8{\ccolor{cH!60}{$8$}}
\dfc{cI}{8db49e}\def\9{\ccolor{cI}{$9$}}
\draw (0,-0) \1\7\2\8\1\7\2\8\1\7\2\8\1\7\2\8\1\7;
\draw (0,-1) \6\3\6\3\6\3\6\3\6\3\6\3\6\3\6\3\6\3;
\draw (0,-2) \2\8\1\7\2\8\1\7\2\8\1\7\2\8\1\7\2\8;
\draw (0,-3) \9\5\9\4\9\5\9\4\9\5\9\4\9\5\9\4\9\5;
\draw (0,-4) \1\8\2\7\1\8\2\7\1\8\2\7\1\8\2\7\1\8;
\draw (0,-5) \6\3\6\3\6\3\6\3\6\3\6\3\6\3\6\3\6\3;
\draw (0,-6) \2\7\1\8\2\7\1\8\2\7\1\8\2\7\1\8\2\7;
\draw (0,-7) \9\4\9\5\9\4\9\5\9\4\9\5\9\4\9\5\9\4;
\draw (0,-8) \1\7\2\8\1\7\2\8\1\7\2\8\1\7\2\8\1\7;
\draw (0,-9) \6\3\6\3\6\3\6\3\6\3\6\3\6\3\6\3\6\3;
\end{tikzpicture}%
\hfill
\begin{tikzpicture}
\def\1{\ccolor{white}{$1$}}
\def\2{\ccolor{white}{$2$}}
\def\4{\ccolor{white}{$4$}}
\def\O{\ccolor{white}{\ }}
\def\0{\ccolor{white}{$0$}}
\def\o{ ++(1, 0) }
\draw (0,-0) \O\O\O\O\O\1\1\1\1 ;
\draw (0,-1) \O\O\O\O\O\1\1\1\1 ;
\draw (0,-2) \O\O\O\O\O\2\1\1\0 ;
\draw (0,-3) \O\O\O\O\O\0\2\0\2 ;
\draw (0,-4) \O\O\O\O\O\0\0\2\2 ;
\draw (0,-5) \1\1\2\0\0\O\O\O\O ;
\draw (0,-6) \1\1\1\1\0\O\O\O\O ;
\draw (0,-7) \1\1\1\0\1\O\O\O\O ;
\draw (0,-8) \1\1\0\1\1\O\O\O\O ;
\draw (0.5,0.5)rectangle(9.5,-8.5);
\end{tikzpicture}
$
\refstepcounter{figure}
\caption*{Fig.~\thefigure\ (Beach).\hfill Twins: $\cll1,\cll2$.\hfill Periods: $[4,0]$, $[0,8]$. }
\label{DD}
\end{figure}

\begin{figure}[H]
$
\begin{tikzpicture}
\def\cccolor#1#2{\Ccolor{white}{#1}{#2}}
\dfc{o}{ffffff}
\dfc{cAA}{e6d1bc}\dfc{cBB}{dfdbda}
\dfc{cB}{dbf5fb}\def\2{\ccolor{cB!80}{$2$}}
\dfc{cA}{eee9ca}\def\1{\Ccolor{yellow!20}{cB!60}{$1$}}
\dfc{cC}{f9cd24}\def\3{\ccolor{cC!90!white}{$3$}}
\dfc{cF}{ac967f}\def\6{\ccolor{cF}{$6$}}
\dfc{cD}{98a142}\def\4{\ccolor{cD}{$4$}}
\dfc{cE}{b995e2}\def\5{\ccolor{cE}{$5$}}
\dfc{cG}{ffffff}\def\7{\ccolor{cC!90!white!50!cF}{$7$}}
\dfc{cH}{ffffff}\def\8{\ccolor{cC!90!white!50!cD}{$8$}}
\dfc{cI}{ffffff}\def\9{\ccolor{cE!50!cD}{$9$}}
\dfc{cJ}{ffffff}\def\A{\ccolor{cE!50!cF}{$10$}}
\draw (0, 1) \6\1\4\2\6\1\4\2\6\1\4\2\6\1\4\2\6\1;
\draw (0,-0) \A\5\9\5\A\5\9\5\A\5\9\5\A\5\9\5\A\5;
\draw (0,-1) \6\2\4\1\6\2\4\1\6\2\4\1\6\2\4\1\6\2;
\draw (0,-2) \7\3\8\3\7\3\8\3\7\3\8\3\7\3\8\3\7\3;
\draw (0,-3) \6\1\4\2\6\1\4\2\6\1\4\2\6\1\4\2\6\1;
\draw (0,-4) \A\5\9\5\A\5\9\5\A\5\9\5\A\5\9\5\A\5;
\draw (0,-5) \6\2\4\1\6\2\4\1\6\2\4\1\6\2\4\1\6\2;
\draw (0,-6) \7\3\8\3\7\3\8\3\7\3\8\3\7\3\8\3\7\3;
\draw (0,-7) \6\1\4\2\6\1\4\2\6\1\4\2\6\1\4\2\6\1;
\draw (0,-8) \A\5\9\5\A\5\9\5\A\5\9\5\A\5\9\5\A\5;
\end{tikzpicture}%
\hfill
\begin{tikzpicture}
\def\1{\ccolor{white}{$1$}}
\def\2{\ccolor{white}{$2$}}
\def\4{\ccolor{white}{$4$}}
\def\O{\ccolor{white}{\ }}
\def\0{\ccolor{white}{$0$}}
\def\o{ ++(1, 0) }
\draw (0,-0) \O\O\1\1\1\1\o\o\o\o ;
\draw (0,-1) \O\O\1\1\1\1\o\o\o\o ;
\draw (0,-2) \1\1\O\O\O\O\1\1\0\0 ;
\draw (0,-3) \1\1\O\O\O\O\0\1\1\0 ;
\draw (0,-4) \1\1\O\O\O\O\0\0\1\1 ;
\draw (0,-5) \1\1\O\O\O\O\1\0\0\1 ;
\draw (0,-6) \o\o\2\0\0\2\O\O\O\O ;
\draw (0,-7) \o\o\2\2\0\0\O\O\O\O ;
\draw (0,-8) \o\o\0\2\2\0\O\O\O\O ;
\draw (0,-9) \o\o\0\0\2\2\O\O\O\O ;
\draw (0.5,0.5)rectangle(10.5,-9.5);
\end{tikzpicture}
$
\refstepcounter{figure}
\caption*{Fig.~\thefigure\ (Spring). \hfill Twins: $\cll1,\cll2$.\hfill Periods: $[4,0]$, $[0,4]$.\\
         The quotient matrix is similar to that in Fig.~\ref{WW}($n=10$).
         }
         \label{EE}
\end{figure}

\begin{figure}[H]
\begin{tikzpicture}
\definecolor{cHH}{HTML}{eaa86e}\definecolor{cA}{HTML}{613414}
\definecolor{ch}{HTML}{c47d2f}\def\8{\CColor{cH}{cHH!50!cH}{210}{$8$}}
\definecolor{cH}{HTML}{c45c34}\def\8{\CColor{cH!70!ch!70!cA}{cH!30!cHH}{210}{$8$}}
\def\3{\CColor{cA!99}{cA!80!cHH}{210}{$1$}}
\definecolor{cE}{HTML}{cce1f9}\def\7{\Ccolor{white}{cE}{$5$}}
\definecolor{cF}{HTML}{6c774c}\def\5{\Ccolor{cF!99}{cF!90!white!70}{$6$}}
\definecolor{cG}{HTML}{3b492f}\def\6{\Ccolor{cG!90}{cG!80!white!70}{$7$}}
\definecolor{cDD}{HTML}{b0dff6}
\definecolor{cD}{HTML}{a9c5ea}\def\4{\CColor{cD}{cDD!70!cD}{180}{$4$}}
\definecolor{cB}{HTML}{a5a6a0}
\definecolor{cC}{HTML}{e3ded8}\def\2{\Ccolor{cC!30!cB}{cC!50!cB!50}{$3$}}
\definecolor{cB}{HTML}{a5a6a0}\def\1{\Ccolor{cC!00!cB}{cC!10!cB!50}{$2$}}
\draw (0,11) \5\4\6\2\8\1\5\4\6\2\8\1\5\4\6\2;
\draw (0,10) \4\7\4\8\3\8\4\7\4\8\3\8\4\7\4\8;
\draw (0, 9) \6\4\5\1\8\2\6\4\5\1\8\2\6\4\5\1;
\draw (0, 8) \1\8\2\6\4\5\1\8\2\6\4\5\1\8\2\6;
\draw (0, 7) \8\3\8\4\7\4\8\3\8\4\7\4\8\3\8\4;
\draw (0, 6) \2\8\1\5\4\6\2\8\1\5\4\6\2\8\1\5;
\draw (0, 5) \5\4\6\2\8\1\5\4\6\2\8\1\5\4\6\2;
\draw (0, 4) \4\7\4\8\3\8\4\7\4\8\3\8\4\7\4\8;
\draw (0, 3) \6\4\5\1\8\2\6\4\5\1\8\2\6\4\5\1;
\draw (0, 2) \1\8\2\6\4\5\1\8\2\6\4\5\1\8\2\6;
\draw (0, 1) \8\3\8\4\7\4\8\3\8\4\7\4\8\3\8\4;
\end{tikzpicture}
\hfill
\begin{tikzpicture}
\def\1{\ccolor{white}{$1$}}
\def\2{\ccolor{white}{$2$}}
\def\4{\ccolor{white}{$4$}}
\def\O{\ccolor{white}{\ }}
\def\0{\ccolor{white}{$0$}}
\def\o{ ++(1, 0) }
\draw (0,-0) \O\O\O\O\0\0\0\4 ;
\draw (0,-1) \O\O\O\O\0\1\1\2 ;
\draw (0,-2) \O\O\O\O\0\1\1\2 ;
\draw (0,-3) \O\O\O\O\1\1\1\1 ;
\draw (0,-4) \0\0\0\4\O\O\O\O ;
\draw (0,-5) \0\1\1\2\O\O\O\O ;
\draw (0,-6) \0\1\1\2\O\O\O\O ;
\draw (0,-7) \1\1\1\1\O\O\O\O ;
\draw (0.5,0.5)rectangle(8.5,-7.5);
\end{tikzpicture}
\refstepcounter{figure}
\caption*{Fig.~\thefigure\ (Street basketball).\hfill Twins: $\cll2,\cll3$; $\cll6,\cll7$.\hfill Periods: $[6,0]$, $[0,6]$.}
\label{FF}
\end{figure}
\vspace{-5mm}

\begin{figure}[H]
\begin{tikzpicture}
\dfc{cA}{d5c5b5}\def\1{\CColor{cA}{cA!30!white}{210}{$1$}}
\dfc{cI}{fc9702}\dfc{cII}{e6b47e}\def\9{\CColor{cI}{cII}{210}{$9$}}
\dfc{cJJ}{586faf}\dfc{cJ}{24285d}\def\A{\CColor{cJ}{cJJ}{210}{$10$}}
\dfc{cb}{518ad4}
\dfc{cB}{80c5ff}\def\2{\Ccolor{cb!20!cB}{cB!50}{$2$}}
\dfc{cC}{c0e2fe}\def\3{\Ccolor{cb!20!cB!30!cC}{cC!50}{$3$}}
\dfc{cG}{4aa1bd}\def\7{\Ccolor{cB!30!cC!20}{cb!20!cB!30!cC}{$7$}}
\dfc{cH}{93c7d2}\def\8{\Ccolor{cB!70!cC!20}{cb!20!cB!70!cC}{$8$}}
\dfc{cD}{e4d5c5}\def\4{\ccolor{cG}{$4$}}
\dfc{cE}{af9076}\def\5{\ccolor{cH!40!cG}{$5$}}
\dfc{cF}{486d7c}\def\6{\ccolor{cF}{$6$}}
\draw (0,11) \8\5\7\3\A\2\8\5\7\3\A\2\8\5\7\3;
\draw (0,10) \4\6\4\9\1\9\4\6\4\9\1\9\4\6\4\9;
\draw (0, 9) \7\5\8\2\A\3\7\5\8\2\A\3\7\5\8\2;
\draw (0, 8) \2\A\3\7\5\8\2\A\3\7\5\8\2\A\3\7;
\draw (0, 7) \9\1\9\4\6\4\9\1\9\4\6\4\9\1\9\4;
\draw (0, 6) \3\A\2\8\5\7\3\A\2\8\5\7\3\A\2\8;
\draw (0, 5) \8\5\7\3\A\2\8\5\7\3\A\2\8\5\7\3;
\draw (0, 4) \4\6\4\9\1\9\4\6\4\9\1\9\4\6\4\9;
\draw (0, 3) \7\5\8\2\A\3\7\5\8\2\A\3\7\5\8\2;
\draw (0, 2) \2\A\3\7\5\8\2\A\3\7\5\8\2\A\3\7;
\draw (0, 1) \9\1\9\4\6\4\9\1\9\4\6\4\9\1\9\4;
\end{tikzpicture}
\hfill
\begin{tikzpicture}
\def\1{\ccolor{white}{$1$}}
\def\2{\ccolor{white}{$2$}}
\def\4{\ccolor{white}{$4$}}
\def\O{\ccolor{white}{\ }}
\def\0{\ccolor{white}{$0$}}
\def\o{ ++(1, 0) }
\draw (0,-0) \O\O\O\O\O\0\0\0\2\2 ;
\draw (0,-1) \O\O\O\O\O\0\1\1\1\1 ;
\draw (0,-2) \O\O\O\O\O\0\1\1\1\1 ;
\draw (0,-3) \O\O\O\O\O\1\1\1\1\0 ;
\draw (0,-4) \O\O\O\O\O\1\1\1\0\1 ;
\draw (0,-5) \0\0\0\2\2\O\O\O\O\O ;
\draw (0,-6) \0\1\1\1\1\O\O\O\O\O ;
\draw (0,-7) \0\1\1\1\1\O\O\O\O\O ;
\draw (0,-8) \1\1\1\1\0\O\O\O\O\O ;
\draw (0,-9) \1\1\1\0\1\O\O\O\O\O ;
\draw (0.5,0.5)rectangle(10.5,-9.5);
\end{tikzpicture}
\refstepcounter{figure}
\caption*{Fig.~\thefigure\ (Beach volleyball).\hfill Twins: $\cll2,\cll3$; $\cll7,\cll8$.\hfill Periods: $[6,0]$, $[0,6]$.}
\label{GG}
\end{figure}
\vspace{-5mm}

\begin{figure}[H]
\begin{tikzpicture}
\dfc{cH}{7a9393}
\dfc{cK}{76a0c0}
\dfc{cKK}{fdfcfc}
\dfc{cHH}{c6cfd8}
\def\8{\CColor{black!20!cK!90}{cKK}{210}{$8$}}
\def\B{\CColor{cH!60}{red!2!white}{210}{$11$}}
\dfc{cA}{1d242d}\def\1{\CColor{cA}{cA!80!white}{210}{$\color{cH}1$}}
\dfc{cI}{6fb6ca}\dfc{cII}{ffffff}\def\6{\Ccolor{cII}{cI}{$6$}}
\dfc{cD}{3f6f17}\dfc{cDD}{a1cc43}\def\4{\ccolor{cD!99}{$4$}}
\dfc{cG}{798738}\def\7{\ccolor{cD!70!cDD}{$7$}}
\dfc{cF}{dd4920}\def\6{\ccolor{cF}{$6$}}
\dfc{cX}{f7e500}\def\9{\ccolor{cX}{$9$}}
\dfc{cY}{5a783c}\def\5{\ccolor{cD!20!cDD!90}{$5$}}
\dfc{cJ}{749351}\def\A{\ccolor{cD!45!cDD}{$10$}}
\dfc{cB}{fa846d}\def\2{\Ccolor{cD}{cDD!70!cD}{$2$}}
\dfc{cB}{fa846d}\def\3{\Ccolor{cD!90}{cDD!60!cD!70}{$3$}}
\draw (0,11) \A\5\A\3\8\2\A\5\A\3\8\2\A\5\A\3;
\draw (0,10) \5\9\5\B\1\B\5\9\5\B\1\B\5\9\5\B;
\draw (0, 9) \A\5\A\2\8\3\A\5\A\2\8\3\A\5\A\2;
\draw (0, 8) \2\B\3\7\4\7\2\B\3\7\4\7\2\B\3\7;
\draw (0, 7) \8\1\8\4\6\4\8\1\8\4\6\4\8\1\8\4;
\draw (0, 6) \3\B\2\7\4\7\3\B\2\7\4\7\3\B\2\7;
\draw (0, 5) \A\5\A\3\8\2\A\5\A\3\8\2\A\5\A\3;
\draw (0, 4) \5\9\5\B\1\B\5\9\5\B\1\B\5\9\5\B;
\draw (0, 3) \A\5\A\2\8\3\A\5\A\2\8\3\A\5\A\2;
\draw (0, 2) \2\B\3\7\4\7\2\B\3\7\4\7\2\B\3\7;
\draw (0, 1) \8\1\8\4\6\4\8\1\8\4\6\4\8\1\8\4;
\end{tikzpicture}
\hfill
\begin{tikzpicture}
\def\1{\ccolor{white}{$1$}}
\def\2{\ccolor{white}{$2$}}
\def\4{\ccolor{white}{$4$}}
\def\O{\ccolor{white}{\ }}
\def\0{\ccolor{white}{$0$}}
\def\o{ ++(1, 0) }
\draw (0,-0) \O\O\O\O\O\0\0\2\0\0\2 ;
\draw (0,-1) \O\O\O\O\O\0\1\1\0\1\1 ;
\draw (0,-2) \O\O\O\O\O\0\1\1\0\1\1 ;
\draw (0,-3) \O\O\O\O\O\1\2\1\0\0\0 ;
\draw (0,-4) \O\O\O\O\O\0\0\0\1\2\1 ;
\draw (0,-5) \0\0\0\4\0\O\O\O\O\O\O ;
\draw (0,-6) \0\1\1\2\0\O\O\O\O\O\O ;
\draw (0,-7) \1\1\1\1\0\O\O\O\O\O\O ;
\draw (0,-8) \0\0\0\0\4\O\O\O\O\O\O ;
\draw (0,-9) \0\1\1\0\1\O\O\O\O\O\O ;
\draw (0,-10)\1\1\1\0\1\O\O\O\O\O\O ;
\draw (0.5,0.5)rectangle(11.5,-10.5);
\end{tikzpicture}
\refstepcounter{figure}
\caption*{Fig.~\thefigure\ (Football).\hfill Twins: $\cll2,\cll3$.\hfill Periods: $[6,0]$, $[0,6]$.} \label{HH}
\end{figure}

\begin{figure}[H]
\def\0{ ++(1\DK, 0) }
\def\dColor#1#2{\Dcolor{#1!75}{#1!5}{#2}}
\def\dcOlor#1#2{\dcolor{#1!75}{#2}}
\def\dfc#1#2{\definecolor{#1}{HTML}{#2}}
 \dfc{cE}{f1c502}\def\c{\dcOlor{cE}{$5$}}
 \dfc{cF}{f19521}\def\f{\dcOlor{cF!65!cE}{$8$}}
 \dfc{cG}{aa7946}\def\e{\dcOlor{cG!75!cF}{$7$}}
 \dfc{cH}{e46601}\def\d{\dcOlor{cH!75!white}{$6$}}
\dfc{cA}{4f987d}\def\1{\dColor{cA}{$1$}}
\dfc{cB}{62884b}\def\2{\dColor{cB}{$2$}}
\dfc{cD}{5e953f}\def\b{\dColor{cD}{$4$}}
\dfc{cC}{998914}\def\a{\dColor{cC!75!cD}{$3$}}
\def\t{ ++(1\DK, 0) node {$\ $}}
\scalebox{1.2}{
\begin{tikzpicture}
\begin{scope}
\clip [](0.05\DK,-1.97\DK) rectangle (11.95\DK,5.45\DK);
\draw (-0\DK, 5\DK)      \1\2\1\2\1\2\1\2\1\2\1;
\draw (-0.5\DK, 4.5\DK) \e\f\e\f\e\f\e\f\e\f\e\f;
\draw (-0\DK, 4\DK)      \a\b\a\b\a\b\a\b\a\b\a;
\draw (-0.5\DK, 3.5\DK) \c\d\c\d\c\d\c\d\c\d\c\d;
\draw (-0\DK, 3\DK)      \1\2\1\2\1\2\1\2\1\2\1;
\draw (-0.5\DK, 2.5\DK) \e\f\e\f\e\f\e\f\e\f\e\f;
\draw (-0\DK, 2\DK)      \a\b\a\b\a\b\a\b\a\b\a;
\draw (-0.5\DK, 1.5\DK) \c\d\c\d\c\d\c\d\c\d\c\d;
\draw (-0\DK, 1\DK)      \1\2\1\2\1\2\1\2\1\2\1;
\draw (-0.5\DK, 0.5\DK) \e\f\e\f\e\f\e\f\e\f\e\f;
\draw (-0\DK, 0\DK)      \a\b\a\b\a\b\a\b\a\b\a;
\draw (-0.5\DK, -.5\DK) \c\d\c\d\c\d\c\d\c\d\c\d;
\draw (-0\DK,-1\DK)      \1\2\1\2\1\2\1\2\1\2\1;
\draw (-0.5\DK,-1.5\DK) \e\f\e\f\e\f\e\f\e\f\e\f;
\draw [draw=black!40!white, thin, step=\DK, xshift=0.5\DK, yshift=-1\DK,
      dash pattern=on 0\DK off 0.28\DK on 0.44\DK off 0.28\DK]
(-0.5\DK,-1.5\DK) grid (11.5\DK,6.5\DK);
 \draw [draw=black!40!white, thin, step=\DK, xshift=1\DK, yshift=-0.5\DK,
       dash pattern=on 0\DK off 0.28\DK on 0.44\DK off 0.28\DK]
 (-1.5\DK,-1.5\DK) grid (10.97\DK,5.97\DK);
\end{scope}
\end{tikzpicture}
}
\hfill
\begin{tikzpicture}
\def\1{\ccolor{white}{1}}
\def\2{\ccolor{white}{2}}
\def\O{\ccolor{white}{\ }}
\def\0{\ccolor{white}{0}}
\def\o{ ++(1, 0) }
\draw (0,-0)  \O\O\O\O\1\1\1\1 ;
\draw (0,-1)  \O\O\O\O\1\1\1\1 ;
\draw (0,-2)  \O\O\O\O\1\1\1\1 ;
\draw (0,-3)  \O\O\O\O\1\1\1\1 ;
\draw (0,-4)  \1\1\1\1\O\O\O\O ;
\draw (0,-5)  \1\1\1\1\O\O\O\O ;
\draw (0,-6)  \1\1\1\1\O\O\O\O ;
\draw (0,-7)  \1\1\1\1\O\O\O\O ;
\draw (0.5,0.5)rectangle(8.5,-7.5);
\end{tikzpicture}
\refstepcounter{figure}
\caption*{Fig.~\thefigure\ (Fall).\hfill Twins: $\cll1,\cll2,\cll3,\cll4$;
                $\cll5,\cll6,\cll7,\cll8$.\hfill Periods: $[2,2]$, $[2,-2]$.}
                \label{II}
\end{figure}

\begin{figure}[H]
\dfc{cA}{914019}\def\1{\Dcolor{cA!75}{cA!10}{$3$}}
\dfc{cB}{845042}\def\2{\Dcolor{cB!75}{cB!10}{$4$}}
\dfc{cD}{2e5c3a}\def\3{\Dcolor{cD!65}{cD!10}{$1$}}
\dfc{cC}{65a370}\def\4{\Dcolor{cC!75}{cC!10}{$2$}}
\dfc{cF}{d36808}
\dfc{cE}{bcaa1f}\def\5{\DDolor{cE!90}{cE!50}{210}{$7$}}
\dfc{cH}{f0ab3a}\def\6{\DDolor{cF!40!cH!90}{cF!30!cH!50}{210}{$8$}}
\dfc{cG}{a08c04}\def\7{\DDolor{cG}{cE!70}{210}{$5$}}
\dfc{cF}{d36808}\def\8{\DDolor{cF!90!cH}{cF!70!cH!50}{210}{$6$}}
\scalebox{1.2}{
\begin{tikzpicture}
\begin{scope}
\clip [] (0.52\DK,-1.5\DK) rectangle (12.5\DK,6\DK);
 \draw (0.5\DK, 5.5\DK)   \5\4\6\4\5\4\6\4\5\4\6;
 \draw (0\DK, 5\DK)      \7\7\8\8\7\7\8\8\7\7\8\8;
 \draw (0.5\DK, 4.5\DK)   \5\3\6\3\5\3\6\3\5\3\6;
 \draw (0\DK, 4\DK)      \1\2\1\2\1\2\1\2\1\2\1\2;
 \draw (0.5\DK, 3.5\DK)   \6\4\5\4\6\4\5\4\6\4\5;
 \draw (0\DK, 3\DK)      \8\8\7\7\8\8\7\7\8\8\7\7;
 \draw (0.5\DK, 2.5\DK)   \6\3\5\3\6\3\5\3\6\3\5;
 \draw (0\DK, 2\DK)      \1\2\1\2\1\2\1\2\1\2\1\2;
 \draw (0.5\DK, 1.5\DK)   \5\4\6\4\5\4\6\4\5\4\6;
 \draw (0\DK, 1\DK)      \7\7\8\8\7\7\8\8\7\7\8\8;
 \draw (0.5\DK, 0.5\DK)   \5\3\6\3\5\3\6\3\5\3\6;
 \draw (0\DK, 0\DK)      \1\2\1\2\1\2\1\2\1\2\1\2;
 \draw (0.5\DK,-0.5\DK)   \6\4\5\4\6\4\5\4\6\4\5;
 \draw (0\DK,-1\DK)      \8\8\7\7\8\8\7\7\8\8\7\7;
\draw [draw=black!50!white, thin, step=2\DK, xshift=.5\DK, yshift=0\DK,
      dash pattern=on 0\DK off 0.28\DK on 0.44\DK off 0.28\DK]
(-0.5\DK,-0.5\DK) grid (11.98\DK,5.98\DK);
\end{scope}
\end{tikzpicture}
}
\hfill
\begin{tikzpicture}
\def\1{\ccolor{white}{1}}
\def\2{\ccolor{white}{2}}
\def\O{\ccolor{white}{\ }}
\def\0{\ccolor{white}{0}}
\def\o{ ++(1, 0) }
\draw (0,-0)  \O\O\1\1\1\1 ;
\draw (0,-1)  \O\O\1\1\1\1 ;
\draw (0,-2)  \1\1\O\O\O\O\1\1 ;
\draw (0,-3)  \1\1\O\O\O\O\1\1 ;
\draw (0,-4)  \1\1\O\O\O\O\2\0 ;
\draw (0,-5)  \1\1\O\O\O\O\0\2 ;
\draw (0,-6)  \o\o\1\1\2\0\O\O ;
\draw (0,-7)  \o\o\1\1\0\2\O\O ;
\draw (0.5,0.5)rectangle(8.5,-7.5);
\end{tikzpicture}
\refstepcounter{figure}
\caption*{Fig.~\thefigure\ (Apples \& oranges).\hfill Twins: $\cll1,\cll2$; $\cll3,\cll4$.\hfill Periods: $[4,0]$, $[0,4]$.\\
          The quotient matrix is similar to that in Fig.~\ref{CC}.
}\label{JJ}
\end{figure}

\begin{figure}[H]
\begin{tikzpicture}[yscale=-1]
\dfc{cA}{09899b}\def\1{\ccolor{blue!3!cA!65}{$1$}}
\dfc{cB}{1da8b4}\def\2{\ccolor{green!2!cB!65}{$2$}}
\dfc{cE}{bc966e}\def\5{\ccolor{cE!75}{$5$}}
\dfc{cF}{c4aa93}\def\6{\ccolor{cF!75}{$6$}}
\dfc{cC}{98c9ba}\def\3{\Ccolor{cE!50!cC!70}{cE!50!cC!10}{$3$}}
\dfc{cD}{75a49d}\def\4{\Ccolor{cD!70}{cD!10}{$4$}}
\dfc{cK}{599ae1}\def\k{\ccolor{red!5!cK!55}{$k$}}
\dfc{cL}{1c7cd2}\def\l{\ccolor{cL!40}{$l$}}
\dfc{cM}{8c9ead}\def\m{\Ccolor{cM!70}{cM!10}{$m$}}
\dfc{cN}{82a7cb}\def\n{\Ccolor{cN!70}{cN!10}{$n$}}
\draw (3, 3)       \5                        ++(2.5,0)  \k\m\1\3\5                        ++(2.5,0)  \k\m\1\3\5     ;
\draw (3, 2)       \4\6                      ++(2.5,0)    \l\n\2\4\6                      ++(2.5,0)    \l\n\2\4\6   ;
\draw (3, 1)       \1\3\5                    ++(2.5,0)      \k\m\1\3\5                    ++(2.5,0)      \k\m\1\3\5 ;
\draw (3, 0)       \n\2\4\6                  ++(2.5,0)        \l\n\2\4\6                  ++(2.5,0)        \l\n\2\4 ;
\draw (3,-1)       \k\m\1\3\5                ++(2.5,0)          \k\m\1\3\5                ++(2.5,0)          \k\m\1 ;
\draw (4,-2)         \l\n\2\4\6              ++(2.5,0)            \l\n\2\4\6              ++(2.5,0)            \l\n ;
\draw (5,-3)           \k\m\1\3\5            ++(2.5,0)              \k\m\1\3\5            ++(2.5,0)              \k ;
\draw (2.5,-4)        \6  ++(2.5,0)   \l\n\2\4\6          ++(2.5,0)                \l\n\2\4\6          ++(2.5,0)                 ;
\draw (2.5,-5)      \3\5   ++(2.5,0)    \k\m\1\3\5        ++(2.5,0)                  \k\m\1\3\5        ++(2.5,0)                 ;
\draw (2.5,-6)      \2\4\6   ++(2.5,0)    \l\n\2\4\6      ++(2.5,0)                    \l\n\2\4\6      ++(2.5,0)                 ;
\draw (2.5,-7)      \m\1\3\5  ++(2.5,0)     \k\m\1\3\5    ++(2.5,0)                      \k\m\1\3\5    ++(2.5,0)    \k\m\1\3\5   ;
\draw (2.5,-8)      \l\n\2\4\6   ++(2.5,0)    \l\n\2\4\6  ++(2.5,0)                        \l\n\2\4\6  ++(2.5,0)      \l\n\2\4\6 ;
\draw [diag, dash phase=0.5\DK]
(3.5,-4 ) -- +(5,-5)
++(0,-1) --  +(4,-4)
++(0,-1) --  +(3,-3)
++(0,-1) --  +(2,-2)
++(0,-1) --  +(1,-1)
  (3,0) -- +(9,-9)
++(0,1) -- +(10,-10)
++(0,1) -- +(11,-11)
++(0,1) -- +(12,-12)
++(0,1) -- +(13,-13)
++(3.5,0) -- +(13,-13)
++(1,0) -- +(13,-13)
++(1,0) -- +(13,-13)
++(1,0) -- +(13,-13)
++(1,0) -- +(13,-13)
++(3.5,0) -- +(13,-13)
++(  1,0) -- +(13,-13)
++(  1,0) -- +(13,-13)
++(  1,0) -- +(13,-13)
++(  1,0) -- +(13,-13);
 \draw [->, dashed] ( 7.5,-7 ) -- node [below,near start] {{$\scriptscriptstyle +2$}} +(1.5,0);
 \draw [->, dashed] ( 3.5,-3 ) -- node [below,near start] {{$\scriptscriptstyle +2$}} +(1.5,0);
 \draw [->, dashed] (15,-7 ) -- node [below,near start] {{$\scriptscriptstyle +2$}} +(1.5,0);
 \draw [->, dashed] (22.5,-7 ) -- +(1.5,0);
 \draw [->, dashed] (30,-7 ) -- +(1.5,0);
 \draw [->, dashed] (11,-3 ) -- node [below,near start] {{$\scriptscriptstyle +2$}} +(1.5,0);
 \draw [->, dashed] ( 7, 1 ) -- node [below,near start] {{$\scriptscriptstyle +2$}} +(1.5,0);
 \draw [->, dashed] (18.5,-3 ) -- node [below,near start] {{$\scriptscriptstyle +2$}} +(1.5,0);
 \draw [->, dashed] (14.5, 1 ) -- node [below,near start] {{$\scriptscriptstyle +2$}} +(1.5,0);
\end{tikzpicture}
\hspace{-10cm}\hfill
\scalebox{0.78}{
\begin{tikzpicture}
\def\1{\ccolor{white}{1}}
\def\0{\ccolor{white}{0}}
\def\O{\cocolor{white}{\ }}
\def\d{\cocolor{white}{\ddots}}
\def\t{\cocolor{white}{\cdots}}
\def\v{\cocolor{white}{\vdots}}
\def\o{ ++(1, 0) }
\fill [fill=white] (0.5,0.5)rectangle(13.5,-12.5);
\draw (0,-0)  \O\O\1\1\o\o\o\o\t\o\o\1\1 ;
\draw (0,-1)  \O\O\1\1\o\o\o\o\t\o\o\1\1 ;
\draw (0,-2)  \1\1\O\O\1\1\o\o\t ;
\draw (0,-3)  \1\1\O\O\1\1\o\o\t ;
\draw (0,-4)  \o\o\1\1\O\O\1\1\t ;
\draw (0,-5)  \o\o\1\1\O\O\1\1\O\v\v\v\v ;
\draw (0,-6)  \o\o\o\o\d\d\d\d\d ;
\draw (0,-7)  \v\v\v\v\O\1\1\O\O\1\1 ;
\draw (0,-8)  \o\o\o\o\t\1\1\O\O\1\1 ;
\draw (0,-9)  \o\o\o\o\t\o\o\1\1\O\O\1\1 ;
\draw (0,-10) \o\o\o\o\t\o\o\1\1\O\O\1\1 ;
\draw (0,-11) \1\1\o\o\t\o\o\o\o\1\1\O\O ;
\draw (0,-12) \1\1\o\o\t\o\o\o\o\1\1\O\O ;
\draw (0.5,0.5)rectangle(13.5,-12.5);
\end{tikzpicture}
}
\caption{$n=4\mbox{(\ref{II})},8\mbox{(\ref{II})},12,16,20,\ldots$.\hfill Twins: $\cll1,\cll2$; $\cll3,\cll4$; $\cll5,\cll6$, \ldots.\hfill Periods: $[2,2]$, $\big[\frac n2,0\big]$.\\[0.3em]
         \mbox{}\hfill
          $\FfF\Lft \alpha+\beta,\beta\Rgt =\framebox{$2\alpha+\beta+1$},
         \quad \alpha = 0,1,\ldots,\frac n2-1,\ \beta=0,1.$\hfill\mbox{}
         }
         \label{KK}
\end{figure}

\begin{figure}[H]
\def\o{\ccolor{white}{$\ $}}
\mbox{}$\!\!\!\!$
\makebox[1cm][l]{
\begin{tikzpicture}[yscale=-1]
\def\1{\ccolor{blue!40!green!30!white}{$1$}}
\def\2{\ccolor{black!30!green!95!yellow!30!white}{$2$}}
\def\3{\Ccolor{yellow!40!white}{white}{$3$}}
\def\4{\Ccolor{orange!30!white}{white}{$4$}}
\def\5{\ccolor{blue!20!white}{$5$}}
\def\6{\ccolor{blue!50!purple!20!white}{$6$}}
\def\k{\Ccolor{black!7!white}{white}{$k$}}
\def\l{\Ccolor{black!12!white}{white}{$l$}}
\def\m{\ccolor{black!4!white}{$m$}}
\def\n{\ccolor{black!7!white}{$n$}}
\begin{scope}
\clip [](4.7,3.5)-- ++(12,-12)-- ++(-6.4,0) -- ++(-8,8) -- ++(0,4)--cycle;
\draw (2, 3)           \3\5\o ;
\draw (2, 2)           \2\4\6\o ;
\draw (2, 1)           \3\1\3\5\o ;
\draw (2, 0)           \6\4\2\4\6\o ;
\draw (2,-1)           \o\5\3\1\3\5\o ;
\draw (3,-2)             \o\6\4\2\4\6\o ;
\draw (4,-3)               \o\5\3\1\3\5\o ;
\draw (5,-4)                 \o\6\4\2\4\6\o ;
\draw (6,-5)                   \o\5\3\1\3\5\o ;
\draw (7,-6)                     \o\6\4\2\4\6\o ;
\draw (8,-7)                       \o\5\3\1\3\5\o ;
\draw (9,-8)                         \o\6\4\2\4\6\o\o ;
\end{scope}
\begin{scope}
\clip [](10.3,3.5)-- ++(6.4,0)-- ++(12,-12)-- ++(-6.4,0)--cycle;
\draw (10, 3)   \o\5\3\1\3\5\o ;
\draw (11, 2)     \o\6\4\2\4\6\o ;
\draw (12, 1)       \o\5\3\1\3\5\o ;
\draw (13, 0)         \o\6\4\2\4\6\o ;
\draw (14,-1)           \o\5\3\1\3\5\o ;
\draw (15,-2)             \o\6\4\2\4\6\o ;
\draw (16,-3)               \o\5\3\1\3\5\o ;
\draw (17,-4)                 \o\6\4\2\4\6 ;
\draw (18,-5)                   \o\5\3\1\3 ;
\draw (19,-6)                     \o\6\4\2\4\6\o ;
\draw (20,-7)                       \o\5\3\1\3\5\o ;
\draw (21,-8)                         \o\6\4\2\4\6\o ;
\end{scope}
\begin{scope}
  \clip []( 9,-8.5)-- ++(-3,0)-- ++(-4,4)-- ++(0,3)--cycle;
\draw ( 2,-2)  \o ;
\draw ( 2,-3)  \k\o ;
\draw ( 2,-4)  \n\l\o ;
\draw ( 2,-5)  \k\m\k\o ;
\draw ( 2,-6)  \o\l\n\l\o ;
\draw ( 3,-7)    \o\k\m\k\o ;
\draw ( 4,-8)      \o\l\n\l\o ;
\end{scope}\draw(2,-8)\6;\begin{scope}
\clip [](6,3.5)-- ++(3,0)-- ++(12,-12)-- ++(-3,0)--cycle;
\draw ( 5, 3)   \o\k\m\k\o ;
\draw ( 6, 2)     \o\l\n\l\o ;
\draw ( 7, 1)       \o\k\m\k\o ;
\draw ( 8, 0)         \o\l\n\l\o ;
\draw ( 9,-1)           \o\k\m\k\o ;
\draw (10,-2)             \o\l\n\l\o ;
\draw (11,-3)               \o\k\m\k\o ;
\draw (12,-4)                 \o\l\n\l\o ;
\draw (13,-5)                   \o\k\m\k\o ;
\draw (14,-6)                     \o\l\n\l\o ;
\draw (15,-7)                       \o\k\m\k\o ;
\draw (16,-8)                         \o\l\n\l\o ;
\end{scope}
\begin{scope}
\clip [](18,3.5)-- ++(3,0)-- ++(12,-12)-- ++(-3,0)--cycle;
\draw (12+ 5, 3)   \o\k\m\k\o ;
\draw (12+ 6, 2)     \o\l\n\l\o ;
\draw (12+ 7, 1)       \o\k\m\k\o ;
\draw (12+ 8, 0)         \o\l\n\l\o ;
\draw (12+ 9,-1)           \o\k\m\k\o ;
\draw (12+10,-2)             \o\l\n\l\o ;
\draw (12+11,-3)               \o\k\m\k\o ;
\draw (12+12,-4)                 \o\l\n\l\o ;
\draw (12+13,-5)                   \o\k\m\k\o ;
\draw (12+14,-6)                     \o\l\n\l\o ;
\draw (12+15,-7)                       \o\k\m\k ;
\draw (12+16,-8)                         \o\l\n ;
\end{scope}
\draw [diag, dash phase=0.5\DK]
  (2,1) -- +(10,-10)
++(0,1) -- +(11,-11)
++(0,1) -- +(12,-12)
++(0,1) -- +(13,-13)
++(1,0) -- +(13,-13) ++(-1,-11) -- +(2,-2) ++(0,3) -- +(5,-5) ++(0,1) -- +(6,-6) ++(0,1) -- +(7,-7)
  (6,4) -- +(13,-13)
++(1,0) -- +(13,-13) ++(1,0) -- +(13,-13)
++(3,0) -- +(13,-13)
++(1,0) -- +(13,-13)
++(1,0) -- +(13,-13)
++(1,0) -- +(13,-13)
++(1,0) -- +(13,-13)
++(3,0) -- +(13,-13)
++(1,0) -- +(13,-13)
++(1,0) -- +(12,-12);

\draw [->, dashed] (19, 1) -- node [below, very near start] {{$\scriptscriptstyle +2$}} +(1,0);
\draw [->, dashed] (27,-7) -- node [below, very near start] {{$\scriptscriptstyle +2$}} +(1,0);
\draw [<-, dashed] (20,-7) -- node [above, very near end] {{$\scriptscriptstyle +2$}} +(1,0);
\draw [<-, dashed] (16,-3) -- node [above, very near end] {{$\scriptscriptstyle +2$}} +(1,0);
\draw [<-, dashed] (12, 1) -- node [above, very near end] {{$\scriptscriptstyle +2$}} +(1,0);
\draw [->, dashed] ( 7, 1) -- node [below, very near start] {{$\scriptscriptstyle +2$}} +(1,0);
\draw [->, dashed] (11,-3) -- node [below, very near start] {{$\scriptscriptstyle +2$}} +(1,0);
\draw [->, dashed] (15,-7) -- node [below, very near start] {{$\scriptscriptstyle +2$}} +(1,0);
\draw [->, dashed] ( 3,-7) -- node [below, very near start] {{$\scriptscriptstyle +2$}} +(1,0);
\draw [<-, dashed] (8.0,-7) -- node [above, very near end] {{$\scriptscriptstyle +2$}} +(1,0);
\draw [<-, dashed] (4.0,-3) -- node [above, very near end] {{$\scriptscriptstyle +2$}} +(1,0);
\end{tikzpicture}
}
\hspace{-16cm}\hfill
\scalebox{0.805}{\begin{tikzpicture}
\def\2{\ccolor{white}{2}}
\def\1{\ccolor{white}{1}}
\def\0{\ccolor{white}{0}}
\def\O{\cocolor{white}{\ }}
\def\d{\cocolor{white}{\ddots}}
\def\t{\cocolor{white}{\cdots}}
\def\v{\cocolor{white}{\vdots}}
\def\o{ ++(1, 0) }
\fill [fill=white] (0.25,0.6)rectangle(11.5,-10);
\draw (0,-0)  \O\O\2\2 ;
\draw (0,-1)  \O\O\2\2 ;
\draw (0,-2)  \1\1\O\O\1\1 ;
\draw (0,-3)  \1\1\O\O\1\1 ;
\draw (0,-4)  \o\o\1\1\O\O\1\1 ;
\draw (0,-5)  \o\o\1\1\O\O\1\1 ;
\draw (0,-6)  \o\o\o\o\d\d\d\d\d ;
\draw (0,-7)  \o\o\o\o\o\1\1\O\O\1\1 ;
\draw (0,-8)  \o\o\o\o\o\1\1\O\O\1\1 ;
\draw (0,-9)  \o\o\o\o\o\o\o\2\2\O\O ;
\draw (0,-10) \o\o\o\o\o\o\o\2\2\O\O ;
\draw (0.5,0.5)rectangle(11.5,-10.5);
\draw (0,-11) \o\o\o\o ;
\end{tikzpicture}}
\caption{$n=4\mbox{(\ref{II})},6\mbox{(\ref{II})},8,10,12,\ldots$.\hfill Twins: $\cll1,\cll2$; $\cll3,\cll4$; $\cll5,\cll6$; \ldots.\hfill Periods: $[2,2]$, $[n-2,0]$.\\[0.3em]
         \mbox{}\hfill\mbox{}
         $\FfF\Lft \pm\alpha+\beta,\beta\Rgt =\framebox{$2\alpha+\beta+1$},
         \quad \alpha = 0,1,\ldots,\frac n2-1,\ \beta=0,1.$
         \mbox{}\hfill\mbox{}
         } \label{LL}
\end{figure}

\def\drdgnl#1{\draw [draw=black!50!white, thin,
       dash pattern=on 0.22\DK off 0.56\DK on 0.22\DK off 0]
       (0.5\DK,#1\DK)--(10.5\DK,#1\DK);}
\def\drdgnla#1{\draw [draw=black!50!white, thin,
       dash pattern=on 0\DK off 0.28\DK on 0.44\DK off 0.28\DK]
       (0.5\DK,#1\DK)--(10.5\DK,#1\DK);}

\begin{figure}[H]
\def\0{ ++(1\DK, 0) }
\def\o{\dcolor{white}{$\ $}}
\def\1{\dcolor{yellow!40!white}{$1$}}
\def\2{\dcolor{yellow!80!white}{$2$}}
\def\3{\Dcolor{green!30!yellow!45!white}{white}{$3$}}
\def\4{\Dcolor{green!30!yellow!80!white}{white}{$4$}}
\def\5{\Dcolor{green!80!yellow!30!white}{white}{$5$}}
\def\6{\Dcolor{green!80!yellow!55!white}{white}{$6$}}
\def\7{\Dcolor{green!80!yellow!85!white}{white}{$7$}}
\def\8{\Dcolor{green!60!cyan!40!white  }{white}{$8$}}
\def\9{\Dcolor{green!60!cyan!80!white  }{white}{$9$}}
\def\h{\Dcolor{blue!80!cyan!10!white   }{white}{$h$}}
\def\i{\Dcolor{blue!80!cyan!30!white   }{white}{$i$}}
\def\j{\Dcolor{blue!80!cyan!50!white   }{white}{$j$}}
\def\k{\Dcolor{blue!80!red!20!white    }{white}{$k$}}
\def\l{\Dcolor{blue!80!red!40!white    }{white}{$l$}}
\def\m{\dcolor{blue!30!red!15!white}{$m$}}
\def\n{\dcolor{blue!30!red!30!white}{$n$}}
\def\t{ ++(1\DK, 0) node {$\cdots$}}
\begin{tikzpicture}[rotate=90]
\draw (0.5\DK, 1.5\DK)\4\4\3\3\4\4\3\3\4;
\draw (0\DK, 1\DK)   \1\2\1\2\1\2\1\2\1\2;
\drdgnl{1}
\draw (0.5\DK, 0.5\DK)\3\3\4\4\3\3\4\4\3;
\draw (0\DK, 0\DK)   \6\5\6\7\6\5\6\7\6\5;
\draw (0.5\DK, -0.5\DK)\8\8\9\9\8\8\9\9\8;
\draw (0\DK, -2\DK)     \i\h\i\j\i\h\i\j\i\h;
\draw (0.5\DK, -2.5\DK)\k\k\l\l\k\k\l\l\k;
\draw (0\DK, -3\DK)     \m\n\m\n\m\n\m\n\m\n;
\drdgnl{-3}
\draw (0.5\DK, -3.5\DK)\l\l\k\k\l\l\k\k\l;
\draw (0\DK, -4\DK)     \i\j\i\h\i\j\i\h\i\j;
\draw (0.5\DK, -5.5\DK)\9\9\8\8\9\9\8\8\9;
\draw (0\DK, -6\DK)   \6\7\6\5\6\7\6\5\6\7;
\draw (0.5\DK, -6.5\DK)\4\4\3\3\4\4\3\3\4;
\draw (0\DK, -7\DK)   \1\2\1\2\1\2\1\2\1\2;
\drdgnl{-7}
\draw (0.5\DK,-7.5\DK)\3\3\4\4\3\3\4\4\3;
\draw (0\DK, -8\DK)   \6\5\6\7\6\5\6\7\6\5;
\draw (0.5\DK, -8.5\DK)\8\8\9\9\8\8\9\9\8;
\draw [<-,dashed] (0.0\DK+1.5\DK, -0.7\DK+3.5\DK) -- +(0\DK, -0.6\DK);
\draw [<-,dashed] (1.5\DK+1.5\DK, -1.2\DK+3.5\DK) -- +(0\DK, -0.6\DK);
\draw [<-,dashed] (3.0\DK+1.5\DK, -0.7\DK+3.5\DK) -- node [above, very near end] {$\scriptscriptstyle+5$}+(0\DK, -0.6\DK);
\draw [<-,dashed] (4.5\DK+1.5\DK, -1.2\DK+3.5\DK) -- +(0\DK, -0.6\DK);
\draw [<-,dashed] (6.0\DK+1.5\DK, -0.7\DK+3.5\DK) -- +(0\DK, -0.6\DK);
\draw [<-,dashed] (7.5\DK+1.5\DK, -1.2\DK+3.5\DK) -- node [above, very near start] {$\scriptscriptstyle+5$}+(0\DK, -0.6\DK);
\draw [->,dashed] (1.5\DK, -1.2\DK) -- (1.5\DK, -1.8\DK);
\draw [->,dashed] (3.0\DK, -0.7\DK) -- (3.0\DK, -1.3\DK);
\draw [->,dashed] (4.5\DK, -1.2\DK) -- node [above, very near start] {$\scriptscriptstyle+5$} +(0\DK, -0.6\DK);
\draw [->,dashed] (6.0\DK, -0.7\DK) -- +(0\DK, -0.6\DK);
\draw [->,dashed] (7.5\DK, -1.2\DK) -- +(0\DK, -0.6\DK);
\draw [->,dashed] (9.0\DK, -0.7\DK) -- node [above, very near end] {$\scriptscriptstyle+5$} +(0\DK, -0.6\DK) ;
\draw [<-,dashed] (0.0\DK+1.5\DK, -0.7\DK-3.5\DK) -- +(0\DK, -0.6\DK);
\draw [<-,dashed] (1.5\DK+1.5\DK, -1.2\DK-3.5\DK) -- +(0\DK, -0.6\DK);
\draw [<-,dashed] (3.0\DK+1.5\DK, -0.7\DK-3.5\DK) -- node [above, very near end] {$\scriptscriptstyle+5$} +(0\DK, -0.6\DK);
\draw [<-,dashed] (4.5\DK+1.5\DK, -1.2\DK-3.5\DK) -- +(0\DK, -0.6\DK);
\draw [<-,dashed] (6.0\DK+1.5\DK, -0.7\DK-3.5\DK) -- +(0\DK, -0.6\DK);
\draw [<-,dashed] (7.5\DK+1.5\DK, -1.2\DK-3.5\DK) -- node [above, very near start] {$\scriptscriptstyle+5$} +(0\DK, -0.6\DK);
\draw [->,dashed] (1.5\DK, -1.2\DK-8\DK) -- +(0\DK, -0.6\DK);
\draw [->,dashed] (3.0\DK, -0.7\DK-8\DK) -- +(0\DK, -0.6\DK);
\draw [->,dashed] (4.5\DK, -1.2\DK-8\DK) --  node [above, very near start] {$\scriptscriptstyle+5$} +(0\DK, -0.6\DK);
\draw [->,dashed] (6.0\DK, -0.7\DK-8\DK) -- +(0\DK, -0.6\DK);
\draw [->,dashed] (7.5\DK, -1.2\DK-8\DK) -- +(0\DK, -0.6\DK);
\draw [->,dashed] (9.0\DK, -0.7\DK-8\DK) -- node [above, very near end] {$\scriptscriptstyle+5$} +(0\DK, -0.6\DK);
\end{tikzpicture}%
\hspace{-10cm}\hfill%
\raisebox{1em}{\scalebox{0.67}
{\begin{tikzpicture}
\def\4{\ccolor{white}{$\boldsymbol{4}$}}
\def\2{\ccolor{white}{$\boldsymbol{2}$}}
\def\1{\ccolor{white}{$\boldsymbol{1}$}}
\def\0{\ccolor{white}{0}}
\def\O{\cocolor{white}{\ }}
\def\d{\cocolor{white}{\ddots}}
\def\t{\cocolor{white}{\cdots}}
\def\v{\cocolor{white}{\vdots}}
\def\o{ ++(1, 0) }
\draw (0,-0) \O\O\2\2 ;
\draw (0,-1) \O\O\2\2 ;
\draw (0,-2) \1\1\O\O\1\1\0 ;
\draw (0,-3) \1\1\O\O\0\1\1 ;
\draw (0,-4) \o\o\2\0\O\O\O\2\0 ;
\draw (0,-5) \o\o\1\1\O\O\O\1\1 ;
\draw (0,-6) \o\o\0\2\O\O\O\0\1 ;
\draw (0,-7) \o\o\o\o\1\1\0\O\O\1\1\0 ;
\draw (0,-8) \o\o\o\o\0\1\1\O\O\0\1\1 ;
\draw (0,-9) \o\o\o\o\o\o\o\2\0\d\d\d\d ;
\draw (0,-10) \o\o\o\o\o\o\o\1\1\d\O\O\O\2\0 ;
\draw (0,-11) \o\o\o\o\o\o\o\0\2\d\O\O\O\1\1 ;
\draw (0,-12) \o\o\o\o\o\o\o\o\o\d\O\O\O\0\2 ;
\draw (0,-13) \o\o\o\o\o\o\o\o\o\o\1\1\0\O\O\1\1 ;
\draw (0,-14) \o\o\o\o\o\o\o\o\o\o\0\1\1\O\O\1\1 ;
\draw (0,-15) \o\o\o\o\o\o\o\o\o\o\o\o\o\2\2\O\O ;
\draw (0,-16) \o\o\o\o\o\o\o\o\o\o\o\o\o\2\2\O\O ;
\draw (0.5,0.5)rectangle(17.5,-16.5);
\end{tikzpicture}
}}
\caption{$n=6\mbox{(\ref{II})},11,16,21,26,\ldots$.\hfill Twins: $\cll1,\cll2$; $\ccll{m},\cll{n}$.\hfill Periods: $[-4,4]$,
         $\big[\frac{2n-2}5,\frac{2n-2}5\big]$.\\[0.3em]
         \mbox{}\hfill\mbox{}
         $\FfF\Lft x,y\Rgt =\FfF_{23}\Lft x,y\Rgt,\ |x+y|<\frac{2n-2}5,$
         \mbox{}\hfill\mbox{}
         \\[0.3em]
         \mbox{}\hfill\mbox{}
         $\FfF\Lft \frac{n-1}5{-}2z{-}\beta,\
                \frac{n-1}5{+}2z{+}\beta \Rgt
         =\framebox{$n{-}1{+}\beta$},
         \
         z\in\ZZ,\beta\in\{0,1\}.$
         \mbox{}\hfill\mbox{}
         } \label{MM}
\end{figure}

\begin{figure}[H]
%
\def\0{ ++(1\DK, 0) }
\def\o{\dcolor{white}{$\ $}}
\def\1{\dcolor{yellow!40!white}{$1$}}
\def\2{\dcolor{yellow!80!white}{$2$}}
\def\3{\Dcolor{green!30!yellow!45!white}{white}{$3$}}
\def\4{\Dcolor{green!30!yellow!80!white}{white}{$4$}}
\def\5{\Dcolor{green!80!yellow!30!white}{white}{$5$}}
\def\6{\Dcolor{green!80!yellow!55!white}{white}{$6$}}
\def\7{\Dcolor{green!80!yellow!85!white}{white}{$7$}}
\def\8{\Dcolor{green!60!cyan!40!white  }{white}{$8$}}
\def\9{\Dcolor{green!60!cyan!80!white  }{white}{$9$}}
\def\h{\Dcolor{blue!80!cyan!10!white   }{white}{$g$}}
\def\i{\Dcolor{blue!80!cyan!30!white   }{white}{$h$}}
\def\j{\Dcolor{blue!80!cyan!50!white   }{white}{$i$}}
\def\k{\Dcolor{blue!80!red!20!white    }{white}{$j$}}
\def\l{\Dcolor{blue!80!red!40!white    }{white}{$k$}}
\def\m{\Dcolor{blue!30!red!15!white    }{white}{$l$}}
\def\n{\Dcolor{blue!30!red!30!white    }{white}{$m$}}
\def\o{\Dcolor{blue!30!red!45!white    }{white}{$n$}}
\def\t{ ++(1\DK, 0) node {$\cdots$}}
\begin{tikzpicture}[rotate=90]
\draw (0.5\DK, 1.5\DK)\4\4\3\3\4\4\3\3\4;
\draw (0\DK, 1\DK)   \1\2\1\2\1\2\1\2\1\2;
\drdgnl{1}
\draw (0.5\DK, 0.5\DK)\3\3\4\4\3\3\4\4\3;
\draw (0\DK, 0\DK)   \6\5\6\7\6\5\6\7\6\5;
\draw (0.5\DK, -0.5\DK)\8\8\9\9\8\8\9\9\8;
\draw (0\DK, -2\DK)     \i\h\i\j\i\h\i\j\i\h;
\draw (0.5\DK, -2.5\DK)\k\k\l\l\k\k\l\l\k;
\draw (0\DK, -3\DK)     \n\m\n\o\n\m\n\o\n\m;
\draw (0.5\DK, -3.5\DK)\k\k\l\l\k\k\l\l\k;
\draw (0\DK, -4\DK)     \i\h\i\j\i\h\i\j\i\h;
\draw (0.5\DK, -5.5\DK)\8\8\9\9\8\8\9\9\8;
\draw (0\DK, -6\DK)   \6\5\6\7\6\5\6\7\6\5;
\draw (0.5\DK, -6.5\DK)\3\3\4\4\3\3\4\4\3;
\draw (0\DK, -7\DK)   \1\2\1\2\1\2\1\2\1\2;
\draw (0.5\DK,-7.5\DK)\4\4\3\3\4\4\3\3\4;
\draw (0\DK, -8\DK)   \6\7\6\5\6\7\6\5\6\7;
\draw (0.5\DK, -8.5\DK)\9\9\8\8\9\9\8\8\9;
\draw [<-,dashed] (0.0\DK+1.5\DK, -0.7\DK+3.5\DK) --  +(0\DK, -0.6\DK);
\draw [<-,dashed] (1.5\DK+1.5\DK, -1.2\DK+3.5\DK) --  +(0\DK, -0.6\DK);
\draw [<-,dashed] (3.0\DK+1.5\DK, -0.7\DK+3.5\DK) -- node [above, very near end] {$\scriptscriptstyle+5$}  +(0\DK, -0.6\DK);
\draw [<-,dashed] (4.5\DK+1.5\DK, -1.2\DK+3.5\DK) -- +(0\DK, -0.6\DK);
\draw [<-,dashed] (6.0\DK+1.5\DK, -0.7\DK+3.5\DK) --   +(0\DK, -0.6\DK);
\draw [<-,dashed] (7.5\DK+1.5\DK, -1.2\DK+3.5\DK) -- node [above, very near start] {$\scriptscriptstyle+5$}  +(0\DK, -0.6\DK);
\draw [->,dashed] (1.5\DK, -1.2\DK) --  +(0\DK, -0.6\DK);
\draw [->,dashed] (3.0\DK, -0.7\DK) --  +(0\DK, -0.6\DK);
\draw [->,dashed] (4.5\DK, -1.2\DK) -- node [above, very near start] {$\scriptscriptstyle+5$}  +(0\DK, -0.6\DK);
\draw [->,dashed] (6.0\DK, -0.7\DK) --  +(0\DK, -0.6\DK);
\draw [->,dashed] (7.5\DK, -1.2\DK) -- +(0\DK, -0.6\DK);
\draw [->,dashed] (9.0\DK, -0.7\DK) --   node [above, very near end] {$\scriptscriptstyle+5$}  +(0\DK, -0.6\DK);
\draw [<-,dashed] (0.0\DK+1.5\DK, -0.7\DK-3.5\DK) --  +(0\DK, -0.6\DK);
\draw [<-,dashed] (1.5\DK+1.5\DK, -1.2\DK-3.5\DK) --  +(0\DK, -0.6\DK);
\draw [<-,dashed] (3.0\DK+1.5\DK, -0.7\DK-3.5\DK) -- node [above, very near end] {$\scriptscriptstyle+5$}  +(0\DK, -0.6\DK);
\draw [<-,dashed] (4.5\DK+1.5\DK, -1.2\DK-3.5\DK) --  +(0\DK, -0.6\DK);
\draw [<-,dashed] (6.0\DK+1.5\DK, -0.7\DK-3.5\DK) --   +(0\DK, -0.6\DK);
\draw [<-,dashed] (7.5\DK+1.5\DK, -1.2\DK-3.5\DK) -- node [above, very near start] {$\scriptscriptstyle+5$} +(0\DK, -0.6\DK);
\draw [->,dashed] (1.5\DK, -1.2\DK-8\DK) --  +(0\DK, -0.6\DK);
\draw [->,dashed] (3.0\DK, -0.7\DK-8\DK) --  +(0\DK, -0.6\DK);
\draw [->,dashed] (4.5\DK, -1.2\DK-8\DK) -- node [above, very near start] {$\scriptscriptstyle+5$} +(0\DK, -0.6\DK);
\draw [->,dashed] (6.0\DK, -0.7\DK-8\DK) --  +(0\DK, -0.6\DK);
\draw [->,dashed] (7.5\DK, -1.2\DK-8\DK) --  +(0\DK, -0.6\DK);
\draw [->,dashed] (9.0\DK, -0.7\DK-8\DK) -- node [above, very near end] {$\scriptscriptstyle+5$}    +(0\DK, -0.6\DK);
\drdgnl{-7}
\end{tikzpicture}%
\hspace{-10cm}\hfill%
\raisebox{1em}{\scalebox{0.63}
{\begin{tikzpicture}
\def\4{\ccolor{white}{$\boldsymbol{4}$}}
\def\2{\ccolor{white}{$\boldsymbol{2}$}}
\def\1{\ccolor{white}{$\boldsymbol{1}$}}
\def\0{\ccolor{white}{0}}
\def\O{\cocolor{white}{\ }}
\def\d{\cocolor{white}{\ddots}}
\def\t{\cocolor{white}{\cdots}}
\def\v{\cocolor{white}{\vdots}}
\def\o{ ++(1, 0) }
\draw (0,-0) \O\O\2\2 ;
\draw (0,-1) \O\O\2\2 ;
\draw (0,-2) \1\1\O\O\1\1\0 ;
\draw (0,-3) \1\1\O\O\0\1\1 ;
\draw (0,-4) \o\o\2\0\O\O\O\2\0 ;
\draw (0,-5) \o\o\1\1\O\O\O\1\1 ;
\draw (0,-6) \o\o\0\2\O\O\O\0\1 ;
\draw (0,-7) \o\o\o\o\1\1\0\O\O\1\1\0 ;
\draw (0,-8) \o\o\o\o\0\1\1\O\O\0\1\1 ;
\draw (0,-9) \o\o\o\o\o\o\o\2\0\d\d\d\d ;
\draw (0,-10) \o\o\o\o\o\o\o\1\1\d\O\O\O\2\0 ;
\draw (0,-11) \o\o\o\o\o\o\o\0\2\d\O\O\O\1\1 ;
\draw (0,-12) \o\o\o\o\o\o\o\o\o\d\O\O\O\0\2 ;
\draw (0,-13) \o\o\o\o\o\o\o\o\o\o\1\1\0\O\O\1\1\0 ;
\draw (0,-14) \o\o\o\o\o\o\o\o\o\o\0\1\1\O\O\0\1\1 ;
\draw (0,-15) \o\o\o\o\o\o\o\o\o\o\o\o\o\4\0\O\O\O ;
\draw (0,-16) \o\o\o\o\o\o\o\o\o\o\o\o\o\2\2\O\O\O ;
\draw (0,-17) \o\o\o\o\o\o\o\o\o\o\o\o\o\0\4\O\O\O ;
\draw (0.5,0.5)rectangle(18.5,-17.5);
\end{tikzpicture}
}}
\caption{$n=7\mbox{(\ref{AA})},12,17,22,27,\ldots$.\hfill Twins: $\cll1,\cll2$.\hfill Periods: $[-4,4]$, $\big[\frac{2n-4}5-2,\frac{2n-4}5+2\big]$.
         \\[0.3ex]
         \mbox{}\hfill\mbox{}
         $\FfF\Lft x,y\Rgt =\FfF_{23}\Lft x,y\Rgt,\ |x+y|\le\frac{2n-4}5.$
         \mbox{}\hfill\mbox{}
         } \label{NN}
\end{figure}

\begin{figure}[H]
\def\0{ ++(1\DK, 0) }
\def\o{\dcolor{white}{$\ $}}
\def\1{\dcolor{yellow!40!white}{$1$}}
\def\2{\dcolor{yellow!80!white}{$2$}}
\def\3{\Dcolor{green!30!yellow!45!white}{white}{$3$}}
\def\4{\Dcolor{green!30!yellow!80!white}{white}{$4$}}
\def\5{\Dcolor{green!80!yellow!30!white}{white}{$5$}}
\def\6{\Dcolor{green!80!yellow!55!white}{white}{$6$}}
\def\7{\Dcolor{green!80!yellow!85!white}{white}{$7$}}
\def\8{\Dcolor{green!60!cyan!40!white  }{white}{$8$}}
\def\9{\Dcolor{green!60!cyan!80!white  }{white}{$9$}}
\def\h{\Dcolor{blue!80!cyan!20!white   }{white}{$h$}}
\def\i{\Dcolor{blue!80!cyan!40!white   }{white}{$i$}}
\def\j{\Dcolor{blue!80!red!10!white    }{white}{$j$}}
\def\k{\Dcolor{blue!80!red!30!white    }{white}{$k$}}
\def\l{\Dcolor{blue!80!red!50!white    }{white}{$l$}}
\def\m{\dcolor{blue!30!red!15!white}{$m$}}
\def\n{\dcolor{blue!30!red!30!white}{$n$}}
\def\t{ ++(1\DK, 0) node {$\cdots$}}
\begin{tikzpicture}[rotate=90]
\draw (0.5\DK, 1.5\DK)\4\4\3\3\4\4\3\3\4;
\draw (0\DK, 1\DK)   \1\2\1\2\1\2\1\2\1\2;
\drdgnl{1}
\draw (0.5\DK, 0.5\DK)\3\3\4\4\3\3\4\4\3;
\draw (0\DK, 0\DK)   \6\5\6\7\6\5\6\7\6\5;
\draw (0.5\DK, -0.5\DK)\8\8\9\9\8\8\9\9\8;
\draw (0.5\DK, -2.5\DK)  \h\h\i\i\h\h\i\i\h;
\draw (0\DK,         -3\DK)\k\j\k\l\k\j\k\l\k\j;
\draw (0.5\DK, -3.5\DK)  \m\n\m\n\m\n\m\n\m;
\drdgnla{-3.5}
\draw (0\DK,         -4\DK)\k\l\k\j\k\l\k\j\k\l;
\draw (0.5\DK, -4.5\DK)  \i\i\h\h\i\i\h\h\i;
\draw (0.5\DK, -6.5\DK)\9\9\8\8\9\9\8\8\9;
\draw (0\DK, -7\DK)   \6\7\6\5\6\7\6\5\6\7;
\draw (0.5\DK, -7.5\DK)\4\4\3\3\4\4\3\3\4;
\draw (0\DK, -8\DK)   \1\2\1\2\1\2\1\2\1\2;
\drdgnl{-8}
\draw (0.5\DK,-8.5\DK)\3\3\4\4\3\3\4\4\3;
\draw (0\DK, -9\DK)   \6\5\6\7\6\5\6\7\6\5;
\draw (0.5\DK, -9.5\DK)\8\8\9\9\8\8\9\9\8;
\draw [<-,dashed] (1.5\DK, -1\DK+3.50\DK) -- node [above] {$                    $} +(0\DK, -0.6\DK);
\draw [<-,dashed] (3.0\DK, -1\DK+3.50\DK) -- node [above] {$                    $} +(0\DK, -0.6\DK);
\draw [<-,dashed] (4.5\DK, -1\DK+3.50\DK) -- node [above] {$                    $} +(0\DK, -0.6\DK);
\draw [<-,dashed] (6.0\DK, -1\DK+3.50\DK) -- node [above] {$                    $} +(0\DK, -0.6\DK);
\draw [<-,dashed] (7.5\DK, -1\DK+3.50\DK) -- node [above] {$                    $} +(0\DK, -0.6\DK);
\draw [<-,dashed] (9.0\DK, -1\DK+3.50\DK) -- node [above] {$                    $} +(0\DK, -0.6\DK);
\draw [->,dashed] (1.5\DK, -1\DK-0.25\DK) -- node [above] {$                    $} +(0\DK, -0.6\DK);
\draw [->,dashed] (3.0\DK, -1\DK-0.25\DK) -- node [above] {$                    $} +(0\DK, -0.6\DK);
\draw [->,dashed] (4.5\DK, -1\DK-0.25\DK) -- node [above] {$\scriptscriptstyle+5$} +(0\DK, -0.6\DK);
\draw [->,dashed] (6.0\DK, -1\DK-0.25\DK) -- node [above] {$                    $} +(0\DK, -0.6\DK);
\draw [->,dashed] (7.5\DK, -1\DK-0.25\DK) -- node [above] {$                    $} +(0\DK, -0.6\DK);
\draw [->,dashed] (9.0\DK, -1\DK-0.25\DK) -- node [above] {$\scriptscriptstyle+5$} +(0\DK, -0.6\DK);
\draw [<-,dashed] (1.5\DK, -1\DK-4.25\DK) -- node [above] {$                    $} +(0\DK, -0.6\DK);
\draw [<-,dashed] (3.0\DK, -1\DK-4.25\DK) -- node [above] {$                    $} +(0\DK, -0.6\DK);
\draw [<-,dashed] (4.5\DK, -1\DK-4.25\DK) -- node [above] {$\scriptscriptstyle+5$} +(0\DK, -0.6\DK);
\draw [<-,dashed] (6.0\DK, -1\DK-4.25\DK) -- node [above] {$                    $} +(0\DK, -0.6\DK);
\draw [<-,dashed] (7.5\DK, -1\DK-4.25\DK) -- node [above] {$                    $} +(0\DK, -0.6\DK);
\draw [<-,dashed] (9.0\DK, -1\DK-4.25\DK) -- node [above] {$\scriptscriptstyle+5$} +(0\DK, -0.6\DK);
\draw [->,dashed] (1.5\DK, -1\DK-9.25\DK) -- node [above] {$                    $} +(0\DK, -0.6\DK);
\draw [->,dashed] (3.0\DK, -1\DK-9.25\DK) -- node [above] {$                    $} +(0\DK, -0.6\DK);
\draw [->,dashed] (4.5\DK, -1\DK-9.25\DK) -- node [above] {$                    $} +(0\DK, -0.6\DK);
\draw [->,dashed] (6.0\DK, -1\DK-9.25\DK) -- node [above] {$                    $} +(0\DK, -0.6\DK);
\draw [->,dashed] (7.5\DK, -1\DK-9.25\DK) -- node [above] {$                    $} +(0\DK, -0.6\DK);
\draw [->,dashed] (9.0\DK, -1\DK-9.25\DK) -- node [above] {$                    $} +(0\DK, -0.6\DK);
\end{tikzpicture}%
\hspace{-10cm}\hfill%
\raisebox{1em}{\scalebox{0.60}
{\begin{tikzpicture}
\def\4{\ccolor{white}{$\boldsymbol{4}$}}
\def\2{\ccolor{white}{$\boldsymbol{2}$}}
\def\1{\ccolor{white}{$\boldsymbol{1}$}}
\def\0{\ccolor{white}{0}}
\def\O{\cocolor{white}{\ }}
\def\d{\cocolor{white}{\ddots}}
\def\t{\cocolor{white}{\cdots}}
\def\v{\cocolor{white}{\vdots}}
\def\o{ ++(1, 0) }
\draw (0,-0) \O\O\2\2 ;
\draw (0,-1) \O\O\2\2 ;
\draw (0,-2) \1\1\O\O\1\1\0 ;
\draw (0,-3) \1\1\O\O\0\1\1 ;
\draw (0,-4) \o\o\2\0\O\O\O\2\0 ;
\draw (0,-5) \o\o\1\1\O\O\O\1\1 ;
\draw (0,-6) \o\o\0\2\O\O\O\0\1 ;
\draw (0,-7) \o\o\o\o\1\1\0\O\O\1\1\0 ;
\draw (0,-8) \o\o\o\o\0\1\1\O\O\0\1\1 ;
\draw (0,-9) \o\o\o\o\o\o\o\2\0\d\d\d\d ;
\draw (0,-10) \o\o\o\o\o\o\o\1\1\d\O\O\1\1\0 ;
\draw (0,-11) \o\o\o\o\o\o\o\0\2\d\O\O\0\1\1 ;
\draw (0,-12) \o\o\o\o\o\o\o\o\o\d\2\0\O\O\O\1\1 ;
\draw (0,-13) \o\o\o\o\o\o\o\o\o\o\1\1\O\O\O\1\1 ;
\draw (0,-14) \o\o\o\o\o\o\o\o\o\o\0\2\O\O\O\1\1 ;
\draw (0,-15) \o\o\o\o\o\o\o\o\o\o\o\o\1\2\1\O\O ;
\draw (0,-16) \o\o\o\o\o\o\o\o\o\o\o\o\1\2\1\O\O ;
\draw (0.5,0.5)rectangle(17.5,-16.5);
\end{tikzpicture}
}}
\caption{$n=4\eqref{II},9,14,19,24,\ldots$.\hfill Twins: $\cll1,\cll2$; $\ccll{m},\cll{n}$.\hfill Periods: $[-4,4]$, $\big[\frac{2n-3}5,\frac{2n-3}5\big]$.
         \\[0.3em]
         \mbox{}\hfill\mbox{}
         $\FfF\Lft x,y\Rgt =\FfF_{23}\Lft x,y\Rgt,\ |x+y|<\frac{2n-3}5,$
         \mbox{}\hfill\mbox{}
         \\[0.3em]
         \mbox{}\hfill\mbox{}
         $\FfF\Lft \frac{n-4}5{-}2z{-}\beta,\
                \frac{n+1}5{+}2z{+}\beta \Rgt
         =\framebox{$n{-}1{+}\beta$},
         \
         z\in\ZZ,\beta\in\{0,1\}.$
         \mbox{}\hfill\mbox{}
         } \label{OO}
\end{figure}

\begin{figure}[H]
\def\0{ ++(1\DK, 0) }
\def\o{\dcolor{white}{$\ $}}
\def\1{\dcolor{yellow!40!white}{$1$}}
\def\2{\dcolor{yellow!80!white}{$2$}}
\def\3{\Dcolor{green!30!yellow!45!white}{white}{$3$}}
\def\4{\Dcolor{green!30!yellow!80!white}{white}{$4$}}
\def\5{\Dcolor{green!80!yellow!30!white}{white}{$5$}}
\def\6{\Dcolor{green!80!yellow!55!white}{white}{$6$}}
\def\7{\Dcolor{green!80!yellow!85!white}{white}{$7$}}
\def\8{\Dcolor{green!60!cyan!40!white  }{white}{$8$}}
\def\9{\Dcolor{green!60!cyan!80!white  }{white}{$9$}}
\def\h{\Dcolor{blue!80!cyan!20!white   }{white}{$h$}}
\def\i{\Dcolor{blue!80!cyan!40!white   }{white}{$i$}}
\def\j{\Dcolor{blue!80!red!10!white    }{white}{$j$}}
\def\k{\Dcolor{blue!80!red!30!white    }{white}{$k$}}
\def\l{\Dcolor{blue!80!red!50!white    }{white}{$l$}}
\def\m{\Dcolor{blue!30!red!20!white    }{white}{$m$}}
\def\n{\Dcolor{blue!30!red!45!white    }{white}{$n$}}
\def\t{ ++(1\DK, 0) node {$\cdots$}}
\begin{tikzpicture}[rotate=90]
\draw (0.5\DK, 1.5\DK)\4\4\3\3\4\4\3\3\4;
\draw (0\DK, 1\DK)   \1\2\1\2\1\2\1\2\1\2;\drdgnl{1}
\draw (0.5\DK, 0.5\DK)\3\3\4\4\3\3\4\4\3;
\draw (0\DK, 0\DK)   \6\5\6\7\6\5\6\7\6\5;
\draw (0.5\DK, -0.5\DK)\8\8\9\9\8\8\9\9\8;
\draw (0.5\DK, -2.5\DK)\h\h\i\i\h\h\i\i\h;
\draw (0\DK,         -3\DK)\k\j\k\l\k\j\k\l\k\j;
\draw (0.5\DK, -3.5\DK)\m\m\n\n\m\m\n\n\m;
\draw (0\DK,         -4\DK)\k\j\k\l\k\j\k\l\k\j;
\draw (0.5\DK, -4.5\DK)\h\h\i\i\h\h\i\i\h;
\draw (0.5\DK, -6.5\DK)\8\8\9\9\8\8\9\9\8;
\draw (0\DK, -7\DK)   \6\5\6\7\6\5\6\7\6\5;
\draw (0.5\DK, -7.5\DK)\3\3\4\4\3\3\4\4\3;
\draw (0\DK, -8\DK)   \1\2\1\2\1\2\1\2\1\2;\drdgnl{-8}
\draw (0.5\DK,-8.5\DK)\4\4\3\3\4\4\3\3\4;
\draw (0\DK, -9\DK)   \6\7\6\5\6\7\6\5\6\7;
\draw (0.5\DK, -9.5\DK)\9\9\8\8\9\9\8\8\9;
\draw [<-,dashed] (1.5\DK, -1\DK+3.50\DK) -- node [above] {$                    $} +(0\DK, -0.6\DK);
\draw [<-,dashed] (3.0\DK, -1\DK+3.50\DK) -- node [above] {$                    $} +(0\DK, -0.6\DK);
\draw [<-,dashed] (4.5\DK, -1\DK+3.50\DK) -- node [above] {$                    $} +(0\DK, -0.6\DK);
\draw [<-,dashed] (6.0\DK, -1\DK+3.50\DK) -- node [above] {$                    $} +(0\DK, -0.6\DK);
\draw [<-,dashed] (7.5\DK, -1\DK+3.50\DK) -- node [above] {$                    $} +(0\DK, -0.6\DK);
\draw [<-,dashed] (9.0\DK, -1\DK+3.50\DK) -- node [above] {$                    $} +(0\DK, -0.6\DK);
\draw [->,dashed] (1.5\DK, -1\DK-0.25\DK) -- node [above] {$                    $} +(0\DK, -0.6\DK);
\draw [->,dashed] (3.0\DK, -1\DK-0.25\DK) -- node [above] {$                    $} +(0\DK, -0.6\DK);
\draw [->,dashed] (4.5\DK, -1\DK-0.25\DK) -- node [above] {$\scriptscriptstyle+5$} +(0\DK, -0.6\DK);
\draw [->,dashed] (6.0\DK, -1\DK-0.25\DK) -- node [above] {$                    $} +(0\DK, -0.6\DK);
\draw [->,dashed] (7.5\DK, -1\DK-0.25\DK) -- node [above] {$                    $} +(0\DK, -0.6\DK);
\draw [->,dashed] (9.0\DK, -1\DK-0.25\DK) -- node [above] {$\scriptscriptstyle+5$} +(0\DK, -0.6\DK);
\draw [<-,dashed] (1.5\DK, -1\DK-4.25\DK) -- node [above] {$                    $} +(0\DK, -0.6\DK);
\draw [<-,dashed] (3.0\DK, -1\DK-4.25\DK) -- node [above] {$                    $} +(0\DK, -0.6\DK);
\draw [<-,dashed] (4.5\DK, -1\DK-4.25\DK) -- node [above] {$\scriptscriptstyle+5$} +(0\DK, -0.6\DK);
\draw [<-,dashed] (6.0\DK, -1\DK-4.25\DK) -- node [above] {$                    $} +(0\DK, -0.6\DK);
\draw [<-,dashed] (7.5\DK, -1\DK-4.25\DK) -- node [above] {$                    $} +(0\DK, -0.6\DK);
\draw [<-,dashed] (9.0\DK, -1\DK-4.25\DK) -- node [above] {$\scriptscriptstyle+5$} +(0\DK, -0.6\DK);
\draw [->,dashed] (1.5\DK, -1\DK-9.25\DK) -- node [above] {$                    $} +(0\DK, -0.6\DK);
\draw [->,dashed] (3.0\DK, -1\DK-9.25\DK) -- node [above] {$                    $} +(0\DK, -0.6\DK);
\draw [->,dashed] (4.5\DK, -1\DK-9.25\DK) -- node [above] {$                    $} +(0\DK, -0.6\DK);
\draw [->,dashed] (6.0\DK, -1\DK-9.25\DK) -- node [above] {$                    $} +(0\DK, -0.6\DK);
\draw [->,dashed] (7.5\DK, -1\DK-9.25\DK) -- node [above] {$                    $} +(0\DK, -0.6\DK);
\draw [->,dashed] (9.0\DK, -1\DK-9.25\DK) -- node [above] {$                    $} +(0\DK, -0.6\DK);
\end{tikzpicture}%
\hspace{-10cm}\hfill%
\raisebox{1em}{\scalebox{0.60}
{\begin{tikzpicture}
\def\4{\ccolor{white}{$\boldsymbol{4}$}}
\def\2{\ccolor{white}{$\boldsymbol{2}$}}
\def\1{\ccolor{white}{$\boldsymbol{1}$}}
\def\0{\ccolor{white}{0}}
\def\O{\cocolor{white}{\ }}
\def\d{\cocolor{white}{\ddots}}
\def\t{\cocolor{white}{\cdots}}
\def\v{\cocolor{white}{\vdots}}
\def\o{ ++(1, 0) }
\draw (0,-0) \O\O\2\2 ;
\draw (0,-1) \O\O\2\2 ;
\draw (0,-2) \1\1\O\O\1\1\0 ;
\draw (0,-3) \1\1\O\O\0\1\1 ;
\draw (0,-4) \o\o\2\0\O\O\O\2\0 ;
\draw (0,-5) \o\o\1\1\O\O\O\1\1 ;
\draw (0,-6) \o\o\0\2\O\O\O\0\1 ;
\draw (0,-7) \o\o\o\o\1\1\0\O\O\1\1\0 ;
\draw (0,-8) \o\o\o\o\0\1\1\O\O\0\1\1 ;
\draw (0,-9) \o\o\o\o\o\o\o\2\0\d\d\d\d ;
\draw (0,-10) \o\o\o\o\o\o\o\1\1\d\O\O\1\1\0 ;
\draw (0,-11) \o\o\o\o\o\o\o\0\2\d\O\O\0\1\1 ;
\draw (0,-12) \o\o\o\o\o\o\o\o\o\d\2\0\O\O\O\2\0 ;
\draw (0,-13) \o\o\o\o\o\o\o\o\o\o\1\1\O\O\O\1\1 ;
\draw (0,-14) \o\o\o\o\o\o\o\o\o\o\0\2\O\O\O\0\2 ;
\draw (0,-15) \o\o\o\o\o\o\o\o\o\o\o\o\2\2\0\O\O ;
\draw (0,-16) \o\o\o\o\o\o\o\o\o\o\o\o\0\2\2\O\O ;
\draw (0.5,0.5)rectangle(17.5,-16.5);
\end{tikzpicture}
}}
\caption{$n=4\eqref{II},9,14,19,24,\ldots$.\hfill Twins: $\cll1,\cll2$.\hfill Periods: $[-4,4]$, $\big[\frac{2n-3}5-2,\frac{2n-3}5+2\big]$.
         \\[0.3em]
         \mbox{}\hfill\mbox{}
         $\FfF\Lft x,y\Rgt =\FfF_{23}\Lft x,y\Rgt,\ |x+y|\le\frac{2n-3}5.
         $
         \mbox{}\hfill\mbox{}
         } \label{PP}
\end{figure}

\begin{figure}[H]
\def\0{ ++(1\DK, 0) }
\def\o{\dcolor{white}{$\ $}}
\def\1{\dcolor{yellow!40!white}{$1$}}
\def\2{\dcolor{yellow!80!white}{$2$}}
\def\3{\Dcolor{green!30!yellow!40!white}{white}{$3$}}
\def\4{\Dcolor{green!30!yellow!70!white}{white}{$4$}}
\def\5{\Dcolor{green!30!yellow!99!white}{white}{$5$}}
\def\6{\Dcolor{green!80!yellow!40!white}{white}{$6$}}
\def\7{\Dcolor{green!80!yellow!80!white}{white}{$7$}}
\def\8{\Dcolor{green!60!cyan!30!white  }{white}{$8$}}
\def\9{\Dcolor{green!60!cyan!55!white  }{white}{$9$}}
\def\a{\Dcolor{green!60!cyan!80!white  }{white}{$10$}}
\def\h{\Dcolor{blue!80!cyan!20!white   }{white}{$h$}}
\def\i{\Dcolor{blue!80!cyan!40!white   }{white}{$i$}}
\def\j{\Dcolor{blue!80!red!10!white    }{white}{$j$}}
\def\k{\Dcolor{blue!80!red!30!white    }{white}{$k$}}
\def\l{\Dcolor{blue!80!red!50!white    }{white}{$l$}}
\def\m{\dcolor{blue!30!red!15!white}{$m$}}
\def\n{\dcolor{blue!30!red!30!white}{$n$}}
\def\t{ ++(1\DK, 0) node {$\cdots$}}
\begin{tikzpicture}[rotate=90]
\draw (0\DK, 1.5\DK)   \4\5\4\3\4\5\4\3\4\5;
\draw (0.5\DK, 1\DK)    \1\2\1\2\1\2\1\2\1;\drdgnla{1}
\draw (0\DK, 0.5\DK)   \4\3\4\5\4\3\4\5\4\3;
\draw (0.5\DK, 0\DK)    \6\6\7\7\6\6\7\7\6;
\draw (0\DK, -0.5\DK)  \9\8\9\a\9\8\9\a\9\8;
\draw (0.5\DK, -2.5\DK)  \h\h\i\i\h\h\i\i\h;
\draw (0\DK,         -3\DK)\k\j\k\l\k\j\k\l\k\j;
\draw (0.5\DK, -3.5\DK)  \m\n\m\n\m\n\m\n\m;\drdgnla{-3.5}
\draw (0\DK,         -4\DK)\k\l\k\j\k\l\k\j\k\l;
\draw (0.5\DK, -4.5\DK)  \i\i\h\h\i\i\h\h\i;
\draw (0\DK, -6.5\DK)   \9\a\9\8\9\a\9\8\9\a;
\draw (0.5\DK, -7\DK)    \7\7\6\6\7\7\6\6\7;
\draw (0\DK, -7.5\DK)   \4\5\4\3\4\5\4\3\4\5;
\draw (0.5\DK, -8\DK)    \1\2\1\2\1\2\1\2\1;\drdgnla{-8}
\draw (0\DK,-8.5\DK)    \4\3\4\5\4\3\4\5\4\3;
\draw (0.5\DK, -9\DK)    \6\6\7\7\6\6\7\7\6;
\draw (0\DK, -9.5\DK)   \9\8\9\a\9\8\9\a\9\8;
\draw [<-,dashed] (0.0\DK+1.5\DK, -1.2\DK+3.75\DK) -- node [above ] {$                    $} +(0\DK, -0.6\DK);
\draw [<-,dashed] (3.0\DK+1.5\DK, -1.2\DK+3.75\DK) -- node [above ] {$                    $} +(0\DK, -0.6\DK);
\draw [<-,dashed] (6.0\DK+1.5\DK, -1.2\DK+3.75\DK) -- node [above ] {$\scriptscriptstyle+5$} +(0\DK, -0.6\DK);
\draw [->,dashed] (1.5\DK, -0.7\DK-0.25\DK) -- node [above, very near end  ] {$                    $} +(0\DK, -0.6\DK);
\draw [->,dashed] (3.0\DK, -1.2\DK-0.25\DK) -- node [above, very near start] {$                    $} +(0\DK, -0.6\DK);
\draw [->,dashed] (4.5\DK, -0.7\DK-0.25\DK) -- node [above, very near end  ] {$\scriptscriptstyle+5$} +(0\DK, -0.6\DK);
\draw [->,dashed] (6.0\DK, -1.2\DK-0.25\DK) -- node [above, very near start] {$                    $} +(0\DK, -0.6\DK);
\draw [->,dashed] (7.5\DK, -0.7\DK-0.25\DK) -- node [above, very near end  ] {$                    $} +(0\DK, -0.6\DK);
\draw [->,dashed] (9.0\DK, -1.2\DK-0.25\DK) -- node [above, very near start] {$\scriptscriptstyle+5$} +(0\DK, -0.6\DK);
\draw [<-,dashed] (0.0\DK+1.5\DK, -1.2\DK-4.25\DK) -- node [above, very near start] {$                    $} +(0\DK, -0.6\DK);
\draw [<-,dashed] (1.5\DK+1.5\DK, -0.7\DK-4.25\DK) -- node [above, very near end  ] {$                    $} +(0\DK, -0.6\DK);
\draw [<-,dashed] (3.0\DK+1.5\DK, -1.2\DK-4.25\DK) -- node [above, very near start] {$\scriptscriptstyle+5$} +(0\DK, -0.6\DK);
\draw [<-,dashed] (4.5\DK+1.5\DK, -0.7\DK-4.25\DK) -- node [above, very near end  ] {$                    $} +(0\DK, -0.6\DK);
\draw [<-,dashed] (6.0\DK+1.5\DK, -1.2\DK-4.25\DK) -- node [above, very near start] {$                    $} +(0\DK, -0.6\DK);
\draw [<-,dashed] (7.5\DK+1.5\DK, -0.7\DK-4.25\DK) -- node [above, very near end  ] {$\scriptscriptstyle+5$} +(0\DK, -0.6\DK);
\draw [->,dashed] (1.5\DK, -0.7\DK-9.25\DK) -- node [above ] {$                    $} +(0\DK, -0.6\DK);
\draw [->,dashed] (4.5\DK, -0.7\DK-9.25\DK) -- node [above ] {$                    $} +(0\DK, -0.6\DK);
\draw [->,dashed] (7.5\DK, -0.7\DK-9.25\DK) -- node [above ] {$\scriptscriptstyle+5$} +(0\DK, -0.6\DK);
\end{tikzpicture}%
\hspace{-10cm}\hfill%
\raisebox{1em}{\scalebox{0.59}
{\begin{tikzpicture}
\def\4{\ccolor{white}{$\boldsymbol{4}$}}
\def\2{\ccolor{white}{$\boldsymbol{2}$}}
\def\1{\ccolor{white}{$\boldsymbol{1}$}}
\def\0{\ccolor{white}{0}}
\def\O{\cocolor{white}{\ }}
\def\d{\cocolor{white}{\ddots}}
\def\t{\cocolor{white}{\cdots}}
\def\v{\cocolor{white}{\vdots}}
\def\o{ ++(1, 0) }
\draw (0,-0) \O\O\1\2\1 ;
\draw (0,-1) \O\O\1\2\1 ;
\draw (0,-2) \1\1\O\O\O\2\0 ;
\draw (0,-3) \1\1\O\O\O\1\1 ;
\draw (0,-4) \1\1\O\O\O\0\2 ;
\draw (0,-5) \o\o\1\1\0\O\O\1\1\0 ;
\draw (0,-6) \o\o\0\1\1\O\O\0\1\1 ;
\draw (0,-7) \o\o\o\o\o\2\0\O\O\O\2\0 ;
\draw (0,-8) \o\o\o\o\o\1\1\O\O\O\1\1 ;
\draw (0,-9) \o\o\o\o\o\0\2\O\O\O\0\2 ;
\draw (0,-10)\o\o\o\o\o\o\o\1\1\0\d\d\d\d ;
\draw (0,-11)\o\o\o\o\o\o\o\0\1\1\d\O\O\1\1\0 ;
\draw (0,-12)\o\o\o\o\o\o\o\o\o\o\d\O\O\0\1\1 ;
\draw (0,-13)\o\o\o\o\o\o\o\o\o\o\d\2\0\O\O\O\1\1 ;
\draw (0,-14)\o\o\o\o\o\o\o\o\o\o\o\1\1\O\O\O\1\1 ;
\draw (0,-15)\o\o\o\o\o\o\o\o\o\o\o\0\2\O\O\O\1\1 ;
\draw (0,-16)\o\o\o\o\o\o\o\o\o\o\o\o\o\1\2\1\O\O ;
\draw (0,-17)\o\o\o\o\o\o\o\o\o\o\o\o\o\1\2\1\O\O ;
\draw (0.5,0.5)rectangle(18.5,-17.5);
\end{tikzpicture}
}}
\caption{$n=7\eqref{II},12,17,22,27,\ldots$.\hfill Twins: $\cll1,\cll2$; $\ccll{m},\cll{n}$.\hfill Periods: $[-4,4]$, $\big[\frac{2n-4}5,\frac{2n-4}5\big]$.\\[0.3em]
         \mbox{}\hfill\mbox{}
         $\FfF\Lft x,y\Rgt =\FfF_{32}\Lft x,y\Rgt,\ |x+y|<\frac{2n-4}5,$
         \mbox{}\hfill\mbox{}
         \\[0.3em]
         \mbox{}\hfill\mbox{}
         $\FfF\big([\frac{n-2}5{-}2z{-}\beta,\
                \frac{n-2}5{+}2z{+}\beta ]\big)
         =\framebox{$n{-}1{+}\beta$},
         \
         z\in\ZZ,\beta\in\{0,1\}.$
         \mbox{}\hfill\mbox{}
         } \label{QQ}
\end{figure}

\begin{figure}[H]
\def\0{ ++(1\DK, 0) }
\def\o{\dcolor{white}{$\ $}}
\def\1{\dcolor{yellow!40!white}{$1$}}
\def\2{\dcolor{yellow!80!white}{$2$}}
\def\3{\Dcolor{green!30!yellow!40!white}{white}{$3$}}
\def\4{\Dcolor{green!30!yellow!70!white}{white}{$4$}}
\def\5{\Dcolor{green!30!yellow!99!white}{white}{$5$}}
\def\6{\Dcolor{green!80!yellow!40!white}{white}{$6$}}
\def\7{\Dcolor{green!80!yellow!80!white}{white}{$7$}}
\def\8{\Dcolor{green!60!cyan!30!white  }{white}{$8$}}
\def\9{\Dcolor{green!60!cyan!55!white  }{white}{$9$}}
\def\a{\Dcolor{green!60!cyan!80!white  }{white}{$10$}}
\def\h{\Dcolor{blue!80!cyan!20!white   }{white}{$h$}}
\def\i{\Dcolor{blue!80!cyan!40!white   }{white}{$i$}}
\def\j{\Dcolor{blue!80!red!10!white    }{white}{$j$}}
\def\k{\Dcolor{blue!80!red!30!white    }{white}{$k$}}
\def\l{\Dcolor{blue!80!red!50!white    }{white}{$l$}}
\def\m{\Dcolor{blue!30!red!20!white    }{white}{$m$}}
\def\n{\Dcolor{blue!30!red!40!white    }{white}{$n$}}
\def\t{ ++(1\DK, 0) node {$\cdots$}}
\begin{tikzpicture}[rotate=90]
\draw (0\DK, 1.5\DK)   \4\5\4\3\4\5\4\3\4\5;
\draw (0.5\DK, 1\DK)    \1\2\1\2\1\2\1\2\1;\drdgnla{1}
\draw (0\DK, 0.5\DK)   \4\3\4\5\4\3\4\5\4\3;
\draw (0.5\DK, 0\DK)    \6\6\7\7\6\6\7\7\6;
\draw (0\DK, -0.5\DK)  \9\8\9\a\9\8\9\a\9\8;
\draw (0.5\DK, -2.5\DK)  \h\h\i\i\h\h\i\i\h;
\draw (0\DK,         -3\DK)\k\j\k\l\k\j\k\l\k\j;
\draw (0.5\DK, -3.5\DK)  \m\m\n\n\m\m\n\n\m;
\draw (0\DK,         -4\DK)\k\j\k\l\k\j\k\l\k\j;
\draw (0.5\DK, -4.5\DK)  \h\h\i\i\h\h\i\i\h;
\draw (0\DK, -6.5\DK)   \9\8\9\a\9\8\9\a\9\8;
\draw (0.5\DK, -7\DK)    \6\6\7\7\6\6\7\7\6;
\draw (0\DK, -7.5\DK)   \4\3\4\5\4\3\4\5\4\3;
\draw (0.5\DK, -8\DK)    \1\2\1\2\1\2\1\2\1;\drdgnla{-8}
\draw (0\DK,-8.5\DK)    \4\5\4\3\4\5\4\3\4\5;
\draw (0.5\DK, -9\DK)    \7\7\6\6\7\7\6\6\7;
\draw (0\DK, -9.5\DK)   \9\a\9\8\9\a\9\8\9\a;
\draw [<-,dashed] (0.0\DK+1.5\DK, -1.2\DK+3.75\DK) -- node [above ] {$                    $} +(0\DK, -0.6\DK);
\draw [<-,dashed] (3.0\DK+1.5\DK, -1.2\DK+3.75\DK) -- node [above ] {$                    $} +(0\DK, -0.6\DK);
\draw [<-,dashed] (6.0\DK+1.5\DK, -1.2\DK+3.75\DK) -- node [above ] {$\scriptscriptstyle+5$} +(0\DK, -0.6\DK);
\draw [->,dashed] (1.5\DK, -0.7\DK-0.25\DK) -- node [above, very near end  ] {$                    $} +(0\DK, -0.6\DK);
\draw [->,dashed] (3.0\DK, -1.2\DK-0.25\DK) -- node [above, very near start] {$                    $} +(0\DK, -0.6\DK);
\draw [->,dashed] (4.5\DK, -0.7\DK-0.25\DK) -- node [above, very near end  ] {$\scriptscriptstyle+5$} +(0\DK, -0.6\DK);
\draw [->,dashed] (6.0\DK, -1.2\DK-0.25\DK) -- node [above, very near start] {$                    $} +(0\DK, -0.6\DK);
\draw [->,dashed] (7.5\DK, -0.7\DK-0.25\DK) -- node [above, very near end  ] {$                    $} +(0\DK, -0.6\DK);
\draw [->,dashed] (9.0\DK, -1.2\DK-0.25\DK) -- node [above, very near start] {$\scriptscriptstyle+5$} +(0\DK, -0.6\DK);
\draw [<-,dashed] (0.0\DK+1.5\DK, -1.2\DK-4.25\DK) -- node [above, very near start] {$                    $} +(0\DK, -0.6\DK);
\draw [<-,dashed] (1.5\DK+1.5\DK, -0.7\DK-4.25\DK) -- node [above, very near end  ] {$                    $} +(0\DK, -0.6\DK);
\draw [<-,dashed] (3.0\DK+1.5\DK, -1.2\DK-4.25\DK) -- node [above, very near start] {$\scriptscriptstyle+5$} +(0\DK, -0.6\DK);
\draw [<-,dashed] (4.5\DK+1.5\DK, -0.7\DK-4.25\DK) -- node [above, very near end  ] {$                    $} +(0\DK, -0.6\DK);
\draw [<-,dashed] (6.0\DK+1.5\DK, -1.2\DK-4.25\DK) -- node [above, very near start] {$                    $} +(0\DK, -0.6\DK);
\draw [<-,dashed] (7.5\DK+1.5\DK, -0.7\DK-4.25\DK) -- node [above, very near end  ] {$\scriptscriptstyle+5$} +(0\DK, -0.6\DK);
\draw [->,dashed] (1.5\DK, -0.7\DK-9.25\DK) -- node [above ] {$                    $} +(0\DK, -0.6\DK);
\draw [->,dashed] (4.5\DK, -0.7\DK-9.25\DK) -- node [above ] {$                    $} +(0\DK, -0.6\DK);
\draw [->,dashed] (7.5\DK, -0.7\DK-9.25\DK) -- node [above ] {$\scriptscriptstyle+5$} +(0\DK, -0.6\DK);
\end{tikzpicture}%
\hspace{-10cm}\hfill%
\raisebox{1em}{\scalebox{0.59}
{\begin{tikzpicture}
\def\4{\ccolor{white}{$\boldsymbol{4}$}}
\def\2{\ccolor{white}{$\boldsymbol{2}$}}
\def\1{\ccolor{white}{$\boldsymbol{1}$}}
\def\0{\ccolor{white}{0}}
\def\O{\cocolor{white}{\ }}
\def\d{\cocolor{white}{\ddots}}
\def\t{\cocolor{white}{\cdots}}
\def\v{\cocolor{white}{\vdots}}
\def\o{ ++(1, 0) }
\draw (0,-0) \O\O\1\2\1 ;
\draw (0,-1) \O\O\1\2\1 ;
\draw (0,-2) \1\1\O\O\O\2\0 ;
\draw (0,-3) \1\1\O\O\O\1\1 ;
\draw (0,-4) \1\1\O\O\O\0\2 ;
\draw (0,-5) \o\o\1\1\0\O\O\1\1\0 ;
\draw (0,-6) \o\o\0\1\1\O\O\0\1\1 ;
\draw (0,-7) \o\o\o\o\o\2\0\O\O\O\2\0 ;
\draw (0,-8) \o\o\o\o\o\1\1\O\O\O\1\1 ;
\draw (0,-9) \o\o\o\o\o\0\2\O\O\O\0\2 ;
\draw (0,-10)\o\o\o\o\o\o\o\1\1\0\d\d\d\d ;
\draw (0,-11)\o\o\o\o\o\o\o\0\1\1\d\O\O\1\1\0 ;
\draw (0,-12)\o\o\o\o\o\o\o\o\o\o\d\O\O\0\1\1 ;
\draw (0,-13)\o\o\o\o\o\o\o\o\o\o\d\2\0\O\O\O\2\0 ;
\draw (0,-14)\o\o\o\o\o\o\o\o\o\o\o\1\1\O\O\O\1\1 ;
\draw (0,-15)\o\o\o\o\o\o\o\o\o\o\o\0\2\O\O\O\0\2 ;
\draw (0,-16)\o\o\o\o\o\o\o\o\o\o\o\o\o\2\2\0\O\O ;
\draw (0,-17)\o\o\o\o\o\o\o\o\o\o\o\o\o\0\2\2\O\O ;
\draw (0.5,0.5)rectangle(18.5,-17.5);
\end{tikzpicture}
}}
\caption{$n=7\eqref{JJ},12,17,22,27,\ldots$.\hfill Twins: $\cll1,\cll2$.\hfill Periods: $[-4,4]$, $\big[\frac{2n-4}5-2,\frac{2n-4}5+2\big]$.
         \\[0.3em]
         \mbox{}\hfill\mbox{}
         $\FfF\Lft x,y\Rgt =\FfF_{32}\Lft x,y\Rgt,\ |x+y|\le \frac{2n-4}5.$
         \mbox{}\hfill\mbox{}
         } \label{RR}
\end{figure}

\begin{figure}[H]
\def\0{ ++(1\DK, 0) }
\def\1{\dcolor{yellow!40!white}{$1$}}
\def\2{\dcolor{yellow!80!white}{$2$}}
\def\3{\Dcolor{green!30!yellow!40!white}{white}{$3$}}
\def\4{\Dcolor{green!30!yellow!70!white}{white}{$4$}}
\def\5{\Dcolor{green!30!yellow!99!white}{white}{$5$}}
\def\6{\Dcolor{green!80!yellow!40!white}{white}{$6$}}
\def\7{\Dcolor{green!80!yellow!80!white}{white}{$7$}}
\def\8{\Dcolor{green!60!cyan!30!white  }{white}{$8$}}
\def\9{\Dcolor{green!60!cyan!55!white  }{white}{$9$}}
\def\a{\Dcolor{green!60!cyan!80!white  }{white}{$10$}}
\def\h{\Dcolor{blue!80!cyan!10!white   }{white}{$g$}}
\def\i{\Dcolor{blue!80!cyan!30!white   }{white}{$h$}}
\def\j{\Dcolor{blue!80!cyan!50!white   }{white}{$i$}}
\def\k{\Dcolor{blue!80!red!20!white    }{white}{$j$}}
\def\l{\Dcolor{blue!80!red!40!white    }{white}{$k$}}
\def\m{\Dcolor{blue!30!red!15!white    }{white}{$l$}}
\def\n{\Dcolor{blue!30!red!30!white    }{white}{$m$}}
\def\o{\Dcolor{blue!30!red!45!white    }{white}{$n$}}
\def\t{ ++(1\DK, 0) node {$\cdots$}}
\begin{tikzpicture}[rotate=90]
\draw (0\DK, 1.5\DK)   \4\5\4\3\4\5\4\3\4\5;
\draw (0.5\DK, 1\DK)    \1\2\1\2\1\2\1\2\1;\drdgnla{1}
\draw (0\DK, 0.5\DK)   \4\3\4\5\4\3\4\5\4\3;
\draw (0.5\DK, 0\DK)    \6\6\7\7\6\6\7\7\6;
\draw (0\DK, -0.5\DK)  \9\8\9\a\9\8\9\a\9\8;
\draw (0\DK, -2.5\DK) \i\h\i\j\i\h\i\j\i\h;
\draw (0.5\DK, -3\DK)  \k\k\l\l\k\k\l\l\k;
\draw (0\DK, -3.5\DK) \n\m\n\o\n\m\n\o\n\m;
\draw (0.5\DK, -4\DK)  \k\k\l\l\k\k\l\l\k;
\draw (0\DK, -4.5\DK) \i\h\i\j\i\h\i\j\i\h;
\draw (0\DK, -6.5\DK)   \9\8\9\a\9\8\9\a\9\8;
\draw (0.5\DK, -7\DK)    \6\6\7\7\6\6\7\7\6;
\draw (0\DK, -7.5\DK)   \4\3\4\5\4\3\4\5\4\3;
\draw (0.5\DK, -8\DK)    \1\2\1\2\1\2\1\2\1;\drdgnla{-8}
\draw (0\DK,-8.5\DK)    \4\5\4\3\4\5\4\3\4\5;
\draw (0.5\DK, -9\DK)    \7\7\6\6\7\7\6\6\7;
\draw (0\DK, -9.5\DK)   \9\a\9\8\9\a\9\8\9\a;
\draw [<-,dashed] (1.5\DK, -1\DK+3.50\DK) -- node [above] {$                    $} +(0\DK, -0.6\DK);
\draw [<-,dashed] (3.0\DK, -1\DK+3.50\DK) -- node [above] {$                    $} +(0\DK, -0.6\DK);
\draw [<-,dashed] (4.5\DK, -1\DK+3.50\DK) -- node [above] {$                    $} +(0\DK, -0.6\DK);
\draw [<-,dashed] (6.0\DK, -1\DK+3.50\DK) -- node [above] {$                    $} +(0\DK, -0.6\DK);
\draw [<-,dashed] (7.5\DK, -1\DK+3.50\DK) -- node [above] {$                    $} +(0\DK, -0.6\DK);
\draw [<-,dashed] (9.0\DK, -1\DK+3.50\DK) -- node [above] {$                    $} +(0\DK, -0.6\DK);
\draw [->,dashed] (1.5\DK, -1\DK-0.25\DK) -- node [above] {$                    $} +(0\DK, -0.6\DK);
\draw [->,dashed] (3.0\DK, -1\DK-0.25\DK) -- node [above] {$                    $} +(0\DK, -0.6\DK);
\draw [->,dashed] (4.5\DK, -1\DK-0.25\DK) -- node [above] {$\scriptscriptstyle+5$} +(0\DK, -0.6\DK);
\draw [->,dashed] (6.0\DK, -1\DK-0.25\DK) -- node [above] {$                    $} +(0\DK, -0.6\DK);
\draw [->,dashed] (7.5\DK, -1\DK-0.25\DK) -- node [above] {$                    $} +(0\DK, -0.6\DK);
\draw [->,dashed] (9.0\DK, -1\DK-0.25\DK) -- node [above] {$\scriptscriptstyle+5$} +(0\DK, -0.6\DK);
\draw [<-,dashed] (1.5\DK, -1\DK-4.25\DK) -- node [above] {$                    $} +(0\DK, -0.6\DK);
\draw [<-,dashed] (3.0\DK, -1\DK-4.25\DK) -- node [above] {$                    $} +(0\DK, -0.6\DK);
\draw [<-,dashed] (4.5\DK, -1\DK-4.25\DK) -- node [above] {$\scriptscriptstyle+5$} +(0\DK, -0.6\DK);
\draw [<-,dashed] (6.0\DK, -1\DK-4.25\DK) -- node [above] {$                    $} +(0\DK, -0.6\DK);
\draw [<-,dashed] (7.5\DK, -1\DK-4.25\DK) -- node [above] {$                    $} +(0\DK, -0.6\DK);
\draw [<-,dashed] (9.0\DK, -1\DK-4.25\DK) -- node [above] {$\scriptscriptstyle+5$} +(0\DK, -0.6\DK);
\draw [->,dashed] (1.5\DK, -1\DK-9.25\DK) -- node [above] {$                    $} +(0\DK, -0.6\DK);
\draw [->,dashed] (3.0\DK, -1\DK-9.25\DK) -- node [above] {$                    $} +(0\DK, -0.6\DK);
\draw [->,dashed] (4.5\DK, -1\DK-9.25\DK) -- node [above] {$                    $} +(0\DK, -0.6\DK);
\draw [->,dashed] (6.0\DK, -1\DK-9.25\DK) -- node [above] {$                    $} +(0\DK, -0.6\DK);
\draw [->,dashed] (7.5\DK, -1\DK-9.25\DK) -- node [above] {$                    $} +(0\DK, -0.6\DK);
\draw [->,dashed] (9.0\DK, -1\DK-9.25\DK) -- node [above] {$                    $} +(0\DK, -0.6\DK);
\end{tikzpicture}%
\hspace{-10cm}\hfill%
\raisebox{2em}{\scalebox{0.55}
{\begin{tikzpicture}
\def\4{\ccolor{white}{$\boldsymbol{4}$}}
\def\2{\ccolor{white}{$\boldsymbol{2}$}}
\def\1{\ccolor{white}{$\boldsymbol{1}$}}
\def\0{\ccolor{white}{0}}
\def\O{\cocolor{white}{\ }}
\def\d{\cocolor{white}{\ddots}}
\def\t{\cocolor{white}{\cdots}}
\def\v{\cocolor{white}{\vdots}}
\def\o{ ++(1, 0) }
\fill [fill=white] (0.25,0.55)rectangle(18.5,-18.5);
\draw (0,-0) \O\O\1\2\1 ;
\draw (0,-1) \O\O\1\2\1 ;
\draw (0,-2) \1\1\O\O\O\2\0 ;
\draw (0,-3) \1\1\O\O\O\1\1 ;
\draw (0,-4) \1\1\O\O\O\0\2 ;
\draw (0,-5) \o\o\1\1\0\O\O\1\1\0 ;
\draw (0,-6) \o\o\0\1\1\O\O\0\1\1 ;
\draw (0,-7) \o\o\o\o\o\2\0\O\O\O\2\0 ;
\draw (0,-8) \o\o\o\o\o\1\1\O\O\O\1\1 ;
\draw (0,-9) \o\o\o\o\o\0\2\O\O\O\0\2 ;
\draw (0,-10)\o\o\o\o\o\o\o\1\1\0\d\d\d\d ;
\draw (0,-11)\o\o\o\o\o\o\o\0\1\1\d\O\O\O\2\0 ;
\draw (0,-12)\o\o\o\o\o\o\o\o\o\o\d\O\O\O\1\1 ;
\draw (0,-13)\o\o\o\o\o\o\o\o\o\o\d\O\O\O\0\2 ;
\draw (0,-14)\o\o\o\o\o\o\o\o\o\o\o\1\1\0\O\O\1\1\0 ;
\draw (0,-15)\o\o\o\o\o\o\o\o\o\o\o\0\1\1\O\O\0\1\1 ;
\draw (0,-16)\o\o\o\o\o\o\o\o\o\o\o\o\o\o\4\0\O\O\O ;
\draw (0,-17)\o\o\o\o\o\o\o\o\o\o\o\o\o\o\2\2\O\O\O ;
\draw (0,-18)\o\o\o\o\o\o\o\o\o\o\o\o\o\o\0\4\O\O\O ;
\draw (0.5,0.5)rectangle(19.5,-18.5);
\end{tikzpicture}
}}
\caption{$n=5\eqref{II},10,15,20,25,\ldots$.\hfill Twins: $\cll1,\cll2$.\hfill Periods: $[-4,4]$, $\big[\frac{2n-5}5-2,\frac{2n-5}5+2\big]$.
         \\[0.3em]
         \mbox{}\hfill\mbox{}
         $\FfF\Lft x,y\Rgt =\FfF_{32}\Lft x,y\Rgt,\ |x+y|\le \frac{2n-5}5.$
         \mbox{}\hfill\mbox{}
         } \label{SS}
\end{figure}

\def\S{\cocolor{white}{\ }}
\def\o{\ccolor{white}{$\ $}}
\def\1{\ccolor{yellow!40!white}{$1$}}
\def\2{\ccolor{black!5!yellow!60!white}{$2$}}
\def\3{\Ccolor{green!10!white}{green!05!white}{$3$}}
\def\4{\Ccolor{green!30!white}{green!10!white}{$4$}}
\def\5{\Ccolor{green!50!white}{green!15!white}{$5$}}
\def\6{\Ccolor{green!70!white}{green!20!white}{$6$}}
\def\7{\Ccolor{cyan!10!white }{cyan!05!white }{$7$}}
\def\8{\Ccolor{cyan!30!white }{cyan!10!white }{$8$}}
\def\9{\Ccolor{cyan!50!white }{cyan!15!white }{$9$}}
\def\a{\Ccolor{cyan!70!white }{cyan!20!white }{$10$}}
\def\T{ ++(1\DK, 0) node {$\vdots$}}
\begin{figure}[H]
 \hskip-7pt
\begin{tikzpicture}[yscale=-1] 
\def\g{\Ccolor{blue!10!white  }{blue!03!white  }{$e$}}
\def\h{\Ccolor{blue!25!white  }{blue!06!white  }{$f$}}
\def\i{\Ccolor{blue!40!white  }{blue!09!white  }{$g$}}
\def\j{\Ccolor{blue!55!white  }{blue!12!white  }{$h$}}
\def\k{\Ccolor{purple!10!white}{purple!04!white}{$i$}}
\def\l{\Ccolor{purple!25!white}{purple!08!white}{$j$}}
\def\m{\Ccolor{purple!40!white}{purple!12!white}{$k$}}
\def\n{\Ccolor{purple!55!white}{purple!16!white}{$l$}}
\def\p{\ccolor{orange!30!white}{$m$}}
\def\q{\ccolor{yellow!30!orange!50!white}{$n$}}
\begin{scope}
\clip [](9.2,-8.5)-- ++(-5.7,0) -- ++(0,5.7)--cycle;
\draw (3,-8.5)   \g\k\p\n\g\o ;
\draw (3,-7.5)   \n\q\m\j\o ;
\draw (3,-6.5)   \p\l\i\o ;
\draw (3,-5.5)   \k\h\o ;
\draw (3,-4.5)   \g\o ;
\draw (3,-3.5)   \o  (3,-2.5)   \o ;
\end{scope}
\begin{scope}
\clip(20.8,3.5)-- ++(1.2,0)-- ++(0,-1.2)--cycle;\draw(20,2.5)\o(21,3.5)\j;
\end{scope}
\begin{scope}
\clip [](16.7,-8.5)-- ++(-6.4,0)-- ++(-6.8,6.8)-- ++(0,5.2)-- ++(1.2,0)--cycle;
\draw (9,-8)   \o\a\3\2\6\a\o\o ;
\draw (8,-7)   \o\9\6\1\5\9\o ;
\draw (7,-6)   \o\8\5\2\4\8\o ;
\draw (6,-5)   \o\7\4\1\3\7\o ;
\draw (5,-4)   \o\a\3\2\6\a\o ;
\draw (4,-3)   \o\9\6\1\5\9\o ;
\draw (3,-2)   \o\8\5\2\4\8\o ;
\draw (3,-1)   \7\4\1\3\7\o ;
\draw (3, 0)   \3\2\6\a\o ;
\draw (3, 1)   \1\5\9\o ;
\draw (3, 2)   \4\8\o ;
\draw (3, 3)   \7\o ;
\end{scope}
\begin{scope}
\clip [](5.8,3.5)-- ++(6.4,0)-- ++(12,-12)-- ++(-6.4,0)--cycle;
\draw (16.5,-8)   \o\j\n\q\m;
\draw (15.5,-7)   \o\i\m\p\l\i ;
\draw (14.5,-6)   \o\h\l\q\k\h\o ;
\draw (13.5,-5)   \o\g\k\p\n\g\o ;
\draw (12.5,-4)   \o\j\n\q\m\j\o ;
\draw (11.5,-3)   \o\i\m\p\l\i\o ;
\draw (10.5,-2)   \o\h\l\q\k\h\o ;
\draw ( 9.5,-1)   \o\g\k\p\n\g\o ;
\draw ( 8.5, 0)   \o\j\n\q\m\j\o ;
\draw ( 7.5, 1)   \o\i\m\p\l\i\o ;
\draw ( 6.5, 2)   \o\h\l\q\k\h\o ;
\draw ( 5.5, 3)   \o\g\k\p\n\g\o ;
\end{scope}
\begin{scope}
\clip []( 13.3, 3.5) -- ++(6.5,0)-- ++(2.5,-2.5)-- ++(0,-6.5)--cycle;
\draw ( 20.5,-5.5) \o                 ;
\draw ( 20.5,-4.5) \o                 ;
\draw ( 19.5,-3.5) \o               \9;
\draw ( 18.5,-2.5) \o             \8\5;
\draw ( 17.5,-1.5) \o           \7\4\1;
\draw ( 16.5,-0.5) \o         \a\3\2\6;
\draw ( 15.5, 0.5) \o       \9\6\1\5\9;
\draw ( 14.5, 1.5) \o     \8\5\2\4\8\o ;
\draw ( 13.5, 2.5) \o   \7\4\1\3\7\o ;
\draw ( 12.5, 3.5) \o \a\3\2\6\a\o  ;
\end{scope}
\draw [diag, dash phase=0.5\DK] 
(3,-5.5) -- +(3,-3)
(3,2) -- +(11,-11) ++(5.5,2) -- +(13,-13)
(16.5,3.5) -- +(6,-6);
\draw [->, dashed] (9,-7 ) -- node [left,near start] {{$\scriptscriptstyle +4$}}  +(0,-1.5);
\draw [->, dashed] (5,-3 ) -- node [left,near start] {{$\scriptscriptstyle +4$}}  +(0,-1.5);
\draw [->, dashed] (14,-6 ) -- node [below,near start] {{$\scriptscriptstyle +4$}}  +(1.5,0);
\draw [->, dashed] (10,-2 ) -- node [below,near start] {{$\scriptscriptstyle +4$}}  +(1.5,0);
\draw [->, dashed] ( 6, 2 ) -- node [below,near start] {{$\scriptscriptstyle +4$}}  +(1.5,0);
\draw [->, dashed] (21.4, 1.5 ) --  +(.5,0);
\draw [->, dashed] (12.5, 3.8 ) --  +(0,-0.85);
\draw [->, dashed] (16.5, 0.5 ) -- node [left,near start] {{$\scriptscriptstyle +4$}}  +(0,-1.5);
\draw [->, dashed] (20.5,-3.5 ) --  node [left,near start] {{$\scriptscriptstyle +4$}}   +(0,-1.5);
\end{tikzpicture}%
\hspace{-5cm}\hfill%
\raisebox{1em}{\scalebox{0.62}
{\begin{tikzpicture}
\def\4{\ccolor{white}{$\boldsymbol{4}$}}
\def\2{\ccolor{white}{$\boldsymbol{2}$}}
\def\1{\ccolor{white}{$\boldsymbol{1}$}}
\def\0{\ccolor{white}{0}}
\def\O{\cocolor{white}{\ }}
\def\d{\cocolor{white}{\ddots}}
\def\t{\cocolor{white}{\cdots}}
\def\v{\cocolor{white}{\vdots}}
\def\o{ ++(1, 0) }
\fill[fill=white] (0.25,0.75)rectangle(16.75,-17);
\draw (0, -0) \O\O\1\1\1\1 ;
\draw (0, -1) \O\O\1\1\1\1 ;
\draw (0, -2) \1\1\O\O\O\O\1\0\0\1 ;
\draw (0, -3) \1\1\O\O\O\O\1\1\0\0 ;
\draw (0, -4) \1\1\O\O\O\O\0\1\1\0 ;
\draw (0, -5) \1\1\O\O\O\O\0\0\1\1 ;
\draw (0, -6) \o\o\1\1\0\0\O\O\O\O\1\0\0\1 ;
\draw (0, -7) \o\o\0\1\1\0\O\O\O\O\1\1\0\0 ;
\draw (0, -8) \o\o\0\0\1\1\O\O\O\O\0\1\1\0 ;
\draw (0, -9) \o\o\1\0\0\1\O\O\O\O\0\0\1\1 ;
\draw (0,-10) \o\o\o\o\o\o\d\d\d\d\d\d\d\d\d;
\draw (0,-11) \o\o\o\o\o\o\o\1\1\0\0\O\O\O\O\1\1 ;
\draw (0,-12) \o\o\o\o\o\o\o\0\1\1\0\O\O\O\O\1\1 ;
\draw (0,-13) \o\o\o\o\o\o\o\0\0\1\1\O\O\O\O\1\1 ;
\draw (0,-14) \o\o\o\o\o\o\o\1\0\0\1\O\O\O\O\1\1 ;
\draw (0,-15) \o\o\o\o\o\o\o\o\o\o\o\1\1\1\1\O\O ;
\draw (0,-16) \o\o\o\o\o\o\o\o\o\o\o\1\1\1\1\O\O ;
\draw (0.5,0.5)rectangle(17.5,-16.5);
\end{tikzpicture}
}} 
\caption{$n=4\eqref{II},8\eqref{II},12,16,20,\ldots$.\hfill
         Twins: $\cll1,\cll2$; $\ccll{m},\cll{n}$.\hfill
         Periods: $[4,4]$, $\big[\frac{n}4,-\frac{n}{4}\big]$.\\[0.3em]
         \mbox{}\hfill\mbox{}
         $\FfF\Lft x,y\Rgt =\FfF_{44}\Lft x,y\Rgt,\ |x-y|<\frac{n}4,$
         \mbox{}\hfill\mbox{}
         \\[0.2em]
         \mbox{}\hfill\mbox{}
         $\FfF\big([ \frac{n}4{+}2z{+}\beta,\
                2z{+}\beta ]\big)
         =\framebox{$n{-}1{+}\beta$},
         \
         z\in\ZZ,\beta\in\{0,1\}.$
         \mbox{}\hfill\mbox{}
         } \label{TT}
\end{figure}

\def\Rt#1{ ++(1\DK, 0) node {$\stackrel{\scriptscriptstyle+#1}\rightarrow$}}
\def\Lt#1{ ++(1\DK, 0) node {$\stackrel{\scriptscriptstyle+#1}\leftarrow$}}
\begin{figure}[H]
 \hskip-11pt
\makebox[1em][l]{
\begin{tikzpicture}[yscale=-1] 
\def\gg{\Ccolor{blue!10!white  }{blue!03!white  }{\ }}
\def\nn{\Ccolor{purple!55!white}{purple!16!white}{\ }}
\def\qq{\Ccolor{orange!60!white}{orange!20!white}{\ }}
\def\mm{\Ccolor{purple!40!white}{purple!12!white}{\ }}
\def\jj{\Ccolor{blue!55!white  }{blue!12!white  }{\ }}
\def\g{\Ccolor{blue!10!white  }{blue!03!white  }{$e$}}
\def\h{\Ccolor{blue!25!white  }{blue!06!white  }{$f$}}
\def\i{\Ccolor{blue!40!white  }{blue!09!white  }{$g$}}
\def\j{\Ccolor{blue!55!white  }{blue!12!white  }{$h$}}
\def\k{\Ccolor{purple!10!white}{purple!04!white}{$i$}}
\def\l{\Ccolor{purple!25!white}{purple!08!white}{$j$}}
\def\m{\Ccolor{purple!40!white}{purple!12!white}{$k$}}
\def\n{\Ccolor{purple!55!white}{purple!16!white}{$l$}}
\def\p{\Ccolor{orange!30!white}{orange!10!white}{$m$}}
\def\q{\Ccolor{orange!60!white}{orange!20!white}{$n$}}
\begin{scope}
\clip [](20.8,3.5)-- ++(3.0,0)-- ++(0.0,-3.0)-- cycle;
\draw (22, 0.5) \o        ;
\draw (22, 1.5) \o     ;
\draw (21, 2.5) \o   \j;
\draw (20, 3.5) \o \g\n;
\end{scope}
\begin{scope}
\clip [](9.2,-8.5)-- ++(-5.8,0)--  ++(0,5.8)--cycle;
\draw (3,-8.5)   \j\m\q\n\g\o ; 
\draw (3,-7.5)   \n\q\m\j\o ;
\draw (3,-6.5)   \p\l\i\o ;
\draw (3,-5.5)   \k\h\o ;
\draw (3,-4.5)   \g\o ;
\draw (3,-3.5)   \o ;
\end{scope}
\begin{scope}
\clip [](16.7,-8.5)-- ++(-6.4,0)-- ++(-7,7)-- ++(0,5.4) -- ++(1,0)--cycle;
\draw (3 ,3)           \7\o ;
\draw (3 ,2)         \4\8\o ;
\draw (3, 1)       \1\5\9\o ;
\draw (3, 0)     \3\2\6\a\o ;
\draw (3,-1)   \7\4\1\3\7\o ;
\draw (3,-2)   \o\8\5\2\4\8\o ;
\draw (4,-3)   \o\9\6\1\5\9\o ;
\draw (5,-4)   \o\a\3\2\6\a\o ;
\draw (6,-5)   \o\7\4\1\3\7\o ;
\draw (7,-6)   \o\8\5\2\4\8\o ;
\draw (8,-7)   \o\9\6\1\5\9\o ;
\draw (9,-8)   \o\a\3\2\6\a\o ;
\end{scope}
\begin{scope}
\clip [](5.8,3.5)-- ++(6.4,0)-- ++(11.3,-11.3)-- ++(0,-0.7)-- ++(-5.7,0)--cycle;
\draw ( 5.5, 3)   \o\g\k\p\n\j\o ;
\draw ( 6.5, 2)   \o\h\l\q\m\i\o ;
\draw ( 7.5, 1)   \o\i\m\q\l\h\o ;
\draw ( 8.5, 0)   \o\j\n\p\k\g\o ;
\draw ( 9.5,-1)   \o\g\k\p\n\j\o ;
\draw (10.5,-2)   \o\h\l\q\m\i\o ;
\draw (11.5,-3)   \o\i\m\q\l\h\o ;
\draw (12.5,-4)   \o\j\n\p\k\g\o ;
\draw (13.5,-5)   \o\g\k\p\n\j\o ;
\draw (14.5,-6)   \o\h\l\q\m\i\o ;
\draw (15.5,-7)   \o\i\m\q\l\h\o ;
\draw (16.5,-8)   \o\j\n\p\k\g \o ;
\end{scope}
\begin{scope}
\clip [](13.3,3.5)-- ++(6.4,0)-- ++(3.9,-3.9)-- ++(0,-6.4)--cycle;
\draw ( 22,-7) \o                     ;
\draw ( 22,-6) \o                   ;
\draw ( 21,-5) \o                 \a;
\draw ( 20,-4) \o               \7\3;
\draw ( 19,-3) \o             \8\4\2;
\draw ( 18,-2) \o           \9\5\1\6;
\draw ( 17,-1) \o         \a\6\2\3\a;
\draw ( 16, 0) \o       \7\3\1\4\7\o ;
\draw ( 15, 1) \o     \8\4\2\5\8\o ;
\draw ( 14, 2) \o   \9\5\1\6\9\o ;
\draw ( 13, 3) \o \a\6\2\3\a\o ;
\end{scope}
\draw [diag, dash phase=0.5\DK] (3,2) -- +(11,-11);
\draw [diag, dash phase=0.5\DK] (16,4) -- +(8,-8);
\draw [->, dashed] (9,-7 ) -- node [left,near start] {{$\scriptscriptstyle +4$}}  +(0,-1.5);
\draw [->, dashed] (5,-3 ) -- node [left,near start] {{$\scriptscriptstyle +4$}}  +(0,-1.5);
\draw [->, dashed] (14,-6 ) -- node [below,near start] {{$\scriptscriptstyle +4$}}  +(1.5,0);
\draw [->, dashed] (10,-2 ) -- node [below,near start] {{$\scriptscriptstyle +4$}}  +(1.5,0);
\draw [->, dashed] ( 6, 2 ) -- node [below,near start] {{$\scriptscriptstyle +4$}}  +(1.5,0);
\draw [<-, dashed] (14.5, 1 ) -- node [above,near end] {{$\scriptscriptstyle +4$}}  +(1.5,0);
\draw [<-, dashed] (18.5,-3 ) -- node [above,near end] {{$\scriptscriptstyle +4$}}  +(1.5,0);
\draw [<-, dashed] (22.5,-7 ) -- node [above,near end] {{$ $}}  +(1.0,0);
\draw [->, dashed] (21,2 ) -- node [right,near start] {{$\scriptscriptstyle +4$}}  +(0,1.5);
\end{tikzpicture}%
}
\hspace{-5cm}\hfill%
\raisebox{1.em}{\scalebox{0.545}
{\begin{tikzpicture}
\def\4{\ccolor{white}{$\boldsymbol{4}$}}
\def\2{\ccolor{white}{$\boldsymbol{2}$}}
\def\1{\ccolor{white}{$\boldsymbol{1}$}}
\def\0{\ccolor{white}{0}}
\def\O{\cocolor{white}{\ }}
\def\d{\cocolor{white}{\ddots}}
\def\t{\cocolor{white}{\cdots}}
\def\v{\cocolor{white}{\vdots}}
\def\o{ ++(1, 0) }
\fill[fill=white] (0.2,0.8)rectangle(17.5,-17.1);
\draw (0, -0) \O\O\1\1\1\1 ;
\draw (0, -1) \O\O\1\1\1\1 ;
\draw (0, -2) \1\1\O\O\O\O\1\0\0\1 ;
\draw (0, -3) \1\1\O\O\O\O\1\1\0\0 ;
\draw (0, -4) \1\1\O\O\O\O\0\1\1\0 ;
\draw (0, -5) \1\1\O\O\O\O\0\0\1\1 ;
\draw (0, -6) \o\o\1\1\0\0\O\O\O\O\1\0\0\1 ;
\draw (0, -7) \o\o\0\1\1\0\O\O\O\O\1\1\0\0 ;
\draw (0, -8) \o\o\0\0\1\1\O\O\O\O\0\1\1\0 ;
\draw (0, -9) \o\o\1\0\0\1\O\O\O\O\0\0\1\1 ;
\draw (0,-10) \o\o\o\o\o\o\d\d\d\d\d\d\d\d\d;
\draw (0,-11) \o\o\o\o\o\o\o\1\1\0\0\O\O\O\O\2\0 ;
\draw (0,-12) \o\o\o\o\o\o\o\0\1\1\0\O\O\O\O\1\1 ;
\draw (0,-13) \o\o\o\o\o\o\o\0\0\1\1\O\O\O\O\0\2 ;
\draw (0,-14) \o\o\o\o\o\o\o\1\0\0\1\O\O\O\O\1\1 ;
\draw (0,-15) \o\o\o\o\o\o\o\o\o\o\o\2\1\0\1\O\O ;
\draw (0,-16) \o\o\o\o\o\o\o\o\o\o\o\0\1\2\1\O\O ;
\draw (0.5,0.5)rectangle(17.5,-16.5);
\end{tikzpicture}
}} 
\caption{$n=4\eqref{II},8\eqref{JJ},12,16,20,\ldots$.\hfill Twins: $\cll1,\cll2$.\hfill Periods: $[4,4]$, $\big[\frac{n}2,-\frac{n}{2}\big]$.\\[0.3ex]
         \mbox{}\hfill
         $\FfF\Lft x,y\Rgt =\FfF\Lft \frac{n}2{+}3{-}x,3{-}y\Rgt =\FfF_{44}\Lft x,y\Rgt,\ |x-y|< \frac{n}4,$
         \hfill\mbox{}
         \\[0.3ex]
         \mbox{}\hfill
         \mbox{%
         $
         \FfF\big([  \frac{n}{4}{+}4z{+}2{+}\gamma,4z{+}2{+}\gamma ]\big)
         =
         \FfF\big([  4z{+}\gamma,\frac{n}{4}{+}4z{+}\gamma ]\big)
         =
         \framebox{$n{-}\frac32{+}|\gamma{-}\frac32|$},\ z\in\ZZ, \gamma\in\{0,1,2,3\}.
         $%
         }
         \hfill\mbox{}
        } \label{UU}
\end{figure}

\begin{figure}[H]
  \hskip-11pt
\makebox[1em][l]{
\begin{tikzpicture}[yscale=-1] 
\def\g{\Ccolor{blue!10!white  }{blue!03!white  }{$d$}}
\def\h{\Ccolor{blue!25!white  }{blue!06!white  }{$e$}}
\def\i{\Ccolor{blue!40!white  }{blue!09!white  }{$f$}}
\def\j{\Ccolor{blue!55!white  }{blue!12!white  }{$g$}}
\def\k{\Ccolor{purple!10!white}{purple!04!white}{$h$}}
\def\l{\Ccolor{purple!25!white}{purple!08!white}{$i$}}
\def\m{\Ccolor{purple!40!white}{purple!12!white}{$j$}}
\def\n{\Ccolor{purple!55!white}{purple!16!white}{$k$}}
\def\0{\Ccolor{orange!20!white}{orange!10!white}{$l$}}
\def\p{\Ccolor{orange!50!white}{orange!20!white}{$m$}}
\def\q{\Ccolor{orange!80!white}{orange!30!white}{$n$}}
\begin{scope}
\clip [](9.2,-8.5)-- ++(-4.9,0) -- ++(0,4.9)--cycle;
\draw (4,-8.5)   \n\q\n\g\o ;
\draw (4,-7.5)   \p\m\j\o ;
\draw (4,-6.5)   \l\i\o ;
\draw (4,-5.5)   \h\o ;
\draw (4,-4.5)   \o ;
\draw (4,-3.5)   \o ;
\end{scope}
\begin{scope}
\clip [](16.7,-8.5)-- ++(-6.4,0)-- ++(-6,6)-- ++(0,6.1) -- ++(0.3,0)--cycle;
\draw ( 4,3)             \o ;
\draw ( 4,2)           \8\o ;
\draw (4, 1)         \5\9\o ;
\draw (4, 0)       \2\6\a\o ;
\draw (4,-1)     \4\1\3\7\o ;
\draw (4,-2)   \8\5\2\4\8\o ;
\draw (4,-3)   \o\9\6\1\5\9\o ;
\draw (5,-4)   \o\a\3\2\6\a\o ;
\draw (6,-5)   \o\7\4\1\3\7\o ;
\draw (7,-6)   \o\8\5\2\4\8\o ;
\draw (8,-7)   \o\9\6\1\5\9\o ;
\draw (9,-8)   \o\a\3\2\6\a\o ;
\end{scope}
\begin{scope}
\clip [](5.8,3.5)-- ++(6.4,0)-- ++(11.3,-11.3)-- ++(0,-0.7)-- ++(-5.7,0)--cycle;
\draw ( 5.5, 3)   \o\g\k\0\k\g\o ;
\draw ( 6.5, 2)   \o\h\l\p\n\j\o ;
\draw ( 7.5, 1)   \o\i\m\q\m\i\o ;
\draw ( 8.5, 0)   \o\j\n\p\l\h\o ;
\draw ( 9.5,-1)   \o\g\k\0\k\g\o ;
\draw (10.5,-2)   \o\h\l\p\n\j\o ;
\draw (11.5,-3)   \o\i\m\q\m\i\o ;
\draw (12.5,-4)   \o\j\n\p\l\h\o ;
\draw (13.5,-5)   \o\g\k\0\k\g\o ;
\draw (14.5,-6)   \o\h\l\p\n\j\o ;
\draw (15.5,-7)   \o\i\m\q\m\i\o ;
\draw (16.5,-8)   \o\j\n\p\l\h\o ;
\end{scope}
\begin{scope}
\clip [](13.3,3.5)-- ++(6.4,0)-- ++(13,-13)-- ++(-6.4,0)--cycle;
\draw ( 21,-5) \o                  \7;
\draw ( 20,-4) \o                \8\4;
\draw ( 19,-3) \o              \9\5\1;
\draw ( 18,-2) \o            \a\6\2\3;
\draw ( 17,-1) \o          \7\3\1\4\7;
\draw ( 16, 0) \o        \8\4\2\5\8\o ;
\draw ( 15, 1) \o      \9\5\1\6\9\o ;
\draw ( 14, 2) \o    \a\6\2\3\a\o ;
\draw ( 13, 3) \o  \7\3\1\4\7\o ;
\end{scope}
\begin{scope}
\clip [](20.8,3.5)-- ++(6.0,0)-- ++(0.0,-6.0)-- cycle;
\draw (22, 1.5) \o     ;
\draw (21, 2.5) \o   \g;
\draw (20, 3.5) \o \h\k;
\end{scope}
\draw [diag, dash phase=0.5\DK] (4,1) -- +(10,-10);
\draw [diag, dash phase=0.5\DK] (16,4) -- +(8,-8);
\draw [->, dashed] (6,-4 ) -- node [left,near start] {{$\scriptscriptstyle +4$}}  +(0,-1.5);
\draw [->, dashed] (10,-8 ) --  +(0,-.8);
\draw [->, dashed] (15,-7 ) -- node [below,near start] {{$\scriptscriptstyle +4$}}  +(1.5,0);
\draw [->, dashed] (11,-3 ) -- node [below,near start] {{$\scriptscriptstyle +4$}}  +(1.5,0);
\draw [->, dashed] ( 7, 1 ) -- node [below,near start] {{$\scriptscriptstyle +4$}}  +(1.5,0);
\draw [<-, dashed] (14.5, 1 ) -- node [above,near end] {{$\scriptscriptstyle +4$}}  +(1.5,0);
\draw [<-, dashed] (18.5,-3 ) -- node [above,near end] {{$\scriptscriptstyle +4$}}  +(1.5,0);
\draw [<-, dashed] (22.5,-7 ) --  +(1.,0);
\draw [->, dashed] (21,2 ) -- node [right,near start] {{$\scriptscriptstyle +4$}}  +(0,1.5);
\end{tikzpicture}
}
\hfill%
\raisebox{1.5em}{\scalebox{0.55}
{\begin{tikzpicture}
\def\4{\ccolor{white}{$\boldsymbol{4}$}}
\def\2{\ccolor{white}{$\boldsymbol{2}$}}
\def\1{\ccolor{white}{$\boldsymbol{1}$}}
\def\0{\ccolor{white}{0}}
\def\O{\cocolor{white}{\ }}
\def\d{\cocolor{white}{\ddots}}
\def\t{\cocolor{white}{\cdots}}
\def\v{\cocolor{white}{\vdots}}
\def\o{ ++(1, 0) }
\fill[fill=white] (0.25,0.75)rectangle(18.0,-18.0);
\draw (0,-0 ) \O\O\1\1\1\1 ;
\draw (0,-1 ) \O\O\1\1\1\1 ;
\draw (0,-2 ) \1\1\O\O\O\O\1\0\0\1 ;
\draw (0,-3 ) \1\1\O\O\O\O\1\1\0\0 ;
\draw (0,-4 ) \1\1\O\O\O\O\0\1\1\0 ;
\draw (0,-5 ) \1\1\O\O\O\O\0\0\1\1 ;
\draw (0,-6 ) \o\o\1\1\0\0\O\O\O\O\1\0\0\1 ;
\draw (0,-7 ) \o\o\0\1\1\0\O\O\O\O\1\1\0\0 ;
\draw (0,-8 ) \o\o\0\0\1\1\O\O\O\O\0\1\1\0 ;
\draw (0,-9 ) \o\o\1\0\0\1\O\O\O\O\0\0\1\1 ;
\draw (0,-10) \o\o\o\o\o\o\d\d\d\d\d\d\d\d\d;
\draw (0,-11) \o\o\o\o\o\o\o\1\1\0\0\O\O\O\O\1\1\0 ;
\draw (0,-12) \o\o\o\o\o\o\o\0\1\1\0\O\O\O\O\1\1\0 ;
\draw (0,-13) \o\o\o\o\o\o\o\0\0\1\1\O\O\O\O\0\1\1 ;
\draw (0,-14) \o\o\o\o\o\o\o\1\0\0\1\O\O\O\O\0\1\1 ;
\draw (0,-15) \o\o\o\o\o\o\o\o\o\o\o\2\2\0\0\O\O\O ;
\draw (0,-16) \o\o\o\o\o\o\o\o\o\o\o\1\1\1\1\O\O\O ;
\draw (0,-17) \o\o\o\o\o\o\o\o\o\o\o\0\0\2\2\O\O\O ;
\draw (0.5,0.5)rectangle(18.5,-17.5);
\end{tikzpicture}
}} 
\caption{$n=5\eqref{II},9\eqref{AA},13,17,21,\ldots$.\hfill Twins: $\cll1,\cll2$.\hfill Periods: $[4,4]$, $\big[\frac {n-1}2,-\frac {n-1}2\big]$.\\[0.3em]\mbox{} \hfill
         $\FfF\Lft x,y\Rgt =\FfF\Lft \frac{n-1}2{-}x,-y\Rgt =\FfF_{44}\Lft x,y\Rgt,\ |x-y|< \frac{n-1}4,$
         \hfill \mbox{} \\[0.3em]
         \mbox{} \hfill
         $
         \FfF \big([ \frac{n-1}4{+}4z{+}\gamma, 4z{+}\gamma ]\big) =
         \FfF \big([ 4z{+}2{+}\gamma, \frac{n-1}4{+}4z{+}2{+}\gamma ]\big) =
         \framebox{$n{-}|2-\gamma|$}$,
         \hfill\mbox{}
         \\[0.3em] \mbox{}
         \hfill
         $
         z\in\ZZ, \gamma\in\{0,1,2,3\}.
         $
         \hfill \mbox{}
         } \label{VV}
\end{figure}

\begin{figure}[H]
\hskip-11pt
\makebox[1em][l]{
\begin{tikzpicture}[yscale=-1] 
\def\g{\Ccolor{blue!10!white  }{blue!03!white  }{$c$}}
\def\h{\Ccolor{blue!25!white  }{blue!06!white  }{$d$}}
\def\i{\Ccolor{blue!40!white  }{blue!09!white  }{$e$}}
\def\j{\Ccolor{blue!55!white  }{blue!12!white  }{$f$}}
\def\k{\Ccolor{purple!10!white}{purple!04!white}{$g$}}
\def\l{\Ccolor{purple!25!white}{purple!08!white}{$h$}}
\def\m{\Ccolor{purple!40!white}{purple!12!white}{$i$}}
\def\n{\Ccolor{purple!55!white}{purple!16!white}{$j$}}
\def\0{\Ccolor{orange!15!white}{orange!10!white}{$k$}}
\def\p{\Ccolor{orange!40!white}{orange!20!white}{$l$}}
\def\q{\Ccolor{orange!65!white}{orange!30!white}{$m$}}
\def\r{\Ccolor{orange!90!white}{orange!40!white}{$n$}}
\begin{scope}
\clip [](9.2,-8.5)-- ++(-5.9,0)-- ++(0,5.9)--cycle;
\draw (3,-8.5)   \i\m\q\n\g\o ;
\draw (3,-7.5)   \l\p\m\j\o ;
\draw (3,-6.5)   \0\l\i\o ;
\draw (3,-5.5)   \k\h\o ;
\draw (3,-4.5)   \g\o ;
\draw (3,-3.5)   \o (3,-2.5) \o ;
\end{scope}
\begin{scope}
\clip [](16.7,-8.5)-- ++(-6.4,0)-- ++(-6.8,6.8)-- ++(0,5.2)-- ++(1.2,0)--cycle;
\draw (9,-8)   \o\a\3\2\6\a\o\o ;
\draw (8,-7)   \o\9\6\1\5\9\o ;
\draw (7,-6)   \o\8\5\2\4\8\o ;
\draw (6,-5)   \o\7\4\1\3\7\o ;
\draw (5,-4)   \o\a\3\2\6\a\o ;
\draw (4,-3)   \o\9\6\1\5\9\o ;
\draw (3,-2)   \o\8\5\2\4\8\o ;
\draw (3,-1)   \7\4\1\3\7\o ;
\draw (3, 0)   \3\2\6\a\o ;
\draw (3, 1)   \1\5\9\o ;
\draw (3, 2)   \4\8\o ;
\draw (3, 3)   \7 ;
\end{scope}
\begin{scope}
\clip [](5.8,3.5)-- ++(6.4,0)-- ++(10,-10)--++(0,-2)-- ++(-4.4,0)--cycle;
\draw (16.5,-8)   \o\j\n\r\k ;
\draw (15.5,-7)   \o\i\m\q\n\g ;
\draw (14.5,-6)   \o\h\l\p\m\j\o ;
\draw (13.5,-5)   \o\g\k\0\l\i\o ;
\draw (12.5,-4)   \o\j\n\r\k\h\o ;
\draw (11.5,-3)   \o\i\m\q\n\g\o ;
\draw (10.5,-2)   \o\h\l\p\m\j\o ;
\draw ( 9.5,-1)   \o\g\k\0\l\i\o ;
\draw ( 8.5, 0)   \o\j\n\r\k\h\o ;
\draw ( 7.5, 1)   \o\i\m\q\n\g\o ;
\draw ( 6.5, 2)   \o\h\l\p\m\j\o ;
\draw ( 5.5, 3)   \o\g\k\0\l\i\o ;
\end{scope}

\begin{scope}
\clip []( 13.3, 3.5) -- ++(6.5,0)-- ++(5,-5)-- ++(0,-6.5)--cycle;
\draw ( 20.5,-5.5) \o                 ;
\draw ( 20.5,-4.5) \o                 ;
\draw ( 19.5,-3.5) \o               \7;
\draw ( 18.5,-2.5) \o             \a\3;
\draw ( 17.5,-1.5) \o           \9\6\1;
\draw ( 16.5,-0.5) \o         \8\5\2\4;
\draw ( 15.5, 0.5) \o       \7\4\1\3\7;
\draw ( 14.5, 1.5) \o     \a\3\2\6\a\o ;
\draw ( 13.5, 2.5) \o   \9\6\1\5\9\o ;
\draw ( 12.5, 3.5) \o \8\5\2\4\8\o  ;
\end{scope}
\begin{scope}
\clip []( 20.8, 3.5) -- ++(1.2,0)-- ++(0,-1.2)--cycle;
\draw ( 20, 2.5) \o   ;
\draw ( 21, 3.5)   \h ;
\end{scope}
\draw [diag, dash phase=0.5\DK] (3,2) -- +(11,-11);
\draw [diag, dash phase=0.5\DK] (16.5,3.5) -- +(6,-6);
\draw [->, dashed] (9,-7 ) -- node [left,near start] {{$\scriptscriptstyle +4$}}  +(0,-1.5);
\draw [->, dashed] (5,-3 ) -- node [left,near start] {{$\scriptscriptstyle +4$}}  +(0,-1.5);
\draw [->, dashed] (14,-6 ) -- node [below,near start] {{$\scriptscriptstyle +4$}}  +(1.5,0);
\draw [->, dashed] (10,-2 ) -- node [below,near start] {{$\scriptscriptstyle +4$}}  +(1.5,0);
\draw [->, dashed] ( 6, 2 ) -- node [below,near start] {{$\scriptscriptstyle +4$}}  +(1.5,0);
\draw [->, dashed] (14.5, 2.5 ) -- node [left,near start] {{$\scriptscriptstyle +4$}}  +(0,-1.5);
\draw [->, dashed] (18.5,-1.5 ) --  node [left,near start] {{$\scriptscriptstyle +4$}}   +(0,-1.5);
\draw [->, dashed] ( 19.5, 3.5 ) -- node [above,near end] {{$\scriptscriptstyle +4$}}  +(1.5,0);
\end{tikzpicture}%
}
\hfill%
\raisebox{1em}{\scalebox{0.55}
{\begin{tikzpicture}
\def\4{\ccolor{white}{$\boldsymbol{4}$}}
\def\2{\ccolor{white}{$\boldsymbol{2}$}}
\def\1{\ccolor{white}{$\boldsymbol{1}$}}
\def\0{\ccolor{white}{0}}
\def\O{\cocolor{white}{\ }}
\def\d{\cocolor{white}{\ddots}}
\def\t{\cocolor{white}{\cdots}}
\def\v{\cocolor{white}{\vdots}}
\def\o{ ++(1, 0) }
\fill[fill=white] (0.25,0.75)rectangle(19,-19);
\draw (0,-0 ) \O\O\1\1\1\1 ;
\draw (0,-1 ) \O\O\1\1\1\1 ;
\draw (0,-2 ) \1\1\O\O\O\O\1\0\0\1 ;
\draw (0,-3 ) \1\1\O\O\O\O\1\1\0\0 ;
\draw (0,-4 ) \1\1\O\O\O\O\0\1\1\0 ;
\draw (0,-5 ) \1\1\O\O\O\O\0\0\1\1 ;
\draw (0,-6 ) \o\o\1\1\0\0\O\O\O\O\1\0\0\1 ;
\draw (0,-7 ) \o\o\0\1\1\0\O\O\O\O\1\1\0\0 ;
\draw (0,-8 ) \o\o\0\0\1\1\O\O\O\O\0\1\1\0 ;
\draw (0,-9 ) \o\o\1\0\0\1\O\O\O\O\0\0\1\1 ;
\draw (0,-10) \o\o\o\o\o\o\d\d\d\d\d\d\d\d\d;
\draw (0,-11) \o\o\o\o\o\o\o\1\1\0\0\O\O\O\O\1\0\0\1 ;
\draw (0,-12) \o\o\o\o\o\o\o\0\1\1\0\O\O\O\O\1\1\0\0 ;
\draw (0,-13) \o\o\o\o\o\o\o\0\0\1\1\O\O\O\O\0\1\1\0 ;
\draw (0,-14) \o\o\o\o\o\o\o\1\0\0\1\O\O\O\O\0\0\1\1 ;
\draw (0,-15) \o\o\o\o\o\o\o\o\o\o\o\2\2\0\0\O\O\O\O ;
\draw (0,-16) \o\o\o\o\o\o\o\o\o\o\o\0\2\2\0\O\O\O\O ;
\draw (0,-17) \o\o\o\o\o\o\o\o\o\o\o\0\0\2\2\O\O\O\O ;
\draw (0,-18) \o\o\o\o\o\o\o\o\o\o\o\2\0\0\2\O\O\O\O ;
\draw (0.5,0.5)rectangle(19.5,-18.5);
\end{tikzpicture}
}} 
\caption{$n=6\eqref{II},10,14,18,\ldots$.\hfill Twins: $\cll1,\cll2$.\hfill Periods: $[4,4]$, $\big[2+\frac{n-2}4,2-\frac{n-2}4\big]$.\\[0.3em]
\mbox{}\hfill
         $\FfF\Lft x,y\Rgt =\FfF_{44}\Lft x,y\Rgt,\ |x-y|\le \frac{n-2}4.$
         \hfill\mbox{}
         }\label{WW}
\end{figure}

\vspace{1.5em}
\subsection*{Acknowledgements}

The author thanks the reviewers
for critically reading the manuscript
and many substantial and constructive suggestions,
which helped to improve and clarify this paper enormously,
and Olga Yakovchenko for help with palettes.

\vspace{2.5em}
\providecommand\href[2]{#2} \providecommand\url[1]{\href{#1}{#1}}
  \def\DOI#1{{\small {DOI}:
  \href{http://dx.doi.org/#1}{#1}}}\def\DOIURL#1#2{{\small{DOI}:
  \href{http://dx.doi.org/#2}{#1}}}


\begin{thebibliography}{10}

\bibitem{AKV:hex}
S.~V. Avgustinovich, D.~S. Krotov, and A.~{Yu}. Vasil'eva.
\newblock Completely regular codes in the infinite hexagonal grid.
\newblock {\em \href{https://www.mathnet.ru/php/journal.phtml?jrnid=semr&option_lang=eng}{\color{black}Sib. \`Elektron. Mat. Izv.}},
  13:987--1016, 2016.
\newblock \DOI{10.17377/semi.2016.13.079}

\bibitem{AvgVas:2022}
S.~V. Avgustinovich and A.~{Yu}. Vasil'eva.
\newblock
Completely regular codes in the $n$-dimensional rectangular grid.
\newblock {\em \href{https://www.mathnet.ru/php/journal.phtml?jrnid=semr&option_lang=eng}{\color{black}Sib. \`Elektron. Mat. Izv.}},
  19(2):861--869, 2022.
\newblock \DOI{10.33048/semi.2022.19.072}

\bibitem{AVS:2012en}
S.~V. Avgustinovich, A.~{Yu}. Vasil'eva, and I.~V. Sergeeva.
\newblock Distance regular colorings of an infinite rectangular grid.
\newblock {\em
  \href{http://link.springer.com/journal/volumesAndIssues/11754}{\color{black}J. Appl. Ind.
  Math.}}, 6(3):280--285, 2012.
\newblock \DOI{10.1134/S1990478912030027} translated from \em
  \href{http://www.mathnet.ru/php/journal.phtml?jrnid=da\&option_lang=eng}{\color{black}Diskretn.
  Anal. Issled. Oper.}\em, 18(2):3--10, 2011.

\bibitem{Axenovich:2003}
M.~A. Axenovich.
\newblock On multiple coverings of the infinite rectangular grid with balls of
  constant radius.
\newblock {\em
  \href{http://www.sciencedirect.com/science/journal/0012365X}{\color{black}Discrete
  Math.}}, 268(1--3):31--48, 2003.
\newblock \DOI{10.1016/S0012-365X(02)00744-6}

\bibitem{BRZ:CR}
J.~Borges, J.~Rif\`a, and V.~A. Zinoviev.
\newblock On completely regular codes.
\newblock {\em \href{http://link.springer.com/journal/11122}{\color{black}Probl. Inf.
  Transm.}}, 55(1):1--45, Jan. 2019.
\newblock \DOI{10.1134/S0032946019010010} translated from \em
\href{http://www.mathnet.ru/php/journal.phtml?jrnid=ppi\&option_lang=eng}{\color{black}Probl. Peredachi Inf.}\em, 55(1):3--50, 2019.

\bibitem{BKW:2018subspace}
M.~Braun, M.~Kiermaier, and A.~Wassermann.
\newblock $q$-{A}nalogs of designs: Subspace designs.
\newblock In M.~Greferath, M.~O. Pav{\v{c}}evi{\'{c}}, N.~Silberstein, and
  M.~{\'A}. V{\'a}zquez-Castro, editors, {\em \href{https://doi.org/10.1007/978-3-319-70293-3}{\color{black}Network Coding and Subspace
  Designs}}, p.\,171--211. Springer Int. Pub., Cham, 2018.
\newblock \DOIURL{10.1007/978-3-319-70293-3\_8}{10.1007/978-3-319-70293-3_8}

\bibitem{Brouwer}
A.~E. Brouwer, A.~M. Cohen, and A.~Neumaier.
\newblock {\em Distance-Regular Graphs}.
\newblock Springer-Verlag, Berlin, 1989.
\newblock \DOI{10.1007/978-3-642-74341-2}

\bibitem{CL:82}
P.~J. Cameron and R.~A. Liebler.
\newblock Tactical decompositions and orbits of projective groups.
\newblock {\em
  \href{http://www.sciencedirect.com/science/journal/00243795}{\color{black}Linear Algebra
  Appl.}}, 46:91--102, 1982.
\newblock \DOI{10.1016/0024-3795(82)90029-5}

\bibitem{ColMat:Steiner}
C.~J. Colbourn and R.~Mathon.
\newblock Steiner systems.
\newblock In C.~J.~Colbourn and J.~H.~Dinitz, editors, {\em \href{https://www.routledge.com/Handbook-of-Combinatorial-Designs/Colbourn-Dinitz/p/book/9781584885061}{\color{black}Handbook of
  Combinatorial Designs}},
  \href{https://www.routledge.com/Discrete-Mathematics-and-Its-Applications/book-series/CHDISMTHAPP}{\color{black}Discrete Mathematics and Its Applications}, p.\,102--108.
  Chapman \& Hall/CRC, Boca Raton, London, New York, 2nd ed., 2006.

\bibitem{CBGS:Symmetries}
J.~H. {Conway}, H. {Burgiel}, and C. {Goodman-Strauss}.
\newblock {\em \href{https://www.routledge.com/The-Symmetries-of-Things/Conway-Burgiel-Goodman-Strauss/p/book/9781568812205}{\color{black}The Symmetries of Things}}.
\newblock A~K~Peters, Wellesley, MA, 2008.

\bibitem{DeBruSuz:2010}
B.~De~Bruyn and H.~Suzuki.
\newblock Intriguing sets of vertices of regular graphs.
\newblock {\em
  \href{http://link.springer.com/journal/volumesAndIssues/373}{\color{black}Graphs Comb.}},
  26(5):629--646, Sept. 2010.
\newblock \DOI{10.1007/s00373-010-0924-y}

\bibitem{DK:74}
A.~D. Keedwell and J. D{\'e}nes.
\newblock {\em Latin Squares and Their Applications}.
\newblock Elsevier, Amsterdam, 2nd edition, 2015.
\newblock \DOI{10.1016/C2014-0-03412-0}

\bibitem{FDF:CorrImmBound}
D.~G. Fon-Der-Flaass.
\newblock A bound on correlation immunity.
\newblock {\em \href{https://www.mathnet.ru/php/journal.phtml?jrnid=semr&option_lang=eng}{\color{black}Sib. \`Elektron. Mat. Izv.}},
  4:133--135, 2007.
\newblock Online: \url{https://mi.mathnet.ru/eng/semr149}

\bibitem{Gabidulin:2021}
E.~M. Gabidulin.
\newblock {\em Rank Codes}.
\newblock TUM.University Press, Munich, 2021.
\newblock \DOI{10.14459/2021md1601193}

\bibitem{GoRo}
C.~Godsil and G.~Royle.
\newblock {\em Algebraic Graph Theory}.
\newblock Springer-Verlag, New York, 2001.
\newblock \DOI{10.1007/978-1-4613-0163-9}

\bibitem{Golubev:2020}
K.~Golubev.
\newblock Graphical designs and extremal combinatorics.
\newblock {\em
  \href{http://www.sciencedirect.com/science/journal/00243795}{\color{black}Linear Algebra
  Appl.}}, 604:490--506, 2020.
\newblock \DOI{10.1016/j.laa.2020.07.012}

\bibitem{HSS:OA}
A.~S. Hedayat, N.~J.~A. Sloane, and J.~Stufken.
\newblock {\em Orthogonal Arrays. Theory and Applications}.
\newblock \href{https://www.springer.com/series/692}{\color{black}Springer Ser. Stat}. Springer, New York, NY, 1999.
\newblock \DOI{10.1007/978-1-4612-1478-6}

\bibitem{Kro:9colors}
D.~S. Krotov.
\newblock Perfect colorings of {$Z^2$}: Nine colors.
\newblock E-print 0901.0004, arXiv.org, 2009.
\newblock Available at \url{http://arxiv.org/abs/0901.0004}

\bibitem{Kro:2m-4}
D.~S. Krotov.
\newblock On the binary codes with parameters of triply-shortened $1$-perfect
  codes.
\newblock {\em \href{http://link.springer.com/journal/10623}{\color{black}Des. Codes
  Cryptogr.}}, 64(3):275--283,  2012.
\newblock \DOI{10.1007/s10623-011-9574-1}

\bibitem{Kro:OA13}
D.~S. Krotov.
\newblock On the {OA}(1536,13,2,7) and related orthogonal arrays.
\newblock {\em
  \href{http://www.sciencedirect.com/science/journal/0012365X}{\color{black}Discrete
  Math.}}, 343:111659/1--11, 2020.
\newblock \DOI{10.1016/j.disc.2019.111659}

\bibitem{LAP:2020}
M.~A. Lisitsyna, S.~V. Avgustinovich, and O.~G. Parshina.
\newblock On perfect colorings of infinite multipath graphs.
\newblock {\em \href{https://www.mathnet.ru/php/journal.phtml?jrnid=semr&option_lang=eng}{\color{black}Sib. \`Elektron. Mat. Izv.}},
  17:2084--2095, 2020.
\newblock \DOI{10.33048/semi.2020.17.139}

\bibitem{MWS}
F.~J. MacWilliams and N.~J.~A. Sloane.
\newblock {\em The Theory of Error-Correcting Codes}.
\newblock Amsterdam, Netherlands: North Holland, 1977.

\bibitem{MK-W:small}
B.~D. McKay and I.~M. Wanless.
\newblock A census of small {L}atin hypercubes.
\newblock {\em \href{http://epubs.siam.org/journal/sjdmec}{\color{black}SIAM J. Discrete
  Math.}}, 22(2):719--736, 2008.
\newblock \DOI{10.1137/070693874}

\bibitem{Plotnikov:2005}
D.~P. Plotnikov.
\newblock Perfect colorings of the vertices of a regular graph of degree three
  into three and more colors.
\newblock Graduate thesis, Novosibirsk State University, 2005.
\newblock In Russian.

\bibitem{Pot:2012:color}
V.~N. Potapov.
\newblock On perfect $2$-colorings of the $q$-ary $n$-cube.
\newblock {\em
  \href{http://www.sciencedirect.com/science/journal/0012365X}{\color{black}Discrete
  Math.}}, 312(6):1269--1272, 2012.
\newblock \DOI{10.1016/j.disc.2011.12.004}

\bibitem{PotAvg:20}
V.~N. Potapov and S.~V. Avgustinovich.
\newblock Combinatorial designs, difference sets, and bent functions as perfect
  colorings of graphs and multigraphs.
\newblock {\em \href{http://link.springer.com/journal/11202}{\color{black}Sib. Math. J.}},
  61(5):867--877, 2020.
\newblock \DOI{10.1134/S0037446620050109} translated from
  \href{http://www.mathnet.ru/php/journal.phtml?jrnid=smj\&option_lang=eng}{\color{black}Sib.
  Mat. Zh.} 61(5):1087--1100, 2020.

\bibitem{Puz2004.en}
S.~A. Puzynina.
\newblock Periodicity of perfect colorings of the infinite rectangular grid.
\newblock {\em
  \href{http://www.mathnet.ru/php/journal.phtml?jrnid=da\&option_lang=eng}{\color{black}Diskretn.
  Anal. Issled. Oper.}, Ser.~1}, 11(1):79--92, 2004.
\newblock In Russian. Online: \url{https://www.mathnet.ru/eng/da98}

\bibitem{Puz2005.en}
S.~A. Puzynina.
\newblock
Perfect colorings of vertices of the graph {$G(Z^2)$} in three colors.
\newblock {\em
  \href{http://www.mathnet.ru/php/journal.phtml?jrnid=da\&option_lang=eng}{\color{black}Diskretn.
  Anal. Issled. Oper.}, Ser.~2}, 12(1):37--54, 2005.
\newblock In Russian. Online: \url{https://www.mathnet.ru/eng/da85}

\bibitem{Puz:2011en}
S.~A. Puzynina.
\newblock On periodicity of perfect colorings of the infinite hexagonal and
  triangular grids.
\newblock {\em \href{http://link.springer.com/journal/11202}{\color{black}Sib. Math. J.}},
  52(1):91--104, 2011.
\newblock \DOI{10.1134/S0037446606010101} translated from \em
  \href{http://www.mathnet.ru/php/journal.phtml?jrnid=smj\&option_lang=eng}{\color{black}Sib.
  Mat. Zh.}\em, 52(1):115--132, 2011.

\bibitem{SBZ:78}
A.~Sayari, Y.~Billiet, and H.~Zarrouk.
\newblock Complete list of subgroups and changes of standard setting of
  two-dimensional space groups.
\newblock {\em \href{https://journals.iucr.org/a/}{\color{black}Acta Crystallogr., Sect. A}}, 34(4):553--555, 1978.
\newblock \DOI{10.1107/S0567739478001187}

\bibitem{Vas:Maltsev14}
A.~{Yu}. Vasil'eva.
\newblock Distance regular colorings of the infinite triangular grid.
\newblock In {\em Int. Conference ``{M}al'tsev Meeting''. Nov.
  10--13, 2014. Collection of Abstracts}, p.\,98, Novosibirsk, 2014. Sobolev
  Institute of Mathematics, Novosibirsk State University.
\newblock Online:
  \url{http://www.math.nsc.ru/conference/malmeet/14/Malmeet2014.pdf}

\bibitem{Vas:grid:22}
A.~{Yu}. Vasil'eva.
\newblock Completely regular codes in the triangular grid.
\newblock {\em Proceedings of MIPT}, 14(2):14--42, 2022.
\newblock In Russian.

\end{thebibliography}
\end{document}